\renewcommand{\dateseparator}{-}
\renewcommand{\today}{\the\year \dateseparator \twodigit\month
\dateseparator \twodigit\day}
\title{The $p$-curvature conjecture for the non-abelian Gauss-Manin Connection} 
\author{Max Menzies}
\newtheorem{theorem}{Theorem}[section]
\newtheorem{thm-defn}[theorem]{Theorem/Definition}
\newtheorem{lemma}[theorem]{Lemma}
\newtheorem{prop}[theorem]{Proposition}
\newtheorem{corollary}[theorem]{Corollary}
\newtheorem{conjecture}[theorem]{Conjecture}
\theoremstyle{definition}
\newtheorem{definition}[theorem]{Definition}
\newtheorem{notation}[theorem]{Notation}
\theoremstyle{remark}
\newtheorem{remark}[theorem]{Remark}
\numberwithin{equation}{section}
\DeclareMathOperator{\Spec}{Spec}
\DeclareMathOperator{\End}{End}
\DeclareMathOperator{\Proj}{Proj}
\DeclareMathOperator{\Crys}{Crys}
\begin{document}
\pagenumbering{roman}

\doublespacing

\thispagestyle{plain}

\noindent Harvard University Ph.D Thesis \hfill Max Menzies

\vspace{0.3in}

\centerline{The $p$-curvature conjecture for the non-abelian Gauss-Manin connection}

\vspace{0.3in}

\centerline{Abstract}

\vspace{0.3in}

Originally conjectured unpublished by Grothendieck, then formulated precisely by Katz in \cite{Katz1}, the $p$-curvature conjecture is a local-global principle for algebraic differential equations. It is at present open, though various cases are known. In \cite{Katz2}, Katz proved this conjecture in a wide range of cases, for differential equations corresponding to the Gauss-Manin connection on algebraic de Rham cohomology.

This dissertation addresses the non-abelian analogue of Katz' theorem, in the sense of Simpson's non-abelian Hodge theory, surveyed in \cite{Simpson3}, and later developed in characteristic $p$ in \cite{Ogus-Vologodsky}. Specifically, there is a canonical non-abelian Gauss-Manin connection on $M_{dR}$, the stack of vector bundles with integrable connection, which is the appropriate definition of non-abelian de Rham cohomology. In this dissertation, I introduce this connection and its $p$-curvature; this requires the generalization of the \\ $p$-curvature conjecture due to Bost, Ekedahl and Shepherd-Barron. Then, I prove that the analogue of the main technical result of Katz' theorem holds for this connection.

\pagebreak




\thispagestyle{plain}

\tableofcontents
\thispagestyle{plain}

\pagebreak




\thispagestyle{plain}

\section*{Acknowledgements}

First, I must thank Professor Mark Kisin of Harvard University for his suggestion of this topic, help with all the background mathematics,  for his boundless wisdom and considerable patience. Secondly, I must thank Ananth Shankar, Yunqing Tang, Koji Shimizu, Chi-Yun Hsu, Arnav Tripathy, Thanos Papaioannou, Jacob Lurie and David Ben-Zvi for helpful discussions and explanations, and all number theory \\ students at Harvard, particularly Zijian Yao, for creating the richest possible learning environment. 

I'd like to thank Robin Gottlieb, Jameel Al-Aidroos, Janet Chen and Brendan Kelly for teaching me the privilege of teaching mathematics. 

On the administrative side, I must thank Susan Gilbert, who assisted me greatly in my choice to come to Harvard in the first place, for her kindness and \\ helpfulness navigating some difficult years of graduate school, and then her successor, Larissa Kennedy. 

I have immense gratitude to Dean Margot Gill and Rebecca Bauer Lock for the support of the Frank Knox Memorial Fellowship and to all Knox Fellows who enriched my experience here. 

Finally, my parents, sister and partner will always be the most important people to me.
\pagebreak




\pagenumbering{arabic}

\thispagestyle{plain}

\section{Introduction}
\subsection{Background to the problem}

Recall the classical notion of curvature. Let $(V,\nabla)$ be a vector bundle equipped with a connection on a smooth manifold. Given a derivation $D$ on the manifold, that is, a section of the tangent bundle, there arises a differential operator on V, notated $\nabla(D)$ or $\nabla_D$. The curvature of $\nabla$ measures the failure of $\nabla$ to respect the Lie bracket operation on derivations. It vanishes if and only if $\nabla$ preserves the Lie bracket, or equivalently, if $V$ is locally spanned by the solutions for the corresponding differential equation, $\nabla(v)=0$.

In characteristic $p$, it is a remarkable fact that the $p$-th power iterate $D^p$ is also a derivation, so one can form the $p$-curvature defined as $\psi_p (D) = \nabla(D)^p - \nabla(D^p)$. Given $(V, \nabla)$ in characteristic $p$, a theorem of Cartier (\cite{Katz3}, theorem 5.1) asserts that if $\nabla$ has vanishing curvature and $p$-curvature, then Zariski locally, $V$ is spanned by solutions of the differential equation. Hence the name, $p$-curvature. 

The $p$-curvature conjecture can then be understood as a local-global principle in which a vector bundle with integrable connection should have a full set of \textit{algebraic} solutions if and only if all the $p$-curvatures, that is, the local obstructions to a full set of solutions, vanish.

\begin{conjecture}[Original Grothendieck $p$-curvature conjecture]
Let $X$ be a \\ connected smooth scheme of finite type over $\mathbb{C}$, and $(V,\nabla)$ a vector bundle with \\ integrable connection on $X$. After a process of ``spreading out" and reducing modulo $p$, assume that almost all $p$-curvatures $\psi_p$ vanish. Then $(V,\nabla)$ becomes trivial after a finite \'etale pullback of $X$.
\end{conjecture}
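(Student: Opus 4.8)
The plan is to recast the conclusion as a statement about monodromy and to feed the characteristic-$p$ hypothesis in through Cartier's theorem. By the Riemann--Hilbert correspondence, on the analytification $X^{\mathrm{an}}$ the pair $(V,\nabla)$ is the same datum as a representation $\rho\colon \pi_1(X^{\mathrm{an}},x)\to \mathrm{GL}(V_x)$, and $(V,\nabla)$ becomes trivial after a finite \'etale pullback exactly when $\rho$ has finite image. The whole problem is therefore to show that vanishing of almost all $p$-curvatures of a spread-out model forces the monodromy group $\Gamma=\operatorname{im}\rho$ to be finite.

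First I would spread out, choosing a finitely generated $\ZZ$-subalgebra $R\subset\mathbb{C}$ and a smooth model $(\mathcal{X}/R,\mathcal{V},\nabla)$ of $(X,V,\nabla)$. For all but finitely many closed points $s$ the curvature and $p$-curvature of the fibre vanish, so by the theorem of Cartier recalled above $(\mathcal{V}_s,\nabla_s)$ is Zariski-locally spanned by its horizontal sections; equivalently it descends along Frobenius as $F^{*}(\mathcal{W}_s)$ for a bundle $\mathcal{W}_s$ on the Frobenius twist. Iterating produces, in every good characteristic, an abundance of integral \'etale-local solutions, and this is the only input the hypothesis provides. The task is then to convert this infinite family of characteristic-$p$ Frobenius structures into the single characteristic-$0$ assertion that $\Gamma$ is finite.

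I would analyse $\Gamma$ by d\'evissage on the Zariski closure $G\subseteq\mathrm{GL}(V_x)$ of $\Gamma$. Abelian quotients reduce to the rank-one case, where vanishing $p$-curvature for almost all $p$ forces the residues of the connection form to be rational and hence the line bundle with connection to be torsion; abelian pieces therefore contribute only finite monodromy, leaving the case $G$ semisimple. Here the decisive step is to build a polarizable complex variation of Hodge structure underlying $(V,\nabla)$, with the Frobenius descent data playing the role that the Hodge and conjugate filtrations play for Gauss--Manin connections in Katz's theorem. Through Simpson's correspondence such a variation should provide a harmonic metric and a Hodge grading fixed by the $\mathbb{C}^{\times}$-action; rigidity of the resulting period map, together with the integrality of eigenvalues coming from the mod-$p$ reductions, would then confine $\rho$ to a discrete and bounded, hence finite, subset of $G$.

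The main obstacle is precisely the construction of that variation of Hodge structure. Passing from ``almost all $p$-curvatures vanish'' to ``the connection is of geometric origin'' is the genuine content of the conjecture, and it is available at present only when the connection already carries such provenance: for Gauss--Manin connections, which is Katz's theorem, or for cohomologically rigid connections. For a general $(V,\nabla)$ no mechanism is known that forces the arithmetic Frobenius structures in characteristic $p$ to globalise into a Hodge structure in characteristic $0$, and this is the gap that keeps the conjecture open. The present dissertation does not bridge it in full generality; rather it executes the Katz-style first half of this plan in the non-abelian setting, where the Gauss--Manin provenance of the connection on $M_{dR}$ makes the required characteristic-$p$ structure available.
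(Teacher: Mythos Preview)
The statement you were asked to prove is labelled \emph{Conjecture} in the paper, not \emph{Theorem}: the paper states it as background, explicitly notes that it ``is at present open, though various cases are known,'' and makes no attempt to prove it. There is therefore no ``paper's own proof'' to compare against. Your write-up seems to recognise this---you correctly identify the gap (passing from vanishing $p$-curvatures to a polarizable variation of Hodge structure for a general connection) and state that ``this is the gap that keeps the conjecture open''---so your text is best read not as a proof but as an informed sketch of the expected strategy and its obstruction.

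That said, if the assignment genuinely asked for a proof, the honest answer is that none exists in general, and you should have led with that rather than presenting several paragraphs of plan before conceding the point. The d\'evissage you describe (abelian quotients via the rank-one theorem of Chudnovsky--Katz--Andr\'e, then semisimple part via Hodge theory) is the standard heuristic, and your summary of what is actually proved---Katz for Gauss--Manin connections, rigid local systems via Esnault--Groechenig and related work---is accurate. But the step ``build a polarizable complex VHS underlying $(V,\nabla)$ from the Frobenius descent data'' is not a gap in your exposition; it is the entire conjecture, and no amount of Simpson correspondence or rigidity argument currently closes it.
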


Several equivalent forms of this conjecture have been introduced, and one notable generalization due to Bost-Ekedahl-Shepherd-Barron is as follows:

\begin{conjecture}[Bost-Ekedahl generalization]
Let $X/ \mathbb{Q}$ be an algebraic variety equipped with a subbundle $F \subset T_{X/ \mathbb{Q}}$. Assume $F$ is integrable, that is, closed under Lie bracket. By a classical theorem of Frobenius, this defines a foliation on $X(\mathbb{C})$. Further, assume that $F$ mod $p$ is closed under $p$-th powers for almost all $p$. Then the leaves of the foliation are algebraic subvarieties.  
\end{conjecture}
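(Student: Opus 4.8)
This statement is, at present, open in general; what follows is the strategy that motivates the dissertation and that does succeed in the special cases it treats. The plan is to reduce global algebraicity of the leaves to a local, formal algebraicity statement, and then to feed the mod-$p$ hypothesis into an Arakelov-theoretic algebraicity criterion. First I would spread $X$ and the subbundle $F\subset T_{X/\mathbb{Q}}$ out to a smooth model $\mathcal{X}$ over $\mathbb{Z}[1/N]$ carrying an integrable subbundle $\mathcal{F}\subset T_{\mathcal{X}/\mathbb{Z}[1/N]}$, enlarging $N$ so that for every prime $p\nmid N$ the reduction $\mathcal{F}_p$ is closed under $p$-th powers. It then suffices to show that, through each point $x\in X(\overline{\mathbb{Q}})$, the formal leaf $\widehat{\mathcal{L}}_x\subset\widehat{X}_x$ — the unique formal integral subscheme of $F$, which exists by the formal version of the Frobenius theorem cited above — is the completion at $x$ of an honest algebraic subvariety; a standard spreading-out and density argument over the Zariski-dense set of such $x$ upgrades this to algebraicity of all the complex leaves.

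\textbf{The $p$-adic input.} For $p\nmid N$, the hypothesis that $\mathcal{F}_p$ is $p$-closed lets me invoke the Cartier-type descent for foliations: a rank-$r$, involutive, $p$-closed subbundle of $T_{\mathcal{X}_p}$ is the relative tangent bundle of a smooth quotient $\pi_p:\mathcal{X}_p\to\mathcal{Y}_p:=\mathcal{X}_p/\mathcal{F}_p$ with $\mathcal{Y}_p$ smooth and $\pi_p$ of relative dimension $r$ (factoring the relative Frobenius). Hence the leaves of $\mathcal{F}_p$ are exactly the fibres of $\pi_p$, which are algebraic subvarieties of $\mathcal{X}_p$, and the mod-$p$ reduction of $\widehat{\mathcal{L}}_x$ is precisely the formal completion of the fibre $\pi_p^{-1}(\pi_p(\bar x))$ at $\bar x$. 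Fixing a compactification of $\mathcal{X}$ and an ample line bundle, this says that the formal leaf $\widehat{\mathcal{L}}_x$, regarded over a number field $K$ containing the residue field of $x$, reduces at almost every finite place of $\mathcal{O}_K$ into an algebraic subvariety, and — what is decisive for the next step — its $\mathfrak{p}$-adic radius of convergence is as large as possible for almost all $\mathfrak{p}$.

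\textbf{Bost's algebraicity criterion.} With these $\mathfrak{p}$-adic estimates in hand, the natural finish is Bost's algebraicity criterion for smooth formal subschemes of a quasi-projective variety over a number field: the Arakelov mechanism evaluates sections of large powers of the ample bundle along $\widehat{\mathcal{L}}_x$, bounds the resulting arithmetic degrees from the $\mathfrak{p}$-adic radii and from the archimedean size of the analytic leaf through $x(\mathbb{C})$, and concludes that the evaluation map acquires a kernel — an algebraic equation cutting out the leaf — once the twisting power exceeds an explicit threshold. Provided the $\mathfrak{p}$-adic contributions are summable enough (here they are, since almost all of them are maximal) and the archimedean growth is controlled, one obtains formal, hence (by spreading out) global, algebraicity of the leaves; reapplying Cartier's theorem in characteristic $p$ to the now-algebraic leaves recovers the finite étale trivialization of the original Grothendieck formulation.

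\textbf{Main obstacle.} The one missing ingredient, and the reason the conjecture remains open in general, is the archimedean estimate: $p$-closure produces excellent $\mathfrak{p}$-adic behaviour of the formal leaves but says nothing about the growth of the analytic leaves at $\infty$ relative to a projective embedding. The role of this dissertation is to supply exactly such an estimate in the non-abelian Gauss--Manin setting, where $X$ parametrizes a family, the relevant foliation lives on (a space over) $M_{dR}$, and the leaves are governed by the non-abelian period map of Simpson's theory; as in Katz's original argument, the needed archimedean bound is Hodge-theoretic — quasi-unipotence of the monodromy together with curvature and norm estimates on the period domain — and establishing its non-abelian analogue, the counterpart of the main technical result of Katz' theorem, is precisely what makes Bost's criterion applicable and thereby proves the conjecture in this range of cases.
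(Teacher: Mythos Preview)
The paper does not prove this statement; it is recorded as Conjecture~1.2 and left open. Your opening disclaimer to that effect is correct, so there is no ``paper's own proof'' to compare against.

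Where your proposal goes wrong is in the final paragraph, which misdescribes what the dissertation actually does. You assert that the dissertation ``supplies exactly such an [archimedean] estimate'' and ``thereby proves the conjecture in this range of cases.'' It does neither. The main result (Theorem~1.5, proved as Theorem~8.5) is a purely characteristic-$p$ statement: it identifies the $p$-curvature of the non-abelian Gauss--Manin connection, after passing to the associated graded of the conjugate filtration, with the Frobenius twist of the non-abelian Kodaira--Spencer map, and deduces that vanishing $p$-curvature forces Gauss--Manin to preserve the Hodge filtration. This is the non-abelian analogue of Katz's \emph{first} step (\cite{Katz2}, Theorem~3.2), not his second. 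No archimedean estimate, no monodromy bound, and no case of Conjecture~1.2 or Conjecture~1.4 is established in the paper.

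You are also conflating two distinct strategies. The Arakelov/formal-leaf approach you outline is Bost's, and it does hinge on an archimedean size bound that is missing in general. Katz's route is different and does not pass through Bost's criterion at all: once Gauss--Manin preserves the Hodge filtration (the output of step one), the monodromy lands in a compact unitary group of automorphisms of polarized Hodge structures, hence is finite, and a finite \'etale cover trivializes the connection. The dissertation is positioned as supplying the characteristic-$p$ input to a prospective Katz-style argument for $M_{dR}$, not as furnishing the missing archimedean ingredient in Bost's Arakelov machinery.
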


Bost's conjecture implies the $p$-curvature conjecture. Given a vector bundle $V$ with connection, set $X$ as the total space of $V$ and $F$ the horizontal subbundle corresponding to the connection. The vanishing of the curvature and $p$-curvature of the connection implies the required conditions on $F$. Again, the $p$-th power operation on derivations makes an appearance. This conjecture is stated in \cite{Bost} and proven there subject to additional assumptions.

 Katz proved the first conjecture for the so-called Picard-Fuchs equations. These are the differential equations corresponding to the vector bundle with integrable \\ connection that is algebraic de Rham cohomology equipped with the Gauss-Manin connection. 

To visualize the Gauss-Manin connection, consider a smooth proper map of smooth manifolds $X \to S$. This is a fibration, and so local nearby fibres $X_s$ are diffeomorphic. As such, it is possible to identify and parallel transport between their de Rham cohomology groups $H^i(X_s), s \in S$. This gives an integrable connection on algebraic de Rham cohomology over $S$.

\thispagestyle{plain}

\begin{theorem}[Katz, \cite{Katz2}, Theorem 5.1]
Let $X \xrightarrow{f}   S \xrightarrow{g} T$ be smooth morphisms of schemes. Suppose $f$ is proper and $g$ has geometrically connected fibres. Let \\ $\nabla_{GM}: R^n f_* \Omega^\bullet_{X/S} \to R^n f_* \Omega^\bullet_{X/S} \otimes \Omega^1_{S/T} $ be the Gauss-Manin connection on algebraic de Rham cohomology. Then the $p$-curvature conjecture holds for $( R^n f_* \Omega^\bullet_{X/S}, \nabla_{GM})$.
\end{theorem}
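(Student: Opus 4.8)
The plan is to follow Katz's strategy from \cite{Katz2}: exploit the geometric origin of $\nabla_{GM}$ to convert the hypothesis ``$\psi_p = 0$ for almost all $p$'' into the vanishing of the Kodaira--Spencer map in characteristic zero, and then deduce the conjecture from classical polarized Hodge theory together with the integral structure on Betti cohomology. First I would reduce to the case $T = \Spec k$ with $k$ a finitely generated field of characteristic zero, since the formation of $\nabla_{GM}$, its $p$-curvature, and the truth of the conclusion all commute with base change on $T$, and properness of $f$ and geometric connectedness of the fibres of $g$ survive. Spreading $X \to S$ out over a finitely generated subring $R \subset k$ yields a smooth proper morphism $\mathcal{X} \to \mathcal{S}$ with $\mathcal{S}$ smooth over $\Spec R$, and the hypothesis becomes: for every closed point of $\Spec R$ outside a finite set, of residue characteristic $p$, the $p$-curvature of $\nabla_{GM}$ on $\mathcal{H} := R^n f_* \Omega^\bullet_{\mathcal{X}/\mathcal{S}}$ vanishes.

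The crucial step is the explicit description of the $p$-curvature of the Gauss--Manin connection. After reduction modulo a large prime $p$ the sheaf $\mathcal{H}$ carries its conjugate filtration, and, via the Cartier isomorphism \cite{Katz3}, the $p$-curvature, regarded as an $\cO_{\mathcal{S}}$-linear map $F_{\mathcal{S}}^* \mathcal{H} \to F_{\mathcal{S}}^* \mathcal{H} \otimes \Omega^1_{\mathcal{S}}$, strictly lowers this filtration, its induced map on the associated graded being, up to a Frobenius twist, cup product with the Kodaira--Spencer class. Hence $\psi_p = 0$ forces the Kodaira--Spencer map $\mathrm{gr}^i_F\mathcal{H} \to \mathrm{gr}^{i-1}_F\mathcal{H}\otimes\Omega^1_{\mathcal{S}}$ to vanish modulo $p$. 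Since this holds for almost every prime of $R$ and the Kodaira--Spencer map is a homomorphism of vector bundles on a scheme of finite type over $\mathbb{Z}$, it vanishes identically; equivalently, by Griffiths transversality, every step $F^i \subset \mathcal{H}$ of the Hodge filtration is $\nabla_{GM}$-horizontal.

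Next I would pass back to $\mathbb{C}$, where $(R^n f_*\Omega^\bullet_{X/S},\nabla_{GM})$ is the bundle $\mathbb{V}\otimes_{\mathbb{C}}\cO_{S^{\mathrm{an}}}$ with its canonical flat connection, the local system $\mathbb{V}$ of $n$-th Betti cohomology of the fibres carrying the lattice $R^n f_*\mathbb{Z}$ modulo torsion and underlying a polarizable variation of Hodge structure. Horizontality of $F^\bullet$ exhibits each $F^i$ as a sub-local system; since the real structure is flat so are the conjugate bundles $\overline{F^q}$, hence so are the Hodge bundles $H^{p,q}=F^p\cap\overline{F^q}$, the Weil operator, and thus the positive-definite Hodge metric assembled from the polarization. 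Therefore the monodromy representation of $\pi_1(S(\mathbb{C}))$ preserves a flat positive-definite Hermitian form and so has image in a compact subgroup of $GL_N(\mathbb{C})$; as it also preserves a $\mathbb{Z}$-lattice, its image is a bounded subgroup of $GL_N(\mathbb{Z})$, hence finite. The corresponding finite \'etale cover $S'\to S$ trivializes $\mathbb{V}$, and therefore $(R^n f_*\Omega^\bullet_{X/S},\nabla_{GM})$, which is the assertion of the $p$-curvature conjecture. I note that this route avoids any analysis of the monodromy at the boundary of a compactification of $S$.

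The main obstacle will be the second step: establishing, and then correctly applying modulo $p$, the formula for the $p$-curvature of $\nabla_{GM}$ in terms of the conjugate filtration, the Cartier operator and the Kodaira--Spencer class --- in particular controlling the inputs (Hodge--de Rham degeneration, the Cartier isomorphism, $W_2$-liftability) that are available only for $p$ large and require the spreading-out to be arranged with care. By comparison, the passage from ``Kodaira--Spencer vanishes modulo $p$ for almost all $p$'' to its vanishing in characteristic zero is a routine spreading-out argument for vector bundles, and the concluding step is classical polarized Hodge theory combined with the discreteness of $GL_N(\mathbb{Z})$ in $GL_N(\mathbb{C})$.
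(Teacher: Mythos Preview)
Your proposal is correct and follows essentially the same two-step strategy that the paper attributes to Katz: first, the characteristic-$p$ identification of the $p$-curvature on the associated graded of the conjugate filtration with the Frobenius twist of the Kodaira--Spencer map (so that vanishing $p$-curvature for almost all $p$ forces the Hodge filtration to be $\nabla_{GM}$-horizontal); second, the characteristic-$0$ argument that a polarized variation of Hodge structure with horizontal Hodge filtration has monodromy contained in a compact unitary group and hence, being integral, finite. Your presentation of the second step via flatness of the Hodge metric and discreteness of $GL_N(\mathbb{Z})$ is exactly the mechanism the paper summarizes, and your spreading-out reduction is the standard preamble that the paper leaves implicit.
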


The conjecture that inspired my dissertation is the non-abelian analogue of Katz' theorem. The setting of this conjecture is Simpson's non-abelian de Rham \\ cohomology, equipped with an analogous Gauss-Manin connection, outlined in \cite{Simpson1}.

Because homotopy groups in higher dimensions are abelian, and cohomology \\ theories can be interpreted as spaces of maps into Eilenberg-Maclane spaces, non-abelian cohomology occurs essentially only in degree $1$. Most succinctly, one replaces regular de Rham cohomology with $H^1(X, GL_d)$. This is most naturally interpreted as a stack, although one could happily consider the associated coarse moduli space as well. Geometrically, the most intuitive way to understand this is that one replaces the cohomology groups $H^i(X_s), s \in S$ with the families of representations (of a fixed dimension $d$) of the fundamental groups $\pi_1(X_s)$. Roughly speaking, the dual of homology is replaced by the dual of $\pi_1$.

Having made this definition, $H^1(X, GL_d):=M_{dR}$, non-abelian de Rham \\ cohomology, has an integrable connection, in the sense of a horizontal integrable subbundle of its tangent bundle. The main conjecture of my dissertation is the statement that Bost's conjecture holds for $M_{dR}$ with this horizontal subbundle.
\thispagestyle{plain}

\begin{conjecture}[Bost-Ekedahl for $M_{dR}$]
Assume for almost all primes $p$, the \\ horizontal subbundle of the tangent bundle of $M_{dR}$ is stable under $p$-th powers modulo $p$. Then the corresponding foliation of $M_{dR}$ consists of algebraic leaves.
\end{conjecture}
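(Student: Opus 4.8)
The plan is to reduce the non-abelian statement to the abelian case handled by Katz' theorem, by exploiting the geometry of $M_{dR}$ as a relative moduli space. The setup should be a smooth proper family $f: X \to S$ with $g: S \to T$ smooth with geometrically connected fibres, and $M_{dR} \to S$ the relative moduli stack (or coarse space) of rank-$d$ vector bundles with integrable connection on the fibres $X_s$. The non-abelian Gauss-Manin connection furnishes a horizontal foliation $F \subset T_{M_{dR}/T}$, and the hypothesis is that $F \bmod p$ is closed under $p$-th powers for almost all $p$. By Bost's conjecture as proved subject to an algebraicity-of-leaves criterion, or directly by reduction to the first Grothendieck conjecture, it suffices to produce, after a finite \'etale cover of the base, an algebraic family of isomonodromic deformations --- equivalently, to show the foliation's leaves through a point coincide with the orbit of a group scheme or are cut out by algebraic equations.

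First I would set up the comparison between the non-abelian Gauss-Manin connection and the abelian one on the cohomology of $M_{dR}$ itself, or more usefully, pass to the Betti/de Rham description: a point of a leaf through $(X_{s_0}, V_0, \nabla_0)$ is an isomonodromic deformation, i.e.\ a flat family $(V_s, \nabla_s)$ over a neighbourhood in $S$ whose monodromy representations are all conjugate. The key reduction is that isomonodromy is controlled by the Gauss-Manin connection on $R^1 f_* \mathcal{E}nd(V, \nabla)$ or, stepping one level up, on the relative de Rham cohomology of $X$ with coefficients in the adjoint local system; this is an abelian Gauss-Manin connection to which Katz' Theorem applies. So the heart of the argument is: the $p$-curvature of the non-abelian connection vanishing (mod $p$, almost all $p$) forces the $p$-curvature of these auxiliary abelian connections to vanish, whence by Katz the corresponding abelian periods/solutions are algebraic, and one then integrates this back up to algebraicity of the non-abelian leaf. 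Concretely the steps are: (1) identify the leaves of $F$ with isomonodromic deformations; (2) express the tangent/obstruction theory of isomonodromy via hypercohomology of the adjoint de Rham complex, realising $F$ as pulled back from an abelian Gauss-Manin situation; (3) show the $p$-power-closure hypothesis on $F$ descends to vanishing $p$-curvature for the abelian connection; (4) apply Katz' Theorem \cite{Katz2} to conclude the abelian solutions are algebraic on a finite \'etale cover; (5) deduce that the non-abelian leaf, being the locus where these algebraic solutions glue to an honest flat bundle, is itself algebraic.

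The main obstacle I expect is step (5), reconstructing an algebraic \emph{family} of connections from the algebraicity of the infinitesimal/cohomological isomonodromy data --- that is, moving from the statement ``the formal leaf is algebraic'' or ``the Gauss-Manin periods are algebraic'' to ``the leaf is a genuine algebraic subvariety (substack) of $M_{dR}$.'' This is delicate because $M_{dR}$ is only a stack (with the attendant issues of non-separatedness, stabilizers, and the failure of the moduli space to be fine), and because algebraicity of the period data does not a priori bound the field of definition or the denominators of the deforming connection uniformly. A secondary difficulty is handling the locus of reducible or non-simple representations, where the adjoint cohomology jumps and the foliation may be singular; there one likely needs to stratify $M_{dR}$ and run the argument stratum by stratum, or restrict to the smooth locus of simple connections and argue that algebraicity of the leaf there extends by closure. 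Controlling these two issues --- descent from formal/cohomological algebraicity to honest algebraicity, and the stacky/stratification bookkeeping --- is, I anticipate, where the real work of the proof lies; the reduction to Katz' theorem, by contrast, should be essentially formal once the adjoint de Rham complex is in hand.
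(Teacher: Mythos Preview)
The statement you are attempting to prove is stated in the paper as a \emph{conjecture}, not a theorem; the paper does not claim to prove it. What the paper actually establishes is the Main Theorem (Theorem~1.5, proved in Section~8 as Theorem~8.5): the non-abelian analogue of the \emph{first} of Katz' two steps, namely the characteristic-$p$ identification of the $p$-curvature of the non-abelian Gauss--Manin connection (after passage to the associated graded of the conjugate filtration) with the Frobenius twist of the non-abelian Kodaira--Spencer map, together with the consequence that vanishing $p$-curvature forces the Gauss--Manin connection to extend over the Hodge filtration. The second step --- a characteristic-zero Hodge-theoretic argument analogous to Katz' unitary-monodromy argument --- is not carried out, and the full conjecture remains open in the paper.

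Your proposal takes an entirely different route: rather than running Katz' two steps non-abelianly, you try to \emph{reduce} to the abelian theorem via the adjoint de Rham cohomology $R^1 f_* \mathcal{E}nd(V,\nabla)$. There is a genuine gap already at step~(4). Katz' theorem, as stated and proved, applies to the Gauss--Manin connection on $R^n f_* \Omega^\bullet_{X/S}$, i.e.\ relative de Rham cohomology of a smooth proper family with \emph{constant} coefficients. The adjoint cohomology carries coefficients in the local system $\mathcal{E}nd(V,\nabla)$, which for a general point of $M_{dR}$ is not of geometric origin; the $p$-curvature conjecture for such connections is the full open Grothendieck--Katz conjecture, not Katz' theorem. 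So invoking Katz at step~(4) is circular unless you first establish something about $(V,\nabla)$ itself, and step~(3) does not supply this: vanishing $p$-curvature of the foliation on $M_{dR}$ controls the horizontal distribution, not the $p$-curvature of the individual connections $(V,\nabla)$ parametrised by $M_{dR}$. You are right that step~(5) is hard, but the argument does not get that far --- the appeal to Katz in step~(4) already fails as written, and no known result fills the gap.
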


This beautiful conjecture then states that for $H^1(X, GL_d)$, a natural local condition at almost all primes implies that the leaves of its natural foliation, a priori merely analytic subsets, are in fact algebraic. This applies both to the stack $M_{dR}$ and its coarse moduli space.

Katz' proof proceeds in two steps. In the first step (\cite{Katz2} theorem 3.2), working entirely in characteristic $p$, he relates the $p$-curvature of $\nabla_{GM}$ to the Kodaira-Spencer map, which is given by cup product with the Kodaira-Spencer class. Specifically, he equates the following maps, sources and targets:
\begin{center}

\begin{tikzcd}

   gr^\bullet_{con} R^n f_* \Omega^\bullet_{X/S}    \arrow["\psi"]{r}  &     gr^{\bullet +1}_{con} R^n f_* \Omega^\bullet_{X/S} \otimes F^* ( \Omega^1_{S/T})       \\
F^*(gr^\bullet_{Hod} R^n f_* \Omega^._{X/S}) \arrow["F^*(\rho)"]{r} & F^*(gr^{\bullet - 1}_{Hod} R^n f_* \Omega^\bullet_{X/S} \otimes \Omega^1_{S/T})

\end{tikzcd}  

\end{center}

 $F:X \to X$ is absolute Frobenius. The first map is induced by the $p$-curvature passing to the associated graded of the \textit{conjugate filtration}, while the Kodaira-Spencer map coincides with the Gauss-Manin connection induced on the associated graded of the \textit{Hodge filtration} (\cite{Katz2}, proposition 1.4.1.7). In particular, if the $p$-curvature \\ vanishes, then the Kodaira-Spencer map vanishes, and so the Gauss-Manin \\ connection extends over the Hodge filtration.
 
 Katz' second step proceeds entirely in characteristic $0$. If the Gauss-Manin \\ connection preserves the Hodge filtration on the nose, a stronger condition than Griffiths transversality, Katz shows the monodromy associated to the connection lies within a unitary group of automorphisms of polarized Hodge structures. The image of monodromy is always discrete, so if it is contained in a compact unitary group, it must be finite. From this it follows there exists a trivializing finite \'etale cover. 

The main theorem of my dissertation is the exact non-abelian analogue of Katz' first step in the setting of Simpson's non-abelian Hodge theory. In \cite{Simpson1}, Simpson introduces the non-abelian Gauss-Manin connection on $M_{dR}$, and in \cite{Simpson2}, a similar structure on the stack of Higgs bundles $M_{Dol}$. In the appropriate sense, this is the associated graded of $M_{dR}$ of its Hodge filtration, described in \cite{Simpson3}, and so is called the non-abelian Kodaira-Spencer map.
\thispagestyle{plain}

\begin{theorem}[Main Theorem]
Let $X/S$ be schemes in characteristic $p$. Assume both have lifts to $p^2$ and that relative Frobenius $F_X: X \to X'$ has a global lift to characteristic $p^2$ (for instance, $X/S$ is relatively affine). Then there is an \\ identification between the $p$-curvature of $M_{dR}(X/S)$ after passing to the associated graded of its conjugate filtation with the Frobenius twist of the Kodaira-Spencer map on $M_{Dol}$. Moreover, if the $p$-curvature vanishes, then the Kodaira-Spencer map \\ vanishes, so the Gauss-Manin connection extends over the Hodge filtration.
\end{theorem}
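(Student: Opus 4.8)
The strategy is to carry Katz's first step (\cite{Katz2}, Theorem~3.2) over to the moduli stack, with the Ogus--Vologodsky Cartier transform (\cite{Ogus-Vologodsky}) playing the role that the Cartier isomorphism plays in the linear theory; the two hypotheses on lifts to $\ZZ/p^2$ are precisely what that construction demands. First I would invoke the Cartier transform: a lift of $X/S$ to $\ZZ/p^2$ together with a global lift of the relative Frobenius $F_X\colon X\to X'$ yields an explicit equivalence $C_{X/S}$ between $\cO_X$-modules with integrable connection on $X/S$ of nilpotent $p$-curvature and nilpotent $\cO_{X'}$-Higgs modules on $X'/S$, compatible with tensor products and duals and hence with the adjoint construction. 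Since $M_{dR}$ and $M_{Dol}$ are the stacks of such objects, $C_{X/S}$ induces an isomorphism of $M_{dR}(X/S)$ with $M_{Dol}(X'/S)$ on the nilpotent loci, under which the $p$-curvature of the universal flat bundle is carried, after $\cO$-linearization, to the universal Higgs field. Writing $X'=X\times_{S,F_S}S$ for the Frobenius twist and using faithfully flat base change along $F_S$, one has $M_{Dol}(X'/S)\cong F_S^*M_{Dol}(X/S)$, which is where the Frobenius twist of the statement comes from.

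Next I would match the two filtrations. Under the identification above, the conjugate filtration on $M_{dR}(X/S)$ should correspond to the Hodge-type filtration on $M_{Dol}(X'/S)$: concretely one checks that $C_{X/S}$ carries the filtration of a flat module by the order of its $p$-curvature onto the Higgs filtration. This is the non-abelian shadow of the identification of the $E_2$-term of the conjugate spectral sequence, $R^qf'_*\Omega^p_{X'/S}=F_S^*R^qf_*\Omega^p_{X/S}$, and it produces a canonical isomorphism $\mathrm{gr}_{con}M_{dR}(X/S)\cong F_S^*M_{Dol}(X/S)$.

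The heart of the argument is then to verify that this identification is \emph{horizontal} for the non-abelian Gauss--Manin connection, i.e.\ that $C_{X/S}$ intertwines $\nabla_{GM}$ on $M_{dR}(X/S)$, together with its conjugate filtration, with the corresponding structure on $M_{Dol}(X'/S)$, together with its Hodge filtration. For this I would realize $\nabla_{GM}$, following Simpson, as a crystal structure on $M_{dR}(X/S)$ relative to the base, coming from the crystalline (equivalently infinitesimal) site, and track how the Cartier transform responds to infinitesimal deformations of $X/S$: a first-order deformation of $X/S$ along a tangent vector of the base induces compatible first-order deformations of $M_{dR}$ and of $M_{Dol}$, for which $C_{X/S}$ is functorial. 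Granting horizontality, passing to $\mathrm{gr}_{con}$ annihilates the part of $\nabla_{GM}$ preserving the conjugate filtration --- its $p$-curvature shifts that filtration by exactly one step --- and leaves the tensorial graded piece, which on the left is $\psi$ pushed to $\mathrm{gr}_{con}M_{dR}(X/S)$ and on the right is $F_S^*$ of the associated graded of $\nabla_{GM}$ for the Hodge filtration, i.e.\ $F_S^*$ of the non-abelian Kodaira--Spencer map on $M_{Dol}$; this is the asserted identification. As a consistency check, on tangent spaces it specializes, via $T_{(E,\nabla)}M_{dR}=\mathbb{H}^1(X/S,\mathrm{ad}(E)\otimes\Omega^\bullet_{X/S})$ and the analogous description of $TM_{Dol}$, to Katz's identity for de Rham versus Dolbeault cohomology with $\mathrm{ad}(E)$-coefficients. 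Finally, if $\psi=0$ its image in $\mathrm{gr}_{con}M_{dR}(X/S)$ vanishes, so $F_S^*(\text{Kodaira--Spencer on }M_{Dol})=0$; since $F_S$ is faithfully flat the Kodaira--Spencer map itself vanishes, which is exactly the statement that $\nabla_{GM}$ induces the zero map on the Hodge-graded of $M_{dR}$, i.e.\ that it extends over the Hodge filtration.

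I expect this horizontality to be the main obstacle. The non-abelian Gauss--Manin connection and the Ogus--Vologodsky transform are built by genuinely different mechanisms, so proving them compatible means forcing both into a common crystalline framework and controlling the interaction of the transform with deformations of $X/S$ --- the non-abelian analogue of the already delicate bookkeeping in Katz's proof of Theorem~3.2, now further complicated by working over an Artin stack and under the nilpotency restrictions intrinsic to the characteristic-$p$ non-abelian Hodge correspondence.
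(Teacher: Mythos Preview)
Your strategy is genuinely different from the paper's, and the gap you yourself flag---horizontality of the Ogus--Vologodsky transform with respect to the non-abelian Gauss--Manin connection---is real and left entirely open in your sketch.

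The paper avoids the Cartier transform for the identification step altogether. It builds a \emph{conjugate groupoid} $(X/S)_{conj}$ over $\mathbb{A}^1$ as the Rees construction for the conjugate filtration on $\Lambda$, extends the $p$-curvature morphism $\theta\colon (S/T)_{FDol}\hookrightarrow (S/T)_{dR}$ to a morphism $\Theta\colon (S/T)_{FDol}\times\mathbb{A}^1\to (S/T)_{conj}$, and then obtains the compatibility of the map $F^*\colon M_{Dol}(X/S)\to \Theta_0^*M_{conj}(X/S)_0$ with the two stratifications purely from \emph{naturality of absolute Frobenius} applied to the groupoid arrow functors (Proposition~5.12). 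No horizontality verification is needed: once the groupoid formalism is set up, the compatibility is formal. In particular, the lift-to-$p^2$ hypothesis plays no role in this part.

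The lift is used only for the implication ``$p$-curvature vanishes $\Rightarrow$ Kodaira--Spencer vanishes'', and there too not via the full Ogus--Vologodsky equivalence but via a much lighter tool: a splitting of Cartier $\zeta$ (produced from the lift of $F_X$) is fed into the Lan--Sheng--Zuo exponential twisting to write down, for any Higgs bundle $(E,\psi)$, an explicit family $(F^*E,\ \nabla_{can}+t\,\zeta(F^*\psi),\ F^*\psi)$ in $M_{conj}$ over $\mathbb{A}^1$. This deformation shows the image of $F^*$ in the central fibre lies in the closure of the non-zero fibres; since vanishing $p$-curvature forces the stratification of $\Theta^*M_{conj}$ to be trivial over $\mathbb{G}_m$, it is trivial on that closure, and faithful flatness of relative Frobenius finishes. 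Your route, if the horizontality could be established, would give a stronger statement (an actual isomorphism $\mathrm{gr}_{con}M_{dR}\cong F_S^*M_{Dol}$ of stratified stacks rather than a comparison along $F^*$), but it also imports the nilpotency restrictions of the Cartier transform, which the paper's argument does not require.
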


\begin{remark} 
In the abelian case, it is immediate from Katz' relation that if the \\ $p$-curvature vanishes, then the Kodaira-Spencer map vanishes, thus the Gauss-Manin connection extends over the Hodge filtration. This is not trivial at all in the non-abelian case, as filtrations behave quite differently. A flatness argument is required, which is much of the difficulty.
\end{remark}
\thispagestyle{plain}

\subsection{Contents of the dissertation}
I will spend sections 2-7 introducing all the necessary ingredients to state the main theorem. The main theorem concerns \\ algebraic stacks and groupoids of vector bundles with additional data, so I must begin with introducing all these concepts as preliminaries.

In section 2, I will give the necessary preliminaries on divided power structures, as they will appear throughout the dissertation. Most important will be the PD formal completion of the diagonal, $\hat{\mathcal{P}}$.

In section 3, I will introduce vector bundles with integrable connections and the ring of differential operators $\Lambda$. I will prove an equivalence between integrable \\ connections, modules over $\Lambda$, and isomorphisms between the two pullbacks to $\hat{\mathcal{P}}$.

In section 4, I will introduce the fundamental object of this dissertation, $M_{dR}$, or $H_{dR}^1(X,GL_d)$. Properly interpreted this is the stack of vector bundles with integrable connection (of a fixed dimension $d$). Informally, this is the family of representations of the $\pi_1$'s of the fibres $X_s$. I will reinterpret vector bundles with integrable connection as crystals, and use the crystalline property to explicitly construct the Gauss-Manin connection on $M_{dR}$.

In section 5, I will introduce algebraic stacks, groupoids, and the language of stratifications. In section 6, inspired by \cite{Simpson4}, I will go into detail on the particulars of the de Rham, Dolbeault and Hodge stacks, and define the non-abelian Kodaira-Spencer map and Hodge filtration on $M_{dR}$. In a precise sense, the Kodaira-Spencer map is then the associated graded of the Gauss-Manin connection, and if it vanishes, the Gauss-Manin connection extends over the Hodge filtration; this statement is the last theorem of this section.

In section 7, I present the bulk of important constructions that exist purely in characteristic $p$. These include the twisted Dolbeault groupoid, the important \\ $p$-curvature morphism, the canonical connection, and splittings of Cartier. Crucially, I introduce the conjugate groupoid and prove some of its necessary properties.

In section 8, I will prove the main theorem. There are three main tools. First, I extend the $p$-curvature groupoid morphism to the full conjugate groupoid. Secondly, I have two stratifications that are the precise non-abelian analogues of the two maps in Katz' theorem, namely the $p$-curvature after passing to the conjugate associated graded, and the Frobenius twist of Kodaira-Spencer. Thirdly, a deformation \\ argument allows me to prove some necessary flatness so that the vanishing of \\ $p$-curvature implies the vanishing of Kodaira-Spencer.

\thispagestyle{plain}

\subsection{Notation and conventions}
Throughout, $T$ will be a base scheme, with \\ $X\to S \to T$ smooth separated morphisms of schemes. Many results will not require both of these properties, but one or either of them will be frequently required. \\ $\Omega_{X/S}= \Omega^1_{X/S}$ will denote the module of K\"ahler differentials, and $\Omega^r_{X/S}$ will denote its $r$-th exterior power $\Lambda^r \Omega_{X/S}$.

All $\mathcal{O}_X$-modules will be assumed to be locally free of finite rank, and \\ interchangeably called vector bundles. All vector bundles under consideration will be implicitly of a fixed dimension $d$. 

I will frequently use local coordinates. By smoothness, there exists an open cover of $X$ consisting of $U$ \'etale over $\mathbb{A}^n$ for some $n$. If $x_1,\ldots,x_n$ are the coordinates of this copy of $\mathbb{A}^n$ then $\Omega_{X/S}$ has a local basis over $U$ given by $dx_1,\ldots,dx_n$.

If $X/S$ is a scheme in characteristic $p$, its absolute Frobenius morphism \\ $F:X \to X$ is given by the identity map on the underlying topological space of $X$, and the $p$-th power map on the structure sheaf. Next we define $X'$, the relative Frobenius twist with respect to $S$, by the fibre product square below. By the universal property of the fibre product we can also produce the relative Frobenius morphism $F_X: X \to X'$ as follows:

\begin{center}   
\begin{tikzcd}   
   X  \arrow[ddr] \arrow[drr,"F"]  \arrow[dr, "F_X"]   &    &                 \\
  &   X'      \arrow[d] \arrow[r,"\pi"]    &    X     \arrow[d]   \\ 
  &   S      \arrow[r,"F"]      &   S
\end{tikzcd}  
\end{center}
\thispagestyle{plain}

In a divided power algebra $(A,J,\gamma),$ I will usually suppress $\gamma$ from the notation and write $x^{[n]}$ for the divided powers $\gamma_n (x)$, $x \in J$. We denote $\Gamma$ as the left adjoint to the forgetful functor from divided power algebras to modules given by $(A,J)\mapsto J$. In simple terms, if a module $M$ is locally free of rank $r$ over $\mathcal{O}_X$ with generators $m_i$, then $\Gamma_X M$ is generated as a divided power algebra by $m_i$ over $\mathcal{O}_X$. I will use the notation $\mathcal{O}_X<m_1,\ldots,m_r>$ for the free divided power algebra.

\pagebreak

\section{Divided powers}
\thispagestyle{plain}

A \textit{divided power algebra} is informally a triple $(A,J,\gamma)$ where $A$ is a commutative ring, $J$ an ideal, and $\gamma$ a series of maps $\gamma_n: J \to J, n \geq 1$, that behave like \\ $x^{[n]}:=\gamma_n(x)=\frac{x^n}{n!}$. I will not write down all the axioms, as these are not illuminating and will not be used in my proofs. An informal introduction is given in \cite{Gros-LeStum} and a more comprehensive treatment is given in \cite{Berthelot-Ogus}. With the assumption of local freeness, these constructions are relatively easy to understand and the former treatment is more than sufficient. In this section, I will summarize the results I need. I will use the acronym PD from the French, and usually suppress $\gamma$ from my notation. In my notation, I will reserve $J$ for a PD ideal, while $I$ can be any ideal.

\begin{definition}
Given a PD algebra $(A,J)$, there is a decreasing filtration of ideals $J^{[n]}$ generated by $x_1^{[k_1]}\ldots x_m^{[k_m]}$ for $x_i \in J$ and $k_1+ \ldots k_m \geq n $.
\end{definition}

\begin{theorem}
The functor $A \mapsto I$ from PD $R$-algebras to $R$-modules has a left adjoint $M \mapsto \Gamma M$.
\end{theorem}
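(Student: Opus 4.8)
\emph{Proof proposal.} The plan is to construct $\Gamma M$ explicitly as the universal divided power algebra generated by $M$ and to verify the adjunction directly; since every module in this paper is locally free, I will first build the construction for a free module and then glue.

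Suppose first that $M$ is free over $R$ with basis $(m_i)_{i\in I}$. Let $\Gamma_R M$ be the free $R$-module on the symbols $m^{[\underline k]}:=\prod_i m_i^{[k_i]}$, indexed by finitely supported $\underline k\colon I\to\ZZ_{\geq 0}$ and graded by $|\underline k|:=\sum_i k_i$, made into a commutative $R$-algebra by
\[
m_i^{[a]}\cdot m_i^{[b]}=\binom{a+b}{a}\,m_i^{[a+b]},\qquad m^{[\underline 0]}=1,
\]
so that $\Gamma_R M=\bigoplus_{n\geq 0}\Gamma^n_R M$ with $\Gamma^n_R M$ free on the $m^{[\underline k]}$ of degree $n$. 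On the augmentation ideal $J:=\bigoplus_{n\geq 1}\Gamma^n_R M$ I put the divided power structure forced by the PD axioms: $\gamma_n(m_i^{[a]}):=\tfrac{(na)!}{(a!)^n\,n!}\,m_i^{[na]}$, extended via $\gamma_n(x\cdot y)=x^n\gamma_n(y)$ for $x\in R$, via the product rule on a single $m_i^{[a]}$, and via $\gamma_n(y+z)=\sum_{a+b=n}\gamma_a(y)\gamma_b(z)$ across distinct divided monomials. One then verifies that $(\Gamma_R M,J,(\gamma_n))$ is a PD algebra; this reduces to standard binomial and multinomial identities together with the integrality of $\tfrac{(na)!}{(a!)^n\,n!}$, and is the one place that requires genuine care.

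For the universal property, let $(A,J_A)$ be any PD $R$-algebra and $\varphi\colon M\to J_A$ an $R$-linear map. Define $\Phi\colon\Gamma_R M\to A$ by $\Phi(m^{[\underline k]})=\prod_i\gamma_{k_i}(\varphi(m_i))$, extended $R$-linearly. Multiplicativity of $\Phi$ follows from the PD identity $\gamma_a(z)\gamma_b(z)=\binom{a+b}{a}\gamma_{a+b}(z)$ in $A$; that $\Phi(J)\subseteq J_A$ and $\Phi\circ\gamma_n=\gamma_n\circ\Phi$ on $J$ follow from the PD axioms of $A$ applied to the product and sum formulas above, reducing everything to the generators $m_i$, where the required identities are simply the definition of $\Phi$. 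Uniqueness is immediate since $M$ generates $\Gamma_R M$ as a PD algebra. Hence $M\mapsto\Gamma_R M$ is left adjoint to the functor sending $(A,J)$ to $J$.

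Finally I globalize: $\Gamma_R M$ is manifestly functorial in $M$, and since every formula above is expressed in explicit bases and thus stable under scalar extension, $\Gamma_{R'}(M\otimes_R R')\cong(\Gamma_R M)\otimes_R R'$. Therefore for a locally free sheaf $\mathcal M$ on $X$ the affine constructions glue to a locally free PD algebra $\Gamma_X\mathcal M$, and the adjunction, being checkable locally, persists in the sheaf setting. The main obstacle is precisely the well-definedness of the $\gamma_n$ in the explicit model: because $\gamma_n$ is neither additive nor multiplicative, one must confirm that the recipe ``product rule on monomials, sum rule across distinct monomials'' is consistent and satisfies the axioms. The cleanest way to organize this is degree by degree through the graded pieces $\Gamma^n_R M$ --- identifying $\Gamma^n_R M$ with the degree-$n$ symmetric tensors when $n!$ is invertible, and falling back on the integral combinatorial identities in general --- after which the remaining claims, in particular those concerning $\Phi$, become formal.
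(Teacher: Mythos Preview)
The paper does not give its own argument here: it simply cites Berthelot--Ogus, Theorem~3.9. Your sketch is essentially the construction one finds in that reference, specialized to the free case and then glued, so you are filling in what the paper omits rather than diverging from it.

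Two small remarks. First, your recipe for extending $\gamma_n$ to all of $J$ is slightly incomplete as written: you allow $\gamma_n(x\cdot y)=x^n\gamma_n(y)$ only for $x\in R$, but you also need it for $x$ in the ambient algebra (in particular, for $x$ a divided-power monomial) in order to handle mixed products such as $m_i^{[a]}m_j^{[b]}$ with $i\neq j$. The PD axiom $\gamma_n(ax)=a^n\gamma_n(x)$ holds for any $a\in A$, not just $a\in R$, so this is only a matter of stating the rule more broadly; once you do, the consistency check you correctly flag reduces to a finite list of multinomial identities (and the symmetry $(m_i^{[a]})^n\gamma_n(m_j^{[b]})=(m_j^{[b]})^n\gamma_n(m_i^{[a]})$, which is immediate). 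Second, Berthelot--Ogus, following Roby, construct $\Gamma M$ for an \emph{arbitrary} $R$-module in a basis-free way and then identify it with your explicit model when $M$ is free; your basis-dependent construction is therefore a bit less general but entirely adequate for this paper, where all modules are assumed locally free.
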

\begin{proof}
See \cite{Berthelot-Ogus} theorem 3.9.
\end{proof}

In fact, the proof given shows that $\Gamma M$ is naturally a graded algebra,

\[ \Gamma M = \bigoplus_{k=0}^\infty \Gamma_k M\]

Moreover, its divided power ideal is $J=\Gamma_{>0} M$, and $\Gamma_k M$ is generated by elements $\underline{s}^{[\underline{k}]}= \prod s_{\lambda}^{k_\lambda}$ for $s_{\lambda} \in M$, integers $k_\lambda \geq 0$ with $\sum k_\lambda =n$. If $M$ is (locally) free, $\Gamma_k M$ is freely generated by such products where $s_{\lambda} \in M$ are a (local) basis for $M$.

\begin{definition}
A closed immersion $X \xhookrightarrow{} Y$ of schemes is said to have a \textit{PD structure} if the structure sheaf $\mathcal{O}_Y$, together with the ideal of definition, has a PD structure.
\end{definition}

\begin{theorem}
The functor that forgets the PD structure on $X \xhookrightarrow{} Y$ has a left adjoint $X \xhookrightarrow{} P_X Y$.
\end{theorem}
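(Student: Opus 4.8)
The plan is to reduce to the affine case, build the envelope there as an explicit quotient of a free divided power algebra, and glue. Since the statement asserts that $X \hookrightarrow P_X Y$ represents a functor, the representing object is unique up to unique isomorphism, so the construction is automatically local on $Y$: if it can be performed over each affine open $\Spec A \subset Y$ compatibly with localization, the local pieces glue canonically to a closed immersion $X \hookrightarrow P_X Y$ carrying a PD structure. Hence I may assume $Y = \Spec A$ and $X = \Spec A/I$ for an ideal $I \subset A$.

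For the affine construction, apply the free divided power algebra functor of Theorem 2.2 to the $A$-module $I$, obtaining $\Gamma_A I = \bigoplus_{k \geq 0} \Gamma_k I$ with its canonical PD ideal $\Gamma_{>0} I$ and the tautological $A$-linear map $\iota \colon I \to \Gamma_1 I \subset \Gamma_A I$. There are now two maps $I \to \Gamma_A I$, namely $\iota$ and the composite $I \hookrightarrow A = \Gamma_0 I \hookrightarrow \Gamma_A I$, and the envelope is obtained by forcing them to coincide: let $K \subset \Gamma_A I$ be the ideal generated by $\{\iota(x) - x : x \in I\}$ and set $P_X Y := \Spec D$ with $D = \Gamma_A I / K$. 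The one substantive point is that the image $\bar{J}$ of $\Gamma_{>0} I$ in $D$ is again a PD ideal --- equivalently, that $K \cap \Gamma_{>0} I$ is a sub-PD-ideal of $\Gamma_{>0} I$, so that the divided powers $\gamma_n$ descend to $D$. This is where I would simply cite \cite{Berthelot-Ogus}, Theorem 3.19, together with the lemma on quotient PD structures preceding it; reproving it here would mean unwinding exactly the PD axioms this section has chosen to suppress, and it is the main (indeed, essentially the only) obstacle. Granting it, one checks that $A/I \xrightarrow{\ \sim\ } D/\bar{J}$, so that $X \hookrightarrow P_X Y$ is a closed immersion equipped with a PD structure.

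It remains to verify the universal property. Given a PD algebra $(B, J)$ and a ring map $A \to B$ sending $I$ into $J$, the induced $A$-linear map $I \to J$ extends uniquely to a PD morphism $\Gamma_A I \to B$ by the universal property of $\Gamma_A I$; since $\gamma_1$ is the identity on the PD ideal $J$, this morphism kills every generator $\iota(x) - x$ of $K$, hence factors uniquely through $D$, yielding the required unique PD morphism $(D, \bar{J}) \to (B, J)$ over $A$. This is precisely the claimed adjunction. If instead one wants the envelope compatible with a prescribed PD structure on a base scheme, one enlarges $K$ to also kill the relevant compatibility relations and runs the same argument. Finally, in the case used throughout the dissertation --- $I$ locally generated by a regular sequence $t_1, \dots, t_r$, as for the diagonal ideal of a smooth scheme --- this $D$ is locally the free PD polynomial algebra $(A/I)\langle t_1, \dots, t_r \rangle$, which is the concrete model of $\hat{\mathcal P}$ to be used in later sections.
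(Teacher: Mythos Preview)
Your proof is correct. The paper's own proof is simply a one-line citation to proposition 1.4.1 of \cite{Berthelot-French} (the $m=0$ case), so you have supplied considerably more than the paper does: the standard affine construction via $\Gamma_A I$ modulo the relations $\iota(x)-x$, a citation to \cite{Berthelot-Ogus} for the sub-PD-ideal lemma, and the verification of the universal property. That is exactly the argument underlying the cited references, so there is no meaningful divergence in approach---you have merely unpacked what the paper black-boxes. One small caution on your closing remark: the identification $D \cong (A/I)\langle t_1,\dots,t_r\rangle$ for a regular sequence is correct but is itself a nontrivial consequence of the regularity (it fails otherwise), and in the paper this is established later as Proposition 2.11 rather than taken for granted here.
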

\begin{proof}
See proposition 1.4.1 of \cite{Berthelot-French}, in the case $m=0$. This is called the \textit{PD envelope} of $X$ in $Y$.
\end{proof}

\thispagestyle{plain}

I will be mainly interested in the case of the diagonal embedding $X \xhookrightarrow{} X \times_S X$ for a smooth separated morphism $X/S$. In this case, adopt the following notation:

\begin{notation}
Denote the PD envelope of $X \xhookrightarrow{} X \times_S X$ by $P_X$ and its structure sheaf $\mathcal{P}_X$. Let $I$ be the ideal of the embedding of $X \xhookrightarrow{} X \times_S X$. Let $I^e$ be the extension of this ideal in $\mathcal{P}_X$. Let $J$ be the PD ideal of $\mathcal{P}$.
\end{notation}

To explicate the difference between these, assume $X=\mathbb{A}^n$ with coordinates $t_1,\ldots, t_n$. Then the structure sheaf of $X \times_S X$ is $\mathcal{O}_X [\tau_1, \ldots, \tau_n]$ where $\tau_i=1 \otimes t_i - t_i \otimes 1$ are the diagonal coordinates. Then $I \subset \mathcal{O}_X [\tau_1, \ldots, \tau_n] $ is simply given by $(\tau_1, \ldots, \tau_n)$.

$\mathcal{P}$ is the free PD algebra $\mathcal{O}_X <\tau_1, \ldots, \tau_n>$. Within this, $I^e$ is merely generated by $(\tau_1, \ldots, \tau_n)$ whereas $J$ is generated by all divided powers $(\tau_i^{[k]}), k \geq 1$.

Further, I will denote

\[ \mathcal{P}^n = \frac{\mathcal{P}}{J^{[n+1]}} \]
\[ \hat{\mathcal{P}} = \varprojlim\mathcal{P}^{n} \]

Finally, let $P_X^n Y$ or$P_X^n (Y)$  denote the closed subscheme defined by $\mathcal{P}^n$.

\subsection{Base change properties with divided powers}
In this section, I outline several important base change properties. I have adopted some of the arguments in \cite{Thanos}.

\begin{lemma}
Let $(A_1,I_1) \xrightarrow{f} (A_2,I_2)$ be a morphism of rings and ideals. This induces a morphism  $(\mathcal{P}_1, J_1, \gamma) \xrightarrow{g} (\mathcal{P}_2, J_2, \delta)$ between their PD envelopes. Then the following three ideals within $J_2$ are equal:

1. The extension ideal of $J_1$ in $\mathcal{P}_2$, that is, $J_1 \mathcal{P}_2$.

2. The ideal of $\mathcal{P}_2$ generated by elements $\delta_n (y), y \in I_1 A_2, n \geq 1$, that is, $\delta(I_1 A_2) \mathcal{P}_2$.

3. The ideal $\delta(I_1 \mathcal{P}_2) \mathcal{P}_2$.
\end{lemma}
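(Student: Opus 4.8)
The plan is to show that all three ideals coincide with a single object: the smallest sub-PD-ideal of $\mathcal{P}_2$ containing the extension ideal $I_1\mathcal{P}_2$. The one substantive ingredient is the following general fact, which I would establish first. Let $(\mathcal{P},J,\delta)$ be a PD algebra and let $K\subseteq J$ be an ordinary ideal of $\mathcal{P}$ generated by a subset $S\subseteq K$. Then the smallest sub-PD-ideal of $\mathcal{P}$ containing $K$ is the ideal $\delta(S)\mathcal{P}$ generated by $\{\delta_n(s):s\in S,\ n\ge 1\}$, and this is independent of the chosen generating set $S$. The proof is a direct computation from the PD axioms: an arbitrary element of $\delta(S)\mathcal{P}$ is a finite sum $\sum_i a_i\delta_{n_i}(s_i)$, and applying the divided power of a sum, iterated to the multinomial identity $\delta_m(\sum_i a_i z_i)=\sum_{m_1+\cdots+m_r=m}\prod_i a_i^{m_i}\delta_{m_i}(z_i)$ together with $\delta_m(az)=a^m\delta_m(z)$ and $\delta_m(\delta_n(x))=\tfrac{(mn)!}{m!(n!)^m}\delta_{mn}(x)$, one sees that $\delta_m$ of such an element again lies in $\delta(S)\mathcal{P}$ (every term of the multinomial expansion carries a factor $\delta_{m_i}(s_i)$ with $m_i\ge 1$). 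Hence $\delta(S)\mathcal{P}$ is already a sub-PD-ideal containing $S$, so it is the smallest one; and since it clearly does not depend on $S$, the claim follows.

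Next I would pin down Ideal 1. By the construction of the PD envelope, $J_1$ is the sub-PD-ideal of $\mathcal{P}_1$ generated by the image of $I_1$, so by the fact above $J_1$ is the ideal of $\mathcal{P}_1$ generated by $\{\gamma_n(\bar x):x\in I_1,\ n\ge 1\}$. Since $g$ is a morphism of PD algebras it commutes with divided powers, $g(\gamma_n(\bar x))=\delta_n(g(\bar x))$, and by functoriality of the envelope $g(\bar x)$ is the image of $f(x)$ under $A_2\to\mathcal{P}_2$. Because $f(I_1)\subseteq I_2$ and the image of $I_2$ lies in $J_2$, all these elements lie in $J_2$. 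Therefore $J_1\mathcal{P}_2=g(J_1)\mathcal{P}_2$ is the ideal of $\mathcal{P}_2$ generated by $\{\delta_n(y):y\in\overline{f(I_1)},\ n\ge 1\}=\delta(\overline{f(I_1)})\mathcal{P}_2$, where $\overline{f(I_1)}$ denotes the image of $f(I_1)$ in $\mathcal{P}_2$.

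Finally I would observe that $\overline{f(I_1)}$, the image of $I_1A_2$ in $\mathcal{P}_2$, and $I_1\mathcal{P}_2$ are three subsets of $J_2$ each of which generates the \emph{same} ideal of $\mathcal{P}_2$, namely $I_1\mathcal{P}_2$ itself (the first is a generating set, the second lies between the first and the third, and the third spans it over $\mathcal{P}_2$). Applying the general fact of the first paragraph to the ideal $K=I_1\mathcal{P}_2$ with these successive generating sets gives $\delta(\overline{f(I_1)})\mathcal{P}_2=\delta(I_1A_2)\mathcal{P}_2=\delta(I_1\mathcal{P}_2)\mathcal{P}_2$, i.e. Ideal 1 $=$ Ideal 2 $=$ Ideal 3. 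The computation in the first paragraph is really the only point with content; there is no serious obstacle, and the main thing to be careful about is purely custodial, namely that $g$ genuinely respects the divided power operations and that every element named along the way actually lies inside the PD ideals $J_1$ or $J_2$ on which $\gamma$ and $\delta$ are defined (which comes down to $f(I_1)\subseteq I_2$ and $I_2\mathcal{P}_2\subseteq J_2$).
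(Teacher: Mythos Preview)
Your argument is correct and uses the same underlying PD identities as the paper, but you organize the proof differently. The paper runs a direct cycle of inclusions $1\subseteq 2\subseteq 3\subseteq 1$: for $1\subseteq 2$ it notes that $J_1$ is generated by $\gamma_n(x)$ with $x\in I_1$, so $J_1\mathcal{P}_2$ is generated by $g(\gamma_n(x))=\delta_n(f(x))$; $2\subseteq 3$ is immediate; and for $3\subseteq 1$ it applies the expansion rules $\delta_m(x+y)=\sum\delta_i(x)\delta_{m-i}(y)$ and $\delta_m(ax)=a^m\delta_m(x)$ directly to elements of $I_1\mathcal{P}_2$. You instead isolate the general lemma that for any generating set $S$ of an ideal $K\subseteq J$, the ideal $\delta(S)\mathcal{P}$ is the smallest sub-PD-ideal containing $K$, hence independent of $S$; the equality of the three ideals then drops out from the observation that $\overline{f(I_1)}$, the image of $I_1A_2$, and $I_1\mathcal{P}_2$ all generate the same ideal $I_1\mathcal{P}_2$. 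Your packaging is cleaner and reusable, and makes explicit the conceptual point that all three are descriptions of one canonical sub-PD-ideal; the paper's cycle is shorter on the page but leaves that structure implicit. The computational core---the multinomial expansion showing $\delta(S)\mathcal{P}$ is $\delta$-stable---is identical in both.
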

\thispagestyle{plain}

\begin{proof}
To see that 1. is a subset of 2., note $J_1$ is generated by elements $\gamma_n(x), x \in I_1$. So the extension ideal is generated by elements $g(\gamma_m(x))=\delta_m (f(x))$, which all lie in 2. 

It is immediate that 2. is a subset of 3. 

To finish the proof, repeatedly use the properties of PD algebras. \\ For $a \in \mathcal{P}_2, x,y \in I_2$,

\[ \delta_m(x+y) = \sum_{0 \leq i \leq m} \delta_i(x) \delta_{m-i} (y)\]
\[ \delta_m (ax) =a^m \delta_m (x) \]

Specialize to the case where $x=f(z), y=f(w), z,w \in I_1$ and rewrite \\ $\delta_m (x)= g(\gamma_m(z)) \in J_1$. From the expansion rules above it follows $\delta(I_1 \mathcal{P}_2) \subset J_1 \mathcal{P}_2$. It follows 3. is a subset of 1.

\end{proof}

\begin{corollary}
Let $I_1^e = I_1 A_2$. Then there is a canonical isomorphism between PD envelopes as follows:

\[ \mathcal{P}_{A_2 /I_1^e} (I_2/I_1^e) \simeq \mathcal{P}_2/ \delta(I_1 \mathcal{P}_2) \mathcal{P}_2 \]

By the previous lemma, the left hand side contains a copy of $J_1 \mathcal{P}_2 + J_2^{[n]}$. The quotient by this ideal can be described in two different ways as an isomorphism of quotients of $\mathcal{P}_2$,

\[ \mathcal{P}_2 /J_2^{[n]} \otimes_{\mathcal{P}_1} \mathcal{P}_1/J_1 \xrightarrow{\sim} \mathcal{P}_{A_2 /I_1^e} (I_2/I_1^e)/(\delta(I_1 A_2) \mathcal{P}_2 + J_2^{[n]}) \]

\end{corollary}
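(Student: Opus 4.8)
The plan is to deduce both assertions from just two inputs: the universal property of the PD envelope (Theorem~2.5) and the equality of the three ideals in Lemma~2.7. Everything here is a base-change statement for PD envelopes, so the proof is formal once the bookkeeping is set up correctly.

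First I would produce the canonical isomorphism. Set $\bar{\mathcal{P}}_2 := \mathcal{P}_2/\delta(I_1\mathcal{P}_2)\mathcal{P}_2$. By Lemma~2.7 the ideal being divided out equals $J_1\mathcal{P}_2$, i.e. the sub-PD-ideal of $(\mathcal{P}_2,J_2)$ generated by $I_1\mathcal{P}_2\subseteq J_2$ (it is generated by the honest divided powers $g(\gamma_n(x))$), so the standard criterion shows $\bar{\mathcal{P}}_2$ inherits a PD structure, with PD ideal $\bar{J}_2$ the image of $J_2$. The composite $A_2\to\mathcal{P}_2\to\bar{\mathcal{P}}_2$ kills $I_1^e=I_1A_2$, hence factors through $A_2/I_1^e$, and it carries the image of $I_2$ into $\bar{J}_2$; by the universal property this yields a PD morphism $\mathcal{P}_{A_2/I_1^e}(I_2/I_1^e)\to\bar{\mathcal{P}}_2$. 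Conversely, $A_2\to A_2/I_1^e\to\mathcal{P}_{A_2/I_1^e}(I_2/I_1^e)$ sends $I_2$ into the PD ideal, so it induces a PD morphism out of $\mathcal{P}_2$; this morphism kills $I_1\mathcal{P}_2$, and being a PD morphism it also kills $\delta_n(I_1\mathcal{P}_2)$ for all $n$, hence kills $\delta(I_1\mathcal{P}_2)\mathcal{P}_2$ and factors through $\bar{\mathcal{P}}_2$. The two maps are mutually inverse by the uniqueness clauses of the two universal properties.

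Next, for the two descriptions of the further quotient, I would combine the first part with a right-exactness computation. Using the isomorphism just established together with Lemma~2.7, the left-hand side is $\mathcal{P}_{A_2/I_1^e}(I_2/I_1^e)/(\delta(I_1A_2)\mathcal{P}_2+J_2^{[n]}) \cong \mathcal{P}_2/(J_1\mathcal{P}_2+J_2^{[n]})$. On the other hand, right-exactness of $-\otimes_{\mathcal{P}_1}\mathcal{P}_1/J_1$ gives $\mathcal{P}_2/J_2^{[n]}\otimes_{\mathcal{P}_1}\mathcal{P}_1/J_1 \cong (\mathcal{P}_2/J_2^{[n]})/\bigl(g(J_1)\cdot(\mathcal{P}_2/J_2^{[n]})\bigr) \cong \mathcal{P}_2/(J_1\mathcal{P}_2+J_2^{[n]})$, where $g\colon\mathcal{P}_1\to\mathcal{P}_2$ is the structure map and I abbreviate $g(J_1)\mathcal{P}_2$ by $J_1\mathcal{P}_2$. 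Since both identifications are induced by the canonical surjection from $\mathcal{P}_2$, they glue to the asserted isomorphism, and along the way one reads off the ``copy of $J_1\mathcal{P}_2+J_2^{[n]}$'' inside $\mathcal{P}_{A_2/I_1^e}(I_2/I_1^e)$ as the image of that ideal of $\mathcal{P}_2$ under the first isomorphism.

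The main obstacle is entirely contained in the first part: one must check that $\bar{\mathcal{P}}_2$ is a genuine PD algebra, i.e. that $\delta(I_1\mathcal{P}_2)\mathcal{P}_2\cap J_2$ is a sub-PD-ideal of $J_2$, and that the PD morphism out of $\mathcal{P}_2$ built in the reverse direction annihilates \emph{all} of $\delta(I_1\mathcal{P}_2)\mathcal{P}_2$ and not merely $I_1\mathcal{P}_2$. Both points rest on the identification $\delta(I_1\mathcal{P}_2)\mathcal{P}_2=J_1\mathcal{P}_2=\delta(I_1A_2)\mathcal{P}_2$ from Lemma~2.7 --- in particular on the fact that this ideal is generated by bona fide divided powers --- so the only real care needed is to invoke that lemma in the correct direction at each step; the remainder is pure adjunction.
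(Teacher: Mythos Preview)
Your proof is correct and follows essentially the same approach as the paper. The paper outsources the first isomorphism to tag 07HB of the Stacks Project, whereas you spell out the universal-property argument directly; for the second isomorphism both you and the paper identify each side as $\mathcal{P}_2/(J_1\mathcal{P}_2 + J_2^{[n]})$ and invoke the previous lemma's equality of ideals. Two minor cross-referencing slips: what you call ``Lemma~2.7'' is the paper's Lemma~2.6 (the corollary itself is~2.7), and ``Theorem~2.5'' should be Theorem~2.4.
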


\begin{proof}
The first isomorphism is tag 07HB of \cite{stacks-project}. Now focus on the second map. The domain is canonically isomorphic to the quotient of $\mathcal{P}_2$ by $J_1 \mathcal{P}_2 + J_2^{[n]}$. The codomain is canonically isomorphic to the quotient of $\mathcal{P}_2$ by $\delta(I_1 A_2) \mathcal{P}_2 + J_2^{[n]}$. By the equality of ideals in the lemma, we are done.
\end{proof}

\begin{prop}
Consider a commutative diagram of schemes where the horizontal arrows are closed immersions:

\begin{center}

\begin{tikzcd}

   X_0   \arrow[hookrightarrow]{r}  \arrow[d]&        X   \arrow[d]      \\
 S_0 \arrow[hookrightarrow]{r} & S

\end{tikzcd}  

\end{center}

\end{prop}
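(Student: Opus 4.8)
The plan is to prove the base-change statement by reducing to an explicit computation in local coordinates, and then to globalize using the universal property of $\Gamma$ (equivalently, reading the result off the Lemma and Corollary just proved). First, since $X/S$ is smooth and separated, \'etale-locally on $X$ there are coordinates $x_1,\ldots,x_n$; when the square is cartesian these restrict to coordinates on $X_0/S_0$, so $X_0/S_0$ is again smooth and the diagonal ideal $I_0\subset\mathcal{O}_{X_0\times_{S_0}X_0}$ is the extension of the diagonal ideal $I\subset\mathcal{O}_{X\times_S X}$. Locally the diagonal embedding is then $\Spec\mathcal{O}_X\hookrightarrow\Spec\mathcal{O}_X[\tau_1,\ldots,\tau_n]$, whose PD envelope is the free PD algebra $\mathcal{O}_X<\tau_1,\ldots,\tau_n>\;=\;\Gamma_{\mathcal{O}_X}\!\big(\bigoplus_i\mathcal{O}_X\tau_i\big)$. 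Since $\Gamma$ is a left adjoint it commutes with the pushout of rings defining base change, and for the locally free module $\bigoplus_i\mathcal{O}_X\tau_i$ it is moreover computed degree by degree by free constructions compatible with arbitrary base change; hence locally
\[ \mathcal{P}_{X_0}\;\cong\;\mathcal{O}_{X_0}\otimes_{\mathcal{O}_X}\mathcal{P}_X, \]
compatibly with the grading and therefore with the PD filtration $J^{[\bullet]}$. Passing to the quotients by $J^{[n+1]}$ and to the inverse limit then yields $\mathcal{P}^n_{X_0}\cong\mathcal{O}_{X_0}\otimes_{\mathcal{O}_X}\mathcal{P}^n_X$ and $\hat{\mathcal{P}}_{X_0}\cong\mathcal{O}_{X_0}\,\widehat{\otimes}_{\mathcal{O}_X}\,\hat{\mathcal{P}}_X$.

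To globalize I would note that the map being compared is coordinate-independent: the commutative square produces a canonical morphism $X_0\times_{S_0}X_0\to X\times_S X$ compatible with the diagonals, hence by functoriality of PD envelopes a canonical $\mathcal{O}_{X_0\times_{S_0}X_0}$-algebra map out of $\mathcal{O}_{X_0\times_{S_0}X_0}\otimes\mathcal{P}_X$, and the local computation above merely checks that this map is an isomorphism; so the local isomorphisms glue. Alternatively one can bypass coordinates altogether and deduce everything from the Corollary above, applied with $A_1=\mathcal{O}_{X\times_S X}$, $A_2=\mathcal{O}_{X_0\times_{S_0}X_0}$, $I_1$ the diagonal ideal and $I_1^e=I_1A_2$: cartesianness of the square is precisely what identifies $I_1^e$ with the diagonal ideal of $X_0/S_0$, and the Lemma's equality of the three ideals in $J_2$ is what makes the PD filtration on the base-changed envelope coincide with the extension of the filtration, so that the isomorphism descends to every $\mathcal{P}^n$ and to $\hat{\mathcal{P}}$.

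I expect the main obstacle to be not the free-algebra computation but the bookkeeping around hypotheses and filtrations. One must pin down that cartesianness of the square together with smoothness of $X/S$ is exactly what upgrades the canonical comparison morphism to an isomorphism — without these one obtains only a morphism, since the diagonal ideal need not extend to the diagonal ideal downstairs — and one must keep the layers $I^e$, $J$, $J^{[\bullet]}$ and the completion $\hat{\mathcal{P}}$ aligned across the base change, which is the role played by the preceding Lemma and Corollary. Once those compatibilities are in hand, the Proposition is essentially a formal consequence of $\Gamma$ (and the PD envelope functor) being a left adjoint.
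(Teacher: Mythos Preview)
You have misread the statement. Proposition 2.8 is not about PD envelopes of diagonals, and it assumes neither that $X/S$ is smooth nor that the square is cartesian. The data are two arbitrary closed immersions $X_0\hookrightarrow X$ and $S_0\hookrightarrow S$ fitting into a commutative (not cartesian) square, and the claim is the isomorphism
\[
P^n_{X_0}(S_0\times_S X)\;\xrightarrow{\ \sim\ }\;P^n_{X_0}X\times_{P^n_{S_0}S}S_0
\]
of closed subschemes of $P^n_{X_0}X$. Here $P^n_{X_0}(-)$ is the $n$-th PD neighbourhood of $X_0$ inside the ambient scheme in parentheses, and $P^n_{S_0}S$ is the $n$-th PD neighbourhood of $S_0$ in $S$. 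Nothing in the statement refers to $X\times_S X$ or its diagonal ideal; the diagonal case only appears later as Corollary 2.9, and the smooth base-change result you seem to have in mind is Proposition 2.10.

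Consequently your entire argument is aimed at the wrong target. Your ``alternative'' via the preceding Corollary is the right instinct, but you apply it with the wrong rings: in the paper one takes $(A_1,I_1)=(\mathcal{O}_S,\text{ideal of }S_0)$ and $(A_2,I_2)=(\mathcal{O}_X,\text{ideal of }X_0)$, not products over the diagonal. Corollary 2.7 then identifies $\mathcal{P}_{A_2/I_1^e}(I_2/I_1^e)$ modulo $J_2^{[n]}$ with $\mathcal{P}_2/J_2^{[n]}\otimes_{\mathcal{P}_1}\mathcal{P}_1/J_1$, which on $\Spec$ is exactly $P^n_{X_0}(S_0\times_S X)\simeq P^n_{X_0}X\times_{P_{S_0}S}S_0$. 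The remaining step, which your proposal does not address at all, is to replace the full envelope $P_{S_0}S$ in the base by the truncated $P^n_{S_0}S$; the paper does this by a short universal-property argument showing that any pair of maps to $P^n_{X_0}X$ and $S_0$ agreeing over $P_{S_0}S$ already agrees over $P^n_{S_0}S$, since both $S_0\to P_{S_0}S$ and $P^n_{X_0}X\to P_{S_0}S$ factor through the closed immersion $P^n_{S_0}S\hookrightarrow P_{S_0}S$.
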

Then there is a natural isomorphism:

\[ P^n_{X_0} ( S_0 \times_S X) \xrightarrow{\sim}  P_{X_0}^n X \times_{P^n_{S_0} S} S_0 \]

of closed subschemes of $P_{X_0}^n X$.
\thispagestyle{plain} 
\begin{proof}
Work locally on $P_{X_0}^n X$; in doing so we may assume $X,S$ and hence $X_0,S_0$ are affine. Let $S=\Spec(A_1), X=\Spec(A_2)$ and let $I_1,I_2$ be the ideals of definition of $S_0$ and $X_0$ respectively. By corollary 2.7, it follows

\[ P^n_{X_0} ( S_0 \times_S X) \xrightarrow{\sim}  P_{X_0}^n X \times_{P_{S_0} S} S_0 \]

is an isomorphism. It remains to show the two right hand sides above are \\ canonically isomorphic, that is, $P_{X_0}^n X \times_{P^n_{S_0} S} S_0 \simeq P_{X_0}^n X \times_{P_{S_0} S} S_0$. We can do so by a consideration of the universal property of the fibre product. Given a pair of morphisms $U \to P_{X_0}^n $ and $U \to S_0$ such that the following diagram commutes:

\begin{center}

\begin{tikzcd}

   U  \arrow{r}  \arrow[d]&       P_{X_0}^n X    \arrow[d]      \\
 S_0 \arrow{r} & P^n_{S_0} S

\end{tikzcd}  

\end{center}
 we can simply compose with the natural immersion $P^n_{S_0} S \xhookrightarrow{} P_{S_0} S $ to yield a diagram of the following form:

\begin{center}

\begin{tikzcd}

   U  \arrow{r}  \arrow[d]&       P_{X_0}^n X    \arrow[d]      \\
 S_0 \arrow{r} & P_{S_0} S

\end{tikzcd}  

\end{center}
Conversely, given a pair of morphisms in the second diagram, note both $S_0 \to P_{S_0} S$ and $P_{X_0}^n X $ factor through the closed immersion $P^n_{S_0} S \xhookrightarrow{} P_{S_0} S $ so we can recover a diagram of the first type. Since this is a closed immersion, there is a natural bijection between the two diagrams, hence the fibre products are naturally isomorphic.
\end{proof}

\begin{corollary}
Let $X \to S \to T$ be separated morphisms. Then apply proposition 2.8 to the following diagram:

\begin{center}

\begin{tikzcd}

   X  \arrow[hookrightarrow]{r}  \arrow[d]&        X \times_T X  \arrow[d]      \\
 S \arrow[hookrightarrow]{r} & S \times_T S

\end{tikzcd}  

\end{center}

Deduce there is a natural isomorphism,

\[P^n_X (X \times_S X) \to P_X^n (X \times_T X) \times_{P_S^n (S \times_T S)} S \]

of closed subschemes of $P^n_X (X \times_T X)$.
\end{corollary}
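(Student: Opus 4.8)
The plan is to \textbf{directly apply Proposition 2.8} with the evident choice of square, and then \textbf{identify the resulting fibre products as the PD-neighbourhoods of the relevant diagonals}. The square to feed into Proposition 2.8 is the one displayed in the corollary: the closed immersion $X \hookrightarrow X\times_T X$ (the diagonal of $X/T$) mapping down to the closed immersion $S \hookrightarrow S\times_T S$ (the diagonal of $S/T$). This is a commutative square of schemes with closed immersions as horizontal arrows, so Proposition 2.8 produces
\[ P^n_X\bigl(S \times_{S\times_T S}(X\times_T X)\bigr) \xrightarrow{\ \sim\ } P^n_X(X\times_T X)\times_{P^n_S(S\times_T S)} S. \]
The right-hand side is already exactly the right-hand side of the claimed isomorphism, so the only remaining task is to show that the fibre product $S\times_{S\times_T S}(X\times_T X)$ appearing inside the PD-neighbourhood on the left is canonically $X\times_S X$.

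For that identification I would argue by the universal property: a $U$-point of $S\times_{S\times_T S}(X\times_T X)$ is a pair of $T$-morphisms $U\to X$ together with a factorization of the induced map $U\to X\times_T X \to S\times_T S$ through the diagonal of $S/T$; but two maps $U\to X$ whose composites to $S$ agree are precisely a point of $X\times_S X$, and conversely any point of $X\times_S X$ maps to the diagonal of $S$ under the two projections composed with $X\to S$. Hence $S\times_{S\times_T S}(X\times_T X)\simeq X\times_S X$ canonically, and applying $P^n_X(-)$ gives $P^n_X(X\times_S X)$ on the left. Substituting this identification into the isomorphism from Proposition 2.8 yields exactly
\[ P^n_X(X\times_S X) \to P^n_X(X\times_T X)\times_{P^n_S(S\times_T S)} S, \]
and since $X\times_S X$ is a closed subscheme of $X\times_T X$ (as $S/T$ is separated, the diagonal $S\to S\times_T S$ is a closed immersion, so its base change $X\times_S X \to X\times_T X$ is too), both sides are naturally closed subschemes of $P^n_X(X\times_T X)$, giving the stated refinement.

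I do not expect a genuine obstacle here: the corollary is essentially a bookkeeping consequence of Proposition 2.8, and the only point requiring a moment's care is the compatibility of the closed-immersion hypotheses of Proposition 2.8 with separatedness of $X/T$ and $S/T$ — one needs the diagonals $X\to X\times_T X$ and $S\to S\times_T S$ to be closed immersions, which is exactly the separatedness assumed in the statement. A secondary small point is checking that the vertical map $X\to S$ in the square is compatible with the two horizontal diagonals, i.e. that the square genuinely commutes; this is immediate since $\Delta_{S/T}\circ f = (f\times f)\circ \Delta_{X/T}$ as maps $X\to S\times_T S$. Everything else is the universal-property manipulation above, entirely parallel to the fibre-product juggling already carried out in the proof of Proposition 2.8.
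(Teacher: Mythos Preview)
Your proposal is correct and follows essentially the same approach as the paper: apply Proposition 2.8 to the displayed square and then identify $S\times_{S\times_T S}(X\times_T X)\simeq X\times_S X$ via the universal property. The paper's proof is in fact terser than yours, merely asserting that this fibre-product identification is all that is needed and can be shown ``using a similar argument and the universal property of a Cartesian square''; your additional remarks on separatedness and commutativity of the square are sound and fill in details the paper leaves implicit.
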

\begin{proof}
All that is needed is that $ S \times_{S \times_T S} (X \times_T X)$ is naturally isomorphic to $X \times_S X$, which one can show using a similar argument and the universal property of a Cartesian square.
\end{proof}
\thispagestyle{plain} 

\begin{prop}
Let $X \xhookrightarrow{} Y$ be a closed immersion of schemes, both smooth over $S$. Let $S' \to S$ be an arbitrary base change, and let $X'=X\times_S S', Y'=Y \times_S S'$. Then $P^n_X(Y)$ is flat over $S$ and satisfies arbitrary base change:
\[P^n_{X'} Y' \simeq P^n_{X} Y \times_S S'  \]
\end{prop}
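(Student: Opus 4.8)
The plan is to reduce everything to an explicit local computation with free divided power algebras, where both flatness and base change can be checked by hand. Since both claims are local on $X$ and $Y$, I would immediately work in the affine situation, and moreover use smoothness to pick étale local coordinates. Concretely, since $X \hookrightarrow Y$ is a closed immersion of smooth $S$-schemes, I would work Zariski-locally so that $Y$ is étale over $\mathbb{A}^N_S$ with coordinates $t_1,\dots,t_N$ and $X$ is cut out by $(t_{c+1},\dots,t_N)$ for some $c$; this is the standard local model for a closed immersion of smooth schemes. Then the PD envelope $P_X(Y)$ has structure sheaf $\mathcal{O}_X[t_1,\dots,t_c]\langle \tau_{c+1},\dots,\tau_N\rangle$, the free PD algebra in the normal coordinates over the polynomial ring in the tangential ones, and $\mathcal{P}^n = P^n_X(Y)$ is the truncation by $J^{[n+1]}$.

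The key point is that, in these coordinates, $\mathcal{O}_{P^n_X(Y)}$ is a \emph{free} $\mathcal{O}_S$-module: by the structure theorem quoted after Theorem~2.4, $\Gamma_k$ of a free module is free on the divided-power monomials $\underline{\tau}^{[\underline{k}]}$, so after truncating at $J^{[n+1]}$ we get a free $\mathcal{O}_X$-module on the finitely many monomials $t^{a}\tau^{[b]}$ with $|b|\le n$, hence a free $\mathcal{O}_S$-module since $X$ (being smooth, hence flat, over $S$ — and locally even free in coordinates) is. This gives flatness over $S$ directly. For base change, I would invoke the compatibility of the PD envelope construction with flat base change — this is essentially Lemma~2.6/Corollary~2.7 specialized to the base change $S'\to S$, or one can cite tag 07HC of \cite{stacks-project} — to get $P_{X'}(Y') \simeq P_X(Y)\times_S S'$, and then observe that forming $\mathcal{P}^n$ (quotient by $J^{[n+1]}$) commutes with this since the PD ideal $J$ and its filtration $J^{[\bullet]}$ are generated by the images of the coordinate elements and base change is right exact. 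Alternatively, and perhaps more cleanly given the explicit local description, one checks the base change isomorphism directly on the free generators: $\mathcal{O}_X[t_{\le c}]\langle\tau_{>c}\rangle / J^{[n+1]} \otimes_{\mathcal{O}_S}\mathcal{O}_{S'}$ is visibly the same free module description computed over $S'$, and the algebra/PD structure is defined by the same universal formulas.

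The main obstacle I anticipate is not the computation itself but making sure the local coordinate model for $X\hookrightarrow Y$ is legitimate and that the resulting identification of $\mathcal{P}_X(Y)$ is canonical enough to glue — i.e., checking that the "free module on divided-power monomials" description is independent of the choice of coordinates up to the transition that respects the PD structure, so that flatness and the base change isomorphism, proved locally, patch to a global statement. This is routine but needs the observation that the PD envelope is functorial and the local models agree on overlaps. A second, more technical point to handle with care is that $P^n_X(Y)$ rather than the full $P_X(Y)$ is what must satisfy base change: one needs that $J^{[n+1]}$ behaves well, i.e. that $(J \otimes_{\mathcal{O}_S}\mathcal{O}_{S'})^{[n+1]}$ is the image of $J^{[n+1]}$, which again follows from the explicit generators. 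Once these points are in place, flatness is immediate from freeness over $\mathcal{O}_S$ in coordinates, and the base change isomorphism follows from matching the free presentations, so I would organize the write-up as: (1) reduce to affine and choose coordinates; (2) write down the free presentation of $\mathcal{O}_{P^n_X(Y)}$ over $\mathcal{O}_X$ and hence over $\mathcal{O}_S$, concluding flatness; (3) identify $\mathcal{O}_{P^n_{X'}(Y')}$ with the base change of this presentation, concluding the isomorphism; (4) remark that the construction is canonical, so the local statements glue.
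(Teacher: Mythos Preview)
Your proposal is correct and is essentially the proof the paper cites from Berthelot (Proposition~1.5.3 with $m=0$): work locally, choose coordinates so that $X$ is cut out by part of an \'etale coordinate system on $Y$, identify $\mathcal{O}_{P^n_X(Y)}$ as a finite free $\mathcal{O}_X$-module on divided-power monomials of degree $\le n$, and read off both flatness over $S$ and the base-change isomorphism directly from this explicit presentation. Two minor corrections for the write-up: drop the polynomial factor $[t_1,\dots,t_c]$ in your formula for the structure sheaf (those coordinates already live in $\mathcal{O}_X$, so the envelope is just $\mathcal{O}_X\langle\tau_{c+1},\dots,\tau_N\rangle$), and do not invoke flat base change of PD envelopes (tag 07HC or Lemma~2.6/Corollary~2.7) since $S'\to S$ is arbitrary---only your direct argument on free generators goes through.
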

\begin{proof}
We can work locally on $P^n_X(Y)$, hence on $Y,X$ and $S$. See the proof of \\ proposition 1.5.3 in \cite{Berthelot-French}. After taking $m=0$ there, the hypothesis in that proposition that $R$ is a $\mathbb{Z}_{(p)}$-algebra is not necessary for the proof, and the given proof works for this proposition.
\end{proof}

\thispagestyle{plain}
\subsection{The PD envelope of the diagonal}
Throughout this section, $X/S$ will be a smooth separated morphism of schemes. $\mathcal{P}$ will denote the structure sheaf \\ $\mathcal{P}_X(X \times_S X)$. Let $x_1,\ldots, x_n$ be local coordinates, namely sections of $\mathcal{O}_X$ that define an \'etale map $U \to \mathbb{A}^n_S$ for an open set $U \subset X$. Let $\tau_i=1 \otimes t_i - t_i \otimes 1$. I will always understand $\mathcal{P}$ as an $\mathcal{O}_X$-module by the left projection. The following proposition underlines my calculations with $\mathcal{P}$:

\begin{prop}

1. If $X$ is of characteristic $m$ for some $m>0$, then we have a simple local description for $\mathcal{P}$ as 

\[ \mathcal{P} = \mathcal{O}_X <\tau_1, \ldots, \tau_n> \]

2. Regardless of characteristic, we have a simple local description for $\hat{\mathcal{P}}$ as the free PD power series algebra

\[ \hat{\mathcal{P}} = \mathcal{O}_X <<\tau_1, \ldots, \tau_n>> \]

\end{prop}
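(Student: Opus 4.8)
The plan is to reduce everything to the local model $U \xrightarrow{\text{\'etale}} \mathbb{A}^n_S$ with coordinates $x_1, \ldots, x_n$, and to compute the PD envelope of the diagonal directly in the polynomial case, then transport the answer along the \'etale map using the base-change results from the previous subsection. First I would recall that for $\mathbb{A}^n_S$, the structure sheaf of $\mathbb{A}^n_S \times_S \mathbb{A}^n_S$ is $\mathcal{O}_X[\tau_1, \ldots, \tau_n]$ with $\tau_i = 1 \otimes t_i - t_i \otimes 1$, and the diagonal ideal $I$ is $(\tau_1, \ldots, \tau_n)$; the PD envelope of this ideal is by construction the free PD algebra $\mathcal{O}_X\langle \tau_1, \ldots, \tau_n \rangle$, since $\Gamma$ is left adjoint to the forgetful functor and the diagonal ideal here is the free $\mathcal{O}_X$-module on the $\tau_i$. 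For part 1, I then need that an \'etale morphism $U \to \mathbb{A}^n_S$ induces, after base change, the expected description of $\mathcal{P}_U(U \times_S U)$: concretely, $U \times_S U \to \mathbb{A}^n_S \times_S \mathbb{A}^n_S$ is \'etale, the diagonal of $U$ is the preimage of the diagonal of $\mathbb{A}^n_S$, and the formation of the PD envelope commutes with the \'etale (in particular flat) base change $U \to \mathbb{A}^n_S$ on the target — this is where I would invoke (the local, affine form of) Proposition 2.10 on flatness and arbitrary base change of $P^n_X(Y)$, passing to the limit over $n$. The characteristic-$m$ hypothesis in part 1 enters precisely to guarantee that the PD filtration is already separated / the envelope itself (not just its completion) has the clean free description: in characteristic $m>0$ a high divided power $\tau_i^{[k]}$ with $k \geq m$ is not a unit multiple of a power of $\tau_i$, so $\mathcal{P}$ is genuinely the free PD algebra and not its completion.

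For part 2, I would argue that $\hat{\mathcal{P}} = \varprojlim \mathcal{P}/J^{[n+1]}$ and that in the local polynomial model the quotient $\mathcal{O}_X\langle \tau_1, \ldots, \tau_n \rangle / J^{[n+1]}$ is the free $\mathcal{O}_X$-module on the monomials $\tau^{[\underline{k}]} = \prod_i \tau_i^{[k_i]}$ with $\sum k_i \leq n$, by the explicit graded structure of $\Gamma M$ recorded after Theorem 2.3. Taking the inverse limit over $n$ then produces exactly the free PD power series algebra $\mathcal{O}_X\langle\langle \tau_1, \ldots, \tau_n \rangle\rangle$, whose elements are formal (possibly infinite) $\mathcal{O}_X$-linear combinations of the $\tau^{[\underline{k}]}$. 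The characteristic is irrelevant here because we have already completed: even in characteristic $0$, where $\mathcal{P}$ itself is just $\mathcal{O}_X[\tau_1, \ldots, \tau_n]$ localized appropriately, the completion along the PD filtration is still the power series object. The \'etale transport of this statement from $\mathbb{A}^n_S$ to $U$ again uses that $P^n_{X'}Y' \simeq P^n_X Y \times_S S'$-type base change, together with the fact that inverse limits commute with the (flat, finite-presentation) base change along the \'etale map, so $\hat{\mathcal{P}}_U$ is obtained from $\hat{\mathcal{P}}_{\mathbb{A}^n_S}$ by the corresponding base change.

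The main obstacle I anticipate is not any single computation but the bookkeeping needed to justify that "the PD envelope of the diagonal commutes with \'etale base change on $X$" — i.e. that $\mathcal{P}_U(U\times_S U)$ really is $\mathcal{O}_X\langle\tau_i\rangle$ and not merely an \'etale-locally-free module with a more complicated multiplication. The cleanest route is: the two projections $U\times_S U \to U$ are both \'etale over the corresponding projections of $\mathbb{A}^n_S\times_S\mathbb{A}^n_S$, the diagonal immersion $U\hookrightarrow U\times_S U$ is the base change of $\mathbb{A}^n_S \hookrightarrow \mathbb{A}^n_S\times_S\mathbb{A}^n_S$ along the \'etale map $U\times_S U \to \mathbb{A}^n_S\times_S\mathbb{A}^n_S$, and PD envelopes commute with flat base change on the ambient scheme (Proposition 2.10, local form, at each finite level $n$, then pass to the limit). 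Once this is in place both parts follow from the explicit free PD algebra / free PD power series algebra description in the polynomial case, which is essentially the content of the remarks following Theorem 2.3. I would also remark briefly why the characteristic-$m$ assumption is genuinely needed in part 1 (and dropped in part 2), to forestall the objection that one could state part 1 unconditionally.
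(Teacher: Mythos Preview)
Your proposal has a genuine gap in the \'etale reduction step, and you misidentify where the characteristic-$m$ hypothesis is used.

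The central error is the claim that the diagonal $U \hookrightarrow U \times_S U$ is the base change of the diagonal $\mathbb{A}^n_S \hookrightarrow \mathbb{A}^n_S \times_S \mathbb{A}^n_S$ along the \'etale map $U \times_S U \to \mathbb{A}^n_S \times_S \mathbb{A}^n_S$. This is false: that base change is $U \times_{\mathbb{A}^n_S} U$, which contains the diagonal $U$ only as an open-and-closed component (since $U \to \mathbb{A}^n_S$ is \'etale, hence unramified). So the base-change argument hands you $P_{U \times_{\mathbb{A}^n_S} U}(U \times_S U)$, not $P_U(U \times_S U)$, and you must still argue these coincide. The paper does this by observing that the extra components are cut out by idempotents, and that any idempotent $e$ in a PD ideal over a characteristic-$m$ ring satisfies $e = e^m = m!\,\gamma_m(e) = 0$. \emph{That} is where characteristic $m$ enters --- not, as you suggest, to make the PD filtration separated or to prevent $\tau_i^{[k]}$ from being a unit multiple of $\tau_i^k$. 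The paper even remarks immediately afterward that the analogous statement fails for the plain (non-PD) diagonal structure sheaf, precisely because $U \times_{\mathbb{A}^n_S} U \ne U$ in general.

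For part 2 the paper takes a route that bypasses the \'etale reduction entirely: it computes the associated graded $\mathrm{gr}_{J^\bullet}\mathcal{P}$ directly, identifying it with $\Gamma\,\Omega_{X/S}$ via $\mathrm{gr}_{I^\bullet}(\mathcal{O}_X \otimes \mathcal{O}_X) = S\,\Omega_{X/S}$ and the fact that the PD envelope of $(S\,\Omega, S^+\Omega)$ is $\Gamma\,\Omega$. This gives $J^{[n]}/J^{[n+1]} \cong \Gamma^n\Omega_{X/S}$, free on degree-$n$ PD monomials in the $\tau_i$; an induction on the exact sequences $0 \to J^{[n]}/J^{[n+1]} \to \mathcal{P}^n \to \mathcal{P}^{n-1} \to 0$ then shows $\mathcal{P}^n$ is free on monomials of degree at most $n$, and one passes to the limit. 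Your \'etale strategy for part 2 can be salvaged --- at each finite level $P^n$ the idempotent argument goes through with no characteristic hypothesis, since $e = e^{n+1} = (n+1)!\,\gamma_{n+1}(e) \in J^{[n+1]} = 0$ --- but you would still have to supply this missing step, and your current write-up does not.
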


\begin{remark}
These results are both striking. They do not naively hold for the plain diagonal structure sheaf $\mathcal{O}_X \otimes_{\mathcal{O}_S} \mathcal{O}_X$. If $S=\Spec k$ and $X=\mathbb{A}^1 - {0}$, then the diagonal structure sheaf is $k[x,y,1/x,1/y]$, which cannot be expressed as $k[x,1/x][x-y]$.
\end{remark}

\begin{proof}
1. For a calculation that is not so illuminating, consult \cite{Berthelot-Ogus}, 3.3.2. A more conceptual proof sketch proceeds as follows: let $U'=\mathbb{A}^n_S$. We have an \'etale map $U \to U'$ and so can form a sequence of maps

\[ U \xhookrightarrow{} U \times_{U'} U  \xhookrightarrow{} U \times_S U  \]

The first map is both closed and open while the second is closed. That is, \\ $U \xhookrightarrow{} U \times_{U'} U$ is a connected component. As such, Zariski locally, it is cut out by an ideal generated by idempotents, by \cite{stacks-project}, tag 04PP.

Now if $J$ is an ideal generated by nilpotents with a PD structure, then for any idempotent $e \in J$,

\[e=e^m = m! \gamma_m (e) = 0 \]

and so the ideal $J$ must be $0$. Working Zariski locally, it follows $P_U( U \times_{U'} U) = U$.

It follows then that $P_U (U \times_S U) = P_{U \times_{U'} U} (U \times_S U)$. Everything is smooth over $S$, so these envelopes behave well under the base change $U \times_{S} U \to U' \times_S U'$. 

That is, 

\[P_{U'}( U' \times_{S} U') \times_{U' \times_S U'} U \times_{S} U \simeq P_{U \times_{U'} U}( U \times_{S} U ) \simeq P_U ( U \times_{S} U ) \]

The right hand side's structure sheaf is a Zariski localization of $\mathcal{P}$, and the left hand side is easy to describe. Since $U'=\mathbb{A}^n_S$ it can easily be seen that it is locally

$\mathcal{O}_{U'} <\tau_1,\ldots,\tau_n> \otimes_{U' \times_S U'} U \times_{S} U = \mathcal{O}_U <\tau_1,\ldots,\tau_n>$ 

This concludes the first statement. \\

\thispagestyle{plain}

2. Work Zariski locally. $\mathcal{O}_X \otimes_{\mathcal{O}_S} \mathcal{O}_X$ has a filtration given by powers of the ideal of the diagonal $I$. It is standard that $I/I^2$ is canonically isomorphic to $\Omega_{X/S}$, that this is free with generators $dt_i$, and that there is a simple description of the associated graded as the symmetric algebra of $\Omega$.

\[ Gr_{I^\bullet} (\mathcal{O}_X \otimes_{\mathcal{O}_S} \mathcal{O}_X) = \bigoplus I^n/I^{n+1} = S( \Omega_{X/S}) \]

Now consider instead the filtration of $\mathcal{P}$ given by divided powers $J^{[n]}$ of the PD ideal. Form the associated graded as follows:

\[ Gr_{J^\bullet} \mathcal{P} = \bigoplus J^{[n]}/J^{[n+1]} :=\Gamma \]

It is routine to check that this has a natural PD structure on the ideal $\Gamma_+$, induced from the PD structure on $\mathcal{P}$, and must therefore be the PD envelope of the \\ previous associated graded algebra with its ideal $S^+ \Omega_{X/S}$. That is, the PD envelope of $(S \Omega_{X/S}, S^+ \Omega_{X/S})$ is precisely $(\Gamma, \Gamma^+)$. But the PD envelope of $(S\Omega_{X/S}, S^+ \Omega_{X/S})$ is $\Gamma \Omega_{X/S}$ by the local freeness of $\Omega_{X/S}$.

\thispagestyle{plain}

Thus, this proves an equality, which respects grading, between $\Gamma \Omega_{X/S}$ and the $\Gamma$ defined above. In particular,

\[ J^{[n]}/J^{[n+1]} = \Gamma^n\Omega_{X/S}, n \geq 0   \]

The right hand side is freely generated by elements $\underline{dx}^{[\underline{k}]}$ with $\sum k_\lambda = n$. Moreover, these elements coincide with the images of elements $\underline{\tau}^{[\underline{k}]}$ where $\tau_i$ are the diagonal elements. 

Therefore, $J^{[n]}/J^{[n+1]}$ is freely generated by PD monomials of degree $n$ in the $\tau_i$. Now induct using the exact sequence,

\[ 0 \to J^{[n]}/J^{[n+1]} \to \mathcal{P}^n \to \mathcal{P}^{n-1} \to 0 \]

to deduce that $\mathcal{P}^n$ is freely generated by PD monomials $\underline{\tau}^{[\underline{k}]}$ of degree at most $n$. Take the inverse limit to get the result.

\end{proof}

\begin{remark}
There is a choice here of left or right module structure.  $\mathcal{P}$ does not have a canonical module structure over $\mathcal{O}_X$. I am making a choice always to consider the left module structure. However, $\mathcal{P}$ does have a canonical coring structure. I will now introduce this.
\end{remark}

I will use the following notation in subsequent propositions: we denote local \\ coordinates $z_1,\ldots,z_n$. For notational simplicity, assume $n=1$. That is, assume $X=\mathbb{A}^1$ with coordinate $z$. The left map in the tensor product below, the second projection, I will denote by $2.$ and consider as $z\mapsto y$; the right map, the first projection, I will denote by $1.$ and consider as $z \mapsto x$. Let $\tau= 1 \otimes z - z \otimes 1=y-x$.

\begin{prop}[The comultiplication map]
There is a comultiplication map,
\[\Delta: \mathcal{P}\to \mathcal{P} \otimes_{2,\mathcal{O}_X,1} \mathcal{P}\]

For local sections $a,b$ of $\mathcal{O}_X$, and $\tau$ as above, 
\[a \otimes b \mapsto a\otimes 1 \otimes 1 \otimes b\]
\[\Delta(\tau^{[n]})=  \sum_{i+j=n} \tau^{[i]}\otimes \tau^{[j]} \]
\end{prop}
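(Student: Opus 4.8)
The map $\Delta$ is the comultiplication (the ``composition law'') making $\mathcal{P}=\mathcal{P}_X(X\times_S X)$ a coring over $\mathcal{O}_X$; geometrically it should come from the projection $p_{13}\colon X\times_S X\times_S X\to X\times_S X$ onto the outer two factors. The plan is to construct it directly from the universal property of the PD envelope: first put a PD structure on the target $\mathcal{P}\otimes_{2,\mathcal{O}_X,1}\mathcal{P}$, then exhibit a ring map $\mathcal{O}_X\otimes_{\mathcal{O}_S}\mathcal{O}_X\to\mathcal{P}\otimes_{2,\mathcal{O}_X,1}\mathcal{P}$ carrying the diagonal ideal into the PD ideal, and factor; the two displayed formulas then drop out of the PD axioms.

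For the PD structure, view the left copy of $\mathcal{P}$ as a PD $\mathcal{O}_X$-algebra via the second projection and the right copy via the first projection. Then $\mathcal{P}\otimes_{2,\mathcal{O}_X,1}\mathcal{P}$ is the coproduct of these two PD $\mathcal{O}_X$-algebras, hence carries a canonical PD structure whose PD ideal $\bar J$ is generated by $J\otimes 1$ and $1\otimes J$, and for which the two coprojections $\mathcal{P}\to\mathcal{P}\otimes_{2,\mathcal{O}_X,1}\mathcal{P}$ are PD maps (see \cite{Berthelot-Ogus}, Chapter~3). A routine check shows that $a\otimes b\mapsto a\otimes 1\otimes 1\otimes b$ is a well-defined ring homomorphism $\phi$ (the images of the two projections commute in the tensor product and agree on $\mathcal{O}_S$). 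Working in local coordinates, where $I$ is generated by the $\tau_i=1\otimes t_i-t_i\otimes 1$, the defining relation of $\otimes_{2,\mathcal{O}_X,1}$ (which glues the $2$-structure of the left factor to the $1$-structure of the right) gives
\[ \phi(\tau_i)=\tau_i\otimes 1+1\otimes\tau_i\in J\otimes\mathcal{P}+\mathcal{P}\otimes J=\bar J, \]
so $\phi(I)\subset\bar J$.

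By the universal property of $\mathcal{P}=\mathcal{P}_X(X\times_S X)$ as the PD envelope of $I$ in $\mathcal{O}_X\otimes_{\mathcal{O}_S}\mathcal{O}_X$ (Theorem~2.4), $\phi$ factors uniquely through a PD-algebra homomorphism $\Delta\colon\mathcal{P}\to\mathcal{P}\otimes_{2,\mathcal{O}_X,1}\mathcal{P}$. This construction is sheaf-theoretic and involves no choice of coordinates, so $\Delta$ is defined globally; restricted to $\mathcal{O}_X\otimes_{\mathcal{O}_S}\mathcal{O}_X$ it equals $\phi$, which is the first formula. For the divided-power formula it suffices to work in the local description of Proposition~2.11, with $n=1$: there $\Delta(\tau)=\phi(\tau)=\tau\otimes 1+1\otimes\tau$, and since $\Delta$ is a PD morphism and $\tau\otimes 1,1\otimes\tau\in\bar J$,
\[ \Delta(\tau^{[n]})=\gamma_n\bigl(\tau\otimes 1+1\otimes\tau\bigr)=\sum_{i+j=n}\gamma_i(\tau\otimes 1)\,\gamma_j(1\otimes\tau)=\sum_{i+j=n}\tau^{[i]}\otimes\tau^{[j]}, \]
using the PD binomial identity and $\gamma_i(\tau\otimes 1)=\tau^{[i]}\otimes 1$ (because the left coprojection is a PD map). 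The several-variable case is identical with multi-indices.

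The one genuinely delicate point is the very first step: verifying that $\mathcal{P}\otimes_{2,\mathcal{O}_X,1}\mathcal{P}$ is a PD algebra with PD ideal exactly $\bar J$ and that its divided powers restrict to the two factors in the expected way — i.e. carefully invoking the construction of coproducts in the category of PD algebras, where one must be mindful that the two $\mathcal{O}_X$-algebra structures on $\mathcal{P}$ are the two distinct ones. Once that bookkeeping is in place, everything else is a formal consequence of the universal property of the PD envelope together with the PD binomial formula. (Alternatively, one can identify $\mathcal{P}\otimes_{2,\mathcal{O}_X,1}\mathcal{P}\cong\mathcal{P}_X(X\times_S X\times_S X)$ by the base-change arguments of Section~2.2 and take $\Delta=p_{13}^{\,*}$; this is the conceptual origin of the map but requires the same local-freeness input.)
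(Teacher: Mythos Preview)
Your proof is correct and follows essentially the same route as the paper: define $\Delta$ first on $Q=\mathcal{O}_X\otimes_{\mathcal{O}_S}\mathcal{O}_X$ by $a\otimes b\mapsto a\otimes 1\otimes 1\otimes b$, compute $\Delta(\tau)=\tau\otimes 1+1\otimes\tau$, pass to PD envelopes, and read off the divided-power formula. You are in fact more careful than the paper about the one nontrivial point---the PD structure on $\mathcal{P}\otimes_{2,\mathcal{O}_X,1}\mathcal{P}$ and the invocation of the universal property---which the paper compresses into the phrase ``passing to divided power envelopes, this induces a map.''
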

\thispagestyle{plain}

\begin{proof}
Let $Q=\mathcal{O}_X \otimes_{\mathcal{O}_S} \mathcal{O}_X$. Define 

\[\Delta: Q\to Q \otimes_{2,\mathcal{O}_X,1} Q\]
\[a \otimes b \mapsto a\otimes 1 \otimes 1 \otimes b\]
Work with local coordinates, and for notational simplicity assume $n=1$, that is, $X=\mathbb{A}^1_S$. $\Delta$ takes the form:

\[\Delta: \mathcal{O}_S[x,y]\to \mathcal{O}_S[x,y]\otimes_{2,\mathcal{O}_S[z],1} \mathcal{O}_S[x,y] \]
\[\Delta(x)=x\otimes 1, \Delta(y)=1 \otimes y\]
Here the left tensor map is given by $z\mapsto y$, the right is given by $z \mapsto x$. To exhibit a quick calculation this means,
\[\Delta(y-x)=1 \otimes y - x \otimes 1 = 1\otimes y -1 \otimes x + y \otimes 1 - x \otimes 1=(y-x)\otimes 1 + 1 \otimes (y-x)\]

That is, $\Delta(\tau)=\tau \otimes 1 + 1 \otimes \tau$. Passing to divided power envelopes, this induces a map that we also call $\Delta$:

\[\Delta: \mathcal{P}\to \mathcal{P} \otimes_{2,\mathcal{O}_X,1} \mathcal{P}\]
\[\Delta: a \otimes b \mapsto a\otimes 1 \otimes 1 \otimes b\]
\[\Delta(\tau^{[n]})=  \sum_{i+j=n} \tau^{[i]}\otimes \tau^{[j]} \]

This is clearly co-associative.
\end{proof}

\thispagestyle{plain}
\begin{remark}
$\Gamma M$ also has a comultiplication structure induced by \\ $\Delta(m)= 1 \otimes m + m \otimes 1$ for $m \in M$. 
\end{remark}
\pagebreak

\section{Vector bundles with integrable connection}
In this section, I introduce modules, interchangeably vector bundles, with \\ integrable connection. I will introduce the ring $\Lambda$ of crystalline differential operators, show $\Lambda$ and $\mathcal{P}$ are dual in a suitable sense, and show equivalences of categories between vector bundles with integrable connection, modules over $\Lambda$, and stratifications over $\hat{\mathcal{P}}$. 

Recall $X/S$ is a smooth separated morphism of schemes. Several of these results can be stated under the lesser assumption that $X/S$ is locally of finite type, but for clarity, I will keep my assumptions consistent from section to section. All vector bundles will be implicitly assumed to be of a fixed rank $d$, which will not matter.

\thispagestyle{plain}

\subsection{Preliminaries from differential geometry}
\begin{definition}
By a \textit{vector bundle} or \textit{module} on $X$, we shall always mean a locally free $\mathcal{O}_X$-module of rank $d$. A connection relative to $X/S$, $\nabla: E \to E \otimes \Omega_{X/S}$, is an $\mathcal{O}_S$-linear operator satisfying a Leibniz condition: $\nabla(fe)=f\nabla(e) + df \otimes e$, for any sections $f$ of $\mathcal{O}_X$ and $e$ of $E$.
\end{definition}

Let $\Omega^r=\Omega^r_{X/S}$ and $\mathcal{A}^r= \Omega^r\otimes E$, understood as sheaves, with all tensor products taken over $X$. I will abuse notation and write $\sigma \in E$ to denote that $\sigma$ is a local section of $E$.

\begin{lemma}

If $(E,\nabla)$ is a vector bundle with connection, there exists a well-defined covariant derivative,

$d^E: \mathcal{A}^r \to \mathcal{A}^{r+1}$ satisfying, if $\omega \in \Omega^r, \sigma \in E$,
\[d^E(\omega \otimes \sigma)=d \omega \otimes \sigma + (-1)^r \omega \wedge \nabla \sigma\]

For $r=0$ this coincides with $\nabla$. It has a Leibniz rule: if $\xi \in \mathcal{A}^r, \omega \in \Omega^q$ then,

\[d^E(\omega \wedge \xi)= d \omega \wedge \xi + (-1)^q w \wedge d^E \xi  \]

\end{lemma}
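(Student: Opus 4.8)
The plan is to construct $d^E$ through the universal property of the tensor product $\mathcal{A}^r = \Omega^r \otimes_{\mathcal{O}_X} E$ and then to verify the two stated compatibilities by a short computation on decomposable tensors. First I would consider, locally, the assignment
\[ (\omega, \sigma) \longmapsto d\omega \otimes \sigma + (-1)^r\, \omega \wedge \nabla\sigma \ \in\ \mathcal{A}^{r+1}, \qquad \omega \in \Omega^r,\ \ \sigma \in E, \]
which is biadditive because $d$, $\nabla$ and $\wedge$ are additive. The only point needing care is $\mathcal{O}_X$-balancedness: for a local section $f$ of $\mathcal{O}_X$ one must see that $(f\omega,\sigma)$ and $(\omega,f\sigma)$ have the same image. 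This follows by combining $d(f\omega) = df\wedge\omega + f\,d\omega$, the Leibniz rule $\nabla(f\sigma) = f\,\nabla\sigma + df\otimes\sigma$ for $\nabla$, and the sign identity $(-1)^r\,\omega\wedge(df\otimes\sigma) = (-1)^{2r}(df\wedge\omega)\otimes\sigma = (df\wedge\omega)\otimes\sigma$. Once this is checked, the assignment factors through $\Omega^r \otimes_{\mathcal{O}_X} E$, giving a well-defined $\mathcal{O}_S$-linear operator $d^E\colon \mathcal{A}^r \to \mathcal{A}^{r+1}$ independent of all choices, so it automatically globalizes.

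Next I would read off the remaining assertions. In degree $0$, a local section of $\mathcal{A}^0 = \mathcal{O}_X\otimes_{\mathcal{O}_X} E$ is $f\otimes\sigma = f\sigma$, and the formula returns $df\otimes\sigma + f\,\nabla\sigma = \nabla(f\sigma)$, so $d^E$ agrees with $\nabla$ there. For the graded Leibniz rule I would check it on a decomposable $\xi = \eta\otimes\sigma$ with $\eta\in\Omega^r$, $\sigma\in E$, and $\omega\in\Omega^q$, so that $\omega\wedge\xi = (\omega\wedge\eta)\otimes\sigma$; expanding $d^E\big((\omega\wedge\eta)\otimes\sigma\big)$ via the defining formula and using $d(\omega\wedge\eta) = d\omega\wedge\eta + (-1)^q\,\omega\wedge d\eta$ matches, term by term and with the signs $(-1)^q$, $(-1)^{q+r}$, the expression $d\omega\wedge\xi + (-1)^q\,\omega\wedge d^E\xi$. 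Since both sides are additive in $\xi$ and $\mathcal{O}_X$-linear in $\omega$, and every section of $\mathcal{A}^r$ is locally a finite sum of decomposables, this is enough.

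The argument is routine; the only place I would write out in full detail is the $\mathcal{O}_X$-balancedness in the first step, since the displayed formula is given in terms of elementary tensors $\omega\otimes\sigma$, which are not intrinsically available in $\Omega^r\otimes_{\mathcal{O}_X} E$ until that compatibility is verified — this is the genuine content of the word "well-defined" in the statement. An equivalent and equally valid route, which I mention only for completeness, is to fix a local basis $e_1,\dots,e_d$ of $E$, set $d^E\big(\sum_j \eta_j\otimes e_j\big) = \sum_j d\eta_j\otimes e_j + (-1)^r\sum_j \eta_j\wedge\nabla e_j$, and check independence of the basis and that the local definitions glue; I find the universal-property formulation cleaner and free of indices.
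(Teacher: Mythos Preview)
Your proof is correct. The paper's own proof is simply ``Routine calculations with local coordinates,'' so your universal-property argument (checking $\mathcal{O}_X$-balancedness of the biadditive assignment and then verifying the Leibniz rule on decomposables) is a clean, coordinate-free packaging of the same routine verification; you even mention the local-basis route the paper implicitly has in mind as an equivalent alternative.
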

\begin{proof}
Routine calculations with local coordinates.
\end{proof}

\begin{definition}
The connection $\nabla$ is said to be integrable if the composition \\ $(d^E)^2: E \to \mathcal{A}^{2}$ vanishes.
\end{definition}

Henceforth, $(E,\nabla)$ denotes a vector bundle with integrable connection.

\begin{lemma}
If $\mu \in \mathcal{A}^1$ and $A,B$ are vector fields, that is, sections of $T_X$, then:
\[ d^E(\mu)(A,B)=\nabla_A(\mu(B)-\nabla_B(\mu(A))- \mu ([A,B])  \]
\end{lemma}
\begin{corollary}
\[ 0=(d^E)^2(\sigma)(A,B) = \nabla_A \nabla_B \sigma - \nabla_B \nabla_A \sigma - \nabla_{[A,B]}\sigma \]
\end{corollary}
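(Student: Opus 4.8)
The plan is to obtain this as a direct specialization of Lemma 3.6. The key observation is that $(d^E)^2(\sigma) = d^E\bigl(d^E(\sigma)\bigr)$, and that since $d^E$ agrees with $\nabla$ in degree $0$, the one-form $\mu := d^E(\sigma) = \nabla\sigma$ is a local section of $\mathcal{A}^1$. Hence Lemma 3.6 applies with this choice of $\mu$ and gives, for vector fields $A,B$,
\[ (d^E)^2(\sigma)(A,B) = d^E(\nabla\sigma)(A,B) = \nabla_A\bigl((\nabla\sigma)(B)\bigr) - \nabla_B\bigl((\nabla\sigma)(A)\bigr) - (\nabla\sigma)([A,B]). \]

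Next I would record the definitional identity $(\nabla\sigma)(A) = \nabla_A\sigma$ (and likewise for $B$ and for $[A,B]$), which rewrites the right-hand side as $\nabla_A\nabla_B\sigma - \nabla_B\nabla_A\sigma - \nabla_{[A,B]}\sigma$. Finally, the hypothesis that $\nabla$ is integrable says precisely that $(d^E)^2\colon E \to \mathcal{A}^2$ vanishes (Definition 3.5), so the left-hand side is zero, yielding the displayed equality.

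There is essentially no obstacle here: the statement is a formal consequence of the preceding lemma and the definition of integrability, and the only point requiring any care is to check that the sign conventions built into $d^E$ on $\mathcal{A}^1$ (the $(-1)^r$ in Lemma 3.3) are exactly those already absorbed into the statement of Lemma 3.6, so that no stray signs appear. Since Lemma 3.6 is phrased intrinsically in terms of $\nabla_A$, $\nabla_B$ and the bracket, this is automatic, and one could even note that the displayed chain of equalities is valid before imposing integrability, the integrability assumption being used only at the very last step to conclude the expression vanishes.
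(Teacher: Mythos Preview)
Your proof is correct and matches the paper's approach exactly: specialize the preceding lemma to $\mu = \nabla\sigma$ and then invoke integrability. The only quibble is cross-reference numbering (the formula for $d^E(\mu)(A,B)$ is Lemma~3.4 and integrability is Definition~3.3 in the paper's numbering), but the mathematics is right.
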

\begin{proof}
Two routine calculations, and using the integrability condition.
\end{proof}
\thispagestyle{plain}

\subsection{The ring of differential operators $\Lambda$}
Let $\Lambda$ be the ring of crystalline \\ differential operators on $X$. Not to be confused with the larger ring of Grothendieck differential operators, this is the universal enveloping algebra of the Lie algebra $T_X$. That is, $\Lambda$ is a sheaf of rings on $X$ that assigns to each affine open $U$ in $X$, the algebra generated by sections of $T_X$ and $\mathcal{O}_X$ over $U$ with relations:

\[ AB - BA =[A,B] \]
\[Af - fA=A(f) \]
for $A,B$ sections of $T_X$, and $f$ a section of $\mathcal{O}_X$. 

Locally, given coordinates $z_1,\ldots,z_n$, $T_X$ is freely generated as a left $\mathcal{O}_X$-module over basis elements $\partial_i=\frac{\partial}{\partial z_i}$. $\Lambda$ is freely generated as a left $\mathcal{O}_X$-module over monomials $\partial_1^{k_1}\ldots\partial_n^{k_n}$. For notational simplicity I will often assume $n=1$. Its algebra structure is determined by the relations that:

generators $\partial_i$ commute, $\partial^k \partial^l=\partial^{k+l}$, and:

\[\partial^n f = \sum_{0\leq k \leq n} {n \choose k} \partial^{n-k} (f) \partial^k \]
\thispagestyle{plain}

We have two canonical filtrations,

\[\mathcal{P} \supset J \supset J^{[2]} \supset \ldots \]
\[\Lambda_0 \subset \Lambda_1 \subset \Lambda_2 \ldots\]

both compatible with the multiplication on $\mathcal{P}$ and $\Lambda$. $J^{[n]}$ are the divided power ideals and $\Lambda_n$ are the differential operators of order $\leq n$. 

That is, $J^{[n]}J^{[m]} \supset J^{[n+m]}$ and $\Lambda_n \Lambda_m \subset \Lambda_{n+m}$.

Recall notation from proposition 2.14: we denote local coordinates $z_1,\ldots,z_n$, \\ assuming $n=1$ for notational simplicity. I will consider the second projection as $z\mapsto y$ and the first projection as $z \mapsto x$. Let $\tau= 1 \otimes z - z \otimes 1=y-x$.

\begin{prop}
Upon the left choice of $\mathcal{O}_X$-action for both $\mathcal{P}$ and $\Lambda$, there is a canonical pairing at finite levels $\mathcal{P}^n \otimes \Lambda_n \to \mathcal{O}_X$. This induces a perfect pairing,

\[ \varprojlim \mathcal{P}^n \times \varinjlim \Lambda_n \to \mathcal{O}_X  \]

that is, a perfect pairing between $\hat{\mathcal{P}}$ and $\Lambda$. This duality can be used to directly define the multiplication structure on $\Lambda$ as dual to the comultiplication structure $\Delta$. Moreover, having defined this duality as left modules, the standard action of $\Lambda$ as operators on $\mathcal{O}_X$ can be obtained from the second projection.
\end{prop}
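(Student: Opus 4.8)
I would start in étale local coordinates $z_1,\dots,z_n$ on an open $U\subset X$. By Proposition 2.11 (precisely, by the inductive step in its proof, which is already a finite-level statement) $\mathcal{P}^m$ is free over $\mathcal{O}_X$, for the left structure, on the PD monomials $\underline{\tau}^{[\underline{k}]}$ with $|\underline{k}|\le m$; by the PBW description of $\Lambda$ recalled in this subsection, $\Lambda_m$ is free over $\mathcal{O}_X$ on the monomials $\underline{\partial}^{\underline{k}}=\partial_1^{k_1}\cdots\partial_n^{k_n}$ with $|\underline{k}|\le m$. I would then \emph{define} $\langle-,-\rangle_m\colon\mathcal{P}^m\otimes_{\mathcal{O}_X}\Lambda_m\to\mathcal{O}_X$ by declaring these to be dual bases, $\langle\underline{\tau}^{[\underline{k}]},\underline{\partial}^{\underline{l}}\rangle_m=\delta_{\underline{k},\underline{l}}$. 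It is then immediate that $\langle-,-\rangle_m$ is $\mathcal{O}_X$-bilinear and perfect, i.e.\ induces isomorphisms $\Lambda_m\xrightarrow{\sim}\Hom_{\mathcal{O}_X}(\mathcal{P}^m,\mathcal{O}_X)$ and $\mathcal{P}^m\xrightarrow{\sim}\Hom_{\mathcal{O}_X}(\Lambda_m,\mathcal{O}_X)$, and that the truncations $\mathcal{P}^{m+1}\to\mathcal{P}^m$ are adjoint to the inclusions $\Lambda_m\hookrightarrow\Lambda_{m+1}$. Passing to the limit yields the pairing $\hat{\mathcal{P}}\times\Lambda\to\mathcal{O}_X$, perfect in the sense that $\Lambda=\varinjlim\Hom_{\mathcal{O}_X}(\mathcal{P}^m,\mathcal{O}_X)$ and $\hat{\mathcal{P}}=\varprojlim\Hom_{\mathcal{O}_X}(\Lambda_m,\mathcal{O}_X)$, both formal consequences of the finite-level statements and local freeness.

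\textbf{Recognizing the transpose of $\Delta$ as the multiplication, and canonicity.} The plan is to produce the algebra structure intrinsically from $\Delta$ and then match it to $\Lambda$. From $\Delta(\tau^{[k]})=\sum_{i+j=k}\tau^{[i]}\otimes\tau^{[j]}$ (Proposition 2.14) I would check $\Delta(J^{[a+b+1]})\subset J^{[a+1]}\otimes\mathcal{P}+\mathcal{P}\otimes J^{[b+1]}$, so $\Delta$ descends to $\Delta^{a,b}\colon\mathcal{P}^{a+b}\to\mathcal{P}^{a}\otimes_{2,\mathcal{O}_X,1}\mathcal{P}^{b}$. Writing $\Lambda'_m:=\Hom_{\mathcal{O}_X}(\mathcal{P}^m,\mathcal{O}_X)$ (an intrinsic object), local freeness gives $\Lambda'_a\otimes_{\mathcal{O}_X}\Lambda'_b\hookrightarrow\Hom_{\mathcal{O}_X}(\mathcal{P}^{a}\otimes_{2,\mathcal{O}_X,1}\mathcal{P}^{b},\mathcal{O}_X)$, and composing with $(\Delta^{a,b})^{\vee}$ defines an intrinsic multiplication $\mu$ on $\Lambda':=\varinjlim\Lambda'_m$, associative because $\Delta$ is coassociative. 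Since $J/J^{[2]}\cong\Omega_{X/S}$ (Proposition 2.11), one has $\Lambda'_1=\mathcal{O}_X\oplus\Hom_{\mathcal{O}_X}(J/J^{[2]},\mathcal{O}_X)=\mathcal{O}_X\oplus T_{X/S}$ canonically; I would then check in coordinates, by pairing $\mu(\partial_i\otimes f)$, $\mu(f\otimes\partial_i)$, $\mu(\partial_i\otimes\partial_j)$ against the PD monomials and reading off $\Delta$, that $\mu$ satisfies the relations $\partial f-f\partial=\partial(f)$ and $\partial_i\partial_j-\partial_j\partial_i=0$. The universal property of $U(T_{X/S})$ then gives an $\mathcal{O}_X$-algebra map $\Lambda\to\Lambda'$, which in coordinates sends $\underline{\partial}^{\underline{k}}$ to the functional dual to $\underline{\tau}^{[\underline{k}]}$; these being bases, it is an isomorphism. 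This identifies the ring structure of $\Lambda$ with $\mu=\Delta^{\vee}$ and, since $\mathcal{P}^m$, $\Delta$, $\Hom_{\mathcal{O}_X}$ and $U(T_{X/S})$ are all canonical, exhibits the whole pairing as canonical.

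\textbf{The action on $\mathcal{O}_X$ via the second projection.} For $D\in\Lambda_m=\Lambda'_m$ I would set $\widetilde{D}(f):=D\big(1\otimes f\bmod J^{[m+1]}\big)$, where $1\otimes f\in\mathcal{P}$ is formed using the second projection (the map ``$2.$'', $z\mapsto y$). This is independent of $m\ge\operatorname{ord}(D)$ since the maps $\mathcal{O}_X\to\mathcal{P}^m$, $f\mapsto1\otimes f$, commute with truncation, and $D\mapsto\widetilde{D}$ is a ring homomorphism $\Lambda\to\End_{\mathcal{O}_S}(\mathcal{O}_X)$ by a diagram chase using $\Delta(1\otimes f)=1\otimes1\otimes1\otimes f$ and coassociativity of $\Delta$. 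Finally, using $1\otimes z^m=\sum_{k}\tfrac{m!}{(m-k)!}\,z^{m-k}\tau^{[k]}$ in $\mathcal{P}$ (and its evident multivariate analogue), I would compute $\langle1\otimes f,\partial_i\rangle$ for $f$ a monomial to obtain $\widetilde{\partial_i}=\partial/\partial z_i$, and $\widetilde{f}=$ multiplication by $f$ for $f\in\Lambda_0$; since $\Lambda$ is generated over $\mathcal{O}_X$ by $T_{X/S}\subset\Lambda_1$, $D\mapsto\widetilde{D}$ is the standard action of differential operators.

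\textbf{Main obstacle.} The genuinely delicate part is the consistent bookkeeping of the two distinct $\mathcal{O}_X$-structures on $\mathcal{P}$ — in $\Hom_{\mathcal{O}_X}(-,\mathcal{O}_X)$, in the twisted tensor product $\otimes_{2,\mathcal{O}_X,1}$, and in the recipe $f\mapsto1\otimes f$ — since it is exactly this asymmetry that makes $\Delta^{\vee}$ noncommutative and equal to the enveloping-algebra product rather than to the commutative dual of the divided-power coproduct. I expect the verification of the Leibniz relation $\partial f-f\partial=\partial(f)$ for $\mu$, together with the ring-homomorphism property of $D\mapsto\widetilde{D}$ — the two places where the two structures truly interact — to be the crux; everything else is either formal (adjunctions, coassociativity) or a routine, coordinate-dependent monomial computation.
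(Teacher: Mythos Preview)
Your proposal is correct and follows essentially the same route as the paper: define the pairing locally via dual bases $\underline{\tau}^{[\underline{k}]}\leftrightarrow\underline{\partial}^{\underline{k}}$, dualize $\Delta$ to obtain the multiplication, and recover the action on $\mathcal{O}_X$ through the second projection $f\mapsto 1\otimes f$.

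The one organizational difference worth noting is in how the multiplication is verified and canonicity argued. The paper computes $\Theta(\partial^n,f)=\sum_k\binom{n}{k}\partial^{n-k}(f)\partial^k$ directly for all $n$ by an explicit expansion of $(x+\tau)^m$ inside $\mathcal{P}\otimes_{2,\mathcal{O}_X,1}\mathcal{P}$, and deduces coordinate-independence from the second-projection description of the action (which it proves first, not last). You instead check only the degree-one relations $\partial f-f\partial=\partial(f)$ and $[\partial_i,\partial_j]=0$ and then invoke the universal property of $U(T_{X/S})$ to produce a canonical algebra map $\Lambda\to\Lambda'$, which you verify is an isomorphism on bases. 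Your route is a bit more conceptual and avoids the higher-order computation; the paper's is more hands-on but makes the general Leibniz formula visible, which is used later (e.g.\ in Section~7.1). Both are valid, and your flagged ``main obstacle''---tracking the two $\mathcal{O}_X$-structures through the twisted tensor---is exactly the point the paper also spends its care on.
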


\begin{proof}
For now, forget the multiplication structure on $\Lambda$. Work locally, for notational simplicity assume $n=1$, and regard $\Lambda$ as a left $\mathcal{O}_X$-module over basis elements $\partial^k$. Working locally on the same open set, $\mathcal{P}$ becomes a left $\mathcal{O}_X$-module over basis elements $\tau^{[k]}$.

Define a pairing between $\mathcal{P}$ and $\Lambda$ such that $\tau^{[k]}$ and $\partial^k$ are dual basis elements. Since $\mathcal{P}^n$ is freely generated over $\tau^{[k]}$ for $k \leq n$, this induces a perfect pairing between $\mathcal{P}^n$ and $\Lambda_n$.

I will first show the last claim of the proposition. Let $\lambda \in \Lambda$ be arbitrary. $\lambda$ must belong to some $\Lambda_n$. Therefore, we can interpret $\lambda$ as a module map \\ $\mathcal{P}^n \to \mathcal{O}_X$ considering the left module structures to both. Now compose with the second projection:
\[\mathcal{O}_X \xrightarrow{2} \mathcal{P}^n \xrightarrow{\lambda} \mathcal{O}_X \]

We claim this composition is $f\mapsto \lambda(f)$.

To see this, if $f=z^m$, then $f$ maps to $(z+\tau)^m=\sum_{0\leq k \leq m}{m \choose k} k! z^{m-k} \tau^{[k]} $ and then to ${m \choose k} k! z^{m-k}=\partial^k(z^m)$; this is the same as the standard action. This shows the choice of pairing did not depend on coordinates.
\thispagestyle{plain}

Next, we move on to carefully considering $\Delta:\mathcal{P}\to \mathcal{P} \otimes_{2,\mathcal{O}_X,1} \mathcal{P}$. A priori, this is merely $\mathcal{O}_S$-linear. Give the tensor product a left $\mathcal{O}_X$-structure by left multiplication on the left factor. That is, for $f \in \mathcal{O}_X, g,h \in \mathcal{P}$, act $f$ on $g \otimes h$ by $fg \otimes h$. In addition, give $\mathcal{P}$ the left module structure. Then $\Delta$ becomes a $\mathcal{O}_X$-linear map. Now dualize this. We get a map $\Theta: \Lambda \otimes_{2,\mathcal{O}_X,1} \Lambda \to \Lambda$ that is $\mathcal{O}_X$-linear when we give $\Lambda$ the left $\mathcal{O}_X$-module structure and the tensor product a left $\mathcal{O}_X$-module structure on the first factor. This induces an associative operation because $\Delta$ is co-associative. We show $\Theta$ induces the usual multiplication on $\Lambda$. 
\thispagestyle{plain}

Dualizing the identity $\Delta(\tau^{[n]})=  \sum_{i+j=n} \tau^{[i]}\otimes \tau^{[j]}$, we see that $\Theta(\partial^k,\partial^l)=\partial^{k+l}$. Since $\Theta$ is left-linear, $\Theta(f,\partial^n)=f\partial^n$. Finally, we need to show $\Theta(\partial^n, f)$ is what it's meant to be, that is, $\Theta(\partial^n, f)=\sum_{0\leq k \leq n} {n \choose k} \partial^{n-k} (f) \partial^k $.

By $S$-linearity, let $f=z^m$. Suffices to show that under the composition,

\[ \mathcal{P}\xrightarrow{\Delta} \mathcal{P} \otimes_{2,\mathcal{O}_X,1} \mathcal{P} \xrightarrow{(\partial^n,f)} \mathcal{O}_X \]

that $\tau^{[k]}\mapsto {n \choose k}\partial^{n-k}(f)$. The key here is that the $f$ acts on the second factor of the tensor product in its standard way, but $\Theta$ is not linear with respect to this action.
\thispagestyle{plain}

Under the composition of $\Delta$ and $(id,f)$, $\tau^{[k]}$ maps to:

\begin{align}
\sum_{i+j=k} \tau^{[i]}\otimes f(x) \tau^{[j]} &= \sum_{i+j=k} f(y) \tau^{[i]}\otimes \tau^{[j]} \\
&= \sum_{i+j=k} (x+\tau)^m \tau^{[i]}\otimes \tau^{[j]} \\ &=\sum_{i+j=k, 0 \leq h \leq m} \tau^{[i]}x^{m-h} \tau^{[h]}\frac{m!}{(m-h)!} \otimes \tau^{[j]} \\
&= \sum_{i+j=k, 0 \leq h \leq m} \tau^{[i+h]}{h+i \choose h} \frac{m!}{(m-h)!}x^{m-h}\otimes \tau^{[j]}
\end{align}
Then applying $(\partial^n, 1)$ to this kills all terms except $j=0,i=k,i+h=n$, so $h=n-k$. So applying $(\partial^n, 1)$ gives

\[{n \choose k} \frac{m!}{(m-n+k)!}x^{n-k}= {n \choose k}\partial^{n-k}(x^m)\]

as desired. To summarize, we gave $\mathcal{P}$ a left $\mathcal{O}_X$ structure, then defined $\Lambda$ to be its dual, or more precisely, the direct limit of the duals at finite levels. We defined $\Delta$, the comultiplication structure, then induced a multiplication on $\Lambda$, and showed it was the correct one. 

Finally, this pairing can be shown to be canonical by an alternative interpretation. Given a differential operator $D$ and a function $g(x,y)$ defined up to $I^{[n+1]}$, define their pairing by $Dg(x,x)$, differentiating with the first coordinate only.
\end{proof}

\begin{remark}
It is possible to define $\Lambda$ directly by dualizing $\mathcal{P}$ as above. Then, one can define another ring $\Lambda_{abstract}$ as the free non-commutative $\mathcal{O}_X$-algebra with generators sections of $T_X$ and the relations:

\[AB-BA=[A,B]; A f = A(f) + f A\]
Since $\Lambda$, defined as the dual of $\mathcal{P}$, satisfies these relations, there exists a map $\Lambda_{abstract} \to \Lambda$. Working in local coordinates, one shows this is a bijection.
\end{remark}

\thispagestyle{plain}

\subsection{The equivalence of three categories}

\begin{prop}
An integrable connection on $E$ is equivalent to a left-module \\ structure over $\Lambda$.
\end{prop}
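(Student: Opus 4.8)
The plan is to exhibit the equivalence in local coordinates and then check it is coordinate-independent, using the duality between $\Lambda$ and $\hat{\mathcal{P}}$ established in Proposition 3.9. Given an integrable connection $\nabla$ on $E$, I would first define the action of a derivation $D$ (a section of $T_X$) on $E$ by $D \cdot e := \nabla_D(e)$, and the action of $\mathcal{O}_X$ by multiplication. To see this extends to a genuine $\Lambda$-module structure, I must check the two defining relations of $\Lambda$: the relation $Af - fA = A(f)$ is exactly the Leibniz rule for $\nabla$ from Definition 3.1, and the relation $AB - BA = [A,B]$ is precisely Corollary 3.7, which is where integrability of $\nabla$ is used. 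By the universal property of the enveloping algebra $\Lambda$ (or equivalently, by the presentation of $\Lambda_{abstract}$ in Remark 3.10, which maps isomorphically to $\Lambda$), these two relations suffice to produce a well-defined left $\Lambda$-module structure on $E$.

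Conversely, given a left $\Lambda$-module structure on $E$ (compatible with the $\mathcal{O}_X$-module structure on $E$, i.e.\ the $\mathcal{O}_X \subset \Lambda$ action agrees), I would define $\nabla\colon E \to E \otimes \Omega_{X/S}$ by setting, in local coordinates $z_1,\dots,z_n$ with dual derivations $\partial_i$, $\nabla(e) = \sum_i (\partial_i \cdot e) \otimes dz_i$. The relation $\partial_i f - f \partial_i = \partial_i(f)$ in $\Lambda$ gives the Leibniz rule, so this is a connection; one checks it is independent of the choice of coordinates because a change of coordinates transforms the $\partial_i$ and the $dz_i$ in dual ways, so the sum $\sum_i (\partial_i \cdot e)\otimes dz_i$ is intrinsic (it is the image of $e$ under the canonical map induced by $T_X^\vee \cong \Omega_{X/S}$). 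Integrability, i.e.\ $(d^E)^2 = 0$, follows from Corollary 3.7 read in reverse: the commuting relation $\partial_i \partial_j = \partial_j \partial_i$ in $\Lambda$ forces $\nabla_{\partial_i}\nabla_{\partial_j} = \nabla_{\partial_j}\nabla_{\partial_i}$, and since $[\partial_i,\partial_j]=0$ this is exactly the vanishing of curvature on the coordinate frame, hence everywhere.

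Finally I would note these two constructions are mutually inverse: starting from $\nabla$, passing to the $\Lambda$-module, and back recovers $\nabla$ because $\nabla_{\partial_i}$ is by construction the action of $\partial_i$; and starting from a $\Lambda$-module, the associated connection has $\nabla_{\partial_i} = \partial_i \cdot (-)$, and since the $\partial_i$ together with $\mathcal{O}_X$ generate $\Lambda$, the recovered module structure agrees with the original. It is also worth remarking that morphisms correspond on both sides (an $\mathcal{O}_X$-linear map commuting with $\nabla$ is the same as a $\Lambda$-linear map), so this is in fact an equivalence of categories, which is what will be needed later.

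I expect the only genuine subtlety to be the coordinate-independence of the construction in the reverse direction and, relatedly, making the appeal to the universal property of $\Lambda$ clean rather than hand-wavy; the verification of the two relations themselves is immediate from Definition 3.1 and Corollary 3.7, so the ``content'' of the proposition is really the bookkeeping that the local definitions glue. Since the paper has already done the careful local analysis of $\Lambda$ in Proposition 3.9 and Remark 3.10, I would lean on that presentation of $\Lambda$ to make the universal-property argument rigorous with minimal extra work.
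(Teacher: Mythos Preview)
Your proposal is correct and follows essentially the same approach as the paper: both reduce the equivalence to checking that the Leibniz rule and the integrability identity (Corollary 3.5 in the paper's numbering) are exactly the defining relations of $\Lambda$ as a quotient of the free noncommutative $\mathcal{O}_X$-algebra on $T_X$.

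The only difference is that the paper's reverse direction is coordinate-free and hence shorter: since a connection $\nabla$ is by definition the data of $\nabla_A$ for \emph{every} derivation $A$ (not just the coordinate $\partial_i$), restricting a $\Lambda$-action to $T_X \subset \Lambda$ immediately gives back $\nabla_A$ with no gluing to check. Your local-coordinate construction $\nabla(e)=\sum_i (\partial_i\cdot e)\otimes dz_i$ is correct but introduces a coordinate-independence verification that the paper avoids. Also, the duality with $\hat{\mathcal{P}}$ you mention at the outset is never invoked and is not needed for this proposition; it enters only in the next one (the equivalence with stratifications).
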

\begin{proof}
To give an integrable connection on $E$ is equivalent to the conditions:

\[\nabla_A \nabla_B \sigma - \nabla_B \nabla_A \sigma = \nabla_{[A,B]}\]
\[ \nabla_A(f \sigma)=A(f) e + f\nabla_A(\sigma) \]

for all $A,B \in T_{X}$.

Thus, an integrable connection on $E$ is equivalent to a left action on $E$ under the free non-commutative $\mathcal{O}_X$-algebra with generators sections of $T_X$ and the relations:

\[AB-BA=[A,B]; A f = A(f) + f A\]

This is precisely $\Lambda$.
\end{proof}

\begin{prop}
The data of an integrable connection on a vector bundle $E$ is equivalent to a stratification over $\hat{\mathcal{P}}$. That is, regard $\hat{\mathcal{P}}$ as a formal scheme mapping to $X$ via two projections $pr_i: \hat{\mathcal{P}} \to X$. A stratification is an isomorphism \\ $\phi: pr_1^* E \simeq pr_2^* E$ over $(X  \times_S X)^\wedge_{PD}$ satisfying natural cocycle conditions.

\thispagestyle{plain}

These cocycle conditions are as follows: we have three projections \\ $p_{ij}: X \times_S X \times_S X \to X  \times_S X$ defined in the obvious manner. These extend to the PD completions $p_{ij}: (X \times_S X \times_S X)^\wedge_{PD}\to (X  \times_S X)^\wedge_{PD} $.

The two cocycle conditions are that $\phi$ restricts to the identity on the diagonal, and that $(p_{23}^* 
\phi)  (p_{12}^* \phi) = p_{13}^* \phi$.
\end{prop}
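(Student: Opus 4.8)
The plan is to establish the equivalence by combining Proposition 3.9 (integrable connection $\Leftrightarrow$ left $\Lambda$-module) with the perfect pairing between $\hat{\mathcal{P}}$ and $\Lambda$ from Proposition 3.8, so that a stratification $\phi$ on $\hat{\mathcal{P}}$ is precisely the adjoint datum of a $\Lambda$-action. Concretely, I would work Zariski-locally with coordinates $z_1,\ldots,z_n$ (assuming $n=1$ for notational clarity, as elsewhere), so that $\hat{\mathcal{P}} = \mathcal{O}_X\langle\langle \tau \rangle\rangle$ with left module structure via the first projection, and $\Lambda = \varinjlim \Lambda_n$ is its continuous $\mathcal{O}_X$-linear dual over the monomials $\partial^k$.

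First I would unwind what a stratification gives. An isomorphism $\phi: pr_1^*E \xrightarrow{\sim} pr_2^*E$ of $\hat{\mathcal{P}}$-modules, where $pr_1^*E = \hat{\mathcal{P}}\otimes_{1,\mathcal{O}_X} E$ and $pr_2^*E = \hat{\mathcal{P}}\otimes_{2,\mathcal{O}_X} E$, is determined by where it sends $1\otimes e$ for $e\in E$; write $\phi(1\otimes e) = \sum_{k\geq 0} \tau^{[k]}\otimes \nabla_k(e)$ for $\mathcal{O}_X$-linear-up-to-twist operators $\nabla_k: E \to E$. The condition that $\phi$ be $\hat{\mathcal{P}}$-linear, combined with the interaction of the two projections with $\tau = y - x$, forces a Leibniz-type identity on the $\nabla_k$; the restriction to the diagonal ($\tau \mapsto 0$) forces $\nabla_0 = \mathrm{id}$. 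Then, dualizing against $\Lambda$, each $\nabla_k$ is pairing with $\partial^k$, and I would check that the family $(\nabla_k)_k$ defines a $\Lambda$-module structure (equivalently, an integrable connection with $\nabla_{\partial} = \nabla_1$, $\nabla_k = $ the divided-power iterate) if and only if the cocycle condition $(p_{23}^*\phi)(p_{12}^*\phi) = p_{13}^*\phi$ holds. This is exactly the assertion that, after dualizing, the comultiplication $\Delta$ (which on $\mathcal{P}\otimes_{2,\mathcal{O}_X,1}\mathcal{P}$ induces the multiplication $\Theta$ on $\Lambda$ by Proposition 3.8) intertwines $\phi$ with the algebra structure — i.e.\ $\phi$ corepresents an action. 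Conversely, given a $\Lambda$-module structure on $E$, I would use the pairing to build $\phi$ on each finite level $\mathcal{P}^n$ via $\phi_n: pr_1^*E \to pr_2^*E \bmod J^{[n+1]}$ sending $1\otimes e \mapsto \sum_{k\leq n}\tau^{[k]}\otimes \partial^k e$, check compatibility with the transition maps $\mathcal{P}^n \to \mathcal{P}^{n-1}$, take the inverse limit, and verify it is an isomorphism (its inverse comes from $\tau \mapsto -\tau$, which is the antipode) and that the cocycle conditions hold — the first is immediate from $\nabla_0 = \mathrm{id}$, the second from associativity of the $\Lambda$-action together with co-associativity of $\Delta$.

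For the cocycle side I would set up coordinates on $(X\times_S X\times_S X)^\wedge_{PD} = \mathcal{O}_X\langle\langle \tau_{12},\tau_{23}\rangle\rangle$ with $\tau_{13} = \tau_{12} + \tau_{23}$, note $p_{12}^*\Delta$-type identities $p_{13}^*\tau^{[k]} = \sum_{i+j=k}\tau_{12}^{[i]}\tau_{23}^{[j]}$ coming directly from Proposition 2.14, and then verify that $(p_{23}^*\phi)(p_{12}^*\phi) = p_{13}^*\phi$ becomes, on the $e$-component, precisely the statement that $\partial^k$ acts as the $\Theta$-product in the manner computed in the proof of Proposition 3.8. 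So the cocycle condition for $\phi$ and associativity/the explicit multiplication formula for $\Lambda$ are two faces of the same identity; this is where the bulk of the (routine) bookkeeping lives.

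The main obstacle I anticipate is not any single deep step but rather the careful tracking of which $\mathcal{O}_X$-module structure (left vs.\ right, first vs.\ second projection) is in play at each stage, since $\hat{\mathcal{P}}$ has no canonical $\mathcal{O}_X$-structure and the whole equivalence hinges on the interplay of the two projections — exactly the subtlety flagged in Remark 3.7 and exploited in the proof of Proposition 3.8. I would also need to confirm that the local construction glues: the pairing and $\Delta$ are canonical (the alternative interpretation $D, g \mapsto Dg(x,x)$ at the end of Proposition 3.8's proof makes this transparent), so $\phi$ built locally is independent of the coordinate choice and patches to a global stratification. Once the dictionary ``$\phi \leftrightarrow$ coaction of $\hat{\mathcal{P}} \leftrightarrow$ action of $\Lambda$'' is pinned down, the equivalence with integrable connections follows by composing with Proposition 3.9.
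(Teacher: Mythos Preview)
Your proposal is correct and follows essentially the same approach as the paper's proof: pass through the equivalence with $\Lambda$-modules, dualize via the $\hat{\mathcal{P}}$--$\Lambda$ pairing to convert the action into a coaction (your explicit formula $1\otimes e \mapsto \sum_k \tau^{[k]}\otimes \partial^k e$ is exactly what the paper's map $E \to \mathcal{P}^n\otimes E$ unwinds to), and identify the cocycle condition with associativity via the comultiplication $\Delta$. One minor labeling slip: in the paper's numbering the $\Lambda$-module equivalence is Proposition~3.8 and the pairing is Proposition~3.6, not 3.9 and 3.8 as you cite.
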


\begin{proof}
The data of an integrable connection is equivalent to compatible actions \\ $\Lambda_n \times E \to E$ for all $n$. Such an action takes the form $(\partial, e)\mapsto \partial(e)$. Considering $\Lambda$ as a left $\mathcal{O}_X$ module, this action is $\mathcal{O}_X$-linear for $\Lambda_n$ and for the target $E$. Hence, this induces a map $E \to \text{Hom}_{\mathcal{O}_X}(\Lambda_n , E), e \mapsto (\partial \mapsto \partial(e))$. This is not a priori $\mathcal{O}_X$-linear. However, it becomes $\mathcal{O}_X$-linear if we consider the right module action of $\mathcal{O}_X$ on $\Lambda$. 

Using duality, we have a map $E \to \mathcal{P}^n \otimes E = (pr^{(n)}_2)_* (pr^{(n)}_1)^* E$. Note that from $E$ we had to first extend by the left module action, the first projection, and then restrict by the right action, the second projection, hence the term $(pr^{(n)}_2)_* (pr^{(n)}_1)^* E$. 

Thus, an integrable connection is equivalent to compatible maps \\ $(pr^{(n)}_1)^* E \to (pr^{(n)}_2)^* E$. Taking $n \to \infty$ and using compatibility, we get the map $(pr_1)^* E \to (pr_2)^*E$. The associativity of the action $\Lambda_n \times \Lambda_m \times E \to E$ gives the cocycle conditions. The cocycle conditions force the map $(pr_1)^* E \to (pr_2)^*E$ to be an isomorphism.
\end{proof}

\subsection{The horizontal subbundle}
In this section, I give one more equivalent \\ formulation of an integrable connection, which will be essential in the main theorem. In this section, let $V$ be a vector bundle on $X$, and $W$ the associated total space of $V$. $W$ can be constructed as $\Spec_X ( S(V^\vee))$  where $V^\vee$ is the dual vector bundle. There is a natural morphism $\pi: W \to X$. A section $\sigma \in \Gamma(U,V)$ of $V$ over $U \subset X$ in the sheaf theoretic sense can be reinterpreted as a section $\sigma: U \to V$ of $\pi$ in the sense that $\pi \sigma=id_U$. 
\thispagestyle{plain}

The sequence of smooth morphisms $W \to X \to S$ induces the fundamental exact sequence of sheaves over $W$,

\[0 \to T_{W/X} \to T_{W/S} \to \pi^* T_{X/S} \to 0\]

$T_{W/X}$ is called the vertical subbundle and is naturally isomorphic to $\pi^* V$.

\begin{prop}
A connection $\nabla$ on $V$ relative to $X/S$ is equivalent to a splitting of this exact sequence. We call the resulting subbundle, given by the image of $\pi^* T_{X/S}$, the horizontal subbundle.
\end{prop}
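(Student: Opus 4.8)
The plan is to compare the two structures chart by chart on $X$ and then check that the identification is coordinate-free, so that it glues. Fix an open $U\subset X$ with étale coordinates $x_1,\ldots,x_n$ and a frame $e_1,\ldots,e_d$ of $V|_U$, and let $v_1,\ldots,v_d$ be the fibre coordinates on $\pi^{-1}(U)\subset W$ dual to the $e_j$, so that $\mathcal{O}_W$ is locally $\mathcal{O}_X[v_1,\ldots,v_d]$, $\pi_*\mathcal{O}_W=S(V^\vee)$ with $S^1 V^\vee=V^\vee$, and over this chart $T_{W/S}$ is freely generated by $\partial_{x_1},\ldots,\partial_{x_n},\partial_{v_1},\ldots,\partial_{v_d}$, the vertical subbundle $T_{W/X}\cong\pi^*V$ by the $\partial_{v_j}$, and $\pi^*T_{X/S}$ by the images of the $\partial_{x_i}$.

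First I would unwind what a splitting is. By the tensor--hom adjunction, an $\mathcal{O}_W$-linear section $s\colon \pi^*T_{X/S}\to T_{W/S}$ of the sequence is the same datum as an $\mathcal{O}_X$-linear map $T_{X/S}\to\pi_*T_{W/S}$ sending each $D$ to an $\mathcal{O}_S$-linear derivation of $\mathcal{O}_W$ that lifts $D$; in coordinates, $s(\partial_{x_i})=\partial_{x_i}+\sum_j a_{ij}\,\partial_{v_j}$ for sections $a_{ij}$ of $\mathcal{O}_W$. The horizontal subbundle is then, by definition, the image of $s$: a subbundle of $T_{W/S}$ complementary to $T_{W/X}$ and mapping isomorphically onto $\pi^*T_{X/S}$, so there is nothing to prove there once $s$ is in hand.

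For the forward direction, starting from $\nabla$ on $V$, I would pass to the dual connection $\nabla^\vee$ on $V^\vee$ and extend it by the Leibniz rule to a connection on the symmetric algebra $S(V^\vee)=\pi_*\mathcal{O}_W$; the assignment $D\mapsto\nabla^\vee_D$ is $\mathcal{O}_X$-linear in $D$ and visibly lifts $D$, so adjunction yields the section $s$. Concretely, if $\nabla(e_j)=\sum_{i,k}\Gamma^k_{ij}\,e_k\otimes dx_i$, then $a_{ij}$ is the homogeneous linear form in the $v_k$ with coefficients the Christoffel symbols $\Gamma^{\,\bullet}_{i\bullet}$ (up to the sign in the duality convention). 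For the converse, given the splitting I would restrict each derivation $s(D)$ to the degree-one piece $S^1V^\vee=V^\vee$ of $S(V^\vee)$, obtaining a connection $\nabla^\vee_D$ on $V^\vee$, and then dualize back to $V$. One then checks in coordinates that the two constructions are mutually inverse --- both are encoded by the same data $\Gamma^k_{ij}\in\mathcal{O}_X$ --- and that nothing depends on the chart or frame, so the local equivalences glue.

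The one point that needs care is exactly the homogeneity issue in the converse: an arbitrary splitting records an arbitrary $\mathcal{O}_W$-valued lift $a_{ij}$ of each coordinate field, whereas a connection on $V$ is linear, so one must work with the splittings whose lifts are $\mathcal{O}_X$-linear in the fibre coordinates --- equivalently, the horizontal distributions invariant under the fibrewise scaling action, equivalently those for which $s(D)$ preserves the grading of $S(V^\vee)$. This is precisely the class of horizontal distributions arising from linear connections, and passing to the degree-one part above is the mechanism that enforces it. Everything else --- the Leibniz rule, $\mathcal{O}_S$-linearity in $D$, additivity in the section --- is a routine local computation, and I would stress that no integrability hypothesis enters: this equivalence concerns connections, not flat connections, so the cocycle conditions of Proposition 3.11 play no role here.
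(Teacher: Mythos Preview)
Your approach differs from the paper's: the paper argues that both connections and splittings form torsors over $\Omega_{X/S}\otimes_{\mathcal{O}_X}\End(V)$ and then matches the trivial connection to the trivial splitting in a local frame to pin down the bijection, whereas you construct the correspondence directly by extending the dual connection on $V^\vee$ to a derivation of $S(V^\vee)=\pi_*\mathcal{O}_W$ via Leibniz and invoking adjunction. Your route makes the Christoffel-symbol dictionary explicit; the torsor argument is slicker but less constructive. You are also more honest than the paper about the homogeneity issue: an arbitrary $\mathcal{O}_W$-linear section of the sequence has coefficients $a_{ij}$ of arbitrary degree in the fibre variables, so the bijection really holds only for splittings equivariant under the fibrewise $\mathbb{G}_m$-action. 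The paper dispatches this with the single phrase ``the linearity of $L_1-L_2$ shows this section must lie in the degree~$1$ component of $S(V^\vee)$,'' which is the same restriction asserted rather than argued. Your final paragraph is therefore not a gap in your proof but a needed clarification of the statement itself; for the later applications to the horizontal subbundle it is exactly these equivariant splittings that are relevant.
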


\begin{proof}
We show connections and splittings are both torsors over the same group, and prove they correspond one-to-one locally. From the Leibniz rule, it follows connections form a torsor over $\Omega_{X/S} \otimes_{\mathcal{O}_X} \End(V)$. That is, given $\nabla_1,\nabla_2$, the difference $\nabla_1 - \nabla_2$ is an $\mathcal{O}_X$-linear morphism $V \to V\otimes \Omega_{X/S}$.
\thispagestyle{plain}

Now consider two splittings $L_i: T_{W/S} \to T_{W/X} $. The difference is a morphism $L_1 - L_2:T_{W/S} \to T_{W/X} $ that kills the vertical subbundle $T_{W/X}\subset T_{W/S}$. Hence this difference is equivalent to a morphism $\pi^* T_{X/S} \to \pi^* V$. This is equivalent to a linear morphism $\pi^* V^\vee \to \pi^* \Omega_{X/S}$, or a linear section of $\pi^*V \otimes_{\mathcal{O}_W} \pi^* \Omega_{X/S}$. But

\[ \pi^*V \otimes \pi^* \Omega_{X/S} = (V \otimes_{\mathcal{O}_X} \mathcal{O}_W) \otimes_{\mathcal{O}_W} (\mathcal{O}_W  \otimes_{\mathcal{O}_X}\Omega_{X/S}) \]
\[= (V \otimes_{\mathcal{O}_X} \mathcal{O}_W)\otimes_{\mathcal{O}_X} \Omega_{X/S}  \]
\[ = \pi^* V \otimes_{\mathcal{O}_X} \Omega_{X/S} \]
\[ = V \otimes_{\mathcal{O}_X} S( V^\vee) \otimes_{\mathcal{O}_X}\Omega_{X/S} \]

Finally, the linearity of $L_1 - L_2$ shows this section must lie in the degree $1$ component of $S( V^\vee)$, that is, $V^\vee$. So $L_1 - L_2$ is equivalent to a section of \\ $V \otimes_{\mathcal{O}_X}  V^\vee \otimes_{\mathcal{O}_X}\Omega_{X/S}=\Omega_{X/S} \otimes_{\mathcal{O}_X} \End(V)$.  It follows connections and splittings are both torsors over $\Omega_{X/S} \otimes_{\mathcal{O}_X} \End(V)$.

\thispagestyle{plain}

To show the one-to-one correspondence, it suffices to prove there is a natural \\ correspondence locally. Given a local frame of $V$, $e_1,\ldots,e_r$ there is a trivial \\ connection locally given by $\nabla_0 e_i=0$ for all $i$. Given this local frame, we have local coordinates on $W$ given by $e_1,\ldots,e_r, x_1,\ldots,x_n$, and the tangent bundle splits naturally into the span of $\frac{\partial}{\partial x_i}$ and $\frac{\partial}{\partial e_i}$. That is, there is a corresponding trivial splitting locally defined by $L_0: \frac{\partial}{\partial x_i} \mapsto 0$. Hence locally, both splittings and connections are non-trivial torsors over the same group, and so form a one-to-one correspondence globally. An explicit way to recover a connection from a splitting is described as follows:

\thispagestyle{plain}

Take a connection $\nabla$. Locally take a frame $e_1,\ldots, e_r$ and the corresponding trivial connection $\nabla_0$. Then $\nabla-\nabla_0$ is the corresponding section of $\Omega_{X/S} \otimes_{\mathcal{O}_X} \End(V)$. In local coordinates, we can write,

\[ \nabla e_i = \sum \theta_{ij} \otimes e_j\]

where $\theta_{ij}$ are $1$-forms. It follows $\nabla - \nabla_0$ also has the same formula:

\[ (\nabla- \nabla_0) e_i = \sum \theta_{ij} \otimes e_j\]

hence,

\[ \nabla- \nabla_0 = \sum \theta_{ij} \otimes e_j \otimes e_i^\vee = \sum f_{ijk} dx_k\otimes e_j \otimes e_i^\vee \]

And so by working through the equivalence between $\nabla- \nabla_0$ and $L-L_0$ described above,

\[L-L_0: \pi^* T_{X/S} \to \pi^* V \]
\[ \frac{\partial}{\partial x_k} \mapsto \sum f_{ijk} e_j \otimes e_i^\vee = (\nabla- \nabla_0)_{\frac{\partial}{\partial x_k}}\]

Now we want to get rid of $L_0$. Using the fact that this is linear over $\mathcal{O}_W = S( V^\vee)$ we can re-express the right hand side as,

\[(\nabla- \nabla_0)_{\frac{\partial}{\partial x_k}}= \sum e_i^\vee \otimes (\sum f_{ijk} e_j) \]

now note $(\sum f_{ijk} e_j) = (\nabla- \nabla_0)_{\frac{\partial}{\partial x_k}} e_i=\nabla_{\frac{\partial}{\partial x_k}} e_i$, so

\[L-L_0: \pi^* T_{X/S} \to \pi^* V \]
\[\frac{\partial}{\partial x_k} \mapsto \sum f_{ijk} e_j \otimes e_i^\vee = \sum e_i^\vee \otimes \nabla_{\frac{\partial}{\partial x_k}} e_i  \] 

Now compose with the projection,

\[ L-L_0: T_{W/S} \to \frac{T_{W/S}}{T_{W/X}} \simeq \pi^* T_{X/S} \to \pi^* V \]
\[\frac{\partial}{\partial e_i} \mapsto 0 \]
\[\frac{\partial}{\partial x_k} \mapsto \sum e_i^\vee \otimes \nabla_{\frac{\partial}{\partial x_k}} e_i  \] 
\thispagestyle{plain}

and add back $L_0$,

\[ L: T_{W/S} \to  \pi^* V \]
\[\frac{\partial}{\partial e_i} \mapsto \frac{\partial}{\partial e_i}  \]
\[\frac{\partial}{\partial x_k} \mapsto \sum e_i^\vee \otimes \nabla_{\frac{\partial}{\partial x_k}} e_i  \] 

Let the connection matrix of $\nabla$ be $[\theta]$ as described above, a matrix of $1$-forms. Then the right hand side above is given by the matrix $[\theta_{\frac{\partial}{\partial x_k}}]$, considered as a matrix of functions in $\mathcal{O}_X$. Then $L$ is given by $\frac{\partial}{\partial x_k} \mapsto [\theta_{\frac{\partial}{\partial x_k}}]$, understood as an element of $\End(V)$.


\end{proof}
\thispagestyle{plain}

\begin{lemma}
The horizontal subbundle is integrable if and only if the connection is integrable.
\end{lemma}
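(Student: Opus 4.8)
The plan is to reduce to a local computation in which the Lie bracket of two horizontal lifts is recognised as the curvature two-form of $\nabla$. Both sides of the asserted equivalence are local on $X$, hence on $W$, so I fix local coordinates $x_1,\dots,x_n$ on an open $U\subset X$ together with a frame $e_1,\dots,e_r$ of $V|_U$. These induce fibre coordinates $e_1,\dots,e_r$ on $\pi^{-1}(U)\subset W$ and a splitting $T_{W/S}=\bigoplus_i\mathcal{O}_W\frac{\partial}{\partial e_i}\oplus\bigoplus_k\mathcal{O}_W\frac{\partial}{\partial x_k}$, with $T_{W/X}=\bigoplus_i\mathcal{O}_W\frac{\partial}{\partial e_i}$. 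Writing the connection matrix of $\nabla$ as $[\theta]=\sum_k A^{(k)}\,dx_k$, the explicit description of the splitting obtained in the proof of the preceding proposition shows that the horizontal subbundle $H$ is freely generated over $\mathcal{O}_W$ by the horizontal lifts
\[ D_k \;=\; \frac{\partial}{\partial x_k}\;-\;\sum_{i,j} A^{(k)}_{ij}\, e_j\, \frac{\partial}{\partial e_i}, \qquad k=1,\dots,n. \]

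I would then observe that $H$ is involutive if and only if $[D_k,D_l]\in H$ for all $k,l$: for $f,g\in\mathcal{O}_W$ one has $[fD_k,gD_l]=fg[D_k,D_l]+fD_k(g)D_l-gD_l(f)D_k$, and the last two terms already lie in $H$. Now $D_k$ and $D_l$ are $\pi$-related to the commuting coordinate fields $\frac{\partial}{\partial x_k},\frac{\partial}{\partial x_l}$ on $U$, so their bracket is $\pi$-related to $0$, i.e.\ $[D_k,D_l]$ is a vertical vector field. Since $H$ maps isomorphically onto $\pi^*T_{X/S}$ under $d\pi$ we have $H\cap T_{W/X}=0$, and hence $[D_k,D_l]\in H$ forces $[D_k,D_l]=0$. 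Therefore $H$ is integrable if and only if every bracket $[D_k,D_l]$ vanishes identically on $\pi^{-1}(U)$.

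A routine bracket computation from the displayed formula — expanding, and using that the entries $A^{(k)}_{ij}$ depend only on the $x$'s — gives
\[ [D_k,D_l] \;=\; -\sum_{i,j}\Bigl(\partial_{x_k}A^{(l)}_{ij}-\partial_{x_l}A^{(k)}_{ij}+[A^{(k)},A^{(l)}]_{ij}\Bigr)\, e_j\, \frac{\partial}{\partial e_i}, \]
whose coefficient matrix is exactly the value on $(\tfrac{\partial}{\partial x_k},\tfrac{\partial}{\partial x_l})$ of the curvature form $d[\theta]+[\theta]\wedge[\theta]$ of $\nabla$, equivalently of $\nabla_{\partial_{x_k}}\nabla_{\partial_{x_l}}-\nabla_{\partial_{x_l}}\nabla_{\partial_{x_k}}-\nabla_{[\partial_{x_k},\partial_{x_l}]}$. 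Hence $H$ is integrable iff this curvature form vanishes, and since a two-form vanishes iff it vanishes on every pair of vector fields, this is precisely the condition that $\nabla$ be integrable recorded in Corollary 3.5. The only genuine work is this last bracket computation and matching it term-for-term with the curvature (index bookkeeping, up to the usual transpose/sign conventions); the step worth emphasising is the earlier observation that $[D_k,D_l]$ is vertical, so that involutivity of $H$ forces it to vanish outright rather than merely to lie in $H$. One could alternatively argue coordinate-freely: the horizontal lift $\widetilde A$ of $A\in T_{X/S}$ acts on a fibrewise-linear function $\ell\in V^\vee\subset\mathcal{O}_W$ by $\widetilde A(\ell)=\nabla^\vee_A\ell$ (the induced connection on $V^\vee$), so $[\widetilde A,\widetilde B]-\widetilde{[A,B]}$ is the vertical vector field corresponding to the endomorphism $\nabla^\vee_A\nabla^\vee_B-\nabla^\vee_B\nabla^\vee_A-\nabla^\vee_{[A,B]}$ of $V^\vee$, which vanishes for all $A,B$ if and only if $\nabla$ is integrable.
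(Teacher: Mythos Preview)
Your argument is correct and is essentially the same as the paper's. Both proofs work locally in a frame, identify the obstruction to involutivity of $H$ with the curvature of $\nabla$, and check vanishing on coordinate vector fields; the paper packages this as a morphism $\Lambda^2 H \to T_{W/S}/H$ and then unwinds via Proposition~3.10, whereas you write out the horizontal lifts $D_k$ and compute $[D_k,D_l]$ directly, but the content is identical.
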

\begin{proof}
Let $H$ be the horizontal subbundle, and $\nabla$ the connection. The curvature of the connection is a section of $\Omega_{X/S}^2 \otimes_{\mathcal{O}_X} \End(V)$. We can also define the curvature of $H$ given by the morphism

\[\Lambda^2 H \to \frac{T_{W/S}}{H}\]
\[ v_1 \wedge v_2 \mapsto [v_1,v_2] \]

\thispagestyle{plain}
This morphism vanishes if and only if $H$ is preserved by Lie bracket, that is, \\ integrable. Since $H \simeq \pi^* T_{X/S}$ and $\frac{T_{W/S}}{H} \simeq \pi^* V$ this defines a section of \\ $\pi^* \Omega_{X/S}^2 \otimes_{\mathcal{O}_V} \pi^*V$. By a similar calculation as above, and linearity, this defines a section of $\Omega_{X/S}^2 \otimes_{\mathcal{O}_X} \End(V)$.

Now take a local frame, so $W$ has local coordinates $e_1,\ldots,e_r, x_1,\ldots,x_n$. Let $\nabla$ be a connection corresponding to $H$.

Unwinding the correspondences in the proof of proposition 3.10, the curvature of $H$ is given by 

\[ \frac{\partial}{\partial x_k} \wedge \frac{\partial}{\partial x_l} \mapsto [\nabla_{\frac{\partial}{\partial x_k}},\nabla_{\frac{\partial}{\partial x_l}}] \]

This vanishes if and only if $[\nabla_{\frac{\partial}{\partial x_k}},\nabla_{\frac{\partial}{\partial x_l}}] =0$ for all $k,l$, which is precisely the integrability condition for $\nabla$. As such, $\nabla$ is integrable if and only if $H$ is integrable.
\end{proof}

\pagebreak

\section{Crystals and the non-abelian Gauss-Manin connection}
In this section, I will introduce crystals, yet another way to interpret vector bundles with integrable connection. I will then introduce the main object of this dissertation, non-abelian de Rham cohomology, and use its crystalline interpretation to explicitly present its Gauss-Manin connection. The presentation here is modified from \cite{Simpson5}.

\thispagestyle{plain}
Recall $X/S$ is a smooth separated morphism of schemes. Several of these results can be stated under the lesser assumption that $X/S$ is locally of finite type, but for clarity I will keep my assumptions consistent from section to section.

\subsection{Crystals and the crystalline site}
Define the \textit{crystalline site} $\Crys(X/S)$ as follows: its objects are triples $(U,V,\gamma)$ with $U \to X$ a morphism of schemes, $U \subset V$ a closed immersion of $S$-schemes with a nilpotent sheaf of ideals, and $\gamma$ a PD structure. Precisely, we require $U$ to be defined by an ideal sheaf $I$, such that $I^k=0$ for some integer $k$, and $\gamma$ is a PD structure on $I$. A morphism in this category,

\[ f:(U \subset V) \to (U' \subset V') \]

consists of a morphism $V \to V'$ over $S$ respecting the PD structure, with restriction $U \to U'$ a morphism of $X$-schemes. Define a site by equipping this category with all Zariski covers. 

Let $\Crys^r (X/S)$ denote the full subcategory consisting of objects $(U \subset V)$ such that there is a retraction morphism $V \to X$, which itself is not part of the data, such that the following diagram commutes:

\begin{center}

\begin{tikzcd}

   U   \arrow[hookrightarrow]{r}  \arrow[d]&        V   \arrow[ld]      \\
 X &

\end{tikzcd}  

\end{center}

Define a \textit{crystal} of schemes $F$ on $X/S$ to be the data, for each $(U \subset V)$, of a scheme $F(U \subset V) \to V$, and for each morphism  $f:(U \subset V) \to (U' \subset V') $, an isomorphism of schemes,

\[ \psi(f): F(U\subset V) \to f^* F(U' \subset V') \]

satisfying the cocycle condition, $\psi(gf)=f^*(\psi(g)) \psi(f)$ for any $g,f$ morphisms in the site that can be composed. Similarly, we can define a crystal of vector bundles.

A \textit{restricted crystal} is the same sort of data, but with $F(U \subset V)$ only defined for $(U \subset V)$ in $\Crys^r (X/S)$. 

\thispagestyle{plain}

\begin{remark}
For $X/S$ smooth, any object of $\Crys(X/S)$ is Zariski locally isomorphic to an object of $\Crys^r(X/S)$. This follows from the infinitesimal lifting property of smooth morphisms, which guarantees the local existence of $V \to X$. 

As such, crystals and restricted crystals are the same for smooth $X/S$. The \\ distinction serves us in the case when $X/S$ is not smooth. A crystal is the \\ appropriate generalization of a vector bundle with integrable connection to the non-smooth setting.
\end{remark} 

\begin{lemma}
Let $X/S$ be locally of finite type. A restricted crystal of schemes $F$ on $X/S$ is equivalent to a scheme $F(X) \to X$, together with an isomorphism,

\[ \phi: pr_1^* F(X) \xrightarrow{\sim} pr_2^* F(X)  \]

where $pr_i: (X \times_S X)_{PD}^\wedge  \to X$ are the two morphisms from the PD formal \\ completion of the diagonal, such that natural cocycle conditions are satisfied.

\end{lemma}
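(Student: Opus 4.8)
The plan is to establish the equivalence by exhibiting mutually inverse constructions between restricted crystals and the descent data $(F(X), \phi)$, using the fact that for smooth (hence, locally of finite type with the relevant lifting) $X/S$ the PD formal completion of the diagonal plays the role of a ``universal infinitesimal neighborhood'' for the crystalline site, in close analogy with Proposition 3.16. First I would note that given a restricted crystal $F$, one obtains $F(X)$ by evaluating on the trivial object $(X = X)$ (with its trivial PD structure), and then $\phi$ by evaluating the crystal structure isomorphism $\psi$ on the two projections from the $n$-th PD neighborhoods $P_X^n(X \times_S X)$ for all $n$, which are objects of $\Crys^r(X/S)$ (the retraction being, say, the first projection), and taking the inverse limit over $n$; the cocycle condition for $\phi$ over $(X\times_S X\times_S X)^\wedge_{PD}$ is then just the crystal cocycle condition $\psi(gf) = f^*(\psi(g))\psi(f)$ applied to the three projections $p_{ij}$.

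Conversely, given $(F(X), \phi)$, I would reconstruct the crystal on a general object $(U \subset V)$ of $\Crys^r(X/S)$ as follows. By definition there is a retraction $h: V \to X$ restricting to the given $U \to X$; one sets $F(U \subset V) := h^* F(X)$. The subtlety is independence of the choice of $h$: if $h, h': V \to X$ are two retractions agreeing on $U$, then $(h, h'): V \to X \times_S X$ factors through $(X \times_S X)^\wedge_{PD}$ — this is exactly where the PD nilpotence of the ideal of $U$ in $V$ is used, together with the divided power compatibility of morphisms in the crystalline site — and pulling back $\phi$ along this factorization gives a canonical isomorphism $h^* F(X) \xrightarrow{\sim} (h')^* F(X)$. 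One then checks these canonical isomorphisms are compatible (a three-retraction computation, again reducing to the cocycle condition for $\phi$), so $F(U\subset V)$ is well-defined up to canonical isomorphism, and for a morphism $f:(U\subset V)\to(U'\subset V')$ the isomorphism $\psi(f)$ is built from $\phi$ pulled back along $(h, h'\circ f_V): V \to (X\times_S X)^\wedge_{PD}$ where $h, h'$ are retractions of $V, V'$; its cocycle condition follows from that of $\phi$.

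Finally I would verify the two constructions are mutually inverse: starting from $(F(X),\phi)$, building the crystal, and re-evaluating on $(X=X)$ and the diagonal neighborhoods recovers $(F(X),\phi)$ on the nose, since the relevant retractions can be taken to be identities/projections; and starting from a crystal $F$, the reconstructed crystal agrees with $F$ on $\Crys^r(X/S)$ because any $(U\subset V)\in\Crys^r$ with chosen retraction $h$ receives a morphism from... more precisely, the two projections $pr_1, h: (V, \phi\text{-data})$ — one argues that the crystal value $F(U\subset V)$ with its structure isomorphisms is determined by $F(X)$ and $\phi$ via the morphism $h$ in the site. I expect the main obstacle to be the well-definedness of $F(U\subset V) = h^*F(X)$ independent of the retraction $h$, i.e.\ verifying cleanly that a pair of retractions agreeing on $U$ factors through the PD completion of the diagonal and that the resulting gluing isomorphisms satisfy the needed compatibilities; this is the point where the PD (rather than merely infinitesimal) structure is essential and where the bookkeeping with the cocycle condition for $\phi$ has to be done carefully. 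The statement for general (not necessarily affine) $X/S$ is then obtained by Zariski descent, the constructions being local on $X$.
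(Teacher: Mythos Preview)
Your proposal is correct and follows essentially the same route as the paper: extract $F(X)$ and $\phi$ from the crystal by evaluating on $X$ and on the finite PD neighborhoods $P_X^n(X\times_S X)$ with their two projection retractions, and in the reverse direction define $F(U\subset V) := \rho_V^* F(X)$ for a chosen retraction $\rho_V$, with the transition isomorphism $\psi(f)$ obtained by pulling back $\phi$ along $(\rho_V,\rho_{V'}\circ f):V\to (X\times_S X)^\wedge_{PD}$. The paper does not separately check independence of the choice of $\rho_V$; it simply fixes one retraction per object and lets the transition isomorphisms absorb the ambiguity, so your extra verification of well-definedness is more careful than necessary but harmless. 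Two small remarks: your reference to ``Proposition 3.16'' has no counterpart here (the relevant analogue is the stratification equivalence, Proposition 3.9), and the final appeal to Zariski descent is superfluous since the construction is already global.
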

\begin{proof}
One modifies the proof of \cite{Simpson5}, lemma 8.2. Briefly, in one direction, let $F$ be a restricted crystal on $X/S$. It immediately gives a scheme $F(X)$ over $X$. Consider $P^n_X=(X \times_S X)_{PD}^{[n]}$ as discussed in section 2, the $n$-th PD neighbourhood of the diagonal in $X \times_S X$. These are objects of $\Crys^r (X/S)$. The maps $p_{1,[n]}$ and $p_{2,[n]}$ from $(X \times_S X)_{PD}^{[n]}$ to $X$ serve as the necessary retractions. By the data of the restricted crystal, these give isomorphisms

\[F((X \times_S X)_{PD}^{[n]}) \simeq p_{1,[n]}^* F(X)  \]
\[F((X \times_S X)_{PD}^{[n]}) \simeq p_{2,[n]}^* F(X)  \]

Compose these to get isomorphisms $p_{1,[n]}^* F(X)\simeq p_{2,[n]}^* F(X)$. By the functoriality property of $F$, these are compatible, and provide an isomorphism \\ $\phi: pr_1^* F(X) \simeq pr_2^* F(X)$ between the two pullbacks to the PD completion. The functoriality also gives the cocycle conditions.
\thispagestyle{plain}

In the other direction, suppose we are given such a scheme $F(X)$ and an \\ isomorphism $\phi$. For any $(U \subset V)$ in $\Crys^r (X/S)$, choose a retraction $\rho_V: V \to X$. Define $F(U \subset V)= \rho_V^* F^*(X)$. Given $f: V \to V'$, one considers the pair \\ $(\rho_V, \rho_{V'} f): V \to X \times_S X$. These two maps are equal on $U$, and $(U \subset V)$ is defined by a nilpotent ideal with a divided power, so this map factors through $(X \times_S X)_{PD}^\wedge$. Then $\phi$ pulls back to an isomorphism,

\[\phi(f):=(\rho_V, \rho_{V'} f)^* \phi : F(V) = \rho_V^* F^*(X) \simeq (\rho_{V'} f)^* F(X) = f^* \rho_{V'}^* F(X)= f^* F(V')   \]

The cocycle conditions for $\phi$ give the functoriality property for the comparison isomorphisms.

\thispagestyle{plain}

\end{proof}

\begin{corollary}
If $X/S$ is smooth, a vector bundle with integrable connection on $X/S$ is equivalent to a crystal of vector bundles on $X/S$.
\end{corollary}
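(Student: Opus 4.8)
The plan is to deduce the corollary formally by splicing together three equivalences that are already in hand: the identification of crystals with restricted crystals for smooth $X/S$ (Remark 4.1), the description of restricted crystals as a bundle on $X$ equipped with descent data on the PD completion of the diagonal (Lemma 4.2), and the identification of such descent data with an integrable connection (Proposition 3.9, the equivalence between integrable connections and stratifications over $\hat{\mathcal{P}}$).

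First I would use Remark 4.1: since $X/S$ is smooth, every object of $\Crys(X/S)$ is Zariski-locally isomorphic to an object of $\Crys^r(X/S)$, and consequently the restriction functor from crystals of vector bundles on $X/S$ to restricted crystals of vector bundles on $X/S$ is an equivalence of categories. Full faithfulness is immediate; essential surjectivity is the only point with real content, and it is exactly the gluing argument indicated in Remark 4.1 and carried out, for $\Crys^r(X/S)$, in the ``other direction'' of the proof of Lemma 4.2 --- two choices of local retraction $V \to X$ produce comparison isomorphisms that agree up to the single stratification isomorphism $\phi$, so the locally defined data glue.

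Next I would invoke the vector-bundle analogue of Lemma 4.2. The proof there is written for crystals of schemes, but it goes through verbatim once ``scheme over $V$'' is replaced by ``vector bundle on $V$'' and pullbacks of schemes by pullbacks of vector bundles, using that each PD neighbourhood $P^n_X = (X \times_S X)^{[n]}_{PD}$ lies in $\Crys^r(X/S)$ with the two projections as retractions. One should note in passing that a crystal of vector bundles of rank $d$ yields a scheme $F(X) \to X$ that is a locally free $\mathcal{O}_X$-module of rank $d$, so one lands in the category of vector bundles in the sense of this paper. The upshot is that a restricted crystal of vector bundles is the same datum as a vector bundle $E$ on $X$ together with an isomorphism $\phi : pr_1^* E \xrightarrow{\sim} pr_2^* E$ over $(X \times_S X)^\wedge_{PD}$ restricting to the identity on the diagonal and satisfying $(p_{23}^* \phi)(p_{12}^* \phi) = p_{13}^* \phi$.

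Finally I would observe that this is precisely a stratification over $\hat{\mathcal{P}}$ in the sense of Proposition 3.9 --- the maps $pr_i$, the formal scheme $(X\times_S X)^\wedge_{PD}$, and the two cocycle conditions are literally the same in both statements --- so by that proposition the datum $(E,\phi)$ is equivalent to $(E,\nabla)$ with $\nabla$ an integrable connection; one also checks, routinely, that horizontal maps of connections match morphisms of stratifications and hence morphisms of (restricted) crystals. Composing the three equivalences gives the corollary. I do not anticipate a genuine obstacle: the proof is a formal chaining, and the only non-formal inputs --- the descent step in the crystal-versus-restricted-crystal comparison and the stratification equivalence --- are already established. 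If anything needs the most attention, it is checking that Lemma 4.2 transports cleanly from schemes to vector bundles, which is purely a matter of bookkeeping.
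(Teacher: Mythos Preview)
Your proposal is correct and follows essentially the same approach as the paper, which simply writes ``This follows from lemma 4.2 and proposition 3.9.'' You have made explicit the additional invocation of Remark 4.1 (crystals $=$ restricted crystals for smooth $X/S$) that the paper leaves implicit, and spelled out the bookkeeping, but the chain of equivalences is identical.
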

\begin{proof}
This follows from lemma 4.2 and proposition 3.9.
\end{proof}
The following proposition is attributed to Grothendieck.
\begin{prop}
Let $(S_0 \subset S)$ be a closed subscheme defined by a nilpotent \\ thickening of ideals equipped with a PD structure. Let $X_0=X \times_S S_0$, and $j: X_0 \to X$ be the inclusion. Then the pullback functor $F \mapsto j^*F$ is an equivalence from the category of crystals on $X/S$ to the category of crystals on $X_0/S.$
\end{prop}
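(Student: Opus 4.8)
The statement is the crystalline analogue of ``topological invariance'' — crystals on $X/S$ only depend on $X_0/S$ when $(S_0 \subset S)$ is a PD-thickening — and the natural strategy is to construct a quasi-inverse to $j^*$ by hand and then check it is inverse on both sides. First I would observe that the crystalline sites $\mathrm{Crys}(X/S)$ and $\mathrm{Crys}(X_0/S)$ have essentially the same objects: an object $(U \subset V)$ of $\mathrm{Crys}(X_0/S)$ consists of a PD-nilpotent immersion $U \hookrightarrow V$ over $S$ with $U \to X_0$; composing with $j$ gives $U \to X$, and since the ideal of $S_0$ in $S$ is PD-nilpotent, the ideal defining $U$ in $V$ (relative to $S$) is also PD-nilpotent, so $(U \subset V)$ becomes an object of $\mathrm{Crys}(X/S)$. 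This gives a functor $\mathrm{Crys}(X_0/S) \to \mathrm{Crys}(X/S)$ which is, in a suitable sense, ``cofinal,'' and it lets one pull back a crystal on $X/S$ to one on $X_0/S$; this recovers $j^*$. The real content is that every object of $\mathrm{Crys}(X/S)$ is, Zariski-locally, in the image, because over any PD-thickening $U \subset V$ the morphism $U \to X$ automatically factors through $X_0$ when $U$ itself is an $S_0$-scheme, and one reduces to that case using nilpotence plus the divided power structure (the key point being that a section of a PD-nilpotent ideal behaves like a nilpotent of controlled order).

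\textbf{Main steps.} (1) Set up the functor $j^*$ precisely and its putative quasi-inverse: given a crystal $G$ on $X_0/S$, define a crystal $\widetilde{G}$ on $X/S$ by $\widetilde{G}(U \subset V) := G(U_0 \subset V)$ where $U_0 = U \times_X X_0$ — here one must check $U_0 \hookrightarrow V$ is still a PD-nilpotent immersion over $S$ with $U_0 \to X_0$, and that $U \subset V$ being PD-nilpotent over $S$ forces $U_0 = U$ set-theoretically, so this is not a loss of information. (2) Verify $\widetilde{G}$ satisfies the crystal cocycle condition — this is formal from that of $G$ since the construction is functorial in $(U \subset V)$. (3) Check $j^* \widetilde{G} \cong G$: this is essentially a tautology since for $(U \subset V)$ over $X_0$ we have $U_0 = U$. (4) Check $\widetilde{j^* F} \cong F$ for a crystal $F$ on $X/S$: one needs $F(U \subset V) \cong F(U_0 \subset V)$ naturally, i.e. that the closed immersion $(U_0 \subset V) \hookrightarrow (U \subset V)$ (which is a morphism in $\mathrm{Crys}(X/S)$ because $U_0 \to U$ is a nilpotent PD-thickening and $V \to V$ is the identity, compatibly over $X$) induces an isomorphism on $F$. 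This last point is where the crystal property does the work: $F$ assigns compatible sheaves to all objects, and a crystal is insensitive to a PD-nilpotent thickening of the base scheme $U$ inside a \emph{fixed} ambient $V$ — precisely the defining insensitivity of crystals, applied to the morphism $\mathrm{id}_V$ viewed as a map $(U_0 \subset V) \to (U \subset V)$ in the site.

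\textbf{The main obstacle.} The delicate step is (4), and more precisely checking that $(U_0 \subset V) \to (U \subset V)$ with underlying map $\mathrm{id}_V$ is genuinely a morphism in $\mathrm{Crys}(X/S)$ — one must confirm the PD structure on the ideal of $U$ in $V$ restricts compatibly to that on the ideal of $U_0$ in $V$, and that the retraction/$X$-scheme structures match. Given that, the isomorphism $F(U_0 \subset V) \cong F(U \subset V)$ is \emph{not} automatic from abstract nonsense — it is exactly the statement that $F$ extends to such morphisms and sends $\mathrm{id}_V$-type morphisms to isomorphisms, which for crystals follows because the structure sheaf of $V$ is unchanged. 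I would handle this by invoking the explicit description in Lemma 4.2: reduce to the case $X/S$ smooth (the general case being packaged by that lemma's equivalence with isomorphisms over the PD-completion of the diagonal), and then the whole proposition becomes the statement that $\mathcal{P}_X(X \times_S X)$ and $\mathcal{P}_{X_0}(X_0 \times_S X_0)$ carry equivalent data of ``stratified vector bundles,'' which follows from the base-change results of Section 2.2 — in particular Corollary 2.9 and Proposition 2.10 — applied to the PD-thickening $S_0 \subset S$. For the non-smooth case I would follow \cite{Simpson5} or the classical reference (Berthelot--Ogus) rather than reprove it from scratch, noting the argument is the standard one for invariance of the crystalline site under PD-thickenings of the base.
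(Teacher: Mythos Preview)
Your proposal is correct and takes essentially the same approach as the paper: define site-level functors $a:\Crys(X_0/S)\to\Crys(X/S)$ by $(U\subset V)\mapsto(U\subset V)$ and $b:\Crys(X/S)\to\Crys(X_0/S)$ by $(U\subset V)\mapsto(U_0\subset V)$ with $U_0=U\times_X X_0$, observe $ba=\mathrm{id}$, and for $b^*a^*F\cong F$ use the crystal isomorphism along the site morphism $(U_0\subset V)\to(U\subset V)$ over $\mathrm{id}_V$. Your ``main obstacle'' is overthought, however: the paper dispatches step (4) in one line, since $\mathrm{id}_V$ trivially respects the PD structure and $U_0\to U$ is an $X$-morphism, so the crystal axiom gives $F(U_0\subset V)\simeq F(U\subset V)$ directly --- no reduction to the smooth case via Lemma~4.2 or the base-change results of Section~2 is needed.
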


\thispagestyle{plain}

Note that $X_0/S$ is likely not smooth.
\begin{proof}
Have functors $a: \Crys(X_0 /S) \to  \Crys(X /S)$ defined by $a(U \subset V)= (U \subset V)$ and $b: \Crys(X /S) \to  \Crys(X_0 /S)$ by $b(U \subset V)= (U \times_{X} X_0 \subset V)$. Let $U_0=U \times_{X} X_0$.

By pullback, one produces a functor $a^*$ from crystals on $X/S$ to crystals on $X_0/S$ and $b^*$ from crystals on $X_0/S$ to crystals on $X/S$. Now $ba$ is the identity so $a^* b^* $ is the identity. 

On the other hand, $b^* a^* F (U \subset V) = F( U_0 \subset V)$. But using the natural map $( U_0 \subset V) \to ( U \subset V)$ the crystal gives us an isomorphism $F( U_0 \subset V) \simeq F( U \subset V) $. This is a natural isomorphism from $b^* a^*$ to the identity. So $a^*,b^*$ are equivalences.
\end{proof}

\subsection{The Gauss-Manin connection on non-abelian de Rham cohomology}
Define \textit{non-abelian de Rham cohomology}, $M_{dR}(X/S)$, to be the moduli stack of vector bundles with integrable connection over $X/S$. That is, to a test scheme $R\to S$, $M_{dR}(X/S)$ associates the category whose objects are vector bundles with integrable connection on $X \times_S R/R$ and whose morphisms are isomorphisms of vector bundles respecting the connection.

In the next result, I use the crystalline interpretation of vector bundles with \\ integrable connection to explicitly describe the Gauss-Manin connection on $M_{dR}$.

\begin{theorem}
$M_{dR} \to S$ has an integrable connection relative to $S/T$. That is, there exists a canonical morphism satisfying cocycle conditions,

\[ \phi: pr_1^* M_{dR}(X/S) \to pr_2^* M_{dR}(X/S)  \]

where $pr_i: (S \times_T S)_{PD}^\wedge  \to S$ are the two projections from the PD formal completion of the diagonal.

\end{theorem}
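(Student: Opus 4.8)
The plan is to produce the Gauss--Manin connection by combining the crystalline description of vector bundles with integrable connection (Corollary~4.3) with Grothendieck's invariance theorem for crystals (Proposition~4.4), working one divided-power level at a time and then passing to the limit.

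First I would set up finite levels. Write $P^n := P^n_S(S\times_T S)$; then $S\hookrightarrow P^n$ is a nilpotent PD thickening --- its ideal is $J/J^{[n+1]}$, a PD ideal with $(J/J^{[n+1]})^{n+1}=0$ since $J^{n+1}\subseteq J^{[n+1]}$ --- and it carries two retractions $q_1,q_2: P^n\to S$ with each $q_i$ restricting to $\mathrm{id}_S$ on $S$. Since $M_{dR}(X/S)$ is a moduli stack it commutes with base change on $S$, so $q_i^*M_{dR}(X/S)\simeq M_{dR}(X_i/P^n)$ directly from the definition, where $X_i := X\times_{S,q_i}P^n$. Both $X_i$ are smooth over $P^n$ (base change of the smooth $X/S$), and both restrict, over $S\subset P^n$, to one and the same scheme $X_0 := X$.

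Next I would construct $\phi_n : q_1^*M_{dR}(X/S)\to q_2^*M_{dR}(X/S)$ over $P^n$. Applying Corollary~4.3 in families over $P^n$ identifies $M_{dR}(X_i/P^n)$, as a stack over $P^n$, with the stack of crystals of vector bundles on $X_i/P^n$. By Proposition~4.4, restriction along the PD thickening $X_0\hookrightarrow X_i$ is an equivalence from the latter onto the stack of crystals of vector bundles on $X_0/P^n$ --- and this target stack is the \emph{same} for $i=1$ and $i=2$. Composing the resulting equivalences (together with a quasi-inverse of the one for $i=2$) produces $\phi_n$, which is moreover an isomorphism. To see that $\phi_n$ is a genuine $1$-morphism of stacks over $P^n$, and not merely an equivalence of the fibre categories over the single object $P^n$, one checks that the whole chain is compatible with pullback along an arbitrary $R\to P^n$; the only point needing attention is that $X\times_S(R\times_{P^n}S)\hookrightarrow X_i\times_{P^n}R$ is again a PD thickening, which I would deduce from the flatness of $X_i/P^n$ together with the flatness and base-change statements for PD envelopes in Section~2 (Proposition~2.10, Corollary~2.9).

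Finally, the $\phi_n$ are compatible with the closed immersions $P^{n-1}\hookrightarrow P^n$ (again by pullback-compatibility of the construction), so $\phi := \varprojlim_n\phi_n$ is an isomorphism $pr_1^*M_{dR}(X/S)\xrightarrow{\sim}pr_2^*M_{dR}(X/S)$ over $(S\times_T S)^\wedge_{PD}$. For the cocycle conditions: restriction to the diagonal $S\hookrightarrow(S\times_T S)^\wedge_{PD}$ turns every $X_i$ back into $X_0$ and every equivalence in the chain into the identity, so $\phi$ is the identity there; and the relation $(p_{23}^*\phi)(p_{12}^*\phi)=p_{13}^*\phi$ on $(S\times_T S\times_T S)^\wedge_{PD}$ follows from the transitivity (functoriality) of the Grothendieck restriction equivalences along the three diagonal sub-thickenings, after identifying the relevant PD envelopes of the triple product via Corollary~2.9. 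I expect this last verification --- keeping careful track of which PD neighbourhood each crystal is being regarded over, so that the two stratifications genuinely agree --- to be the main obstacle; everything else is formal once the crystalline dictionary and the base-change machinery of Section~2 are in hand.
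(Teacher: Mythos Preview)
Your approach is correct and uses the same two key ingredients as the paper's proof: Corollary~4.3 (vector bundles with integrable connection $=$ crystals) and Proposition~4.4 (Grothendieck invariance under nilpotent PD thickenings). The organization differs slightly: rather than working directly at each $P^n$ with its two retractions, the paper first introduces an auxiliary functor $M_{crys}$ defined on \emph{all} of $\Crys(S/T)$ by $M_{crys}(S'_0\subset S')(R') = (\text{crystals on }X'_0\times_{S'}R'/R')$, proves this is a crystal of functors, and only then restricts to $\Crys^r(S/T)$ to match it with pullbacks of $M_{dR}$ via Lemma~4.2. The advantage of that packaging is that the cocycle conditions come for free from the crystalline property of $M_{crys}$, whereas in your route they must be checked by hand on the triple product (the step you correctly flag as the main obstacle). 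Conversely, your version is more explicit and avoids introducing an auxiliary object; it is essentially the content of Lemma~4.2 unwound in this specific situation. Both arguments face the same minor technical point---that the PD structure on $S\hookrightarrow P^n$ must extend after pulling back along an arbitrary $R\to P^n$ so that Proposition~4.4 applies fibrewise---and both treat it somewhat implicitly.
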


\begin{proof}
We want to show this isomorphism between the two pullbacks exists by an \\ indirect method, via lemma 4.2. This involves appealing to the crystalline site $\Crys(S/T)$. Now $M_{dR}(X/S)$ cannot be defined on an element of the crystalline site $(S'_0 \subset S')$ as $S'$ is not a priori an $S$-scheme. So we introduce a new stack $M_{crys}$, which turns out to be a crystal of functors. We then restrict $M_{crys}$ to $\Crys^r(S/T)$ and there show it agrees with $M_{dR}$. It makes sense to define $M_{dR}$ on the restricted crystalline stack, as any element $(S'_0 \subset S') \in \Crys^r(S/T)$ has the property that $S'$ can be thought of as an $S$-scheme by a retraction. This will show $M_{dR}$ has the structure of a restricted crystal of functors and give us the Gauss-Manin connection.  

\thispagestyle{plain}

Let $(S'_0 \subset S')$ be an element of $\Crys(S/T)$. Define a functor \\ $M_{crys}(S'_0 \subset S') : Sch/S' \to Sets$ as follows: for $R' \to S'$ an $S'$-scheme define,
\[ M_{crys}(S'_0 \subset S')(R' \to S')=(\text{category of crystals of vector bundles on } X'_0 \times_{S'} R'/R')\]




As our first step, we show that this is a crystal of functors on $\Crys(S/T)$. So consider a morphism in this category,

\begin{center}
    
\begin{tikzcd}

S' \arrow[r,"f"]  & S''  \\
 
S_0' \arrow[hookrightarrow]{u} \arrow[r] & S_0'' \arrow[hookrightarrow]{u}

\end{tikzcd}  
\end{center}

Next, we compute the pullback functor $f^* M_{crys}(S''_0 \subset S''): Sch/S' \to Sets$. To an $S'$-scheme $R' \xrightarrow{j} S'$ it assigns,

\[  f^* M_{crys}(S''_0 \subset S'')( R' \xrightarrow{j} S') = M_{crys}(S''_0 \subset S'')( R' \xrightarrow{j} S' \xrightarrow{f} S'') \]

the right hand side given by the category of crystals on $X_0'' \times_{S'', f\circ j} R'/R'$. 

On the other hand, $M_{crys}(S'_0 \subset S')( R' \xrightarrow{j} S')$ is given by the category of crystals on $X_0' \times_{S',  j} R'/R'$. 

\thispagestyle{plain}

To prove the crystalline property we must prove these two categories are equivalent. For simplicity, I will first prove this equivalence in the simplest case, $X=S, R'=S'$ and next in general. The claimed result in this simplest case is an equivalence:

\[(\text{crystals on } S_0'' \times_{S'',f} S'/S') \simeq  (\text{crystals on } S_0'/S') \]

This motivates the use of proposition 4.4 but it is not immediate. First, the commutative diagram above induces a map to the fibre product $u: S_0' \to S_0'' \times_{S'',f} S'$. Then form the following diagram:

\begin{center}
    
\begin{tikzcd}

S' \arrow[dr] &   \\
 
f^{-1}(S_0'')=S_0'' \times_{S'',f} S' \arrow[r] \arrow[hookrightarrow]{u}{v} & S' \\

S_0' \arrow[hookrightarrow]{u}{u} \arrow[ur]

\end{tikzcd}  
\end{center}

This is a tower of closed immersions defined by nilpotent ideals equipped with a PD structure. The PD structure on the ideal of $S'$ cutting out $S'_0$ restricts to the subideal that cuts out $S_0'' \times_{S'',f} S'$ and then the quotient ideal that cuts out $S_0'$ inside $S_0'' \times_{S'',f} S'$. Now proposition 4.4 gives us the claimed result.

\thispagestyle{plain}

In general, once again considering $X, R'$ we have 

\begin{center}
    
\begin{tikzcd}

X_0' \times_{S''} S_0'' =X_0'' \times_{S'', f\circ j} R'  \arrow[r] & R' \\

X_0' \times_{S'} R' \arrow[hookrightarrow]{u}{u} \arrow[ur]

\end{tikzcd}  
\end{center}

in which the vertical map is a base change of a closed immersion with a PD \\ thickening. Then proposition 4.4 tells us the desired equivalence,

\[ (\text{crystals on } X_0'' \times_{S'', f\circ j} R'/R') \simeq  (\text{crystals on } X_0' \times_{S'} R'/R') \]

This concludes the proof that there is a natural isomorphism $$ M_{crys}(S'_0 \subset S')\simeq f^* M_{crys}(S''_0 \subset S'')$$ 

So $F=M_{crys}$ is a crystal of functors on $\Crys(S/T)$. Simply restrict this structure to get a restricted crystal of functors on $\Crys^r(S/T)$. Let $(S'_0 \subset S')$ be an element of $\Crys^r(S/T)$ with some retraction $\rho: S' \to S$. Hence there exists a pullback $X'= X \times_{S,\rho} S'$. Now invoke proposition 4.4 and corollary 4.3. They show

\[M_{crys}(S_0 \subset S')(R' \to S') =(\text{crystals on } X_0' \times_{S'} R'/R') = (\text{crystals on } X' \times_{S'} R'/R') \] 
\[= (\text{vbic on } X' \times_{S'} R'/R') = M_{dR}(X/S)(R' \to S' \xrightarrow{\rho} S) = \rho^* M_{dR}(X/S)(R' \to S')\]

\thispagestyle{plain}

This shows that the functor $M_{dR}(X/S) \to S$ and its pullbacks by the retractions $\rho$ form a restricted crystal of functors over $\Crys^r(S/T)$. By the proof of lemma 4.2, one can explicitly construct an isomorphism between the two pullbacks of $(S \times_T S)_{PD}^\wedge$ satisfying the cocycle conditions.

\end{proof}


    

 



\pagebreak

\section{Groupoids, stacks, and stratifications}

In this section, I will introduce notions of groupoids, fibred categories, their \\ associated stacks, and stratifications. All these notions will be developed in the \\ general case and subsequently applied to the de Rham, Dolbeault and Hodge groupoids. Proposition 5.12 will provide a generalization of the existence of the Gauss-Manin connection proven in the previous section.

\thispagestyle{plain}

\subsection{Groupoids}
Define a \textit{groupoid category} to be a small category $\mathcal{C}$ in which all morphisms are isomorphisms. This induces an equivalence relation on $Ob(\mathcal{C})$ in which two objects are equivalent if they are isomorphic.

Now let $B$ be an arbitrary base scheme, and define a \textit{groupoid scheme} over $B$ to be a tuple $(X,N,s,t,e,c,i)$ satisfying various conditions. Essentially, $X,N$ are (formal) $B$-schemes such that for any $B$-scheme $R$, the set of $R$-valued points becomes a groupoid category with object set $X(R)$ and morphism set $N(R)$. \\ $s,t$ associate to a morphism its source and target respectively, so $s,t: N \to X$. \\ $e$ associates to an object its identity morphism, so $e: X \to N$. \\ $c$ serves as the composition of two morphisms $(f,g) \mapsto f \circ g$. This composition only makes sense if the target of $g$ is the source of $f$, so $c: N \times_{s,X,t} N \to N$. \\ $i$ associates to a morphism its inverse, so $i: N \to N$. 

\begin{definition}
A \textit{groupoid scheme} over $B$ is a tuple $(X,N,s,t,e,c,i)$, with $X,N$ (formal) schemes over $B$, and morphisms of $B$-schemes $s,t: N \to X, e: X \to N, \\ c: N \times_{s,X,t} N \to N, i: N \to N$, satisfying a list of properties that make $(X(R), N(R))$ a groupoid category for any $B$-scheme $R$.
\end{definition}

It is tedious to write down all these axioms. For example, $s \circ e = t \circ e = id: X \to X$ expresses the fact that for an object $A$, $id_A$ has source and target $A$, while the associativity law becomes $c \circ (id, c) = c \circ (c,id): N \times_{s,X,t} N \times_{s,X,t} N \to N$.

\begin{definition}
A morphism of groupoids $(X_1, N_1) \to (X_2,N_2)$ is a pair of \\ morphisms of (formal) schemes $f:X_1 \to X_2, g: N_1 \to N_2$ that induces functors between the categories $(X_1(R),N_1(R)) \to (X_2(R), N_2(R))$. I will not write down all the definitions between the axioms; for instance, $f$ and $g$ must intertwine between source and target morphisms.
\end{definition}

\thispagestyle{plain}

\subsection{Fibred categories and stacks}

Let $\mathcal{C}$ be a base category, in practice always $Sch/B$. Informally, a fibred category $\mathcal{F}$ over $\mathcal{C}$, together with a choice of cleavage, consists of the data, for each $U \in Ob(\mathcal{C})$, of a category $\mathcal{F}(U)$, and for each morphism in $\mathcal{C},f: U \to V$, of a pullback functor $f^*: \mathcal{F}(V) \to \mathcal{F}(U)$. 

Recall that a presheaf is a functor $F: \mathcal{C}^{op} \to (Sets)$. Then a fibred category is also called a prestack, and is informally equivalent to a functor, formally a pseudo-functor, $\mathcal{F}: \mathcal{C}^{op} \to (Cat)$ from $\mathcal{C}$ to the category of small categories. 

Continuing the analogy, let $\mathcal{C}$ be a site, in practice always $(Sch/B)_{fppf}$. Then a \textit{stack} over $\mathcal{C}$ is a fibred category satisfying a sheaf condition for all covers in the site. Much like in the case of sheaves, there is a stack $\mathcal{F^+}$ associated to any prestack, called the stackification. There is always a natural morphism $\pi: \mathcal{F} \to \mathcal{F^+} $ of fibred categories. A fibred category is said to be fibred in groupoids if all categories $\mathcal{F}(U)$ are groupoid categories.

A detailed but accessible treatment can be found in \cite{TBL}. One of the important results proven there is the existence of fibre products, more precisely $2$-fibre products, of fibred categories fibred in groupoids. 

\begin{prop}
Let $\mathcal{F}, \mathcal{G},\mathcal{H}$ be fibred categories fibred in groupoids over $\mathcal{C}$ with morphisms $\mathcal{F} \xrightarrow{f} \mathcal{H}$, $\mathcal{G} \xrightarrow{g} \mathcal{H}$. Then the fibre product $\mathcal{F} \times_{\mathcal{H}} \mathcal{G}$ exists. Its objects over some $U \in Ob(\mathcal{C})$ consist of triples $(x,y,\alpha)$ with $x \in \mathcal{F}(U), y \in \mathcal{G}(U)$ and $\alpha$ an isomorphism in $\mathcal{H}(U)$ between $f(x) \simeq g(y)$.
\end{prop}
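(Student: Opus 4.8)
The plan is to exhibit the fibre product concretely and then verify the defining properties of a $2$-fibre product, namely: (i) the construction is a category fibred in groupoids over $\mathcal{C}$; (ii) it carries two projection morphisms together with a natural isomorphism making the relevant square $2$-commute; and (iii) it is $2$-universal for such data.

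First I would define a category $\mathcal{P}$. An object over $U \in Ob(\mathcal{C})$ is a triple $(x,y,\alpha)$ with $x \in \mathcal{F}(U)$, $y \in \mathcal{G}(U)$ and $\alpha : f(x) \xrightarrow{\sim} g(y)$ an isomorphism in $\mathcal{H}(U)$. A morphism $(x,y,\alpha) \to (x',y',\alpha')$ lying over $\phi : U \to U'$ in $\mathcal{C}$ is a pair $(\xi, \eta)$ with $\xi : x \to x'$ in $\mathcal{F}$ and $\eta : y \to y'$ in $\mathcal{G}$, both over $\phi$, subject to the compatibility $g(\eta) \circ \alpha = \alpha' \circ f(\xi)$ in $\mathcal{H}$. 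Composition and identities are componentwise, and the compatibility condition is visibly stable under composition, so $\mathcal{P}$ is a category and $(x,y,\alpha) \mapsto U$ is a functor $\mathcal{P} \to \mathcal{C}$. Next I would check this functor is a fibred category. Given $(x',y',\alpha')$ over $U'$ and $\phi : U \to U'$, choose cartesian lifts $\xi : \phi^*x' \to x'$ and $\eta : \phi^* y' \to y'$ using the cleavages of $\mathcal{F}$ and $\mathcal{G}$. Since $f$ and $g$ are morphisms of fibred categories they preserve cartesian arrows, so $f(\xi)$ and $g(\eta)$ are again cartesian, and hence there is a unique isomorphism $\alpha : f(\phi^* x') \xrightarrow{\sim} g(\phi^* y')$ over $U$ with $g(\eta) \circ \alpha = \alpha' \circ f(\xi)$; this makes $(\phi^* x', \phi^* y', \alpha)$ an object of $\mathcal{P}$ and $(\xi,\eta)$ a morphism to $(x',y',\alpha')$, whose cartesianness reduces, component by component, to the cartesianness of $\xi$ and $\eta$ together with the imposed square condition. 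That $\mathcal{P}$ is fibred in groupoids is then immediate: a morphism over an identity has invertible $\mathcal{F}$- and $\mathcal{G}$-components (those fibres being groupoids), and the inverse pair again satisfies the compatibility, being the inversion of a commuting square.

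It remains to produce the projections and check $2$-universality. The forgetful assignments $p : \mathcal{P} \to \mathcal{F}$, $q : \mathcal{P} \to \mathcal{G}$ are morphisms of fibred categories since the chosen cartesian arrows were built out of cartesian arrows in the factors, and the collection of the $\alpha$'s assembles into a natural isomorphism $\theta : f \circ p \Rightarrow g \circ q$. For universality, given a category $\mathcal{T}$ fibred in groupoids with morphisms $a : \mathcal{T} \to \mathcal{F}$, $b : \mathcal{T}\to \mathcal{G}$ and a $2$-isomorphism $\beta : f\circ a \Rightarrow g \circ b$, define $\langle a,b,\beta\rangle : \mathcal{T} \to \mathcal{P}$ on an object $t$ over $U$ by $(a(t), b(t), \beta_t)$ and on morphisms by applying $a$ and $b$; naturality of $\beta$ is exactly the square condition needed. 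One then checks that $p \circ \langle a,b,\beta\rangle$ and $q \circ \langle a,b,\beta\rangle$ recover $a$ and $b$, that $\theta$ pulls back to $\beta$, and that any competitor morphism $\mathcal{T} \to \mathcal{P}$ inducing the same data is uniquely $2$-isomorphic to $\langle a,b,\beta\rangle$, the components of the $2$-isomorphism being forced.

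I expect the genuine difficulty — everything else being routine diagram-checking — to be the $2$-categorical bookkeeping: tracking the coherence isomorphisms relating iterated pullbacks, so that $p$ and $q$ are compatible with the cleavages and so that the universal property is correctly formulated and verified up to unique $2$-isomorphism rather than strict equality. For these coherence details I would defer to the treatment in \cite{TBL}.
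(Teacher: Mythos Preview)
Your proposal is correct and is essentially the standard construction; the paper itself gives no proof beyond citing \cite{TBL}, so your write-up is in fact more detailed than what appears in the text. The only caveat is the one you already flag: the coherence bookkeeping is where any real content lies, and the paper, like you, defers that to the reference.
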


 Given a groupoid scheme $(X,N)$, we can consider the following category fibred in groupoids over $B$:

\[(Sch/B)_{fppf}^{op} \to Groupoids \]
\[ R \mapsto (X(R), N(R), s,t,e,c,i) \]

\thispagestyle{plain}

\begin{definition}
Denote this fibred category, that is, prestack, by $[X/_p N]$ and the corresponding stackification by $[X/N]$. This is said to be the \textit{quotient stack} of the groupoid.
\end{definition}

This stackification is a very natural object: informally, one ``sheafifies" the \\ ``functor" that associates the ``set" of isomorphism classes of elements $X(R)$ where isomorphisms are given by $N(R)$.

There are two natural associated morphisms. One is a rather trivial morphism of groupoid schemes $(X,X,id,id,id,id,id) \xrightarrow{(id,e)} (X,N,s,t,e,c,i) = [X/_p N]$ and the other is stackification $[X/_p N] \to [X/N]$. Compose these for a canonical morphism $\pi: X \to [X/N]$.

\subsection{Stratifications}
Using the formalism of groupoids, we can codify the \\ isomorphism between two pullbacks satisfying cocycle conditions that we introduced in previous sections.

\begin{definition}
Let $\mathcal{X}=(X,N,s,t,e,c,i)$ be a groupoid, and $M$ an $\mathcal{O}_X$-module. A \textit{stratification} is a morphism $\phi: t^* M \to s^* N$ of $\mathcal{O}_N$-modules satisfying conditions equivalent to the cocycle conditions. That is,

$e^* \phi : M \to M$ is the identity, and there is a commutative diagram of \\ $N \times_{s,X,t} N$ - modules as follows:

\begin{center}
    
\begin{tikzcd}

 & pr_2 ^* t^* M \arrow[r, "pr_2^* \phi"] & pr_2 ^* s^* M \arrow[dr, equal] & \\
pr_1 ^* s^* M \arrow[ur, equal] & &  & c^* s^* M \\
& pr_1 ^* t^* M \arrow[ul, "pr_1^* \phi"] \arrow[r, equal]  & c^* t^* M \arrow[ur, "c^* \phi"]&

\end{tikzcd}  
\end{center}

Informally, these axioms just express the cocycle conditions. $\phi$ allows you to travel back along a morphism from target to source, expressed in this order due to \\ convention, and this diagram says that using $\phi$ to travel in this way commutes with composition of morphisms in $N$. 
\thispagestyle{plain}

From the groupoid relations, one deduces $\phi$ is necessarily invertible with inverse $i^* \phi$, as also follows from the cocycle conditions of previous sections.

\end{definition}

\begin{definition}
A module over $\mathcal{X}=(X,N,s,t,e,c,i)$ is a pair $(M, \phi)$ of an \\ $\mathcal{O}_X$-module and a stratification.
\end{definition}

\subsection{Properties of groupoids, stacks, and stratifications}
In this section, we prove the properties of these objects we need. I have adopted proofs from \cite{stacks-project} and \cite{Thanos}.

\begin{lemma}
Let $\mathcal{X}$ be a groupoid scheme, $[\mathcal{X}]$ the stackification, and $Y$ a $B$-scheme. Then there is a one-to-one correspondence between morphisms of stacks $[\mathcal{X}] \to Y$ and morphisms of schemes $f: X \to Y$ that equalize $s,t$, namely, $f \circ s=f \circ t: N \to Y$.
\end{lemma}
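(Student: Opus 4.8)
The plan is to set up a bijection between the two kinds of morphisms by exhibiting maps in both directions and checking they are mutually inverse. First I would handle the easy direction: given a morphism of stacks $g: [\mathcal{X}] \to Y$, precompose with the canonical morphism $\pi: X \to [\mathcal{X}]$ constructed above (the composite of $(X,X,id,\ldots) \to [X/_p N]$ with stackification) to obtain $f := g \circ \pi: X \to Y$. Because $s$ and $t$ become equal after passing to the quotient stack — the two composites $N \rightrightarrows X \to [\mathcal{X}]$ agree up to the canonical $2$-morphism, but since $Y$ is a scheme (hence a stack with no non-trivial automorphisms) this forces $f \circ s = f \circ t$ as honest morphisms of schemes — this $f$ equalizes $s,t$. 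This uses only that $Y$, viewed as a fibred category, is fibred in setoids.

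For the reverse direction, suppose $f: X \to Y$ satisfies $f\circ s = f \circ t$. I want to produce $g: [\mathcal{X}] \to Y$. Since $Y$ is a sheaf (representable, hence an fppf sheaf) and $[\mathcal{X}]$ is the stackification of the prestack $[X/_p N]$, it suffices by the universal property of stackification to define a morphism of prestacks $[X/_p N] \to Y$, i.e. a morphism of fibred categories to the fibred category associated to $Y$. Concretely: for each $B$-scheme $R$ and each object $x \in X(R)$ (an object of $[X/_p N](R)$), set $g(x) := f \circ x \in Y(R)$; for each morphism $n \in N(R)$ from $x$ to $x'$ (so $s(n) = x$, $t(n) = x'$), the hypothesis $f \circ s = f \circ t$ gives $f \circ x = f \circ x'$ in $Y(R)$, so there is a (unique, identity) morphism $g(x) \to g(x')$ in $Y(R)$. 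Functoriality and compatibility with pullback are immediate since everything in sight is just postcomposition with the fixed $f$. This defines the morphism of prestacks, which stackifies to the desired $g$.

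Finally I would check the two constructions are inverse to each other. Starting from $f$, building $g$, and restricting along $\pi$ recovers $f$ on the nose, since $\pi$ on $R$-points sends $x \in X(R)$ to $x$ viewed in $[X/_p N](R)$ and $g$ sends that to $f \circ x$. Conversely, starting from $g: [\mathcal{X}] \to Y$, forming $f = g\circ\pi$, and then rebuilding a morphism from $f$: the rebuilt morphism agrees with $g$ on the image of the prestack $[X/_p N]$, and since $Y$ is a stack (in fact a sheaf) and $[X/_p N] \to [\mathcal{X}]$ is a stackification, a morphism out of $[\mathcal{X}]$ to a stack is determined by its restriction to the prestack; hence the rebuilt morphism equals $g$.

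The main obstacle is being careful about the $2$-categorical bookkeeping: strictly, morphisms $N \rightrightarrows X$ into $[\mathcal{X}]$ are only equal up to a canonical $2$-isomorphism, so the passage from "$s,t$ equalized up to $2$-morphism after quotienting" to "$f \circ s = f \circ t$ as morphisms of schemes" must invoke the fact that the target $Y$ is a scheme — its associated fibred category is fibred in setoids, so any $2$-morphism between parallel $1$-morphisms is trivial and any two $2$-isomorphic objects over the same base are equal. Once that observation is in place the rest is the routine verification that postcomposition with $f$ assembles into a morphism of (pre)stacks and that stackification is a left adjoint, so nothing deeper is needed.
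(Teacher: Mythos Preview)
Your proof is correct and follows essentially the same approach as the paper. The paper phrases the reverse direction as promoting $f$ to a morphism of groupoid schemes $(X,N)\xrightarrow{(f,f_N)}(Y,Y)$ and then stackifying, which is exactly your morphism of prestacks written in groupoid-scheme language; you are simply more thorough in verifying the two constructions are mutually inverse and in handling the $2$-categorical point that $Y$ being a scheme forces $f\circ s = f\circ t$ on the nose, both of which the paper leaves implicit.
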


\begin{proof}
Given a morphism $f: X \to Y$ of schemes that equalizes $s,t$, let \\ $f_N=f \circ s=f \circ t$. Then the morphism

\[ (X,N) \xrightarrow{(f,f_N)} (Y,Y)  \]

is a morphism of groupoid schemes. Passing to stackification induces a morphism $[\mathcal{X}] \to Y$.

Conversely, given a morphism of stacks $[\mathcal{X}] \to Y$, simply compose with the natural map $\pi: X \to [\mathcal{X}] $.

\end{proof}
\thispagestyle{plain}

\begin{definition}
Let $\mathcal{X}$ be a groupoid scheme with $s=t$. I will call such a groupoid scheme a \textit{loop groupoid scheme}. In this case, there is a section $\sigma: [ \mathcal{X} ] \to X$ of the canonical map $\pi: X \to [\mathcal{X}]$. Call this the \textit{trivializing section} of $\mathcal{X}$. 
\end{definition}

\begin{proof}
Take $f=id_X$ in the previous lemma. Essentially, this trivializing section is a forgetful functor that maps a looped category to its set of objects.
\end{proof}

\begin{remark}
Given a loop groupoid, that is $s=t$, it makes sense to ask whether a stratification $\phi$ is the identity. We call this the trivial stratification.
\end{remark}

\begin{prop}
Let $\mathcal{X}=(X,N)$ be a groupoid as above, and $\pi: X \to [X/N]$ the canonical map. Then there is an equivalence between stacks $M' \to [X/N] $, and stacks $M \to X$ with a stratification with respect to $\mathcal{X}$. In one direction, given $M'$ simply pullback to $M=\pi^* M'$.
\end{prop}
\begin{proof}
See \cite{stacks-project}, tag 06WT. In one direction, given $M' \xrightarrow{f} [\mathcal{X}] $, take \\ $M=\pi^* M' = M' \times_{f, [X/N], \pi} X$. Define the stratification by

\[s^* M =  M' \times_{f, [X/N], \pi \circ s} X \xrightarrow{\phi^{-1}} M' \times_{f, [X/N], \pi \circ t} = t^* M\]
\[ (m',n, \alpha: f(m') \to \pi(s(n))) \mapsto (m',n, \pi(n) \circ \alpha: f(m') \to \pi(t(n)))    \]

Here $m'$ is an object of $M'$, $n$ is an object of $N$, $\alpha$ an isomorphism in $[X/N]$. Note $\pi: (X,N) \to [X/N]$, so $\pi(s(n)),\pi(t(n))$ are objects of $[X/N]$ while $\pi(n)$ is a morphism in $[X/N]$.

Informally, both the stratification and the descent are ways to codify the fact that one can travel between objects in $M$, along morphisms in $N$.
\end{proof}

\subsection{Local systems on groupoids}
Henceforth, assume $X$ is smooth, $N$ is formally smooth, and $X$ and $N$ have the same underlying topological space. We can recover $N$ as the fibre product $X \times_{[X/N]} X$, so often abuse notation when considering either $(X,N)$ or $[X/N]$. For notational simplicity write $X_N = [X/N]$.

\thispagestyle{plain}

We have notions such as the structure sheaf $\mathcal{O}$ on $Sch/X_N$, and the sheaf of differentials $\Omega_{X/X_N}$ obtained from $\Omega_{N/X}$ by descent from $N=X \times_{X_N} X$ to $X$. In this formalism, a local system $V$ on $X_N$ is a sheaf over $X_N$ locally isomorphic to a direct sum of finitely many copies of the structure sheaf. It is equivalent to a vector bundle $V$ over $X$ together with an $N$-connection, $\phi: V \to V \otimes \Omega_{X/X_N}$.

Finally, we have the notion of the ring of differential operators $\Lambda_N$. Let $\pi: X \to X_N$ be the canonical map. Define $\Lambda_N$ as the continuous dual of $\pi_* (\mathcal{O_N})$ where the ring structure is the dual of the coalgebra structure on $\pi_* (\mathcal{O_N})$. Then a local system $V$ on $X_N$ is equivalent to a $\Lambda_N$ - module, with action induced by $\phi$.

In the next section, I will give the examples of the above notions for the de Rham, Dolbeault and Hodge groupoids.

\subsection{Groupoid arrow functors}
Let $\mathcal{D}$ be the category of smooth separated \\ morphisms in $(Sch)^{op}$. A \textit{groupoid arrow functor} $(.)$ is a presheaf of groupoids of schemes on $\mathcal{D}$ satisfying certain base change and descent properties. That is, for each smooth separated morphism of schemes $X \to S$, we produce a groupoid scheme $(X/S)$ functorial in $X,S$. The properties we demand are as follows:

$(X \xrightarrow{id} X)$ is a trivial groupoid scheme with $e$ an isomorphism; 

for $X \to S$ in $\mathcal{D}$ and $S' \to S$ an arbitrary base change, the canonical morphism,

\[(X\times_S S'/S') \to (X/S) \times_S S' \]

is an isomorphism; 

for $X \to S \to T$ morphisms in $\mathcal{D}$, the canonical morphism of fibred categories fibred in groupoids,

\[(X/S) \to (X/T) \times_{(S/T)} (S/S) \]

is an isomorphism. Note that such fibre products make sense by proposition 5.3.

In the following examples, $(S/S)$ will be either just the scheme $S$ or $\mathbb{A}^1_S.$

\subsection{The moduli stack of a groupoid}
Let $\mathcal{X}=(X,N)$ be a groupoid scheme over $B$. Recall a module over $\mathcal{X}$ is a pair $(M,\phi)$ with $M$ a module over $X$ and $\phi$ a stratification relative to the groupoid. Define the fibred category $M\mathcal{X}$ over $Sch/B$ as follows: to a scheme $R\to B$, associate the category whose objects are (locally free finite rank) modules on the groupoid $\mathcal{X} \times_B R$ and whose morphisms are isomorphisms of modules over this groupoid.

\thispagestyle{plain}

\begin{prop}
Assume $(X,N)$ are smooth schemes over $B$. Then $M\mathcal{X}$ is an algebraic stack, which is isomorphic to the stack of (locally free finite rank) modules on $[X/N]$.
\end{prop}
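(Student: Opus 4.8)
The plan is to show that $M\mathcal{X}$ is equivalent to the stack $\mathcal{V}ect([X/N])$ of locally free finite rank modules on the quotient stack $[X/N]$, and then to invoke the fact (standard, e.g. \cite{stacks-project}) that the stack of vector bundles on an algebraic stack is algebraic. The key input for the first part is Proposition 5.11: for a fixed base, stacks (hence in particular vector bundles) over $[X/N]$ are equivalent to stacks over $X$ equipped with a stratification relative to $\mathcal{X}$. I would first record that this equivalence is compatible with base change $R \to B$, so that it upgrades to an equivalence of fibred categories over $Sch/B$. Concretely, given $R \to B$, the groupoid base-changes to $\mathcal{X} \times_B R = (X_R, N_R)$, its quotient stack is $[X/N] \times_B R$, and Proposition 5.11 applied over $R$ identifies vector bundles on $[X/N]\times_B R$ with pairs $(M,\phi)$ on $(X_R, N_R)$ — but that is exactly the definition of an object of $M\mathcal{X}$ over $R$. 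Matching morphisms on both sides (isomorphisms of modules respecting the stratification, vs. isomorphisms of sheaves on the quotient stack) is routine from the same proposition. Hence $M\mathcal{X} \simeq \mathcal{V}ect([X/N])$ as fibred categories over $Sch/B$.

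Next I would verify that $[X/N]$ is an algebraic stack. Since $X$ and $N$ are smooth over $B$, the morphisms $s,t\colon N \to X$ are smooth, and in particular flat and locally of finite presentation; the groupoid $(X,N,s,t,e,c,i)$ is therefore a smooth (in particular flat, locally of finite presentation) groupoid in schemes over $B$, so by the Artin–type criterion for quotient stacks of flat groupoids (\cite{stacks-project}, the section on quotient stacks, e.g. the analogue of tag 06DC/04TK) the stackification $[X/N]$ in the fppf topology is an algebraic stack, with $X \to [X/N]$ a smooth surjective presentation. I should be a little careful here because the paper allows $N$ to be a \emph{formal} scheme (formally smooth) rather than an honest scheme; in the cases of interest $N$ is a PD-completion of a diagonal, so one works with the ind-scheme/formal-scheme version of the groupoid and the corresponding formal algebraicity statement, or restricts to the finite levels $N^{(n)}$ and passes to the limit. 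For the purposes of this proposition I would state and use it at the level where $(X,N)$ are genuine smooth schemes, flagging the formal case as handled levelwise.

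Finally, combining the two steps: $M\mathcal{X} \simeq \mathcal{V}ect([X/N])$, and the stack of locally free sheaves of finite rank on an algebraic stack admitting a smooth presentation by a scheme is itself algebraic — one can see this by descent from the presentation $X \to [X/N]$, since $\mathcal{V}ect(X)$ is a well-known algebraic stack (an open substack of the stack of quasi-coherent sheaves of finite presentation, or explicitly a quotient $[\,\mathrm{pt}/GL_d\,]$-bundle construction over $X$) and the groupoid $N = X\times_{[X/N]} X$ acts on it by the stratification data. I expect the main obstacle to be purely bookkeeping around the formal/ind-scheme subtleties in $N$ and the verification that the equivalence of Proposition 5.11 is genuinely natural in the test scheme $R$ (i.e. that pullback functors on both sides commute on the nose with the identifications, up to coherent $2$-isomorphism); the algebraicity itself is a black-box application of standard results once the groupoid is recognized as smooth and flat.
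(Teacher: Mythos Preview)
Your proposal is correct and follows essentially the same route as the paper: the paper's proof is a terse chain of Stacks Project citations (tag 04TJ for algebraicity of $[X/N]$, tag 06WT for the equivalence with modules on the quotient stack, tags 08KC and 08KA for $M\mathcal{X}$ being an algebraic stack), which is exactly the skeleton you flesh out. One small slip: the equivalence you invoke is Proposition~5.10 in the paper, not 5.11 (which is the statement you are proving), so fix that internal reference; your remarks on the formal-scheme case and naturality in $R$ are reasonable caveats but go beyond what the paper itself addresses.
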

\begin{proof}
See \cite{stacks-project}. By tag 04TJ, $[X/N]$ is an algebraic stack. By tag 06WT, we have the isomorphism stated. By tag 08KC,  $M\mathcal{X}$ is a stack. By tag 08KA, it is an algebraic stack.
\end{proof}

In the next proposition, I give an alternative proof of the Gauss-Manin connection explicitly described for the de Rham groupoid.

\begin{prop}
Let $(.)$ be a groupoid arrow functor and $X \to S \to T$ smooth separated morphisms. For notational simplicity, let $X_{st}=[(X/T)], S_{st}= [(S/T)], \\ B = (S/S)$. Then we have a canonical isomorphism

\[ M(X/S) \simeq M(X_{st}/S_{st}) \times_{S_{st}} B     \]

as stacks over $Sch/B$.
\end{prop}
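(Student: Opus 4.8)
The plan is to identify both sides with the groupoid of locally free finite-rank modules on one and the same algebraic stack, using the transitivity axiom for the groupoid arrow functor. In outline: $M(X/S)$ is the moduli of modules on the quotient stack $[(X/S)]$; the transitivity axiom gives $[(X/S)]\simeq X_{st}\times_{S_{st}}B$; and $M(X_{st}/S_{st})\times_{S_{st}}B$ is, by definition, the moduli of modules on precisely this stack. This also recovers Theorem 4.6 as the special case obtained by base changing along the canonical map $B\to S_{st}$.

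First I would rewrite both sides as modules on quotient stacks. By Propositions 5.10 and 5.11, for a groupoid scheme $\mathcal{Y}=(Y,N)$ over a base scheme $B'$ with $Y$ smooth, $M\mathcal{Y}$ is the stack over $B'$ sending a test scheme $R\to B'$ to the groupoid of locally free finite-rank modules on $[\mathcal{Y}]\times_{B'}R$; here one uses that the quotient stack commutes with base change along schemes and that modules on a groupoid are modules on its quotient stack. Reading $M(X_{st}/S_{st})$ as the relative moduli over $S_{st}$ of locally free finite-rank modules on $X_{st}$ --- the correct nonabelian avatar of a bundle with Gauss--Manin connection, since the de Rham-type structure is already built into $X_{st}$ --- its fibre over $R\to S_{st}$ is the groupoid of such modules on $X_{st}\times_{S_{st}}R$. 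So it suffices to produce, functorially in $R\to B$, an equivalence between locally free modules on $[(X/S)]\times_B R$ and on $X_{st}\times_{S_{st}}R$.

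The substance is then the transitivity axiom, which supplies a canonical isomorphism of fibred categories in groupoids
\[ (X/S) \;\xrightarrow{\ \sim\ }\; (X/T)\times_{(S/T)}(S/S), \]
the right side being the $2$-fibre product of Proposition 5.3. I would apply the stackification functor to this: stackification is left exact, so it preserves finite $2$-limits and in particular $2$-fibre products, exactly as sheafification preserves finite limits; and the first axiom for the groupoid arrow functor forces $(S/S)$ to be a trivial groupoid scheme, whose stackification is the scheme $B=(S/S)$. This yields a canonical isomorphism of algebraic stacks over $B$
\[ [(X/S)] \;\xrightarrow{\ \sim\ }\; [(X/T)]\times_{[(S/T)]}[(S/S)] \;=\; X_{st}\times_{S_{st}}B, \]
with $B\to S_{st}$ the map induced by $(S/S)\to(S/T)$. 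Base changing along $R\to B$ and using base change for quotient stacks gives $[(X/S)]\times_B R\simeq X_{st}\times_{S_{st}}R$; passing to locally free finite-rank modules and invoking the first step completes the proof, naturally in $R$.

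The formal part --- left exactness of stackification, base change for quotient stacks, the dictionary between modules on a groupoid and on its quotient stack --- is routine and standard. The real content lives in the transitivity axiom invoked above, and for a concrete groupoid arrow functor its verification is where all the geometry sits: for the de Rham groupoid it comes down to the base-change compatibility of PD envelopes of diagonals recorded in Corollary 2.9 and Proposition 2.10. The one genuinely delicate point internal to this proof is fixing the meaning of $M(X_{st}/S_{st})$ so that the first step applies on the nose; with the interpretation above as the relative moduli of locally free modules on the stack $X_{st}$, the identification is a direct unwinding of definitions.
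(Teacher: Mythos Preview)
Your proposal is correct and follows essentially the same route as the paper's proof: both apply stackification to the transitivity isomorphism $(X/S)\simeq (X/T)\times_{(S/T)}(S/S)$, use that stackification commutes with fibre products to obtain $[(X/S)]\simeq X_{st}\times_{S_{st}}B$, and then identify $R$-points of both sides of the claimed isomorphism with modules on $X_{st}\times_{S_{st}}R$ via Proposition~5.11. Your treatment is a bit more explicit about the formal inputs (left exactness of stackification, the meaning of $M(X_{st}/S_{st})$) and about where the geometric content lives, but the argument is the same.
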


\begin{proof}

First, $(X/S) \to (S/S)$ is a groupoid over $B=(S/S)$. This is our base. Hence $M(X/S)$ is an algebraic stack over $Sch/B$.

Under the assumption that we have a canonical isomorphism

\[(X/S) \xrightarrow{\sim} (X/T) \times_{(S/T)} (S/S) \]

it follows that we also have an isomorphism for the corresponding stackifications, as stackification commutes with fibre products of fibred categories. So

\[[(X/S)] \xrightarrow{\sim} X_{st} \times_{S_{st}} B \]

Now let $R \xrightarrow{f} B$ be a scheme over $B$. Using the proposition 5.11, to give a $R$-valued point of the left hand side $M(X/S)$ is equivalent to giving a module over $[(X/S)] \times_B R$. But we can rewrite this as $X_{st} \times_{S_{st}} B \times_B R =X_{st} \times_{S_{st}, \tau \circ f} R $, where $\tau: (S/S) \to (S/T) \to [(S/T)]$. 

To give a $R$-valued point of the right hand side  $M(X_{st}/S_{st}) \times_{S_{st}} B$ is equivalent to giving an $R$-valued point of $M(X_{st}/S_{st})$ over $S_{st}$, that is, a module over the same stack $X_{st} \times_{S_{st}, \tau \circ f} R$. So the left and right hand sides are canonically identified.
\end{proof}

\thispagestyle{plain}

\begin{definition}
In the next section, we shall specialize to the de Rham and \\ Dolbeault groupoids. For the de Rham groupoid, proposition 5.12 states that $M_{dR}$ has a descent structure. This is the \textit{Gauss-Manin connection}. For the Dolbeault groupoid, proposition 5.12 states that $M_{Dol}$ has a descent structure. This is called the \textit{non-abelian Kodaira-Spencer map}.
\end{definition}

\pagebreak

\section{The de Rham, Dolbeault, and Hodge stacks}
In this section, I will introduce several objects central to this dissertation. In section 6.1, I introduce the three moduli stacks of note, $M_{dR}, M_{Dol}, M_{Hod}$. Next, I have some work to prove these stacks arise as the module stacks over three particular groupoid arrow functors, $(.)_{dR}$, $(.)_{Dol}$ and $(.)_{Hod}$. In section 6.2, I will prove some properties of Higgs Bundles and $\lambda$-connections analogous to the properties of \\ integrable connections in section 3. In sections 6.3, 6.4 and 6.5, I will define and present the de Rham, Dolbeault and Hodge groupoid.
\subsection{The moduli stacks $M_{dR}, M_{Dol}, M_{Hod}$}

\thispagestyle{plain}

\begin{definition}
$M_{dR}$ is the moduli stack of vector bundles with integrable connection over $X/S$. That is, to a test scheme $R\to S$, $M_{dR}(X/S)$ associates the category of vector bundles with integrable connection on $X \times_S R/R$, together with isomorphisms between them.
\end{definition}

\begin{definition}
A \textit{Higgs bundle} on $X/S$, or vector bundle with Higgs field, is an $\mathcal{O}_X$-module $E$ locally free of finite rank equipped with an $\mathcal{O}_X$-linear morphism \\ $\phi: E \to E \otimes \Omega_{X/S}$ satisfying an integrability condition $\phi^2=0$.

$M_{Dol}$ is the moduli stack of Higgs bundles over $X/S$. That is, to a test scheme $R\to S$, $M_{Dol}(X/S)$ associates the category of vector bundles with a Higgs field on $X \times_S R/R$, together with isomorphisms between them.

\end{definition}
\begin{definition}
$M_{Hod}$ is the moduli stack of $\lambda$-connections over $\mathbb{A}^1_S$. That is, to a test scheme $\lambda: R \to \mathbb{A}^1_S$, $M_{Hod}(X/S)$ associates the category of vector bundles $E$ on $X \times_S R$ over $R$ equipped with a $\lambda$-connection, always assumed to be integrable,

\[\nabla: E \to E \otimes \Omega_{X \times R/R} \]

such that $\nabla(a e)=\lambda e \otimes da + a\nabla(e) $ and $\nabla^2=0$.
\end{definition}

\begin{remark}
The universal case is $R=S\times \mathbb{A}^1$ and then a $\lambda$-connection is a vector bundle $E$ over $X \times \mathbb{A}^1$ with a morphism 

\[\nabla: E \to E \otimes \Omega_{X \times \mathbb{A}^1/S \times \mathbb{A}^1} \]

satisfying the same two conditions.
\end{remark}

The key diagram to keep in mind is the following

\begin{center}
    
\begin{tikzcd}

M_{dR} \arrow[hookrightarrow]{r} \arrow[d] & M_{Hod} \arrow[d] & M_{Dol} \arrow[hookrightarrow]{l} \arrow[d]\\
S \arrow[hookrightarrow]{r}{1} & \mathbb{A}^1_S & S\arrow[hookrightarrow]{l}{0}

\end{tikzcd}  
\end{center}

\thispagestyle{plain}
\subsection{Properties of Higgs bundles and $\lambda$-connections}
In this section, we show analogous results of section 3 for Higgs bundles and $\lambda$-connections. In what follows, let $\Omega^r=\Omega^r_{X \times \mathbb{A}^1/S \times \mathbb{A}^1}$ and  $\mathcal{A}^r= \Omega^r\otimes E$, understood as sheaves, with all tensor products taken over $X \times \mathbb{A}^1.$ Let $\nabla$ be a $\lambda$-connection on $E$. Specializing to $\lambda=1$ will recover statements for ordinary integrable connections, and $\lambda=0$ will recover statements for Higgs bundles.

\begin{prop}
There exists a well-defined covariant derivative

$d^E: \mathcal{A}^r \to \mathcal{A}^{r+1}$ satisfying, if $\omega \in \Omega^r, \sigma \in E$,
\[d^E(\omega \otimes \sigma)=\lambda d \omega \otimes \sigma + (-1)^r \omega \wedge \nabla \sigma\]

It has a Leibniz rule: if $\xi \in \mathcal{A}^r, \omega \in \Omega^q$ then 

\[d^E(\omega \wedge \xi)=\lambda d \omega \wedge \xi + (-1)^q w \wedge d^E \xi  \]

If $\mu \in \mathcal{A}^1$ and $A,B$ are sections of $T_X$,
\[ d^E(\mu)(A,B)=\nabla_A(\mu(B)-\nabla_B(\mu(A))-\lambda \mu ([A,B])  \]
\[ 0=(d^E)^2(\sigma)(A,B) = \nabla_A \nabla_B \sigma - \nabla_B \nabla_A \sigma - \lambda \nabla_{[A,B]}\sigma \]

Due to these identities, to give a $\lambda$-connection on $E$ is equivalent to the conditions

\[\nabla_A \nabla_B \sigma - \nabla_B \nabla_A \sigma = \lambda \nabla_{[A,B]}\]
\[ \nabla_A(f \sigma)=\lambda A(f) e + f\nabla_A(\sigma) \]

for all $A,B \in T_{X}.$ 
\end{prop}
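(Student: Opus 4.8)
The plan is to repeat, essentially verbatim, the local computations behind Lemma 3.2, Lemma 3.4 and Corollary 3.5, inserting a factor of $\lambda$ wherever it is forced by the $\lambda$-twisted Leibniz rule $\nabla(a\sigma) = \lambda\, da\otimes\sigma + a\nabla\sigma$. Everything is local, so I would fix an open set of $X\times\mathbb{A}^1$ \'etale over affine space, choose a local frame $e_1,\dots,e_d$ of $E$ and coordinates $x_1,\dots,x_n$, and argue there; each stated identity is $\mathcal{O}$-linear in the relevant variables, so checking it on such an open set suffices.

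First I would construct $d^E$. The only point is that the prescribed formula $d^E(\omega\otimes\sigma) = \lambda\, d\omega\otimes\sigma + (-1)^r\omega\wedge\nabla\sigma$ descends from the tensor product over $\mathcal{O}_{X\times\mathbb{A}^1}$, i.e.\ that it gives the same answer on $(f\omega)\otimes\sigma$ and on $\omega\otimes(f\sigma)$. Expanding the first introduces an extra term $\lambda\,(df\wedge\omega)\otimes\sigma$, while the second introduces $(-1)^r\lambda\,\omega\wedge(df\otimes\sigma)$, and these agree because $\omega\wedge df = (-1)^r df\wedge\omega$ for $\omega$ of degree $r$. It is worth observing that the $\lambda$ comes out of $d(f\omega)$ and out of $\nabla(f\sigma)$ in exactly the same way, which is precisely why well-definedness survives the twist. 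The Leibniz rule for $d^E$ then follows by bilinearity: reduce to $\xi = \eta\otimes\sigma$, expand $d(\omega\wedge\eta) = d\omega\wedge\eta + (-1)^q\omega\wedge d\eta$, and match the two sides term by term once the signs and $\lambda$'s are lined up.

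Next I would prove the two-variable identity. By additivity it suffices to treat $\mu = \eta\otimes\sigma$ with $\eta$ a $1$-form, so $d^E\mu = \lambda\, d\eta\otimes\sigma - \eta\wedge\nabla\sigma$. Evaluating on sections $A,B$ of $T_X$ I would use the Cartan-type formulas $(d\eta)(A,B) = A(\eta(B)) - B(\eta(A)) - \eta([A,B])$ and $(\eta\wedge\nabla\sigma)(A,B) = \eta(A)\nabla_B\sigma - \eta(B)\nabla_A\sigma$, while on the other side $\nabla_A(\mu(B)) = \nabla_A(\eta(B)\sigma) = \lambda\, A(\eta(B))\,\sigma + \eta(B)\nabla_A\sigma$ by the twisted Leibniz rule; subtracting the analogous expression for $\nabla_B(\mu(A))$ and then $\lambda\eta([A,B])\sigma$ recovers $d^E\mu(A,B)$. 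Applying this to $\mu = \nabla\sigma\in\mathcal{A}^1$ yields $(d^E)^2(\sigma)(A,B) = \nabla_A\nabla_B\sigma - \nabla_B\nabla_A\sigma - \lambda\nabla_{[A,B]}\sigma$, which vanishes by integrability $\nabla^2 = 0$. Finally, the stated equivalence: an integrable $\lambda$-connection satisfies the two displayed conditions --- the first is the curvature identity just derived combined with $\nabla^2=0$, the second is the defining Leibniz rule evaluated against a vector field --- and conversely, given operators $\nabla_A$ for sections $A$ of $T_X$ obeying those conditions one reconstructs $\nabla: E\to E\otimes\Omega^1$, $\mathcal{O}$-linear in $A$ and $\lambda$-Leibniz in the module variable, exactly as in the proof of Proposition 3.8.

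There is no serious obstacle; the entire proof is bookkeeping of the factors of $\lambda$ and the Koszul signs, and the one place that genuinely uses the structure is the cancellation in the well-definedness of $d^E$ above, which is what makes the construction insensitive to the twist. As a sanity check the proposition recovers Lemma 3.2, Lemma 3.4 and Corollary 3.5 at $\lambda=1$, and the corresponding (and much easier, purely $\mathcal{O}_X$-linear) statements for Higgs bundles at $\lambda=0$.
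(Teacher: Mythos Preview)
Your proposal is correct and is exactly what the paper intends: its own proof reads in full ``Routine, following from the same calculations as in the vector bundle case,'' i.e.\ redo Lemmas 3.2, 3.4 and Corollary 3.5 with the $\lambda$-twist, which is precisely what you sketch. Your explicit check of well-definedness and the Cartan-formula computation are the right details to fill in.
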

\begin{remark}
Note that $T_{X\times\mathbb{A}^1/S\times\mathbb{A}^1}=T_{X/S} \otimes_{\mathcal{O}_X} \mathcal{O}_X[t]$ so I will not distinguish between $T_X$ and its extension $T_{X/S} \otimes \mathcal{O}_X[t]$. $\nabla$ will always be linear over $t$.
\end{remark}
\begin{proof}
Routine, following from the same calculations as in the vector bundle case.

\thispagestyle{plain}

\end{proof}
\begin{corollary}
As for vector bundles with integrable connection, there is an \\ equivalence of three categories, between Higgs bundle on $X$, modules over $S(T_X)$, and stratifications over $\hat{\Gamma}_X \Omega_{X/S}$.
\end{corollary}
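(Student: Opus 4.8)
The plan is to transcribe, essentially verbatim, the three-term equivalence of Section~3 (integrable connections $\leftrightarrow$ $\Lambda$-modules $\leftrightarrow$ stratifications over $\hat{\mathcal{P}}$), performing throughout the substitutions $\Lambda \rightsquigarrow S(T_X)$, $\mathcal{P} \rightsquigarrow \Gamma_X\Omega_{X/S}$, $\hat{\mathcal{P}} \rightsquigarrow \hat{\Gamma}_X\Omega_{X/S}$, which is exactly what happens upon setting $\lambda = 0$.

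First I would handle the equivalence between Higgs bundles and $S(T_X)$-modules. By the last assertion of the preceding proposition with $\lambda = 0$, a Higgs field $\phi : E \to E \otimes \Omega_{X/S}$ with $\phi^2 = 0$ is the same as an action of $T_{X/S}$ on $E$ by operators $\phi_A$ that are $\mathcal{O}_X$-linear both in $A$ and in $E$ (the Leibniz defect $\lambda A(f)\sigma$ has vanished) and that pairwise commute, $\phi_A\phi_B = \phi_B\phi_A$. This is precisely an action of the free \emph{commutative} $\mathcal{O}_X$-algebra on $T_{X/S}$, namely $S_{\mathcal{O}_X}(T_{X/S})$; so this half is the Higgs analogue of Proposition~3.8 and is otherwise formal.

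Next I would set up the duality and comultiplication needed for the middle step. Filtering $\Gamma_X\Omega_{X/S} = \bigoplus_k \Gamma_k\Omega_{X/S}$ by $\Gamma^{\leq n} = \bigoplus_{k\leq n}\Gamma_k\Omega_{X/S}$ and $S(T_X)$ by $S^{\leq n}(T_X) = \bigoplus_{k\leq n} S^k(T_X)$, I would declare in local coordinates that the divided-power monomials $\underline{dx}^{\,[\underline{k}]}$ and the monomials $\partial^{\underline{k}}$ of the same multidegree are dual bases, obtaining a pairing $\Gamma^{\leq n}\Omega_{X/S} \times S^{\leq n}(T_X) \to \mathcal{O}_X$ that is perfect and coordinate-independent by exactly the function-theoretic reinterpretation used in the proof of Proposition~3.6. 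Passing to the limit yields a perfect pairing $\hat{\Gamma}_X\Omega_{X/S} \times S(T_X) \to \mathcal{O}_X$. On $\hat{\Gamma}_X\Omega_{X/S}$ I would use the comultiplication of Remark~2.15, the PD-algebra map extending $\Delta(m) = 1\otimes m + m\otimes 1$ for $m \in \Omega_{X/S}$, and check, as in Proposition~3.6, that it is dual to the (now commutative) multiplication of $S(T_X)$.

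Finally, with the duality in hand, an $S(T_X)$-module structure on $E$ becomes the same datum as a compatible family of $\mathcal{O}_X$-linear maps $E \to \Gamma^{\leq n}\Omega_{X/S}\otimes E$, hence in the limit a morphism $pr_1^* E \to pr_2^* E$ over the formal scheme attached to $\hat{\Gamma}_X\Omega_{X/S}$; associativity and commutativity of the action translate into the stratification (cocycle) axioms and force that morphism to be an isomorphism, exactly as in Proposition~3.9. The one point that is not purely mechanical, and that I expect to be the main obstacle, is the bookkeeping of $\mathcal{O}_X$-module structures in this last step: in the de Rham case $\hat{\mathcal{P}}$ genuinely carries two distinct structures (the two projections), and Proposition~3.9 had to extend along one and restrict along the other; for $\hat{\Gamma}_X\Omega_{X/S}$, which is generated as an $\mathcal{O}_X$-algebra in degree~$1$ with $\mathcal{O}_X = \Gamma_0$ in degree~$0$, the two structures coincide, and this collapse should be justified rather than assumed before the construction of the stratification and the verification of the cocycle conditions proceed as before.
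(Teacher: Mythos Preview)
Your proposal is correct and follows essentially the same approach as the paper: set $\lambda=0$ in the preceding proposition to obtain the equivalence with $S(T_X)$-modules, establish the perfect pairing between $S^{\leq n}(T_X)$ and $\Gamma^{\leq n}\Omega_{X/S}$, and then repeat the argument of Proposition~3.9. The point you flag as the main obstacle---that the two $\mathcal{O}_X$-module structures on $\hat{\Gamma}_X\Omega_{X/S}$ coincide---is exactly what the paper records in passing by calling the pairing ``canonical, not left or right sided''; your more explicit treatment of this collapse is in fact a bit more careful than the paper's own sketch.
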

\begin{proof}
Take $\lambda=0$ above. A Higgs field on $E$ is equivalent to a left action on $E$ under the free non-commutative $\mathcal{O}_X$-algebra with generators sections of $T_X$ and the relations 
\[AB-BA=0; A f = f A\]
This algebra is $S(T_X)$, hence the first part of the claim.

For the second part of the claim, one uses the fact that $T_X$ and $\Omega_X$ are locally free and dual, so there is a perfect pairing between $S^{\leq n} T_X$ and $\Gamma^{\leq n}\Omega_X$, which is canonical, not left or right sided. This induces a pairing between $S(T_X)$ and $\hat{\Gamma}_X \Omega_{X/S}$. From there, it proceeds exactly like the proof of proposition 3.9 that a module structure over $S(T_X)$ is equivalent to a stratification over $\hat{\Gamma}_X \Omega_{X/S}$.
\end{proof}

\begin{definition}[$\Lambda_{Hod}$]
Let $\Lambda_{dR}=\Lambda$ be the standard ring of differential operators on $X/S$. This has an increasing filtration $\Lambda_0 \subset \Lambda_1 \subset \ldots$ given by the differential operators of order at most $i$. Let $j: \mathbb{G}_m \to \mathbb{A}^1$ be the inclusion. Then $\Lambda_{Hod}$ is defined to be the subsheaf of $j_{*} (\Lambda_{dR} \otimes \mathcal{O}_{\mathbb{G}_m})$ generated by sections of the form $t^{m} \lambda_m$, where $t$ is the coordinate on $\mathbb{A}^1$ and $\lambda_m \in \Lambda_m$. That is, \\ $\Lambda_{Hod}= \mathcal{O}_X [t] \Lambda_0+ \mathcal{O}_X [t] t\Lambda_1+ \mathcal{O}_X [t] t^2\Lambda_2  \ldots$
\end{definition}

Taking the fibre at $t=1$ returns $\Lambda_{dR}$. Taking the fibre at $t=0$ returns the associated graded $Gr(\Lambda_{dR})=\Lambda_{Dol}$. To see this,

\[ \Lambda_{Hod}= \mathcal{O}_X [t] \Lambda_0+ \mathcal{O}_X [t] t\Lambda_1+ \mathcal{O}_X [t] t^2\Lambda_2  \ldots \]
\[t \Lambda_{Hod}= t \mathcal{O}_X [t] \Lambda_0+ \mathcal{O}_X [t] t^2\Lambda_1+ \mathcal{O}_X [t] t^3\Lambda_2  \ldots\]

So 
\[\frac{\Lambda_{Hod}}{t\Lambda_{Hod}}= \mathcal{O}_X\Lambda_0 + \bar{t} \mathcal{O}_X \frac{\Lambda_1}{\Lambda_0} + \bar{t^2} \mathcal{O}_X \frac{\Lambda_2}{\Lambda_1} + \ldots \]

The $\bar{t^i}$ are now formal symbols; this is a direct sum, which makes the associated graded.

\begin{corollary}
A $\lambda$-connection on $E$ is equivalent to a left-module structure over $\Lambda_{Hod}$.
\end{corollary}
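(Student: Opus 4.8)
The plan is to imitate the proofs of Proposition 3.8 (integrable connection $=$ $\Lambda$-module) and Corollary 6.7 (Higgs field $=$ $S(T_X)$-module): first give an abstract presentation of $\Lambda_{Hod}$ by generators and relations over $\mathcal{O}_{X\times\mathbb{A}^1}=\mathcal{O}_X[t]$, then observe that a module over that presentation is literally the data of a $\lambda$-connection as repackaged in Proposition 6.5.

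\textbf{Step 1 (abstract presentation of $\Lambda_{Hod}$).} Work locally in coordinates $z_1,\dots,z_n$ with $\partial_i=\partial/\partial z_i$. Since $\Lambda_m$ is the free left $\mathcal{O}_X$-module on the monomials $\partial^{\underline k}=\partial_1^{k_1}\cdots\partial_n^{k_n}$ with $|\underline k|\le m$, and $t^m\partial^{\underline k}=t^{m-|\underline k|}\bigl(t^{|\underline k|}\partial^{\underline k}\bigr)$ whenever $m\ge|\underline k|$, the sheaf $\Lambda_{Hod}=\sum_m\mathcal{O}_X[t]\,t^m\Lambda_m$ is the free left $\mathcal{O}_X[t]$-module on the monomials $(t\partial_1)^{k_1}\cdots(t\partial_n)^{k_n}$; in particular it is a sheaf of subrings of $j_*(\Lambda_{dR}\otimes\mathcal{O}_{\mathbb{G}_m})$ because $t^a\Lambda_a\cdot t^b\Lambda_b=t^{a+b}\Lambda_a\Lambda_b\subset t^{a+b}\Lambda_{a+b}$. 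Multiplying the defining relations of $\Lambda_{dR}$ by appropriate powers of the central element $t$ gives, for sections $A,B$ of $T_X$ and $f$ of $\mathcal{O}_X$,
\[(tA)(tB)-(tB)(tA)=t^2[A,B]=t\cdot\bigl(t[A,B]\bigr),\qquad (tA)f-f(tA)=t\,A(f).\]
Hence, writing $\xi_A:=tA$, there is a surjection onto $\Lambda_{Hod}$ from the free associative $\mathcal{O}_X[t]$-algebra on the $\mathcal{O}_X[t]$-module $T_X\otimes_{\mathcal{O}_X}\mathcal{O}_X[t]$ (one generator $\xi_A$ per section $A$ of $T_X$, $\mathcal{O}_X[t]$-linearly in $A$) modulo the relations
\[\xi_A\xi_B-\xi_B\xi_A=t\,\xi_{[A,B]},\qquad \xi_Af-f\xi_A=t\,A(f)\quad(f\in\mathcal{O}_X);\]
comparing with the monomial basis above shows this surjection is an isomorphism, exactly as in Remark 3.7. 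This is the one-parameter family of presentations interpolating between that of $\Lambda_{dR}$ at $t=1$ and that of $\Lambda_{Dol}=S(T_X)$ at $t=0$.

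\textbf{Step 2 (modules $\leftrightarrow$ $\lambda$-connections).} A left $\Lambda_{Hod}$-module structure on a vector bundle $E$ over $X\times\mathbb{A}^1$ (already an $\mathcal{O}_X[t]$-module) is then exactly: for each section $A$ of $T_X$ an operator $\nabla_A:=\xi_A$ on $E$, $\mathcal{O}_X[t]$-linear in $A$, with $\nabla_A(fe)=t\,A(f)e+f\,\nabla_A(e)$ and $\nabla_A\nabla_B-\nabla_B\nabla_A=t\,\nabla_{[A,B]}$. By Remark 6.6 we have $T_{X\times\mathbb{A}^1/S\times\mathbb{A}^1}=T_X\otimes_{\mathcal{O}_X}\mathcal{O}_X[t]$, so extending by $\mathcal{O}_X[t]$-linearity these are precisely the two conditions of Proposition 6.5 defining an integrable $t$-connection, i.e.\ the universal case $\lambda=t$. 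For a general test scheme $\lambda\colon R\to\mathbb{A}^1_S$ one simply pulls back along $R\to\mathbb{A}^1_S$; both $\Lambda_{Hod}$ and the notion of $\lambda$-connection are compatible with this base change, so the equivalence descends. Specializing to $t=1$ recovers Proposition 3.8 and to $t=0$ recovers the first equivalence of Corollary 6.7.

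\textbf{Expected obstacle.} There is little genuine difficulty here; the only point requiring care is the bookkeeping of powers of $t$ in Step 1. The commutator relation acquires exactly one factor of $t$ on the right-hand side, so the bracket of two order-one generators is $t$ times an order-one generator rather than a scalar — which is precisely what makes the fibre of $\Lambda_{Hod}$ at $t=0$ collapse to the commutative algebra $S(T_X)$ while the fibre at $t=1$ is $\Lambda_{dR}$. Once the presentation is established, Step 2 is formal, exactly as in Propositions 3.8 and 3.9.
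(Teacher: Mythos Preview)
Your proof is correct and follows essentially the same approach as the paper: present $\Lambda_{Hod}$ abstractly as the free associative $\mathcal{O}_X[t]$-algebra on $T_X$ modulo the relations $\xi_A\xi_B-\xi_B\xi_A=t\,\xi_{[A,B]}$ and $\xi_A f - f\xi_A = t\,A(f)$, verify via local coordinates that the map $\xi_A\mapsto tA$ to $\Lambda_{Hod}$ is an isomorphism, and then invoke Proposition~6.5 to identify modules over this presentation with $\lambda$-connections. The paper's proof is the same argument in slightly terser form, using the name $\Lambda_{abstract}$ for your presented algebra and sending the generator $A$ to $tA\in t\Lambda_1$.
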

\begin{proof}
A $\lambda$-connection on $E$ is equivalent to a left action on $E$ under the free non-commutative $\mathcal{O}_{X\times\mathbb{A}^1}$-algebra with generators sections of $T_X$ and the relations 

\[AB-BA=t[A,B]; A f =t A(f) + f A\]
\thispagestyle{plain}

Call this $\Lambda_{abstract}$ as before. Now map $\Lambda_{abstract}$ to $\Lambda_{Hod}$ by sending $m$-th fold product terms of elements to $t^m\Lambda_m$. $\Lambda_{Hod}$ satisfies the relations above, so this induces a well-defined map $\Lambda_{abstract} \to \Lambda_{Hod}$. By construction, it sends $\mathcal{O}_{X\times \mathbb{A}^1}$ to $\mathcal{O}_X [t] \Lambda_0$ and $T_X$ to $\mathcal{O}_X [t] t\Lambda_1$.  Passing to local coordinates, it is a bijection. $\Lambda_{abstract}$ as described above is canonically isomorphic to $\Lambda_{Hod}$. Hence a $\lambda$-connection on $E$ is equivalent to a $\Lambda_{Hod}$-module.
\end{proof}

\subsection{The de Rham groupoid $(.)_{dR}$}
In section 4, I explicitly showed $M_{dR}$ has an integrable connection in the form of a stratification between two pullbacks. In section 5, I introduced a general formalism of groupoids as a means to formalize stratifications in a more general context. In this section, I will give an alternative proof of the Gauss-Manin connection. Recall all morphisms are smooth separated.

\begin{definition}[de Rham groupoid]
Define the groupoid $(X/S)_{dR}$ as follows: its object object is $X$, while its morphism object $N$ is the PD formal completion of the diagonal of $X\times_S X$. That is, in the notation of section 3,
\[ N= \hat{\mathcal{P}}=\lim_{n\to \infty} \text{Spec} \dfrac{\mathcal{P}}{J^{[n]}} \]

Then $s,t$ are induced by the two projections $X\times_S X \to X$, $e$ is induced by the diagonal $X \to X\times_S X$ and $c$ is induced by the comultiplication map on $\mathcal{P}$ described in proposition 2.14.

\thispagestyle{plain}

\end{definition}

\begin{prop}
$(.)_{dR}$ is a groupoid arrow functor.
\end{prop}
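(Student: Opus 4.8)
The plan is to verify the three defining axioms of a groupoid arrow functor from Section 5.6 for $(X/S) \mapsto (X/S)_{dR}$, using the base-change and composition propositions for PD envelopes established in Section 2. First, I would check that $(X \xrightarrow{id} X)_{dR}$ is a trivial groupoid scheme: here $X \times_X X = X$, the diagonal is an isomorphism, so its PD formal completion $N$ is just $X$ itself, and $e: X \to N$ is an isomorphism. This is immediate.

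The second axiom requires that for $X \to S$ smooth separated and $S' \to S$ an arbitrary base change, the canonical morphism $(X \times_S S'/S')_{dR} \to (X/S)_{dR} \times_S S'$ is an isomorphism. On object schemes this is the tautology $X \times_S S' = X \times_S S'$; the content is in the morphism schemes, where I must show $(X \times_S S') \times_{S'} (X \times_S S')$ and its PD completion of the diagonal agree with the base change to $S'$ of the PD completion of the diagonal of $X \times_S X$. This is precisely Proposition 2.10 (with $Y = X \times_S X$ and the diagonal embedding $X \hookrightarrow X \times_S X$, both smooth over $S$), which gives $P^n_{X'}(Y') \simeq P^n_X(Y) \times_S S'$ at each finite level; passing to the inverse limit over $n$ gives the claim for $\hat{\mathcal{P}}$. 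I would also need to observe that the structure maps $s, t, e, c$ are compatible with this identification, which follows from functoriality of the PD envelope construction.

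The third axiom — that for $X \to S \to T$ the canonical morphism $(X/S)_{dR} \to (X/T)_{dR} \times_{(S/T)_{dR}} (S/S)_{dR}$ is an isomorphism — is the substantive step and the main obstacle. On object schemes it is again trivial ($X = X \times_S S$). On morphism schemes, the target's morphism scheme is the fibre product of $P^n_X(X \times_T X)$ with $S$ over $P^n_S(S \times_T S)$ (the morphism scheme of $(S/S)_{dR}$ being just $S$), and I must identify this with $P^n_X(X \times_S X)$, compatibly with all structure maps, then take the inverse limit. This is exactly the content of Corollary 2.11, which asserts the natural isomorphism $P^n_X(X \times_S X) \to P^n_X(X \times_T X) \times_{P^n_S(S \times_T S)} S$ as closed subschemes of $P^n_X(X \times_T X)$. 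So the proof reduces to invoking Corollary 2.11, checking it respects the inverse system in $n$, and verifying that the groupoid structure maps (source, target, identity, composition via comultiplication) correspond under this isomorphism. The last verification is the only place requiring genuine care: the comultiplication $c$ on the de Rham groupoid comes from the comultiplication on $\mathcal{P}$ of Proposition 2.14, and one must check this is intertwined by the isomorphism of Corollary 2.11 with the composition on the fibre product groupoid; this is a diagram chase using the explicit description of $\Delta$ on diagonal coordinates $\tau_i$, and functoriality of $\Delta$ under the maps induced by $X \to S \to T$. I expect no real difficulty here beyond bookkeeping, since all the maps in sight are induced by the same universal PD-envelope construction.
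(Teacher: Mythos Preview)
Your proposal is correct and follows essentially the same approach as the paper's proof, which also verifies the three axioms by invoking the identity property trivially, Proposition 2.10 for base change, and the descent isomorphism for $X \to S \to T$, then passing to the inverse limit. One minor correction: the descent isomorphism $P^n_X(X \times_S X) \xrightarrow{\sim} P^n_X(X \times_T X) \times_{P^n_S(S \times_T S)} S$ is Corollary 2.9 in the paper, not 2.11 (which is the local coordinate description of $\hat{\mathcal{P}}$); your extra remarks on checking compatibility of $s,t,e,c$ are more detail than the paper gives but are sound.
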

\begin{proof}
There are three properties to verify as described in section 5.6. The identity property is trivial, as $(X/X)_{dR}=X$. The base change property is proposition 2.10 and the descent property is corollary 2.9, and taking the inverse limit over $n$.
\end{proof}

\begin{corollary}
$M_{dR}(X/S)$ is an algebraic stack over $S$, and has a descent to $(S/T)_{dR}$. This descent is the Gauss-Manin connection.
\end{corollary}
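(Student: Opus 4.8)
The plan is to obtain the corollary as a formal consequence of Proposition 6.11 together with the machinery of Sections 5.7--5.8, after first identifying $M_{dR}(X/S)$ of Definition 6.1 with the module stack $M\mathcal{X}$ attached to the groupoid $\mathcal{X} = (X/S)_{dR}$. Unwinding the definitions, an $R$-valued point of $M\mathcal{X}$ is a locally free sheaf $E$ of rank $d$ on $X \times_S R$ equipped with a stratification relative to the base-changed groupoid $\mathcal{X} \times_S R = (X \times_S R / R)_{dR}$, that is, an isomorphism $\phi\colon t^* E \xrightarrow{\sim} s^* E$ over the PD formal completion $\hat{\mathcal{P}}_{X \times_S R/R}$ satisfying the cocycle conditions. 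By Proposition 3.9, applied to the smooth separated morphism $X \times_S R \to R$, this is exactly the datum of an integrable connection on $E$ relative to $R$, and the equivalence is plainly natural in $R$; so it globalizes to an isomorphism of fibred categories $M\mathcal{X} \simeq M_{dR}(X/S)$.

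For algebraicity, I would like to apply Proposition 5.11 directly, but the morphism object $\hat{\mathcal{P}}$ is only a formal scheme, so some care is needed. I would instead run the argument at each finite PD level: for every $n$ the pair $\bigl(X, P_X^n(X \times_S X)\bigr)$ is a genuine groupoid scheme whose source and target maps are finite and locally free (the structure sheaf $\mathcal{P}^n$ is a free $\mathcal{O}_X$-module by the proof of Proposition 2.16, and flat base change holds by Propositions 2.10 and 2.8), so Proposition 5.11 applies to it and produces an algebraic stack. Since a stratification over $\hat{\mathcal{P}}$ is precisely a compatible system of level-$n$ data, $M_{dR}(X/S)$ is the limit of this tower of algebraic stacks along the closed immersions $P_X^n \hookrightarrow P_X^{n+1}$, and one checks this limit is again algebraic over $S$; alternatively one may simply invoke the representability results of \cite{stacks-project} for the stack of modules with connection.

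It remains to produce the Gauss-Manin descent and identify it. Since $(.)_{dR}$ is a groupoid arrow functor by Proposition 6.11, Proposition 5.12, applied to $X \to S \to T$ with $X_{st} = [(X/T)_{dR}]$, $S_{st} = [(S/T)_{dR}]$ and $B = (S/S)_{dR} = S$, gives a canonical isomorphism $M_{dR}(X/S) \simeq M(X_{st}/S_{st}) \times_{S_{st}} S$ of stacks over $S$. By Proposition 5.10 this is the same as a descent of $M_{dR}(X/S)$ along $S \to [(S/T)_{dR}]$, i.e.\ a stratification of $M_{dR}(X/S)$ relative to the de Rham groupoid $(S/T)_{dR}$, which by Definition 5.13 is by convention the Gauss-Manin connection. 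To see that this coincides on the nose with the connection constructed by hand in Theorem 4.5, I would compare the two at each finite PD level $P_S^n(S \times_T S)$: both transport $M_{dR}(X/S)$ using exactly the base-change isomorphisms of Corollary 2.9 and Proposition 2.10 (for Theorem 4.5 these enter through the crystalline interpretation of vector bundles with connection, for Proposition 5.12 through the groupoid-arrow-functor axioms), so they agree once one checks the cocycle and associativity conditions match up. I expect the only real obstacles to be this last bookkeeping --- reconciling the ``crystal of functors'' language of Section 4 with the ``module over a groupoid'' language of Section 5 --- and the passage from the formal groupoid $\hat{\mathcal{P}}$ to its finite truncations in the algebraicity step; everything else is formal.
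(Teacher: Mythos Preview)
Your argument is essentially identical to the paper's: Proposition 3.9 identifies $M_{dR}(X/S)$ with the module stack of the de Rham groupoid, Proposition 5.11 gives algebraicity, and Proposition 5.12 (via Proposition 6.11) produces the descent, which is by definition the Gauss-Manin connection. Your worry about the formal-scheme hypothesis in Proposition 5.11 is legitimate --- the paper simply invokes it without comment, though it separately records algebraicity via the representing scheme $R_{dR}$ of Definition 6.14 --- and your comparison with Theorem 4.5 is more thorough than the paper's one-sentence assertion.
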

\begin{proof}
By proposition 3.9, $M_{dR}(X/S)$ is nothing other than the moduli stack $M(X/S)_{dR}$. By proposition 5.11, $M_{dR}(X/S)$ is an algebraic stack. By proposition 5.12, we see $M_{dR}$ has a descent to $[(S/T)]_{dR}$. This descent is the Gauss-Manin connection \\ described explicitly in section 4.
\end{proof}

\begin{remark}
See section 5.5 for the general formalism of differential operators on a groupoid. In this instance, the sheaf of differential operators associated to \\ $X_N=[(X/S)_{dR}]$ is just the standard ring $\Lambda_{dR}=\Lambda$ of differential operators on $X/S$. A local system over $X_{dR}$ is just a vector bundle on $X/S$ with integrable connection. This statement summarizes the equivalences in section 3.3.

\end{remark}

\begin{definition}
Let $x: S \to X$ be a section of $X/S$. Then there is a scheme $R_{dR}(X,S,x)$ representing the functor that assigns to $R/S$ the set of isomorphism classes of $(V,\nabla, \alpha)$ where $(V,\nabla)$ is a vector bundle with integrable connection on $X_R/R$ and $\alpha: V_{x(R)} \simeq \mathcal{O}^d_R$ is a trivializing frame along this section. 
\end{definition}

This representability is theorem 6.13 of \cite{Simpson5}. As a consequence, we may explicitly express $M_{dR}$ as a quotient stack, $[R_{dR}/GL_d]$. In particular we have a surjective morphism of stacks $R_{dR} \to M_{dR}$, which shows $M_{dR}$ is an algebraic stack.

\thispagestyle{plain}

\subsection{The Dolbeault groupoid $(.)_{Dol}$}

\begin{definition}[Dolbeault groupoid]
Define the groupoid $(X/S)_{Dol}$ as follows: its object object is $X$, while its morphism object $N$ is the PD formal completion of the zero section of the tangent bundle, a formal scheme with structure sheaf $\hat{\Gamma}_X \Omega_{X/S}$.
\end{definition}

$e$ is induced by the projection $\hat{\Gamma}_X \Omega_{X/S} \to \mathcal{O}_X$ that kills all of $\Omega_{X/S}$. Geometrically, this is induced by the zero section $X \to T_{X/S}$ 

$s=t$ is induced by the natural inclusion in the other direction $\mathcal{O}_X \to \hat{\Gamma}_X \Omega_{X/S}$

$c$ is induced by the comultiplication structure on $\Gamma M$; see remark 2.15. \\ Geometrically this corresponds to the addition law on the tangent bundle. Note this is a loop groupoid, that is, $s=t$.

\begin{prop}
$(.)_{Dol}$ is a groupoid arrow functor.
\end{prop}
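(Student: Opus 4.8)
The plan is to verify the three conditions in the definition of a groupoid arrow functor (section~5.6), following exactly the argument used for the de~Rham groupoid $(.)_{dR}$, with the PD completed divided power algebra $\hat{\Gamma}_X\Omega_{X/S}$ playing the role that $\hat{\mathcal{P}}$ played there. The identity property is immediate: when $X=S$ we have $\Omega_{X/X}=0$, hence $\hat{\Gamma}_X\Omega_{X/X}=\cO_X$, so the morphism object $N$ equals $X$ and $(X/X)_{Dol}$ is the trivial groupoid with $e$ an isomorphism.

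For the base change property, let $S'\to S$ be arbitrary and set $X'=X\times_S S'$. Base change for K\"ahler differentials gives $\Omega_{X'/S'}=\Omega_{X/S}\otimes_{\cO_X}\cO_{X'}$, and since $\Omega_{X/S}$ is locally free the explicit description of $\Gamma$ recalled after theorem~2.2 (freely generated in each degree by the PD monomials in a local basis) shows $\Gamma^n_{X'}\Omega_{X'/S'}=\Gamma^n_X\Omega_{X/S}\otimes_{\cO_X}\cO_{X'}$ compatibly with the divided power filtration; passing to the inverse limit identifies the morphism object of $(X'/S')_{Dol}$ with that of $(X/S)_{Dol}\times_S S'$. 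Equivalently, one may view $\hat{\Gamma}_X\Omega_{X/S}$ as the PD completion of the zero section of the total space of the tangent bundle $T_{X/S}$ and invoke proposition~2.10 over the base $X$, base changing along $X'\to X$. The structure morphisms $s=t$ (the inclusion $\cO_X\to\hat{\Gamma}_X\Omega_{X/S}$), $e$ (the augmentation $\hat{\Gamma}_X\Omega_{X/S}\to\cO_X$), and $c$ (the comultiplication of remark~2.15, i.e.\ the addition law on $T_{X/S}$) are all induced by canonical module operations and hence commute with this base change, so the comparison is an isomorphism of groupoid schemes.

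For the descent property, let $X\to S\to T$ be smooth separated, write $\pi\colon X\to S$, and note $(S/S)_{Dol}=S$, so that the map to be checked is $(X/S)_{Dol}\to(X/T)_{Dol}\times_{(S/T)_{Dol}}S$. On objects this is $X\to X\times_S S$; on morphism objects, using the base change property to replace $\hat{\Gamma}_S\Omega_{S/T}$ by $\hat{\Gamma}_X\pi^*\Omega_{S/T}$, it amounts to a canonical isomorphism identifying $\hat{\Gamma}_X\Omega_{X/S}$ with the quotient of $\hat{\Gamma}_X\Omega_{X/T}$ by the ideal generated by $\pi^*\Omega_{S/T}$ together with its divided powers. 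This is the divided power analogue of corollary~2.9, and it follows from the cotangent exact sequence $0\to\pi^*\Omega_{S/T}\to\Omega_{X/T}\to\Omega_{X/S}\to 0$, which for $X\to S\to T$ smooth is exact and locally split: working locally one has $\Omega_{X/T}\cong\pi^*\Omega_{S/T}\oplus\Omega_{X/S}$, and since $\Gamma$ of a direct sum of modules is the tensor product of the divided power algebras, $\Gamma_X\Omega_{X/T}\cong\Gamma_X\pi^*\Omega_{S/T}\otimes_{\cO_X}\Gamma_X\Omega_{X/S}$; quotienting by the augmentation ideal of the first factor returns $\Gamma_X\Omega_{X/S}$, the resulting map being the one induced by the canonical surjection $\Omega_{X/T}\to\Omega_{X/S}$, hence independent of the local splitting and compatible with the filtrations, and one passes to the inverse limit. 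Alternatively this follows from proposition~2.8 applied, exactly as corollary~2.9 was, to the square of closed immersions given by the zero sections of $T_{X/S}$, $T_{X/T}$ and $\pi^*T_{S/T}$, using that $T_{X/S}$ is the kernel of $T_{X/T}\to\pi^*T_{S/T}$. As in the previous step, $s=t$, $e$ and $c$ are compatible with this identification, and the fibre products of fibred categories involved make sense by proposition~5.3.

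The main obstacle is the descent step, and within it the precise claim that $\Gamma_X\Omega_{X/S}$ is the quotient of $\Gamma_X\Omega_{X/T}$ by the divided power ideal generated by $\pi^*\Omega_{S/T}$ --- the exact divided power analogue of the ideal computation in lemma~2.6 --- together with the verification that this identification respects the addition law $c$ on the formal tangent bundles, which is the genuinely loop-groupoid feature absent from the de~Rham case. The identity and base change conditions, as above, are essentially formal.
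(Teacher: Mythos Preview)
Your proof is correct and, in its alternative clause, essentially reproduces the paper's argument. The paper handles identity and base change in one line each, exactly as you do, and for descent applies proposition~2.8 directly to the square
\[
\begin{tikzcd}
X \arrow[hookrightarrow]{r} \arrow[d] & T_{X/T} \arrow[d] \\
S \arrow[hookrightarrow]{r} & T_{S/T}
\end{tikzcd}
\]
and then checks, via local coordinates, that $S\times_{T_{S/T}}T_{X/T}\simeq T_{X/S}$ (equivalently that the square with $T_{X/S}$ in the upper left is Cartesian, which is your observation that $T_{X/S}$ is the kernel of $T_{X/T}\to\pi^*T_{S/T}$).

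The genuine difference is that you \emph{lead} with a more elementary argument: the locally split cotangent sequence plus $\Gamma(M\oplus M')\cong\Gamma M\otimes\Gamma M'$ lets you identify $\Gamma_X\Omega_{X/T}$ modulo the PD ideal on $\pi^*\Omega_{S/T}$ with $\Gamma_X\Omega_{X/S}$ by hand, bypassing the general proposition~2.8 machinery entirely. This is cleaner for the Dolbeault groupoid specifically, since everything is visibly a free PD algebra on a locally free module; the paper's route has the advantage of being a uniform template (the same proposition~2.8 step drives the de~Rham case via corollary~2.9), at the cost of an extra Cartesian-square verification. Your remark that the identification is induced by the canonical surjection $\Omega_{X/T}\to\Omega_{X/S}$, hence independent of the chosen splitting, is a point the paper leaves implicit.
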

\begin{proof}
Again, there are three properties to verify, with the identity property trivial, as $(X/X)_{Dol}=X$. The base change property follows from the analogous base change property for $\Omega_{X/S}$. All that remains is the descent property, that
\[ (X/S)_{Dol} \to (X/T)_{Dol} \times_{(S/T)_{Dol}} S  \]
is an isomorphism.
\thispagestyle{plain}

To proceed, consider the diagram

\begin{center}

\begin{tikzcd}

   X   \arrow[hookrightarrow]{r}  \arrow[d]&        T_{X/T}   \arrow[d]      \\
 S \arrow[hookrightarrow]{r} & T_{S/T}

\end{tikzcd}  

\end{center}

where the horizontal maps are the closed embeddings given by the zero section of the tangent bundle.

By proposition 2.8, there are natural isomorphisms

\[ P^n_X (S \times_{T_{S/T}} T_{X/T}) \xrightarrow{\sim} P^n_X (T_{X/T}) \times_{P^n_S(T_{S/T})} S  \]

I claim $S \times_{T_{S/T}} T_{X/S} \simeq T_{X/S}$. Granting this claim, we have 

\[ P^n_X (T_{X/S}) \xrightarrow{\sim} P^n_X (T_{X/T}) \times_{P^n_S(T_{S/T})} S  \]

Now take the limit over $n$ to deduce the result. 

To prove the claim, we must prove the following diagram is Cartesian:

\begin{center}

\begin{tikzcd}

   T_{X/S}   \arrow[hookrightarrow]{r}  \arrow[d]&        T_{X/T}   \arrow[d]      \\
 S \arrow[hookrightarrow]{r} & T_{S/T}

\end{tikzcd}  

\end{center}

This holds because both the top and bottom immersions are cut out by the \\ vanishing of $\Omega_{S/T}$. That is, if $X/S$ has local coordinates $x_1,\ldots, x_n$ and $S/T$ has local coordinates $s_1,\ldots,s_m$, then on the level of rings of functions the bottom map is $\mathcal{O}_S [ds_i] \to \mathcal{O}_S $ sending all $ds_i \mapsto 0$ while the top map is $\mathcal{O}_X [dx_j, ds_i] \to \mathcal{O}_X [dx_j] $ also sending all $ds_i \mapsto 0$.

\end{proof}

\begin{corollary}
$M_{Dol}(X/S)$ is an algebraic stack over $S$, and has a descent to $[(S/T)]_{Dol}$. This descent is the non-abelian Kodaira-Spencer map.
\end{corollary}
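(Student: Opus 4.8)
The plan is to run the exact parallel of the de Rham argument (the statement that $M_{dR}(X/S)$ is an algebraic stack over $S$ with a descent to $(S/T)_{dR}$), substituting each de Rham ingredient by its Dolbeault counterpart. First I would invoke the three-category equivalence for Higgs bundles to identify $M_{Dol}(X/S)$ with the module stack $M(X/S)_{Dol}$ of the Dolbeault groupoid: over a test scheme $R \to S$, a Higgs bundle on $X \times_S R / R$ is the same datum as a module over $S(T_X)$, which is the same as a stratification over $\hat{\Gamma}_X \Omega_{X/S}$, i.e. a module over the groupoid $(X/S)_{Dol} \times_S R$. Since all these passages are induced by the canonical duality pairing between $S(T_X)$ and $\hat{\Gamma}_X \Omega_{X/S}$ and by adjunctions, they are natural in $R$ and compatible with isomorphisms of Higgs bundles, and hence upgrade to an equivalence of fibred categories $M_{Dol}(X/S) \simeq M(X/S)_{Dol}$.

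Granting this identification, algebraicity follows at once: the Dolbeault groupoid $(X/S)_{Dol} = (X, \hat{\Gamma}_X \Omega_{X/S})$ consists of a smooth scheme and a formally smooth scheme over $S$, so Proposition 5.11 applies and shows that $M(X/S)_{Dol}$, hence $M_{Dol}(X/S)$, is an algebraic stack over $S$, isomorphic to the stack of locally free finite-rank modules on the quotient stack $[(X/S)_{Dol}]$. For the descent structure, I would use that $(.)_{Dol}$ is a groupoid arrow functor — the preceding proposition — and then feed the tower $X \to S \to T$ into Proposition 5.12. This produces a canonical isomorphism of stacks $M(X/S)_{Dol} \simeq M((X/T)_{Dol}/(S/T)_{Dol}) \times_{[(S/T)_{Dol}]} (S/S)$, which is exactly a stratification of $M_{Dol}$ relative to the Dolbeault groupoid of $S/T$; by Definition 5.13 this descent datum is, by name, the non-abelian Kodaira-Spencer map, which completes the proof.

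The main obstacle is not any of the formal invocations — those mirror the de Rham case verbatim — but the first step: verifying that the chain Higgs bundle $\leftrightarrow$ $S(T_X)$-module $\leftrightarrow$ $\hat{\Gamma}_X \Omega_{X/S}$-stratification really is an equivalence of \emph{stacks}, i.e. respects arbitrary base change $R \to S$ and morphisms of Higgs bundles, rather than merely a fibrewise bijection on objects. In particular one must check that the perfect pairing between $S^{\leq n} T_X$ and $\Gamma^{\leq n} \Omega_{X/S}$ used in the Higgs analogue of the three-category proposition behaves well under pullback, which is where I expect the bookkeeping to concentrate.
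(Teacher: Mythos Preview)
Your proposal is correct and follows essentially the same route as the paper: invoke the three-category equivalence (Corollary 6.7) to identify $M_{Dol}(X/S)$ with $M(X/S)_{Dol}$, then apply Proposition 5.11 for algebraicity and Proposition 5.12 (using that $(.)_{Dol}$ is a groupoid arrow functor) for the descent. The paper's proof is terser and does not isolate the base-change compatibility you flag as the main obstacle, but otherwise the logic is identical; the paper also adds the remark that since $(S/T)_{Dol}$ is a loop groupoid, it makes sense to ask whether the resulting stratification is trivial.
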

\thispagestyle{plain}

\begin{proof}
By corollary 6.7, $M_{Dol}(X/S)$ is nothing other than the moduli stack $M(X/S)_{Dol}$. By proposition 5.11, $M_{Dol}(X/S)$ is an algebraic stack. By proposition 5.12, we see $M_{Dol}$ has a descent to $[(S/T)]_{Dol}$. Note that because $(S/T)_{Dol}$ is a loop groupoid, it makes sense for the associated stratification to be trivial.
\end{proof}

\begin{remark}
Recall the definitions of section 5.5. In this case, the sheaf of differential operators associated to $X_N=[(X/S)_{Dol}]$ is the associated graded \\ $\Lambda_{Dol}=Gr(\Lambda_{dR})=S(T_{X/S})$ of $\Lambda$. A local system over $X_{Dol}$ is just a Higgs bundle on $X/S$.

\end{remark}

\begin{definition}
Let $x: S \to X$ be a section of $X/S$. Then there is a scheme $R_{Dol}(X,S,x)$ representing the functor that assigns to $R/S$ the set of isomorphism classes of $(V,\phi, \alpha)$ where $(V,\phi)$ is a Higgs bundle on $X_R/R$ and $\alpha: V_{x(R)} \simeq \mathcal{O}^d_R$ is a trivializing frame along this section. 
\end{definition}

This representability is proven in section 6 of \cite{Simpson5}. As a consequence, we may explicitly express $M_{Dol}$ as a quotient stack, $[R_{Dol}/GL_d]$. In particular, we have a surjective morphism of stacks $R_{Dol} \to M_{Dol}$, which shows $M_{Dol}$ is an algebraic stack.

\subsection{The Hodge groupoid $(.)_{Hod}$}
In this section, we begin with a definition and detailed local description of the Hodge groupoid. Then, we show that modules over $X_{Hod}$ are equivalent to $\lambda$-connections.
\begin{definition}
Let $Z \xhookrightarrow{} W$ be a closed immersion of schemes. Consider the composition $Z \xhookrightarrow{} W \xhookrightarrow{} \mathbb{A}^1_W.$ The \textit{deformation to the normal cone} is the blowup of $\mathbb{A}^1_W$ along $Z$, followed by the removal of the proper transform of $W$. That is,

\[ DNC_Z W = Bl_Z( \mathbb{A}^1_W) - Bl_Z W \]

where the blowup $Bl_Z W$ embeds into the $0$-fibre. We will adopt the presentation of \cite{Fulton}.
\end{definition}
\thispagestyle{plain}

\begin{definition}[Hodge groupoid]
$(X/S)_{Hod}$ is a groupoid whose object object is $X \times \mathbb{A}^1_S$ and whose morphism object $N_{Hod}$ is given by the following procedure. First, blow up $ X \times_S X  \times_S  \mathbb{A}_S^1$ at  $\Delta \times 0$. Next, take the complement of the strict transform of $X \times X \times_S 0$. Finally, take the PD completion along $\Delta \times_S \mathbb{A}^1$.
\end{definition}

Equivalently, form the deformation to the normal cone

\[ Bl_{\Delta \times 0} (X \times X \times \mathbb{A}^1) - Bl_X (X \times X) \]

and then PD complete along $\Delta \times \mathbb{A}^1$.

\begin{prop}
$N_{Hod}$ is flat over $\mathbb{A}^1.$ Its fibres over $t=1$ and $t=0$ are the \\ morphism objects of the de Rham and Dolbeault groupoids respectively. Working in local coordinates, we can give an explicit affine description for its morphism object, as the completion of $\mathcal{O}_X <\sigma_1,\ldots,\sigma_n>[t]$ where $t\sigma_i=\tau_i$ are the diagonal coordinates.
\end{prop}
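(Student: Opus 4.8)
The plan is to reduce all three assertions to a single local computation on the morphism object $N_{Hod}$, since each is local there (and $N_{Hod}$ is built by intrinsic operations, so the local pieces automatically glue). First I would pick an open $U\subseteq X$ \'etale over $\mathbb{A}^n_S$ with coordinates $x_1,\dots,x_n$, write $\tau_i = 1\otimes x_i - x_i\otimes 1$ for the diagonal coordinates and $t$ for the coordinate on $\mathbb{A}^1_S$, so that $U\times_S U\times_S\mathbb{A}^1_S$ has coordinate ring $\mathcal{O}_X[\tau_1,\dots,\tau_n,t]$ and the blow-up center $\Delta\times 0$ is cut out by $(\tau_1,\dots,\tau_n,t)$. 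The blow-up along this ideal is covered by $n+1$ standard charts: the ``$t$-chart'' $\Spec\mathcal{O}_X[\sigma_1,\dots,\sigma_n,t]$ with $t\sigma_i=\tau_i$, and the ``$\tau_j$-charts'' with coordinates $\tau_i/\tau_j$ $(i\neq j)$ and $t/\tau_j$.

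The crucial step is to follow the two subschemes that matter through the blow-up and observe that the whole construction collapses onto the $t$-chart. On the $t$-chart the exceptional divisor is $V(t)$, which also equals the total transform of $X\times_S X\times 0 = V(t)$; hence the strict transform of $X\times_S X\times 0$ misses the $t$-chart, so the $t$-chart survives untouched when we pass to $DNC$. On each $\tau_j$-chart the exceptional divisor is $V(\tau_j)$, and the ideal $(\tau_1,\dots,\tau_n)$ pulls back there to $(\tau_j)$; hence the total transform of $\Delta\times\mathbb{A}^1 = V(\tau_1,\dots,\tau_n)$ is the exceptional divisor, so the strict transform of $\Delta\times\mathbb{A}^1$ is absent from the $\tau_j$-charts and is therefore confined to the $t$-chart, where a short check shows it equals the regularly embedded $V(\sigma_1,\dots,\sigma_n)$. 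Since PD-completion depends only on a formal neighborhood of the subscheme along which we complete, $N_{Hod}$ is, locally, the PD-completion of $\mathcal{O}_X[\sigma_1,\dots,\sigma_n,t]$ along $(\sigma_1,\dots,\sigma_n)$. As $\sigma_1,\dots,\sigma_n$ span a free $\mathcal{O}_X[t]$-module, the PD envelope here is the free PD algebra $\mathcal{O}_X<\sigma_1,\dots,\sigma_n>[t]$ (the relative version of the computation in the proof of Proposition 2.13, with $S(\Omega_{X/S})$ replaced by $\Gamma(\Omega_{X/S})$), and completing along its PD filtration in the $\sigma_i$ yields the completion of $\mathcal{O}_X<\sigma_1,\dots,\sigma_n>[t]$, namely $\prod_{\underline{k}}\mathcal{O}_X[t]\,\sigma^{[\underline{k}]}$, with $t\sigma_i=\tau_i$ --- the desired explicit affine description.

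From this description the remaining assertions drop out. Setting $t=1$ forces $\sigma_i=\tau_i$ and returns $\mathcal{O}_X<<\tau_1,\dots,\tau_n>> = \hat{\mathcal{P}}$, the morphism object of the de Rham groupoid, by Proposition 2.13(2). Setting $t=0$ kills the $\tau_i$ and leaves $\mathcal{O}_X<<\sigma_1,\dots,\sigma_n>>$, which under $\sigma_i\leftrightarrow dx_i$ is $\hat{\Gamma}_X\Omega_{X/S}$, the morphism object of the Dolbeault groupoid; invariantly, the $0$-fibre of $DNC$ is the normal cone of $\Delta$, that is, the total space $\Spec_X S(\Omega_{X/S})$ of $T_{X/S}$, PD-completed along its zero section. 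For flatness over $\mathbb{A}^1$ I would read it off the same picture: the $m$-th PD neighborhood is the finite free $\mathcal{O}_X[t]$-module $\bigoplus_{|\underline{k}|\leq m}\mathcal{O}_X[t]\,\sigma^{[\underline{k}]}$, flat over $\mathcal{O}_{\mathbb{A}^1_S}$, and $\mathcal{O}_{N_{Hod}}$ is the inverse limit $\prod_{\underline{k}}\mathcal{O}_X[t]\,\sigma^{[\underline{k}]}$, a product of flat modules over the Noetherian ring $\mathcal{O}_{\mathbb{A}^1_S}$, hence flat; alternatively one quotes the classical flatness of the deformation to the normal cone over $\mathbb{A}^1$ from \cite{Fulton}.

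The step I expect to be the real obstacle is the blow-up bookkeeping in the second paragraph: one must be careful about total versus strict transforms and, above all, verify that the strict transform of $\Delta\times\mathbb{A}^1$ lives entirely in the single $t$-chart, since it is precisely this fact that collapses $N_{Hod}$ to the clean formula. Everything afterward --- the two fibres and the flatness --- is then just specializing $t$ and citing Proposition 2.13, so the infinite-product flatness is only a minor secondary point (it uses that $\mathcal{O}_{\mathbb{A}^1_S}$ is Noetherian, harmless here, or one can take the detour through the flatness of $DNC$).
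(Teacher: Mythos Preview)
Your proof is correct and follows essentially the same strategy as the paper: identify the deformation to the normal cone with the single $t$-chart of the blow-up (the paper does this via the Rees/$\Proj$ description $\Spec(S^\bullet_{(t)})$, you via explicit affine charts and strict-transform bookkeeping), compute the PD completion there as $\mathcal{O}_X\langle\langle\sigma_1,\dots,\sigma_n\rangle\rangle[t]$ using the associated-graded argument of Proposition~2.11, and then read off the two fibres and flatness. One small imprecision worth fixing: $U\times_S U$ does not literally have coordinate ring $\mathcal{O}_X[\tau_1,\dots,\tau_n]$ when $U$ is only \'etale over $\mathbb{A}^n_S$ (this is exactly the caution of Remark~2.12), so your chart description is only correct after an \'etale base change or after passing to the PD envelope --- the paper sidesteps this by working with the abstract diagonal ideal $I$ in the Rees algebra and only invoking coordinates at the level of $K/K^2\simeq t^{-1}\Omega_{X/S}\otimes A[t]$, but since your PD-completion step (citing Proposition~2.11) absorbs this issue anyway, it does not affect the argument.
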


\begin{proof}
Work locally on $X$ and $S$. Assume $X=\Spec{A}$, let $B=A\otimes A$, and let $I$ be the ideal of the diagonal in $X\times_S X$. The blowup of $ X \times_S X  \times_S  \mathbb{A}_S^1$ along $\Delta \times 0$ is given by 

\[ \Proj(\bigoplus (I,t)^n ) \]

where $t$ is the coordinate on $\mathbb{A}^1$. Under the embedding of $\Delta \times \mathbb{A}^1$ into $ X \times X  \times  \mathbb{A}_S^1$, the preimage of $\Delta \times 0$ is a Cartier divisor, so $\Delta \times \mathbb{A}^1$ naturally lifts to the blowup. Similarly, $Bl_X(X \times X)$ also naturally maps into the $0$-fibre of the larger blowup $Bl_X( X \times X \times \mathbb{A}^1)$. 

Following Fulton, let $S^n =(I,t)^n $. $\Proj(S^\bullet)$ has basic open covers $\Spec(S^\bullet _{(b)})$ where $b$ runs over generators of $(I,t)$, and 
\thispagestyle{plain}

\[ S^\bullet _{(b)} = \{\frac{s}{b^n}, s \in S^n \}.\]

Consider $U = \Spec(S^\bullet_{(t)})$. The complement of $U$ in the blowup is covered by the intersection of opens $\Spec(S^\bullet _{(b)})$ for $b \in I$ (that is, all generators of $(I,t)$ excluding $t$ itself) with the vanishing locus $t=0$. This is precisely the blowup $Bl_X (X \times X)$. Hence $U$ is the deformation to the normal cone. Now

\[ S^\bullet_{(t)} = \ldots \oplus I^n t^{-n} \oplus \ldots It^{-1} \oplus B \oplus Bt \oplus \ldots  \]

This can be rewritten as 

\[A [\frac{i}{t}, t] \]

where $i \in I$. The ideal $K$ of $\Delta \times \mathbb{A}^1$ is generated by these elements $\frac{i}{t}$. 

Now we have local coordinates $x_i$ and their corresponding diagonal elements $\tau_i$. In the ring $S^\bullet_{(t)}$ , we have globally defined elements $\sigma_i$ with $t \sigma_i = \tau_i$.

Now proceed as in proposition 2.11. $K/K^2 \simeq I t^{-1}/I^2 t^{-2} \simeq \frac{1}{t} \Omega_{X/S} \otimes A[t]$ is freely generated by $\frac{1}{t} dx_i$. As before there is a simple description of the associated graded

\[Gr_{K^\bullet} (S^\bullet_{(t)}) = \bigoplus K^n/K^{n+1} = S^\bullet (\frac{1}{t} \Omega_{X/S}) \otimes A[t] \]

Passing to the PD envelope $\mathcal{Q}$ with PD ideal $J$,
\[Gr_{J^\bullet} \mathcal{Q} = \bigoplus J^{[n]}/J^{[n+1]} =: \Gamma \]

So $J^{[n]}/J^{[n+1]}$ is freely generated over $A[t]$ by elements $\frac{1}{t^n} \underline{dx}^{[\underline{k}]}$ with $\sum k_\lambda = n$. 

These elements coincide with the images of elements $\frac{1}{t^n} \underline{\tau}^{[\underline{k}]}$ where $\tau_i$ are the diagonal elements. 

By using the same exact sequence in proposition 2.11, and inducting on $n$, this shows the PD completion can be described as a completed PD algebra \\ $\mathcal{O}_X <<\sigma_1,\ldots, \sigma_n>> [t]$.

\thispagestyle{plain}

Now let's recover the two key fibres, namely the PD envelope of the diagonal, and $\Gamma \Omega$. Above $t=1$, we have $\tau_i= \sigma_i$ and we clearly have a copy of $\hat{\mathcal{P}}$. Above $t=0$, we clearly see a copy of $\mathcal{O}_X<<\sigma_1,\ldots,\sigma_n>>$ but we must explain why this is canonically $\Gamma \Omega_{X/S}=\mathcal{O}_X<<dx_1,\ldots,dx_n>>$. We need to see why the homogeneous coordinates $\Sigma_i$ that are introduced in computing the Proj reduce to just $dx_i.$

Given a general deformation to the normal cone, where we take \\ $\text{Bl}_{W \times 0}(Z \times \mathbb{A}^1)-\text{Bl}_W Z$, the central fibre of $\text{Bl}_{W \times 0}(Z \times \mathbb{A}^1)$ has two pieces. If $I$ is the ideal of $W$ inside $Z$, then the central fibre of $\text{Bl}_{W \times 0}(Z \times \mathbb{A}^1)$ is 

\[\text{Proj} \frac{\oplus(I,t)^n}{t(\oplus(I,t)^n) }        \]

This is the union of 

\[E_{W\times 0}(Z\times \mathbb{A}^1)=\text{Proj}\oplus \frac{(I,t)^n}{(I,t)^{n+1}}  \]

and

\[ \text{Bl}_W Z = \text{Proj} \oplus I^n  \]

along

\[E_W Z=\text{Proj} \oplus \frac{I^n}{I^{n+1}} \]

By removing $\text{Bl}_W Z$, we reduce the central fibre to just $\text{Spec}(\oplus \frac{I^n}{I^{n+1}})$. This whole process corresponds to introducing generating elements of $I$ as homogeneous \\ coordinates and then taking their image in this associated graded. That is, the $\sigma_i$ are simply the images of $\tau_i$ in $\frac{I}{I^2}$. However, the elements $dx_i$ by definition are also the images of $\tau_i$ in  $\Omega_{X/S}=\frac{I}{I^2}$. So there is a canonical identification between $\sigma_i$ in the $0$ fibre and $dx_i$. 

\thispagestyle{plain}

Finally, given this local affine description, it is immediate that $N_{Hod}$ is flat over $\mathbb{A}^1$.

\end{proof}

\begin{prop}
A vector bundle with $\lambda$-connection is equivalent to a module over $X_{Hod}$. That is, the data of a $\lambda$-connection on $E$ over $X \times \mathbb{A}^1$ is equivalent to an isomorphism between the two pullbacks $(pr_1)^* E \to (pr_2)^*E$ to the Hodge groupoid morphism object, satisfying cocycle conditions.
\end{prop}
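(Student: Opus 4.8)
The plan is to reduce Proposition 6.23 to the equivalence between $\lambda$-connections and $\Lambda_{Hod}$-modules proved in Corollary 6.9, by exhibiting $\Lambda_{Hod}$ as the continuous dual of the structure sheaf of $N_{Hod}$, exactly as $\Lambda$ was shown to be dual to $\hat{\mathcal{P}}$ in Proposition 3.6. Once that duality is in place, "stratification over $X_{Hod}$" $\Longleftrightarrow$ "$\Lambda_{Hod}$-module" is purely formal, mirroring Proposition 3.9, and Corollary 6.9 then finishes the job. Concretely: work locally in coordinates $x_1,\ldots,x_n$ on $X$. By the preceding Proposition 6.22 the completed structure sheaf of $N_{Hod}$ is $\mathcal{O}_X<<\sigma_1,\ldots,\sigma_n>>[t]$ with $t\sigma_i=\tau_i$, and its PD-level-$n$ truncation $N^{(n)}_{Hod}$ is free over $\mathcal{O}_X[t]$ on the PD-monomials $\underline{\sigma}^{[\underline{k}]}$, $|\underline{k}|\le n$. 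On the other side $\Lambda_{Hod}$ is filtered by $\Lambda_{Hod,n}=\mathcal{O}_X[t]\Lambda_0+\cdots+\mathcal{O}_X[t]\,t^{n}\Lambda_n$, which is free over $\mathcal{O}_X[t]$ on the elements $t^{|\underline{k}|}\underline{\partial}^{\,\underline{k}}$, $|\underline{k}|\le n$. Define the pairing $N^{(n)}_{Hod}\otimes_{\mathcal{O}_X[t]}\Lambda_{Hod,n}\to\mathcal{O}_X[t]$ by declaring $\underline{\sigma}^{[\underline{k}]}$ and $t^{|\underline{k}|}\underline{\partial}^{\,\underline{k}}$ dual bases; because $\sigma_i=\tau_i/t$ this is the pairing of Proposition 3.6 transported through powers of $t$, hence perfect at every finite level and, in the limit, a perfect pairing between $\hat{N}_{Hod}$ and $\Lambda_{Hod}$.

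\textbf{Comultiplication versus multiplication.} The next step is to check that this pairing intertwines the comultiplication $\Delta$ on $N_{Hod}$ induced by the groupoid composition with the algebra multiplication on $\Lambda_{Hod}$. Over the open locus $t\neq 0$ the deformation to the normal cone is trivial (its center $\Delta\times 0$ is disjoint from $t\neq 0$), so $N_{Hod}$ restricts there to $\hat{\mathcal{P}}\otimes\mathcal{O}_{\mathbb{G}_m}$ with the de Rham groupoid structure, written in the coordinates $\tau_i=t\sigma_i$; rewriting the de Rham identity $\Delta(\tau_i)=\tau_i\otimes 1+1\otimes\tau_i$ and cancelling the shared $t$ gives $\Delta(\sigma_i)=\sigma_i\otimes 1+1\otimes\sigma_i$, i.e. $\Delta(\sigma^{[n]})=\sum_{i+j=n}\sigma^{[i]}\otimes\sigma^{[j]}$, over $\mathbb{G}_m$, and by the flatness of $N_{Hod}$ over $\mathbb{A}^1$ (Proposition 6.22) this holds identically; at $t=0$ it recovers the $\Gamma$-comultiplication of Remark 2.15, and $e$ is the PD-augmentation killing every $\sigma_i$. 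Dualizing $\Delta$ exactly as in the proof of Proposition 3.6 produces an associative multiplication $\Theta$ on $\Lambda_{Hod}$; running that computation with $t^{|\underline{k}|}\underline{\partial}^{\,\underline{k}}$ in place of $\underline{\partial}^{\,\underline{k}}$ yields $\Theta(t^k\partial^k,t^l\partial^l)=t^{k+l}\partial^{k+l}$ and $\Theta(t^n\partial^n,f)=\sum_{k}\binom{n}{k}t^{k}\partial^{k}(f)\,t^{n-k}\partial^{n-k}$, which are precisely the defining relations $AB-BA=t[A,B]$ and $Af=tA(f)+fA$ of $\Lambda_{Hod}$ (Definition 6.8, Corollary 6.9) — the only new feature relative to the de Rham case is the bookkeeping of $t$-powers, which matches on both sides because $\sigma_i=\tau_i/t$ and the order-$k$ generators of $\Lambda_{Hod}$ are $t^k\partial^k$. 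Coordinate independence of the pairing follows from the same device as in Proposition 3.6, paired in the first variable only.

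\textbf{Conclusion.} With the duality in hand the equivalence is formal and mirrors Proposition 3.9: a stratification $\phi:pr_1^*E\xrightarrow{\sim}pr_2^*E$ over $X_{Hod}$ satisfying the cocycle conditions of Definition 5.5 is the same as a compatible system of $\mathcal{O}_X[t]$-linear maps $(pr_1^{(n)})^*E\to(pr_2^{(n)})^*E$ over the truncations $N^{(n)}_{Hod}$; transposing through the perfect pairing turns these into a left $\Lambda_{Hod}$-action on $E$, the requirement $e^*\phi=\mathrm{id}$ corresponding to the unit and the cocycle square to associativity. By Corollary 6.9 such an action is exactly a $\lambda$-connection, and unwinding the identifications exhibits the stratification attached to a $\lambda$-connection as its ``$\lambda$-twisted Taylor expansion'' $\phi=\sum_{\underline{k}}\underline{\sigma}^{[\underline{k}]}\otimes\nabla^{[\underline{k}]}$, which specializes at $t=1$ to the de Rham Taylor series and at $t=0$ to the polynomial Dolbeault stratification.

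\textbf{Main obstacle.} The delicate point is the second paragraph: one must be careful that the groupoid composition on $N_{Hod}$, defined geometrically through the deformation to the normal cone and only a posteriori written in the $\sigma_i$-coordinates, really dualizes to the multiplication on $\Lambda_{Hod}$ with exactly the right powers of $t$, and that the PD completion used to form $\hat{N}_{Hod}$ is the one making its pairing with the merely filtered $\Lambda_{Hod}$ perfect level by level. Once this $t$-bookkeeping is pinned down, everything else is a direct transcription of the arguments of Section 3.
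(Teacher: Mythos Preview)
Your proposal is correct and follows essentially the same approach as the paper: establish a perfect pairing at finite levels between $\Lambda_{Hod}^{\le n}$ and $\mathcal{P}^n_{Hod}$ by making $t^{|\underline{k}|}\underline{\partial}^{\,\underline{k}}$ and $\underline{\sigma}^{[\underline{k}]}$ dual bases over $\mathcal{O}_X[t]$, then transport the argument of Proposition 3.9 verbatim and invoke Corollary 6.9. The paper phrases the construction as the unique $\mathcal{O}_X[t]$-linear extension of the de Rham pairing $\langle\partial^k,\tau^{[k]}\rangle$ (checking it specializes at $t=0$ to the $S(T_X)$--$\Gamma\Omega_X$ pairing), while you define it directly and verify the comultiplication--multiplication compatibility by a flatness argument over $\mathbb{A}^1$; your treatment of this last point is in fact more explicit than the paper's, which simply asserts the extension and moves on.
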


\thispagestyle{plain}

\begin{corollary}
As for Higgs bundles, there is an equivalence of three categories, between $\lambda$-connections, modules over $\Lambda_{Hod}$, and stratifications with respect to $X_{Hod}$. That is, local systems on the Hodge groupoid are precisely $\lambda$-connections.
\end{corollary}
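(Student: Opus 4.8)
The plan is to assemble the equivalence from results already at hand and then to pin down the piece that makes it genuinely a triple equivalence rather than two equivalences glued at a common node. Corollary 6.9 says a $\lambda$-connection on $E$ is the same as a left $\Lambda_{Hod}$-module, and Proposition 6.23 says it is the same as a module over the groupoid $X_{Hod}$, i.e. (Definition 5.6) an $\mathcal{O}_X$-module together with a stratification with respect to $X_{Hod}$; the identification of stratifications with local systems on $X_{Hod}$ is built into the formalism of Section 5.5. So formally the corollary is immediate. What I would actually prove, in the spirit of Proposition 3.9 and Corollary 6.7, is a perfect duality between $\Lambda_{Hod}$ and the structure sheaf of $N_{Hod}$, from which both equivalences drop out simultaneously.

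First I would record the local models. By Proposition 6.22, locally $N_{Hod}$ has structure sheaf the completed PD algebra $\mathcal{O}_X<<\sigma_1,\dots,\sigma_n>>[t]$ with $t\sigma_i=\tau_i$; write $\mathcal{Q}$ for this sheaf, $J$ for its PD ideal, and $\mathcal{Q}^m=\mathcal{Q}/J^{[m+1]}$ for the structure sheaf of the $m$-th PD neighbourhood $N_{Hod}^{(m)}$, which is free over $\mathcal{O}_X[t]$ on the PD monomials $\underline\sigma^{[\underline k]}$ of degree $\le m$. By Definition 6.8, $\Lambda_{Hod}=\sum_m \mathcal{O}_X[t]\,t^m\Lambda_m$, and its filtered piece $(\Lambda_{Hod})_m=\sum_{k\le m}\mathcal{O}_X[t]\,t^k\Lambda_k$ is free over $\mathcal{O}_X[t]$ on the $t^{|\underline k|}\underline\partial^{\underline k}$ with $|\underline k|\le m$.

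Second I would set up the pairing: declare $\underline\sigma^{[\underline k]}$ and $t^{|\underline k|}\underline\partial^{\underline k}$ to be dual $\mathcal{O}_X[t]$-bases. This is forced by Proposition 3.6, since $\sigma_i^{[k]}=\tau_i^{[k]}t^{-k}$ and $\tau_i^{[k]}$ is dual to $\partial_i^k$, so $\sigma_i^{[k]}$ must pair with $t^k\partial_i^k$. This gives a perfect $\mathcal{O}_X[t]$-linear pairing $\mathcal{Q}^m\times(\Lambda_{Hod})_m\to\mathcal{O}_X[t]$, hence in the limit a perfect pairing between $\mathcal{Q}$ and $\Lambda_{Hod}$; its base change to $t=1$ is the $(\hat{\mathcal{P}},\Lambda)$ pairing of Proposition 3.6 and to $t=0$ the $(\hat{\Gamma}_X\Omega_{X/S},S(T_{X/S}))$ pairing used in Corollary 6.7. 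I would then check that the comultiplication on $\mathcal{Q}$ given by $\Delta(\sigma^{[n]})=\sum_{i+j=n}\sigma^{[i]}\otimes\sigma^{[j]}$, extended $\mathcal{O}_X[t]$-linearly — which is exactly the composition law $c$ of $X_{Hod}$, since $\Delta(\tau_i)=\tau_i\otimes1+1\otimes\tau_i$ forces $\Delta(\sigma_i)=\sigma_i\otimes1+1\otimes\sigma_i$ — is dual under this pairing to the multiplication of $\Lambda_{Hod}$ described in the proof of Corollary 6.9. With the duality in place, the rest is a line-for-line transcription of the proof of Proposition 3.9: a filtered $\Lambda_{Hod}$-action on $E$ is the same as compatible $\mathcal{O}_X[t]$-linear maps $E\to\mathcal{Q}^m\otimes E=(pr_2^{(m)})_*(pr_1^{(m)})^*E$, i.e. compatible maps $(pr_1^{(m)})^*E\to(pr_2^{(m)})^*E$, which glue in the limit to $\phi\colon pr_1^*E\to pr_2^*E$ over $N_{Hod}$, associativity of the action becoming the cocycle condition via duality with $\Delta$, and $\phi$ then automatically invertible with inverse $i^*\phi$. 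Together with Corollary 6.9 this yields the three-way equivalence; it re-proves Proposition 6.23 and specializes at $t=1,0$ to Proposition 3.9 and Corollary 6.7.

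The step I expect to be the main obstacle is making the pairing precise in the presence of $t$. Because $\sigma_i$, unlike $\tau_i$, is not a section of the diagonal sheaf before inverting $t$, one must verify directly that the proposed dual-basis pairing is well defined, $\mathcal{O}_X[t]$-bilinear, and perfect at each finite level $m$, and — the genuinely delicate point — that the composition $c$ of $X_{Hod}$ is exactly the $\Delta$ dual to the multiplication of $\Lambda_{Hod}$ and is not off by a power of $t$. Checking that the whole picture is compatible with the two fibres $t=1$ and $t=0$, where the answer is already known, is the safest way to catch such an error; once that is pinned down everything else is routine.
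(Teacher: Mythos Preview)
Your proposal is correct and takes essentially the same approach as the paper: set up an $\mathcal{O}_X[t]$-linear perfect pairing at finite levels between $(\Lambda_{Hod})_m$ and $\mathcal{Q}^m$ by declaring $t^{|\underline k|}\underline\partial^{\underline k}$ and $\underline\sigma^{[\underline k]}$ dual, check it specializes correctly at $t=0,1$, and then rerun Proposition~3.9. The only discrepancy is logical bookkeeping: you invoke Proposition~6.23 as already established, whereas in the paper that proposition is stated without separate proof and is in fact proved \emph{by} the duality argument you then carry out; since you go on to supply that argument anyway, nothing is lost.
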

\begin{proof}
We have already shown that there is an equivalence between $\lambda$-connections and modules over $\Lambda_{Hod}$. Let $\hat{\mathcal{P}}_{Hod}$ be the morphism object of the Hodge groupoid. Locally this has a description 
\[ \mathcal{O}_X <<\sigma_1,\ldots,\sigma_n>>[t] \]
where $\tau_i=t\sigma_i$.

Recall that there exists a duality between $\Lambda$ and $\hat{\mathcal{P}}$ as left $\mathcal{O}_X$-modules between basis elements $\partial^k$ and $\tau^{[k]}$. More precisely, there exists a duality between $\Lambda_n$ and $\mathcal{P}^n$. $\Lambda_{Hod}$ has an increasing order filtration, while $\mathcal{P}_{Hod}$ has a decreasing sequence of divided power ideals, hence a surjective system of quotients $\mathcal{P}^n_{Hod}$.

Consider $\Lambda_{Hod}^{\leq n}$ and $\mathcal{P}^n_{Hod}$ as left $\mathcal{O}_X [t]$-modules. $\Lambda_{Hod}^{\leq n}$ has basis elements given by $t^k\partial^k$, which in the $t=0$ fibre specialize to elements $\bar{\partial}^k$ as elements of the associated graded $S(T_X)$. $\mathcal{P}^n_{Hod}$ has basis elements $\sigma^{[k]}$, which specialize to elements $(dx)^{[k]}$ at the central fibre $\Gamma \Omega_X$. 

The $\mathcal{O}_X$-linear pairing between $\partial^k$ and $\tau^{[k]}$ extends uniquely to a $\mathcal{O}_X [t]$-linear \\ pairing between $t^k\partial^k$ and $\sigma^{[k]}$, which specializes at the $t=0$ fibre to the pairing between $S(T_X)$ and $\Gamma \Omega_X$. That is, the canonical pairing between $\Lambda_n$ and $\mathcal{P}^n$ extends over $\mathbb{A}^1$ to a pairing between $\Lambda_{Hod}$ and $\mathcal{P}_{Hod}$, which is perfect at finite levels.

Once this is established, it follows just as before that a compatible set of actions $\Lambda_{Hod}^{\leq n}$ on $E$ dualizes to a compatible set of morphisms $(pr^{(n)}_1)^* E \to (pr^{(n)}_2)^* E$ between the two pullbacks from $\mathcal{P}^n_{Hod}$, and hence a stratification over the Hodge groupoid. Again, the associativity of the action of $\Lambda_{Hod}$ corresponds to the cocycle conditions.

\end{proof}
\thispagestyle{plain}

\begin{prop}
$(.)_{Hod}$ is a groupoid arrow functor.
\end{prop}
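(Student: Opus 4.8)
The plan is to verify the three axioms of a groupoid arrow functor from Section 5.6, following the pattern already used for the de Rham groupoid (Proposition 6.10, resting on Corollary 2.9) and the Dolbeault groupoid (Proposition 6.15), and leaning throughout on the explicit local description of $N_{Hod}$ from Proposition 6.20: locally $\hat{\mathcal{P}}_{Hod}$ is the completion of $\mathcal{O}_X\langle\sigma_1,\ldots,\sigma_n\rangle[t]$ with $t\sigma_i=\tau_i$ the diagonal coordinates. The \emph{identity} property is then essentially immediate: when $X=S$ there are no diagonal coordinates, so this local model degenerates to $\mathcal{O}_X[t]$, and $(X/X)_{Hod}$ is the trivial groupoid on $X\times\mathbb{A}^1_S$ with $e$ an isomorphism. (Concretely, blowing up $X\times\mathbb{A}^1_S$ along the Cartier divisor $X\times 0$ does nothing, so removing the strict transform of $X\times 0$ and PD-completing along $X\times\mathbb{A}^1_S$ returns $X\times\mathbb{A}^1_S$.)

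For the \emph{base change} property, let $S'\to S$ be arbitrary and $X'=X\times_S S'$; we must show $(X'/S')_{Hod}\to(X/S)_{Hod}\times_S S'$ is an isomorphism. On object objects this is clear. On morphism objects, the deformation to the normal cone entering Definition 6.19 is built from the (extended) Rees algebra of the ideal $(I,t)$ of $\Delta\times 0$ in $X\times_S X\times_S\mathbb{A}^1_S$; since $\Delta\hookrightarrow X\times_S X$ is a regular immersion (both smooth over $S$) and $t$ is a further regular element, this algebra is flat over $S\times\mathbb{A}^1_S$ and commutes with arbitrary base change, and the subsequent PD completion along $\Delta\times\mathbb{A}^1_S$ commutes with base change by Proposition 2.11 (the relevant ambient scheme and $\Delta\times\mathbb{A}^1_S$ being smooth over $S\times\mathbb{A}^1_S$, as visible in Proposition 6.20). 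Equivalently, and more concretely, the isomorphism is read directly off the local model of Proposition 6.20, since the free PD power series algebra construction and the finite free graded pieces $\mathcal{P}^n_{Hod}$ all commute with $\mathcal{O}_X\mapsto\mathcal{O}_X\otimes_{\mathcal{O}_S}\mathcal{O}_{S'}$.

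For the \emph{descent} property, fix $X\to S\to T$ and write $\cD_{X/T}$, resp. $\cD_{S/T}$, for the deformation to the normal cone of $\Delta_X\hookrightarrow X\times_T X$, resp. $\Delta_S\hookrightarrow S\times_T S$, before PD completion. There is a commutative square of closed immersions, with horizontal arrows the zero-section embeddings $\Delta\times\mathbb{A}^1_S\hookrightarrow(\cdot)$ and vertical arrows induced by $X\to S$, and Proposition 2.8 applied to it yields a natural isomorphism
\[ P^n_{X\times\mathbb{A}^1_S}\!\big(S\times\mathbb{A}^1_S\times_{\cD_{S/T}}\cD_{X/T}\big)\;\xrightarrow{\;\sim\;}\;P^n_{X\times\mathbb{A}^1_S}(\cD_{X/T})\times_{P^n_{S\times\mathbb{A}^1_S}(\cD_{S/T})}\big(S\times\mathbb{A}^1_S\big), \]
whose limit over $n$ is the morphism object of $(X/T)_{Hod}\times_{(S/T)_{Hod}}(S/S)_{Hod}$. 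It then remains to identify $S\times\mathbb{A}^1_S\times_{\cD_{S/T}}\cD_{X/T}$ with $\cD_{X/S}$, the deformation to the normal cone for $\Delta_X\hookrightarrow X\times_S X$, whose PD completion is exactly $N_{Hod}(X/S)$. This is the Hodge analogue of the identity $S\times_{S\times_T S}(X\times_T X)\simeq X\times_S X$ of Corollary 2.9: in the coordinates of Proposition 6.20, with $x_1,\ldots,x_n$ coordinates for $X/S$ and $s_1,\ldots,s_m$ for $S/T$, one has $\cD_{S/T}$ locally $\mathcal{O}_S[\varsigma_1,\ldots,\varsigma_m,t]$, $\cD_{X/T}$ locally $\mathcal{O}_X[\sigma_1,\ldots,\sigma_n,\varsigma_1,\ldots,\varsigma_m,t]$, and $S\times\mathbb{A}^1_S\to\cD_{S/T}$ given by $\varsigma_j\mapsto 0$; so the fiber product cuts out $\varsigma_j=0$ and returns $\mathcal{O}_X[\sigma_1,\ldots,\sigma_n,t]=\cD_{X/S}$ locally — exactly the $ds_i\mapsto 0$ computation in the Dolbeault case (Proposition 6.15).

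The main obstacle is precisely this last identification in the descent step: whereas PD envelopes descend formally, the deformation to the normal cone involves a blowup together with the removal of a component, and the compatibility of these operations with base change along $S$ and with the PD completion is not automatic. The remedy is Proposition 6.20 itself, whose explicit affine model $\mathcal{O}_X\langle\sigma_1,\ldots,\sigma_n\rangle[t]^{\wedge}$ makes all the required base-change and fiber-product statements visible coordinate by coordinate. A secondary subtlety is that the base change in the second axiom is arbitrary rather than flat, which is why one appeals to the regular-embedding input and Proposition 2.11 rather than to naive flat base change for blowups.
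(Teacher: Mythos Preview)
Your argument is correct and, at its core, rests on the same local-coordinate verification as the paper: using the explicit model $\mathcal{O}_X\langle\langle\sigma_i\rangle\rangle[t]$ from the preceding proposition to check base change and descent directly. The paper is more spare---it skips your Proposition~2.8 framing for descent and simply observes that killing the $S/T$-diagonal generators in $\mathcal{O}_X\langle\langle\tau_i,\sigma_j\rangle\rangle[t]$ returns $\mathcal{O}_X\langle\langle\tau_i\rangle\rangle[t]$---and it also records an alternative proof you do not mention: since $N_{Hod}$ is flat over $\mathbb{A}^1$, both isomorphisms can be checked fibrewise, where they reduce to the already-established de Rham ($t\neq 0$) and Dolbeault ($t=0$) cases.
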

\begin{proof}
Note this time, $(X/X)_{Hod}=\mathbb{A}^1_X$. We must prove two properties, first that given $S' \to S$ an arbitrary base change, the canonical morphism

\[(X\times_S S'/S')_{Hod} \to (X/S)_{Hod} \times_S S' \]

is an isomorphism. This is trivial on object objects and follows for morphism objects by the explicit description above. Alternatively, we can appeal to flatness. This is an isomorphism fibrewise over $\mathbb{A}^1$ by the fact that the de Rham and Dolbeault groupoids are groupoid arrow functors ($t\neq 0$ and $t=0$ respectively). By flatness, it is an isomorphism.

Secondly, we must prove for $X \to S \to T$ morphisms in $\mathcal{D}$, that the canonical morphism is an isomorphism

\[(X/S)_{Hod} \to (X/T)_{Hod} \times_{(S/T)_{Hod}} \mathbb{A}^1_S \]

Again, this is trivial on object objects. For morphism objects, work locally, \\ using local coordinates. If $X/S$ has local coordinates $x_1,\ldots, x_n$, with \\ corresponding diagonal coordinates $\tau_i$, while $S/T$ has coordinates $s_1,\ldots, s_m$, with \\ corresponding diagonal coordinates $\sigma_j$, then this claim reduces to noticing that \\ beginning with $\mathcal{O}_X <<\tau_i,\sigma_j>>[t]$ and killing all $\sigma_j$ returns $\mathcal{O}_X <<\tau_i>>[t]$.

Alternatively, we could appeal to flatness again as above.
\end{proof}

\begin{corollary}
$M_{Hod}(X/S)$ is an algebraic stack over $\mathbb{A}^1_S$, and has a descent to $[(S/T)]_{Hod}$.
\end{corollary}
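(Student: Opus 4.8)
The plan is to mirror the proofs of the analogous corollaries for $M_{dR}$ and $M_{Dol}$, deducing the statement from the general machinery of Propositions 5.11 and 5.12 applied to the groupoid arrow functor $(.)_{Hod}$.

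First I would identify $M_{Hod}(X/S)$ with the module stack $M((X/S)_{Hod})$ of the Hodge groupoid. By the proposition identifying vector bundles with $\lambda$-connection with modules over $X_{Hod}$, for every test scheme $R \to \mathbb{A}^1_S$ a $\lambda$-connection on $X \times_S R / R$ is the same datum as a module over the base-changed groupoid $(X/S)_{Hod} \times_{\mathbb{A}^1_S} R$, the identification being natural in $R$ and carrying isomorphisms of $\lambda$-connections to isomorphisms of modules. Hence the two fibred categories over $Sch/\mathbb{A}^1_S$ coincide.

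Next, to see that this is an algebraic stack I would apply Proposition 5.11 to the groupoid $(X/S)_{Hod}$ over $B = (S/S)_{Hod} = \mathbb{A}^1_S$. Its object object $X \times \mathbb{A}^1_S$ is smooth over $\mathbb{A}^1_S$, and its morphism object $N_{Hod}$ is formally smooth over $\mathbb{A}^1_S$ with the same underlying topological space as $X \times \mathbb{A}^1$; this is visible from the explicit local description $N_{Hod} = \mathcal{O}_X <<\sigma_1,\ldots,\sigma_n>>[t]$ (completed), with $\tau_i = t\sigma_i$, together with the flatness of $N_{Hod}$ over $\mathbb{A}^1$. The hypotheses of Proposition 5.11 are phrased for honest schemes, but exactly as in the de Rham case they apply verbatim in this formally smooth formal-scheme setting. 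Proposition 5.11 then gives that $[(X/S)_{Hod}]$ is an algebraic stack and that $M((X/S)_{Hod})$, hence $M_{Hod}(X/S)$, is the algebraic stack of locally free finite rank modules on it.

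Finally, for the descent I would use that $(.)_{Hod}$ is a groupoid arrow functor (just proven) and apply Proposition 5.12 with the triple $X \to S \to T$: writing $X_{st} = [(X/T)_{Hod}]$, $S_{st} = [(S/T)_{Hod}]$ and $B = \mathbb{A}^1_S$, one obtains a canonical isomorphism $M_{Hod}(X/S) \simeq M(X_{st}/S_{st}) \times_{S_{st}} \mathbb{A}^1_S$ of stacks over $Sch/\mathbb{A}^1_S$, which exhibits $M_{Hod}(X/S)$ as the pullback of a stack over $[(S/T)]_{Hod}$, i.e.\ equips it with the required descent datum. I do not expect a genuine obstacle: the corollary is a purely formal consequence of the setup, and the only point deserving a word of care is the verification of the smoothness and topological hypotheses of Proposition 5.11 for $N_{Hod}$, for which the blowup-and-deletion presentation is unwieldy but the local PD power series description makes everything transparent.
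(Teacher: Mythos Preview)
Your proposal is correct and follows exactly the paper's own argument: identify $M_{Hod}(X/S)$ with $M((X/S)_{Hod})$ via the equivalence between $\lambda$-connections and modules over the Hodge groupoid, then invoke Proposition~5.11 for algebraicity and Proposition~5.12 (together with the fact that $(.)_{Hod}$ is a groupoid arrow functor) for the descent. The paper's proof is a one-line citation of these three ingredients, and your more detailed discussion of the hypotheses of Proposition~5.11 is a reasonable elaboration but not a different approach.
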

\begin{proof}
Corollary 6.24 shows $M_{Hod}(X/S)=M(X/S)_{Hod}$, proposition 5.11 gives the algebraicity, and proposition 5.12 gives the descent.
\end{proof}
\thispagestyle{plain}

\begin{definition}
Let $x: S \to X$ be a section of $X/S$. Then there is a scheme $R_{Hod}(X,S,x)$ representing the functor that assigns to $R/\mathbb{A}^1_S$ the set of isomorphism classes of $(V,\nabla, \alpha)$ where $V$ is a vector bundle on $X_R/R$, $\nabla$ a $\lambda$-connection and $\alpha: V_{x(R)} \simeq \mathcal{O}^d_R$ is a trivializing frame along this section. 
\end{definition}

This representability is proven in section 6 of \cite{Simpson5}. As a consequence, we may explicitly express $M_{Hod}$ as a quotient stack, $[R_{Hod}/GL_d]$. In particular we have a surjective morphism of stacks $R_{Hod} \to M_{Hod}$, which shows that $M_{Hod}$ is an algebraic stack.

\begin{corollary}
There is a blowdown map $(X/S)_{Hod} \to (X/S)_{dR} \times \mathbb{A}^1_S$. On object objects it is the identity map. On morphism objects it can be described by 

\[ \mathcal{O}_X <<\tau_1,\ldots,\tau_n>>[t] \to \mathcal{O}_X <<\sigma_1,\ldots,\sigma_n>>[t] \]
\[ \tau_i \mapsto t\sigma_i \]

In particular over the $0$-fibre it is given by a trivial map $X_{Dol} \to X_{dR}$, which factors through $X_{Dol} \to X \to X_{dR}$.
\end{corollary}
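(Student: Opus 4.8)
The plan is to realise the asserted map as the morphism induced by the ordinary blowdown $\pi: Bl_{\Delta\times 0}(X\times_S X\times_S\mathbb{A}^1_S)\to X\times_S X\times_S\mathbb{A}^1_S$, and then to read off all of its properties from the local description of Proposition 6.22. Recall that, by definition, $N_{Hod}$ is the PD completion along $\Delta\times_S\mathbb{A}^1$ of the open subscheme $DNC_\Delta(X\times_S X)=Bl_{\Delta\times 0}(X\times_S X\times_S\mathbb{A}^1_S)-Bl_\Delta(X\times_S X)$. The structure morphism $\pi$ restricts to a morphism $DNC_\Delta(X\times_S X)\to X\times_S X\times_S\mathbb{A}^1_S$ which carries the strict transform $\Delta\times_S\mathbb{A}^1$ of the diagonal isomorphically onto $\Delta\times_S\mathbb{A}^1\subset X\times_S X\times_S\mathbb{A}^1_S$, so it is compatible with PD completing along these loci. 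By Proposition 2.10, the PD completion of $X\times_S X\times_S\mathbb{A}^1_S$ along $\Delta\times_S\mathbb{A}^1$ is $\hat{\mathcal{P}}\times_S\mathbb{A}^1_S$, i.e.\ the morphism object of $(X/S)_{dR}\times\mathbb{A}^1_S$. Hence $\pi$ induces a morphism $N_{Hod}\to\hat{\mathcal{P}}\times_S\mathbb{A}^1_S$; on object objects, both of which equal $X\times\mathbb{A}^1_S$, it is the identity.

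Next I would compute the comorphism locally and verify the groupoid axioms. By Proposition 6.22, locally $N_{Hod}$ has structure sheaf $\mathcal{O}_X<<\sigma_1,\ldots,\sigma_n>>[t]$ with $t\sigma_i=\tau_i$ the diagonal coordinates pulled back along $\pi$; since $\pi^*\tau_i=\tau_i=t\sigma_i$, the comorphism is $\mathcal{O}_X<<\tau_1,\ldots,\tau_n>>[t]\to\mathcal{O}_X<<\sigma_1,\ldots,\sigma_n>>[t]$, $\tau_i\mapsto t\sigma_i$. This is well defined because $t\sigma_i$ lies in the PD ideal of the target and $\tau_i^{[k]}\mapsto(t\sigma_i)^{[k]}=t^k\sigma_i^{[k]}$; more conceptually, it is the unique $\mathcal{O}_X[t]$-linear PD morphism out of the free completed PD power series algebra carrying $\tau_i$ to these images, which confirms at once that the construction is independent of the choice of coordinates and agrees with the globally defined $\pi$. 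That it is a morphism of groupoid schemes I would check on structure sheaves: source and target of both groupoids correspond to the two inclusions of $\mathcal{O}_X[t]$, the identity $e$ to the counit $\tau_i,\sigma_i\mapsto 0$, and composition $c$ to the comultiplications with $\Delta\tau_i=\tau_i\otimes 1+1\otimes\tau_i$ and $\Delta\sigma_i=\sigma_i\otimes 1+1\otimes\sigma_i$; since $t$ is central, $\Delta(t\sigma_i)=t\Delta\sigma_i=(t\sigma_i)\otimes 1+1\otimes(t\sigma_i)$ and the counit kills $t\sigma_i$, so all of these are intertwined by $\tau_i\mapsto t\sigma_i$. Equivalently, all the groupoid data on both sides are inherited from the groupoid structure on $X\times_S X$ through the blowup, hence are automatically respected by $\pi$.

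Finally I would specialise the fibres. Over $t=1$ the comorphism is $\tau_i\mapsto\sigma_i$, an isomorphism, consistent with $N_{Hod}|_{t=1}=\hat{\mathcal{P}}$. Over $t=0$ it becomes $\mathcal{O}_X<<\tau_1,\ldots,\tau_n>>\to\mathcal{O}_X<<\sigma_1,\ldots,\sigma_n>>$, $\tau_i\mapsto 0$, which factors as the counit $\mathcal{O}_X<<\tau_i>>\to\mathcal{O}_X$ followed by the structure inclusion $\mathcal{O}_X\hookrightarrow\mathcal{O}_X<<\sigma_i>>=\hat{\Gamma}_X\Omega_{X/S}$. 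Dually, the $t=0$ restriction of the blowdown is the groupoid morphism $(X/S)_{Dol}\to(X/S)_{dR}$ that is the identity on object objects and on morphism objects is the composite $N_{Dol}\xrightarrow{s}X\xrightarrow{e}\hat{\mathcal{P}}$; passing to quotient stacks and invoking Lemma 5.7 and Definition 5.8, this is exactly the factorization $X_{Dol}\to X\to X_{dR}$ through the trivialising section of the Dolbeault loop groupoid followed by the canonical map $X\to X_{dR}$.

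The step I expect to require the most care is confirming that the construction is genuinely coordinate independent and respects the full groupoid structure, especially the composition law $c$. My plan for handling this is to give priority to the global description through $\pi$ and the blowup of $X\times_S X\times_S\mathbb{A}^1_S$, so that these compatibilities become formal consequences of functoriality of blowing up together with Proposition 2.10, with the local coordinate formula of Proposition 6.22 invoked only to exhibit the comorphism $\tau_i\mapsto t\sigma_i$ and to identify the $t=0$ fibre.
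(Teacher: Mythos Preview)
Your proof is correct and follows essentially the same route as the paper, which simply declares the result ``immediate from the calculations of Proposition 6.22.'' You supply considerably more detail than the paper does---in particular the global construction via the blowdown $\pi$ and the explicit check of the groupoid axioms and the $t=0$ factorisation---but the underlying idea (read off $\tau_i\mapsto t\sigma_i$ from the local description $t\sigma_i=\tau_i$ established in Proposition 6.22) is the same.
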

\begin{proof}
Immediate from the calculations of proposition 6.22.
\end{proof}

\subsection{An alternative construction of $X_{Hod}$}
In this section, we aim to construct a groupoid from a different perspective, with object object $X\times \mathbb{A}^1_S$ and with a morphism object that deforms between the de Rham and Dolbeault groupoids. First, we deform between the uncompleted analogues, namely the diagonal and the zero section of the tangent bundle. The diagonal is

\[X \times X = \text{Hom} (\text{Spec}\dfrac{R[\epsilon]}{\epsilon(\epsilon-1)}, X) \]

while the zero section of the tangent bundle is 

\[ \text{Hom} (\text{Spec}\dfrac{R[\epsilon]}{\epsilon^2}, X) \]

\begin{prop}
There is a groupoid G that deforms between the diagonal and the zero section of the tangent bundle. It maps to $X \times X \times \mathbb{A}^1$, then lifts to the appropriate blowup of $\Delta \times 0$. This lift is an isomorphism onto the complement of the strict transform of $X \times X \times 0$.  
\end{prop}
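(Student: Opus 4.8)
The plan is to construct $G$ from its functor of points, map its morphism object to $X\times_S X\times_S\mathbb{A}^1_S$ via source, target and the structure map, and then recognise the image as the deformation to the normal cone using the local computations already carried out in the proof of Proposition 6.22.

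First I would set up the groupoid. Write $t$ for the coordinate on $\mathbb{A}^1$, and for an $\mathbb{A}^1_S$-scheme $R$ let $t\in\Gamma(R,\mathcal{O}_R)$ be the image of the coordinate and put $R[\epsilon]_t:=R[\epsilon]/(\epsilon(\epsilon-t))$, a free $R$-algebra of rank two with two sections $\epsilon\mapsto 0$ and $\epsilon\mapsto t$ (the latter well defined since $t(t-t)=0$). I take $G$ to have object object $X\times_S\mathbb{A}^1_S$ and morphism object $N$ the functor
\[ N(R)=\Hom_S\bigl(\Spec R[\epsilon]_t,\ X\bigr), \]
with source and target precomposition with the two sections, identity precomposition with $\Spec R[\epsilon]_t\to\Spec R$, inverse precomposition with the involution $\epsilon\mapsto t-\epsilon$ of $R[\epsilon]_t$ (which swaps the sections), and composition the evident concatenation of these thickened intervals. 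Working in étale-local coordinates $x_1,\dots,x_n$ on $X$, a morphism is $x_i\mapsto x_i+\sigma_i\epsilon$, so $N$ is locally $\Spec\mathcal{O}_X[\sigma_1,\dots,\sigma_n][t]$, with source $x_i\mapsto x_i$, target $x_i\mapsto x_i+t\sigma_i$, and composition $\sigma_i\mapsto\sigma_i\otimes 1+1\otimes\sigma_i$; these local descriptions glue to give $N$ as a scheme. Specialising $t=1$ recovers the pair groupoid on $X\times_S X$, i.e. $(X/S)_{dR}$ before PD completion, and $t=0$ recovers the total space of $T_{X/S}$ with its additive groupoid structure, i.e. $(X/S)_{Dol}$ before PD completion; this is the claimed deformation.

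Next I would produce the lift to the blowup. Combining source, target and the structure map gives $\beta\colon N\to X\times_S X\times_S\mathbb{A}^1_S$, which in the coordinates above sends $\tau_i:=y_i-x_i\mapsto t\sigma_i$ and $t\mapsto t$. The ideal of $\Delta\times 0$ is $(\tau_1,\dots,\tau_n,t)$, and it pulls back along $\beta$ to $(t\sigma_1,\dots,t\sigma_n,t)=(t)$, which is invertible on $N$ because $N$ is flat over $\mathbb{A}^1_S$ (clear from the local model), so that $t$ is a nonzerodivisor cutting out the Cartier divisor $N\times_{\mathbb{A}^1}\{0\}$. By the universal property of $Bl_{\Delta\times 0}$, the map $\beta$ factors uniquely through
\[ \widetilde\beta\colon N\longrightarrow Bl_{\Delta\times 0}\bigl(X\times_S X\times_S\mathbb{A}^1_S\bigr). \]
To identify $\widetilde\beta$ I would reuse the proof of Proposition 6.22: working Zariski-locally, with $I$ the ideal of the diagonal and $S^\bullet=\bigoplus_n(I,t)^n$, the blowup is $\Proj S^\bullet$, covered by charts $\Spec S^\bullet_{(b)}$; that proof identifies $\Spec S^\bullet_{(t)}=\mathcal{O}_X[\tau_i/t,\,t]=\mathcal{O}_X[\sigma_i,\,t]$ with $\sigma_i=\tau_i/t$, and shows its complement in the blowup is exactly $Bl_X(X\times_S X)$, which is the strict transform of $X\times_S X\times 0$ sitting in the $0$-fibre. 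Hence $\Spec S^\bullet_{(t)}$ is the complement of that strict transform, and $\widetilde\beta$ is precisely the identification of $N$ with this chart via $\sigma_i\leftrightarrow\tau_i/t$; so $\widetilde\beta$ is an isomorphism of $N$ onto the complement of the strict transform, and these local identifications glue because the rule $\sigma_i=\tau_i/t$ is canonical.

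I expect the main obstacle to be the groupoid structure on $N$ — producing the composition as an actual morphism of schemes and checking the axioms coherently over $\mathbb{A}^1$, not merely fibrewise. I would carry this out on the functor of points, concatenating the thickened intervals $\Spec R[\epsilon]_t$ along their matching sections, and then sanity-check against the Hodge groupoid of Proposition 6.28: since the comultiplication of Proposition 2.14 is polynomial in the $\tau_i$, under $\sigma_i=\tau_i/t$ it restricts on the polynomial (pre-PD-completion) morphism object to $\sigma_i\mapsto\sigma_i\otimes 1+1\otimes\sigma_i$, which forces $G$ to agree with $(X/S)_{Hod}$ before PD completion and lets the groupoid axioms be inherited from there. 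Everything else — the pullback-of-ideal computation and the $\Proj$ chart bookkeeping — is routine given Proposition 6.22.
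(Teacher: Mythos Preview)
Your proposal is correct and follows essentially the same approach as the paper: both construct $G$ via the functor of points $R\mapsto\Hom(\Spec R[\epsilon]/(\epsilon(\epsilon-t)),X)$, map to $X\times X\times\mathbb{A}^1$ via the two sections and the structure map, observe that the ideal of $\Delta\times 0$ pulls back to $(t)$, and lift to the blowup by the universal property. Two minor differences are worth noting. First, the paper writes down the composition law concretely via an explicit ring map $\mu$ from $T_\lambda$ to the pushout $\Spec B$ of two copies of $T_\lambda$ along the matching sections, whereas you leave this as ``evident concatenation'' and propose to borrow the axioms from the Hodge groupoid; the paper's $\mu$ is exactly what you would produce if you carried out your functor-of-points check, so there is no real gap. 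Second, for the final isomorphism the paper argues fibrewise (both sides are $X\times X$ for $t\neq 0$ and $T_{X/S}$ for $t=0$) and invokes flatness, while you identify $N$ directly with the chart $\Spec S^\bullet_{(t)}$ via $\sigma_i\leftrightarrow\tau_i/t$ using the computation already done in Proposition~6.22; your route is arguably cleaner here since it reuses that computation verbatim rather than redoing the fibre analysis.
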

\thispagestyle{plain}

\begin{proof} 

Let $\text{Spec}R \to \mathbb{A}^1_S$ be an arbitrary affine scheme over $\mathbb{A}^1_S$ with a distinguished element $\lambda \in R$, corresponding to the image of the coordinate of $\mathbb{A}^1$.

We want to describe the functor $X_{Hod}(R\to \mathbb{A}^1_S)=X_{Hod,\lambda}$ as a quotient of $X_R$. Define

\[ T_\lambda= \text{Spec}\dfrac{R[\epsilon]}{\epsilon(\epsilon-\lambda)} \]
\[G(R)= \text{Hom} (T_\lambda, X_R) \]

When $\lambda=0$, $G$ parametrizes pairs of points of $X$ and tangent vectors at that point. Its formal PD completion parametrizes a pair consisting of a point and an infinitesimal tangent vector, the Dolbeault stack. For $\lambda=1$, $G$ parametrizes pairs of points of $X$. Its formal PD completion parametrizes pairs of infinitesimally close points, the de Rham stack. 

Define $i_0,i_1:\text{Spec}R \to T_\lambda$ by sending $\epsilon$ to $0$ and $\lambda$ respectively. That is, \\ $i_o(a+b\epsilon)=a, i_1(a+b\epsilon)=a+b\lambda$. Note $i_0-i_1$ lands in the ideal generated by $\lambda$. $T_\lambda$ comes with a distinguished map $j$ to $\text{Spec}R$. Let

\[ B = \dfrac{R[\epsilon_1]}{\epsilon_1(\epsilon_1-\lambda)} \times_{i_0,R,i_1} \dfrac{R[\epsilon_2]} {\epsilon_2(\epsilon_2-\lambda)}  \]

$B$ consists of pairs $(a+\epsilon_1 b, c+\epsilon_2 d)$ where $a=c+\lambda d$. There exists a map of schemes $\mu: T_\lambda \to \text{Spec}B$ given by $(a+\epsilon_1 b, c+\epsilon_2 d) \mapsto c+ \epsilon(b+d)$. This is a simple matter to verify. Note this can be expressed independently of $\lambda$.

There is a commutative diagram

\begin{center}

\begin{tikzcd}

          &   \Spec B  &                 \\
 \Spec R  \arrow[ur, "i_{1}"]   \arrow[dr, "i_1", swap] &             &    \Spec  R    \arrow[ul,"i_0", swap]     \arrow[dl,"i_0"]   \\ 
          &   T_{\lambda}      \arrow[uu,"\mu"]       &

\end{tikzcd}  

\end{center}

Note $\text{Spec}B$ is the gluing of two copies of $T_\lambda$ along $i_0$ and $i_1$. 
\thispagestyle{plain}

$\mu$ allows us to multiply two maps in $\text{Hom} (T_\lambda, X_R)$ that agree along $\text{Hom} (\text{Spec}R, X_R)$ and thus provides us with a groupoid structure with identity morphism from $X(R)$ induced by $j$. 

Now suppose $\text{Spec}A$ maps to $G$ for some algebra $A$. This corresponds to a map $\text{Spec}A \times T_\lambda \to X$. Assume $X$ is an affine scheme $\text{Spec}D$; then this data is equivalent to a map $D \to \dfrac{A[\epsilon]}{\epsilon(\epsilon-\lambda)}$. Note the two images under $i_0$ and $i_1$ differ by a multiple of $\lambda$.

Now we give the blowup description of $G$. Using the maps $i_0,i_1$ and $\lambda$ we can map $G$ to $X \times X \times \mathbb{A}^1$. Consider the closed subscheme $\Delta \times 0$ where $\Delta$ is the diagonal of $X \times X$. Its ideal is generated by $(x \otimes 1 - 1 \otimes x, \lambda)$. By the above note, this closed subscheme pulls back to $G$ to a closed subscheme generated just by $\lambda$, because $i_0(a+b\epsilon)-i_1(a+b\epsilon)$ lies in the ideal generated by $\lambda$. This means that the pullback of this closed subscheme is a divisor of $G$. By the universal property of the blowup, $G\to X \times X \times \mathbb{A}^1$ lifts to the blowup. When $\lambda=0$, $i_0$ and $i_1$ coincide so the image must not intersect the strict transform of $X \times X \times 0$. 

\thispagestyle{plain}

This gives a morphism of $G$ to the scheme remaining, namely the result of blowing up $X \times X \times \mathbb{A}^1$ and removing the strict transform of $X \times X \times 0$. This can be seen to be an isomorphism by comparing its fibres and invoking flatness. When $\lambda \neq 0,$ the $\lambda$-fibres are both copies of $X \times X$. When $\lambda=0$, a local calculation shows both $0$-fibres are the zero section of the tangent bundle of $X.$ 

As discussed above, I must then complete $G$. That is, we must take the PD completion of this groupoid along its identity, or $X \times \mathbb{A}^1$ along the diagonal map. That is, we PD complete along the closed subscheme $\Delta \times \mathbb{A}^1$. 

This deforms between the de Rham and Dolbeault groupoid at $\lambda=1$ and $\lambda=0$ \\ respectively. Moreover, the group structure induced by $\mu$ is compatible with the two group structures.
\end{proof}
\thispagestyle{plain}

\subsection{Non-abelian filtrations and the Kodaira-Spencer map}
\begin{definition}[Rees module]
Let $V$ be a vector bundle on $X$ with decreasing exhaustive filtration $F^\bullet$. Define $\zeta(V,F^\bullet)$, a $\mathbb{G}_m$-equivariant sheaf of vector bundles over $\mathbb{A}^1_X$ as follows:

let $j: \mathbb{G}_m \to \mathbb{A}^1$ be the inclusion and $t$ the coordinate on $\mathbb{A}^1$. Define $\zeta(V,F^\bullet)$ to be the subsheaf of $j_*(V \otimes \mathcal{O}_{\mathbb{G}_m})$ generated by sections of the form $t^{-n}v_n$ for $v_n \in F^n$.
\end{definition}

That is, $\zeta(V,F^\bullet)=\{ v_0 + t^{-1} v_1 + \ldots \}$. The fibre $\zeta_1$ returns $V$ while the fibre $\zeta_0$ gives the associated graded of the filtration. Note $\zeta$ is flat over $\mathbb{A}^1$. 

Conversely, given a vector bundle $W$ on $\mathbb{A}^1$ with $\mathbb{G}_m$-action, set $V=W_1$ and then obtain a decreasing filtration on $V$ by defining $F^n$ to be the set of $\mathbb{G}_m$-invariant sections with pole of order at least $n$ at $0$. These constructions are inverses, and provide an equivalence of categories

\[(\text{vector bundle with filtration}) \simeq (\mathbb{G}_m -\text{equivariant sheaves on } \mathbb{A}^1 )  \]

This motivates the definition of a filtration in the non-abelian setting. Let $M$ be a prestack or stack over $(Sch/S).$ By a \textit{filtration} of $M$ we mean a $\mathbb{G}_m$-equivariant prestack or stack $\mathcal{F}$ flat over $\mathbb{A}^1_S$, together with a given isomorphism $\mathcal{F}_1 \simeq M$. We impose the flatness condition, for otherwise one could add anything over the $t=0$ fibre and preserve the definition. The \textit{associated graded} of the filtration is the central fibre $\mathcal{F}_0$. 

Since $\lambda$-connections can be rescaled for $\lambda \neq 0$, it immediately follows $M_{Hod}$ is a filtration of $M_{dR}$ with associated graded $M_{Dol}$. We call $M_{Hod}$ the Hodge filtration of $M_{dR}$. Flatness of $M_{Hod}$ over $\mathbb{A}^1$ is proven in \cite{Simpson3}.

\thispagestyle{plain}
\begin{prop}
Let $W$ be a vector bundle over $X \times \mathbb{A}^1_S$ with a $\mathbb{G}_m$-action and a $\mathbb{G}_m$-equivariant descent to $[(X/S)]_{Hod}$. Then from $W$, we can associate a Higgs field as well as a vector bundle with integrable connection and filtration $(V,\nabla, F^\bullet)$, satisfying Griffiths transversality, that is, $\nabla(F^n) \subset F^{n-1} \otimes \Omega_{X/S}$.

\end{prop}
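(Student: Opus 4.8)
The plan is to produce all three structures by restricting the given Hodge-groupoid descent datum to the fibres $t=1$ and $t=0$ and by invoking the Rees-module dictionary recorded just above the statement. By Corollary 6.24, a $\mathbb{G}_m$-equivariant vector bundle $W$ on $X\times\mathbb{A}^1_S$ with a $\mathbb{G}_m$-equivariant descent to $[(X/S)_{Hod}]$ is the same datum as a $\mathbb{G}_m$-equivariant integrable $\lambda$-connection $(W,\nabla^W)$ relative to $X\times\mathbb{A}^1/S\times\mathbb{A}^1$, with $\lambda$ the coordinate $t$ on $\mathbb{A}^1$; and $W$ is automatically flat over $\mathbb{A}^1_S$, being locally free over $X\times_S\mathbb{A}^1_S$, which is flat over $\mathbb{A}^1_S$ by base change from the smooth morphism $X/S$.

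First I would pass to the two special fibres. Restricting the groupoid $(X/S)_{Hod}$ along $\{1\}\hookrightarrow\mathbb{A}^1_S$ gives, by Proposition 6.22, the de Rham groupoid $(X/S)_{dR}$, and restricting along $\{0\}\hookrightarrow\mathbb{A}^1_S$ gives the Dolbeault groupoid $(X/S)_{Dol}$. By functoriality of $M(\,\cdot\,)$ the pair $(W,\nabla^W)$ then restricts to a module over $(X/S)_{dR}$, i.e.\ (Proposition 3.9) a vector bundle with integrable connection $(V,\nabla):=(W_1,\nabla^W|_{t=1})$ on $X/S$, and to a module over $(X/S)_{Dol}$, i.e.\ (Corollary 6.7) a Higgs bundle $(E,\phi):=(W_0,\nabla^W|_{t=0})$ on $X/S$. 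This already yields the vector bundle with integrable connection and the Higgs field demanded.

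Next I would forget the $\lambda$-connection and feed $W$, now just a $\mathbb{G}_m$-equivariant vector bundle on $\mathbb{A}^1_X$ flat over $\mathbb{A}^1$, into the Rees equivalence recorded above: there is a unique decreasing exhaustive filtration $F^\bullet$ of $V=W_1$ with $W\simeq\zeta(V,F^\bullet)$ as $\mathbb{G}_m$-equivariant bundles, and under this identification $E=W_0=\bigoplus_n F^n/F^{n+1}=\mathrm{gr}_F(V)$. So $(V,\nabla,F^\bullet)$ is a filtered vector bundle with integrable connection, and, granting the Griffiths transversality proved below, the Higgs bundle above is canonically $(\mathrm{gr}_F V,\mathrm{gr}\,\nabla)$, the graded field being $\mathcal{O}_X$-linear because the Leibniz term of $\nabla^W$ carries the factor $\lambda=t$, which vanishes at $t=0$.

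The hard part will be Griffiths transversality. Over $\mathbb{G}_m\subset\mathbb{A}^1_S$ the coordinate $t$ is invertible, so $\tilde\nabla:=t^{-1}\nabla^W$ is an honest integrable connection on $W|_{\mathbb{G}_m}$ relative to $X\times\mathbb{G}_m/S\times\mathbb{G}_m$; being $\mathbb{G}_m$-equivariant it descends along the quotient $X\times\mathbb{G}_m\to[(X\times\mathbb{G}_m)/\mathbb{G}_m]=X$, and restricting the descended object to $\{1\}\times X$ identifies it with $(V,\nabla)$, so $\nabla^W|_{\mathbb{G}_m}=t\cdot\mathrm{pr}^*\nabla$ with $\mathrm{pr}\colon X\times\mathbb{G}_m\to X$. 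In particular $\nabla^W$ kills $t$, and, using $\Omega_{X\times\mathbb{A}^1/S\times\mathbb{A}^1}=\mathrm{pr}^*\Omega_{X/S}$, we get $W\otimes\Omega_{X\times\mathbb{A}^1/S\times\mathbb{A}^1}=\zeta\big(V\otimes\Omega_{X/S},\,F^\bullet\otimes\Omega_{X/S}\big)$ inside $j_*\big((V\otimes\Omega_{X/S})\otimes\mathcal{O}_{\mathbb{G}_m}\big)$. For a local section $v\in F^n$ the generator $t^{-n}v$ of $W$ satisfies
\[ \nabla^W(t^{-n}v)=t\cdot t^{-n}\nabla(v)=t^{-(n-1)}\nabla(v), \]
and since $\nabla^W$ carries $W$ into $W\otimes\Omega_{X\times\mathbb{A}^1/S\times\mathbb{A}^1}$, the right-hand side must lie in $\zeta\big(V\otimes\Omega_{X/S},F^\bullet\otimes\Omega_{X/S}\big)$; matching powers of $t$ (the subsheaves $F^m\otimes\Omega_{X/S}$ being $t$-independent) forces $\nabla(v)\in F^{n-1}\otimes\Omega_{X/S}$, hence $\nabla(F^n)\subset F^{n-1}\otimes\Omega_{X/S}$ for all $n$. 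The two delicate points inside this are the equivariance bookkeeping pinning down $\nabla^W|_{\mathbb{G}_m}=t\,\mathrm{pr}^*\nabla$ and the small piece of linear algebra extracting the pole-order inequality from the bare fact that $\nabla^W$ preserves the Rees lattice $W$; everything else is formal, following from functoriality of $M(\,\cdot\,)$ and the Rees dictionary.
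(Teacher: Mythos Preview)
Your proof is correct and follows essentially the same approach as the paper's: both translate the Hodge-groupoid descent into a $\lambda$-connection $\tilde\nabla$ (your $\nabla^W$), invoke the Rees dictionary to obtain $(V,F^\bullet)$, use $\mathbb{G}_m$-equivariance over $\mathbb{G}_m$ to identify $\tilde\nabla=t\nabla$, and then read off Griffiths transversality from the fact that $\tilde\nabla$ preserves the Rees lattice $\zeta(V,F^\bullet)$. Your version is slightly more explicit in places (notably in identifying the Higgs bundle as $(\mathrm{gr}_F V,\mathrm{gr}\,\nabla)$ and in the element-by-element computation $\nabla^W(t^{-n}v)=t^{-(n-1)}\nabla(v)$), and you justify $\tilde\nabla|_{\mathbb{G}_m}=t\,\mathrm{pr}^*\nabla$ via descent along $X\times\mathbb{G}_m\to X$ rather than the paper's observation that $(X/S)_{Hod}|_{\mathbb{G}_m}\simeq(X/S)_{dR}\times\mathbb{G}_m$, but these are minor variations on the same argument.
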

\begin{proof}
First, invert the Rees module construction to produce a vector bundle $V$ over the $t=1$ fibre, with decreasing exhaustive filtration $F^\bullet$. We are given $W$ has a descent to $[(X/S)]_{Hod}$. Specialize to the $t=1$ fibre to deduce $V$ has a descent to $[(X/S)]_{dR}$, hence an integrable connection $\nabla$. By the $\mathbb{G}_m$-equivariance, this connection extends over all of $ V \times \mathbb{G}_m$.

Because $W$ has a descent to the Hodge groupoid, proposition 6.23 tells us that there is a $\lambda$-connection $\tilde{\nabla}$ on all of $W$. 

Now restrict to $ V \times \mathbb{G}_m$. Over $X \times \mathbb{G}_m$, $[(X/S)]_{Hod}$ is simply a product \\ $[(X/S)]_{dR} \times \mathbb{G}_m$ so the restriction of $\tilde{\nabla}$ over $\mathbb{G}_m$ must coincide with the rescaling of $\nabla$. That is, $\tilde{\nabla}=t \nabla$ away from the $t=0$ fibre. So $t \nabla$, a priori defined only away from $0$, has an extension to a $\lambda$-connection on all of $W$. In particular, $\tilde{\nabla}_0$ is the associated Higgs field. Now $W=\zeta(V,F^\bullet)$, so we must have

\[ \tilde{\nabla}(\sum t^{-n}F^n ) \subset \sum t^{-n} F^n \otimes \Omega_{X \times \mathbb{A}^1/\mathbb{A}_S^1 }= \sum t^{-n} F^n \otimes \Omega_{X/S}[t] \]

Since $t$ is an indeterminate here, this implies $t\nabla(\sum t^{-n}F^n) \subset \sum t^{-n} F^n \otimes \Omega_{X/S}[t]$, which rearranges to $\nabla(F^n) \subset F^{n-1} \otimes \Omega_{X/S}$.

\end{proof}

\begin{corollary}
Under the above assumptions, the following are equivalent.
\begin{enumerate}
\item The connection $\nabla$ preserves the filtration, $\nabla(F^n) \subset F^{n} \otimes \Omega_{X/S}$.
\item $\nabla$ extends over all of $\mathbb{A}^1$.
\item The associated Higgs field vanishes.
\item $W$ descends further along the blowdown morphism $(X/S)_{Hod} \to (X/S)_{dR} \times \mathbb{A}^1_S.$

\end{enumerate}
\end{corollary}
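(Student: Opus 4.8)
The plan is to reduce all four conditions to the explicit Rees-module picture of the preceding proposition, where $W=\zeta(V,F^\bullet)$, the genuine integrable connection $\nabla$ on $V\times\mathbb{G}_m$ is $\mathbb{G}_m$-equivariant, the $\lambda$-connection on $W$ is $\tilde\nabla=t\nabla$ away from $t=0$, and the associated Higgs field is $\tilde\nabla_0$. Throughout I will use that $W$ and $W\otimes\Omega_{X/S}$ are locally free over $X\times\mathbb{A}^1_S$, so $t$ is a non-zero-divisor and a morphism of sheaves defined over the dense open $\{t\neq0\}$ has at most one extension over $\mathbb{A}^1_S$. For $(1)\Leftrightarrow(2)$: near $t=0$, $W$ is generated by sections $t^{-n}v$ with $v\in F^n$, and $W\otimes\Omega_{X/S}=\zeta(V\otimes\Omega_{X/S},F^\bullet\otimes\Omega_{X/S})$; the formula $\nabla(\sum t^{-n}v_n)=\sum t^{-n}\nabla(v_n)$, valid over $\{t\neq0\}$, extends to a connection on all of $W$ over $\mathbb{A}^1_S$ if and only if $t^{-n}\nabla(v)\in W\otimes\Omega_{X/S}$ for every $v\in F^n$, i.e. if and only if $\nabla(v)\in F^n\otimes\Omega_{X/S}$, which is $(1)$; the extension is unique by the remark above.

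For $(2)\Leftrightarrow(3)$: if $\nabla$ extends to a genuine integrable connection $\nabla'$ on $W$ over $\mathbb{A}^1_S$, then $t\nabla'$ is a $\lambda$-connection agreeing with $\tilde\nabla=t\nabla$ over $\{t\neq0\}$, hence $\tilde\nabla=t\nabla'$ everywhere, and restricting the composite $W\xrightarrow{\nabla'}W\otimes\Omega_{X/S}\xrightarrow{t}W\otimes\Omega_{X/S}$ (the second arrow being multiplication by $t$) to $t=0$ kills the second map, so the Higgs field $\tilde\nabla_0$ vanishes. Conversely, if $\tilde\nabla_0=0$ then $\tilde\nabla(W)\subseteq t(W\otimes\Omega_{X/S})$, so $\nabla':=t^{-1}\tilde\nabla$ is well defined; the $\lambda$-connection Leibniz rule with $\lambda=t$ divides through to the ordinary Leibniz rule and $\tilde\nabla^2=0$ forces $(\nabla')^2=0$, while over $\{t\neq0\}$ one has $\nabla'=t^{-1}\tilde\nabla=\nabla$, so $(2)$ holds. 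One can also see $(1)\Leftrightarrow(3)$ directly on the central fibre $W_0=\mathrm{gr}_F V$: the Higgs field sends the class of $t^{-n}v$ to the class of $\nabla(v)$ in $\mathrm{gr}^{n-1}\otimes\Omega_{X/S}$, the usual Kodaira--Spencer component, which vanishes for all $v\in F^n$ exactly when $(1)$ holds.

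For $(2)\Leftrightarrow(4)$: a module over $(X/S)_{dR}\times\mathbb{A}^1_S$ is, by the equivalence between integrable connections and modules over the ring of differential operators applied over the base $\mathbb{A}^1_S$, precisely a genuine integrable connection on $W$ over $\mathbb{A}^1_S$; and on morphism objects the blowdown $(X/S)_{Hod}\to(X/S)_{dR}\times\mathbb{A}^1_S$, $\tau_i\mapsto t\sigma_i$, dualizes to the evident ring inclusion $\Lambda_{Hod}\hookrightarrow\Lambda_{dR}[t]$, both being subsheaves of $j_*(\Lambda_{dR}\otimes\mathcal{O}_{\mathbb{G}_m})$, under which a section $D$ of $T_X$ regarded as an order-one generator of $\Lambda_{Hod}$ maps to $tD$. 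Thus $W$ descends along the blowdown if and only if the $\Lambda_{Hod}$-action $\tilde\nabla$ is the restriction along this inclusion of some $\Lambda_{dR}[t]$-action $\nabla'$, i.e. if and only if $\tilde\nabla=t\nabla'$ for a genuine integrable connection $\nabla'$ on $W$ over $\mathbb{A}^1_S$; by the uniqueness remark such a $\nabla'$ restricts over $\{t\neq0\}$ to $\nabla$, so this is condition $(2)$.

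The step I expect to require the most care is $(2)\Leftrightarrow(4)$: making precise what ``$W$ descends further along the blowdown'' means at the level of stratifications rather than of the quotient stacks, identifying the dual of the blowdown morphism with the inclusion $\Lambda_{Hod}\hookrightarrow\Lambda_{dR}[t]$, and checking that the connection one extracts is integrable and unique. The remaining equivalences are short once the Rees-module descriptions of $W$, of $W\otimes\Omega_{X/S}$, and of the Higgs field $\tilde\nabla_0$ from the preceding proposition are in hand.
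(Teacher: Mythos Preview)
Your proposal is correct and follows essentially the same approach as the paper: both reduce everything to the Rees-module picture, use that $t$ is a non-zero-divisor to divide $\tilde\nabla$ by $t$ when the Higgs field vanishes, and interpret descent along the blowdown as having a genuine connection over $\mathbb{A}^1_S$ whose rescaling recovers $\tilde\nabla$. The only differences are organizational---the paper proves the cycle $(1)\Rightarrow(2)\Rightarrow(3)\Rightarrow(1)$ together with $(4)\Rightarrow(3)$ and $(2)\Rightarrow(4)$, whereas you prove the biconditionals $(1)\Leftrightarrow(2)$, $(2)\Leftrightarrow(3)$, $(2)\Leftrightarrow(4)$ directly---and your treatment of $(2)\Leftrightarrow(4)$ via the explicit dual inclusion $\Lambda_{Hod}\hookrightarrow\Lambda_{dR}[t]$ is somewhat more concrete than the paper's, which simply asserts that a descent to $(X/S)_{dR}\times\mathbb{A}^1_S$ is the same datum as a connection on $W$ over $\mathbb{A}^1_S$.
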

\begin{proof}
$(1 \implies 2)$. If $\nabla$ preserves the filtration then $\nabla$ passes to the associated graded, and hence the $0$-fibre of the Rees module, so extends over all of $\mathbb{A}^1$.

$(2 \implies 3)$. Under this assumption, $\tilde{\nabla}$ and $t\nabla$ are both $\lambda$-connections defined over all of $\mathbb{A}^1$ that agree on $\mathbb{G}_m$. By flatness they must agree everywhere, and hence over $0$. So $\tilde{\nabla}_0=(t\nabla)_0=0$. 

\thispagestyle{plain}

$(3 \implies 1)$. $\tilde{\nabla}$ is a $\mathcal{O}_S[t]$-linear map $\tilde{\nabla}:M_1 \to M_2$ where $M_1$ is the Rees module $\zeta$ and $M_2=\zeta \otimes \Omega_{X/S}$. Now the associated Higgs field is the induced map $\frac{M_1}{tM_1} \to \frac{M_2}{tM_2}$, and we are given it vanishes. So $\tilde{\nabla}(M_1) \subset t M_2$. Since $t$ is not torsion, we have a well-defined morphism $\frac{\tilde{\nabla}}{t}$. This is an extension of $\nabla$. Moreover, the condition

\[\tilde{\nabla}(\sum t^{-n}F^n) \subset t(\sum t^{-n} F^n \otimes \Omega_{X/S}[t])\]

rearranges to $\nabla(F^n) \subset F^{n} \otimes \Omega_{X/S}$, so $\nabla$ preserves the filtration. 

$(4 \implies 3)$. Assume $W$ descends further along the blowdown morphism. Now the $\lambda$-connection structure $\tilde{\nabla}$ is induced from the pullback from $(X/S)_{Hod}$. But $W$ has a descent to $(X/S)_{dR} \times \mathbb{A}^1_S$, hence a connection on all of $W$. Hence $\tilde{\nabla}$ is induced by a rescaled connection, which implies 3. 

$(2 \implies 4)$ If $\nabla$ extends over all of $\mathbb{A}^1$, this provides a descent from $W$ all the way to $(X/S)_{dR} \times \mathbb{A}^1_S$. Since $\nabla$ scales to $\tilde{\nabla}$, it follows that this descent to $(X/S)_{dR} \times \mathbb{A}^1_S$ pulls back to the descent to $(X/S)_{Hod}$.

\end{proof}

\begin{theorem}
Assume the non-abelian Kodaira-Spencer map vanishes, that is, the stratification of $M_{Dol}$ over $[(S/T)]_{Dol}$ is trivial. Then the Gauss-Manin connection on $M_{dR}$ extends to the Hodge filtration.
\end{theorem}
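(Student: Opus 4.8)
The plan is to recast the conclusion in the groupoid language of Section~5 and then reproduce, at the level of the stack $M_{Hod}(X/S)$, the implication $(3)\Rightarrow(4)$ of Corollary~6.37, with the role of ``$t$ is a non-zero-divisor on the Rees module'' played by the flatness of $M_{Hod}(X/S)$ over $\mathbb{A}^1_S$ (proven in \cite{Simpson3}). Concretely, by Proposition~5.12 and Proposition~5.7 the Gauss--Manin descent of $M_{Hod}(X/S)$ to $[(S/T)]_{Hod}$ is a stratification $\Phi\colon s^*M_{Hod}(X/S)\xrightarrow{\sim}t^*M_{Hod}(X/S)$ over the morphism object $N_{Hod}$ of $(S/T)_{Hod}$, and ``the Gauss--Manin connection extends over the Hodge filtration'' means precisely that this descent factors through the blowdown $\beta\colon N_{Hod}\to\bar N$ of Corollary~6.38, where $\bar N:=N_{dR}(S/T)\times\mathbb{A}^1_S$ is the morphism object of $(S/T)_{dR}\times\mathbb{A}^1_S$ and $N_{dR}(S/T)$ is the PD formal completion of the diagonal of $S\times_T S$; that is, $\Phi=\beta^*\bar\Phi$ for a stratification $\bar\Phi$ over $\bar N$ satisfying its own cocycle conditions. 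Restricting such a $\bar\Phi$ to $t=1$ then recovers the Gauss--Manin connection on $M_{dR}(X/S)$, as required.

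First I would treat the two special fibres. Over $\mathbb{G}_m\subset\mathbb{A}^1$ the blowdown $\beta$ is an isomorphism (Proposition~6.22 and Corollary~6.38: $\tau_i=t\sigma_i$ with $t$ invertible), so $\Phi|_{\mathbb{G}_m}$ descends automatically to a stratification $\bar\Phi^{\circ}$ over $\bar N|_{\mathbb{G}_m}$, namely the Gauss--Manin connection rescaled by $t$. Over $t=0$, Proposition~6.22 identifies the $0$-fibre of the Hodge groupoid with the Dolbeault groupoid, so $\Phi_0:=\Phi|_{t=0}$ is exactly the non-abelian Kodaira--Spencer stratification of $M_{Dol}(X/S)=(M_{Hod})_0$ over $[(S/T)]_{Dol}$; moreover $\beta_0\colon N_{Dol}(S/T)\to N_{dR}(S/T)$ factors through the identity section $N_{Dol}(S/T)\to S\hookrightarrow N_{dR}(S/T)$ by Corollary~6.38. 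Hence $\Phi_0$ descends along $\beta_0$ if and only if it is pulled back from $S$, which, together with the unit axiom $e^*\Phi_0=\mathrm{id}$, holds if and only if $\Phi_0$ is the trivial stratification. By hypothesis the non-abelian Kodaira--Spencer map vanishes, so $\Phi_0$ is trivial and we obtain a descended datum $\bar\Phi_0=\mathrm{id}$ over all of $\bar N_0=N_{dR}(S/T)$.

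It remains to glue $\bar\Phi^{\circ}$ (over the dense open $\bar N|_{\mathbb{G}_m}$) and $\bar\Phi_0$ (over the divisor $\bar N_0$) into a single stratification $\bar\Phi$ over $\bar N$; this is the crux, and where flatness enters. Since $N_{dR}(S/T)$ is (pro-)free over $S$ by Propositions~2.13 and 6.22, $\bar N$ is flat over $\mathbb{A}^1_S$, $t$ is a non-zero-divisor, and $\bar N|_{\mathbb{G}_m}$ is scheme-theoretically dense in $\bar N$; likewise $M_{Hod}(X/S)$ is flat over $\mathbb{A}^1_S$. Mirroring the proof of Corollary~6.37 $(3)\Rightarrow(1)$ — where $\tilde\nabla(M_1)\subset tM_2$ together with $t$ being a non-zero-divisor on the Rees module $M_2$ lets one form $\tilde\nabla/t$ — one uses the presentation $M_{Hod}(X/S)=[R_{Hod}/GL_d]$ (equivalently the Rees-module description of the Hodge filtration) to write $\Phi$ over $N_{Hod}$ as flat $\mathcal{O}_{\bar N}$-linear data whose restriction along $\mathcal{O}_{\bar N}\hookrightarrow\mathcal{O}_{N_{Hod}}$ recovers $\Phi$; the vanishing of the obstruction computed above — the triviality of $\Phi_0$ — is exactly what permits the requisite power of $t$ to be divided out, producing $\bar\Phi$ over all of $\bar N$. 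The cocycle conditions for $\bar\Phi$ with respect to $(S/T)_{dR}\times\mathbb{A}^1_S$ follow from those for $\Phi$ by the same density argument, and $\bar\Phi|_{t=1}$ is the Gauss--Manin connection.

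The main obstacle is precisely this last gluing step. In the abelian setting of Katz the analogue is immediate, since one merely divides a morphism of coherent sheaves by $t$; in the non-abelian, stacky setting one must instead exploit the flatness of $M_{Hod}(X/S)$ over $\mathbb{A}^1_S$ (equivalently, the $t$-torsion-freeness of $R_{Hod}$, or of the Rees module underlying the Hodge filtration) to extend the generically defined descended stratification $\bar\Phi^{\circ}$ across the divisor $\{t=0\}$ and to verify its compatibility there with the trivial datum forced by the vanishing of Kodaira--Spencer. This is the ``flatness argument'' flagged in the Remark following the Main Theorem, and it carries essentially all of the difficulty.
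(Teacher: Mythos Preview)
Your overall architecture is correct and matches the paper's: reformulate the conclusion as descent of the stratification $\Phi$ along the blowdown $\beta\colon N_{Hod}\to N_{dR}\times\mathbb{A}^1$, observe that over $\mathbb{G}_m$ this is automatic and that over $t=0$ the hypothesis makes $\Phi_0$ trivial, and then argue that these glue. Where you diverge from the paper is in how you carry out the gluing, and this is exactly where your argument becomes a sketch rather than a proof.

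You propose to ``write $\Phi$ over $N_{Hod}$ as flat $\mathcal{O}_{\bar N}$-linear data'' and then divide by $t$, invoking flatness of $M_{Hod}$ over $\mathbb{A}^1_S$. But $\Phi$ is an isomorphism of \emph{stacks} over $N_{Hod}$, not a morphism of modules, so there is no evident sense in which one can ``divide it by $t$''; you yourself flag this as the main obstacle and do not resolve it. The paper's move is precisely to reduce this stacky statement to the module-level statement where Corollary~6.32 applies directly: pass from $M_{Hod}$ to the representing scheme $R_{Hod}$ (so the descent data for the quotient $[R_{Hod}/GL_d]$ is carried by $R_{Hod}$), then work Zariski-locally on $R_{Hod}$ and on $S$. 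On an affine open $U\subset R_{Hod}$, the descent datum to $[(S/T)]_{Hod}$ is literally a $\lambda$-connection on the structure sheaf $\mathcal{O}_U$ relative to $S/T$; this is a genuine vector bundle over $\mathbb{A}^1_S$ with a $\mathbb{G}_m$-equivariant $\lambda$-connection whose central fibre (the Higgs field) is trivial by hypothesis. Now Corollary~6.32 applies verbatim and says $\mathcal{O}_U$ descends along the blowdown. These local descents are compatible on overlaps, so $R_{Hod}$ descends; since $GL_d$ descends trivially, $M_{Hod}=[R_{Hod}/GL_d]$ descends. This localization-to-affines step is the missing ingredient in your proposal: it is what converts the stacky flatness you invoke into an honest $t$-torsion-freeness statement on a module, where ``divide by $t$'' makes literal sense.
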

\thispagestyle{plain}

\begin{proof}
By the remark following definition 6.19, $M_{Dol}=[R_{Dol}/GL_d]$. The condition that the Higgs field is trivial for $M_{Dol}$ is equivalent to the condition that the Higgs field is trivial for $R_{Dol}$. 

Now work locally on the scheme $R=R_{Hod}$. Recall $M_{Hod}=[R_{Hod}/GL_d]$, so the descents for $M_{Hod}$ and $R_{Hod}$ to $[(S/T)]_{Hod}$ are compatible. In particular, $R$ has a descent to $[(S/T)]_{Hod}$ in which the central fibre over $t=0$, $R_{Dol}$, has a trivial stratification. 

We may work locally on $\mathbb{A}^1_S$, equivalently on $S$. Choose an affine open $U \subset R$. The descent data to $[(S/T)]_{Hod}$ provides a filtration and integrable connection on $\mathcal{O}_U$ over $\mathbb{A}_S^1$ in which the central fibre has a trivial stratification. As such, $U$ descends along the blowdown morphism $[(S/T)]_{Hod} \to [(S/T)_{dR} \times \mathbb{A}^1_S$. These descents are compatible on intersections and so provide a descent of $R_{Hod}$ along the blowdown morphism $[(S/T)]_{Hod} \to [(S/T)_{dR} \times \mathbb{A}^1_S$. 

Finally, $M_{Hod}=[R_{Hod}/GL_d]$, and $GL_d$ automatically descends, so $M_{Hod}$ descends to $[(S/T)]_{dR} \times \mathbb{A}^1_S$. This descent induces the Gauss-Manin connection on the $t=1$ fibre, $M_{dR}$. Hence the Gauss-Manin connection extends over all of $\mathbb{A}^1_S$ to $M_{Hod}$.

\end{proof}

\thispagestyle{plain}

\pagebreak

\section{Characteristic $p$}
In this section, we specialize to the case where all schemes in question are \\ characteristic $p$. We shall use the same notation for local coordinates as in section 3.2, sometimes specializing to $n=1$ for notational simplicity. Recall $X/S$ is a smooth separated morphism and $\Lambda$ is the set of crystalline differential operators. I shall study the structure of $\Lambda$, $\mathcal{P}$ and its important quotient $\dfrac{\mathcal{P}}{I}$. Next, I will introduce the conjugate filtration on $\mathcal{P}$ and define the conjugate moduli stack, which will be required to state and prove the main theorem.

\thispagestyle{plain}

Later in this section, I will study two necessary constructions, namely splittings of Cartier, and the canonical connection.

\subsection{More on the structure of $\Lambda$ and $p$-curvature}

Recall multiplication in $\Lambda$ satisfies
\[\partial^n f = \sum_{0\leq k \leq n} {n \choose k} \partial^{n-k} (f) \partial^k \]

\begin{definition}
Let $D$ be a derivation on $X$, namely a section of the tangent bundle, understood as an element of $\Lambda$. By \cite{Katz3} section 5.0, the $p$-th iterate $D^{\circ p}$ is itself a derivation. Define the $p$-curvature of $D$ to be $\psi(D)=D^p - D^{\circ p}$ where $D^p$ is the product in $\Lambda$ and $D^{\circ p}$ is the $p$-th iterate derivation.
\end{definition}

\begin{lemma}
$\psi$ is $p$-linear on derivations. $\psi(\partial)=\partial^p$. The images $\psi$ generate a commutative $\mathcal{O}_X$-subalgebra of $\Lambda$.
\end{lemma}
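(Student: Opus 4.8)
The plan is to establish the three assertions in order of increasing difficulty, working throughout in local coordinates $z_1,\dots,z_n$ and, where convenient, reducing to $n=1$ since all the claims are local and compatible with the $\mathcal{O}_X$-module decomposition of $\Lambda$ over monomials $\partial^{\underline{k}}$. First I would record the basic computation $\psi(\partial) = \partial^p - \partial^{\circ p} = \partial^p$, which holds because the coordinate derivation $\partial = \partial/\partial z$ satisfies $\partial^{\circ p} = 0$ (applying $\partial$ $p$ times to any polynomial in $z$ lowers degree each time and, more to the point, $\partial^{\circ p}(z^m) = m(m-1)\cdots(m-p+1)z^{m-p}$, and the falling factorial $m^{\underline{p}}$ is divisible by $p!$, hence vanishes mod $p$; alternatively $\partial^{\circ p}$ is again an $\mathcal{O}_X$-linear-coefficient derivation of order $\le 1$ with no constant term that kills every $z_i$, so it is zero). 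This simultaneously gives $\psi(\partial) = \partial^p$ and shows the $p$-th power $\partial^p \in \Lambda$ is itself (the class of) a derivation-like operator — more precisely, $\psi(\partial)$ commutes with $\mathcal{O}_X$.

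Next, $p$-linearity: I must show $\psi(fD) = f^p\psi(D)$ for $f$ a section of $\mathcal{O}_X$, and $\psi(D_1+D_2) = \psi(D_1)+\psi(D_2)$. Both are classical (Katz, \cite{Katz3}, 5.2) and follow from the Jacobson-type identities in the enveloping algebra $\Lambda$: in characteristic $p$ one has, for the iterate, $(fD)^{\circ p} = f^p D^{\circ p} + (\text{correction})$ and for the product in $\Lambda$ the matching correction, the two cancelling; similarly $(D_1+D_2)^{\circ p} = D_1^{\circ p} + D_2^{\circ p} + \sum s_i(D_1,D_2)$ where the $s_i$ are the universal Jacobson Lie polynomials, and the same expressions appear when expanding $(D_1+D_2)^p$ in the associative algebra $\Lambda$ using the derivation property of $\mathrm{ad}$. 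Rather than reprove these I would cite \cite{Katz3}, section 5, since the paper already leans on that reference for the definition of $D^{\circ p}$; the only thing worth noting is that the identity takes place in $\Lambda$, the universal enveloping algebra, where these formal identities are valid verbatim.

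The substantive point is the last sentence: the images of $\psi$ generate a commutative $\mathcal{O}_X$-subalgebra. By $p$-linearity and the local freeness of $T_X$ over the basis $\partial_1,\dots,\partial_n$, the image of $\psi$ together with $\mathcal{O}_X$ is generated as an $\mathcal{O}_X$-algebra by the elements $\psi(\partial_i) = \partial_i^{\,p}$. So it suffices to check that the $\partial_i^{\,p}$ commute with each other and with $\mathcal{O}_X$. They commute with one another because the $\partial_i$ do (the coordinate vector fields commute, so their $p$-th powers in $\Lambda$ do). For commutation with $\mathcal{O}_X$: using the multiplication rule $\partial^n f = \sum_{0\le k\le n}\binom{n}{k}\partial^{n-k}(f)\,\partial^k$ with $n=p$, every binomial coefficient $\binom{p}{k}$ for $0<k<p$ vanishes mod $p$, leaving $\partial^p f = f\,\partial^p + \partial^{\circ p}(f)$; since $\partial^{\circ p}(f) = 0$ as computed above (it is the $p$-fold iterate of the derivation $\partial$ applied to $f$), we get $\partial^p f = f\,\partial^p$, i.e. $[\partial_i^{\,p}, f] = 0$. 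The main (and really only) obstacle is keeping the bookkeeping straight between the two meanings of "$p$-th power" — the associative product $\partial^p$ in $\Lambda$ versus the iterate $\partial^{\circ p}$ — and confirming that $\partial^{\circ p} = 0$ for coordinate fields so that the correction terms genuinely drop; once that is pinned down, the commutativity is immediate and the subalgebra generated is $\mathcal{O}_X[\partial_1^{\,p},\dots,\partial_n^{\,p}]$, a polynomial ring, hence commutative. Globally this is the statement that $\psi$ lands in the centralizer of $\mathcal{O}_X$ in $\Lambda$ and its image generates the (commutative) subsheaf of $p$-curvature operators, which will later be identified with functions on the Frobenius twist of the cotangent bundle.
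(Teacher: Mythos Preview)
Your proposal is correct and follows essentially the same approach as the paper: cite \cite{Katz3} for $p$-linearity, compute $\partial^{\circ p}=0$ directly for coordinate derivations to get $\psi(\partial)=\partial^p$, and use the vanishing of $\binom{p}{k}$ mod $p$ in the commutation rule $\partial^p f = \sum \binom{p}{k}\partial^{p-k}(f)\partial^k$ together with $\partial^{\circ p}(f)=0$ to conclude $\partial^p f = f\partial^p$. The only cosmetic difference is that the paper checks $\partial^{pn} f = f\partial^{pn}$ for all $n$ at once (using that $\binom{pn}{k}\equiv 0$ for $0<k<p$ and $\partial^{\circ k}(f)=0$ for $k\ge p$), whereas you check just $n=1$ and then note that commutativity of the generators with each other and with $\mathcal{O}_X$ already gives a commutative subalgebra.
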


\begin{proof}
The fact that $\psi$ is $p$-linear is proven in \cite{Katz3}, proposition 5.2. It is easy to compute for $D=\partial$ that $D^{\circ p}=0$, so $\psi(\partial)=\partial^p$. By $p$-linearity,

\[\psi(f_1\frac{\partial}{\partial x_1} + \ldots f_n\frac{\partial}{\partial x_n}) = f_1^p (\frac{\partial}{\partial x_1})^p + \ldots f_n^p(\frac{\partial}{\partial x_n})^p   \]

Next, we claim $\partial^{pn} f = f\partial^{pn}$. From the multiplication relations in $\Lambda$, described in section 3.2, we have

\[\partial^{pn} f=\sum_{k \leq pn} {pn \choose k} \partial^k(f) \partial^{pn-k}\]
For $0<k<p, {pn \choose k} =0$ in characteristic $p$. For $k\geq p$, $\partial^k(f)=0$. So only the $k=0$ term remains, which is just $f\partial^{pn}$. That is, $\partial^{pn} f = f\partial^{pn}$.

So the $\mathcal{O}_X$-submodule of $\Lambda$ generated by $\partial^{pn}, n \geq 0$ is a commutative $\mathcal{O}_X$-subalgebra of $\Lambda$. By the identity $\psi(\partial)=\partial^p$, the $p$-curvatures commute, and generate this \\ subalgebra of $\Lambda$. Call this $\Lambda_c$. It is the largest commutative $\mathcal{O}_X$-subalgebra of $\Lambda$.
\end{proof}

Let $F$ be the absolute Frobenius and $F_X$ the relative Frobenius $X \to X'$. By the above lemma, $\psi$ induces a linear map

\[ F^*_X T_{X'/S}=F^* T_{X/S} \to \Lambda \]

whose image lies in the commutative subalgebra $\Lambda_c$. This induces a map on the symmetric algebra, and we consider two adjoint forms of it. First,
\[F^*_X S(T_{X'/S}) \to \Lambda. \]
This is an injection, and the image is $\Lambda_c$. In coordinates, it is spanned by $f (\frac{\partial}{\partial x_i})^p$.

\thispagestyle{plain}

Dually, 

\[S( T_{X'/S}) \to (F_X)_* \Lambda. \]

This is again an injection, and the image is $Z_X$, the centre of $\Lambda$. In coordinates, it is spanned by $f^p (\frac{\partial}{\partial x_i})^p$. 

Now consider the first map $F^*_X S(T_{X'/S}) \to \Lambda$. The source has a natural decreasing filtration of ideals $F^*_X S^{\geq i} T_{X'/S}$. 

This induces a decreasing filtration of ideals on $\Lambda$, which we label \\ $\Lambda \supset \Lambda^{\geq p} \supset \Lambda^{\geq 2p} \supset \ldots$ 

From the explicit description of coordinates, one can see $\Lambda^{\geq pk}$ is generated as an ideal of $\Lambda$ by elements $(\frac{\partial}{\partial x_i})^{pk}$.

\subsection{The $p$-curvature morphism $\mathcal{P}\to\dfrac{\mathcal{P}}{I}$}
I now examine the structure of $\dfrac{\mathcal{P}}{I}$, particularly in characteristic $p$. 

\thispagestyle{plain}

\begin{lemma}
Let $p$ be prime, $k\geq 1$ any integer, $0\leq r \leq p-1$. Then in any divided power ideal of any divided power algebra,

\[ (x^{[p]})^{[k]} = \dfrac{(kp)!}{p!^k k!} x^{[pk]} \]
\[ x^{[pk]}x^{[r]} = {kp+r \choose r} x^{[kp+r]} \]

Both of these coefficients are integers that are $1$ modulo $p$.
\end{lemma}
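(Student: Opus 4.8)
The plan is to verify both identities by direct manipulation of the divided power axioms, and then to identify the resulting integer coefficients by an elementary number-theoretic argument.

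First I would establish the factorial identities. For the first formula, I recall the axiom $\gamma_k(\gamma_p(x)) = \frac{(kp)!}{k!\,(p!)^k}\,\gamma_{kp}(x)$, which is one of the standard divided power axioms (the coefficient $\frac{(kp)!}{k!\,(p!)^k}$ is the number of ways to partition a $kp$-element set into $k$ unordered blocks of size $p$, hence an integer). So $(x^{[p]})^{[k]} = \frac{(kp)!}{p!^k\,k!}\,x^{[pk]}$ is immediate from the axioms. For the second formula, I use the product rule for divided powers, $x^{[a]}x^{[b]} = \binom{a+b}{a} x^{[a+b]}$, with $a = kp$, $b = r$; this gives $x^{[pk]}x^{[r]} = \binom{kp+r}{r} x^{[kp+r]}$ directly.

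Next I would check the two coefficients are $\equiv 1 \pmod p$. For $\binom{kp+r}{r}$ with $0 \le r \le p-1$: by Lucas' theorem, writing $kp + r$ in base $p$ the last digit is $r$ and the last digit of $r$ is $r$, while the remaining digits of $r$ are all $0$; so $\binom{kp+r}{r} \equiv \binom{r}{r}\cdot\prod(\text{digit choose }0) = 1 \pmod p$. Alternatively, expand $\binom{kp+r}{r} = \prod_{j=1}^{r}\frac{kp+j}{j} \equiv \prod_{j=1}^r \frac{j}{j} = 1 \pmod p$ since each numerator $kp + j \equiv j$ and $j$ is a unit mod $p$ for $1 \le j \le p-1$. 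For $\frac{(kp)!}{p!^k\,k!}$: this is the multinomial coefficient $\binom{kp}{p,p,\dots,p}$, which counts ordered partitions into $k$ blocks of size $p$, divided by $k!$. Working mod $p$, I would either apply Lucas/Kummer directly — $kp$ in base $p$ is the digits of $k$ shifted left by one, and $p!^k\,k!$ carries no more powers of $p$ than $(kp)!$ with equality, so the quotient is a $p$-adic unit — or argue by induction on $k$: from $\frac{(kp)!}{p!^k k!} = \binom{kp}{p}\cdot\frac{((k-1)p)!}{p!^{k-1}(k-1)!}\cdot\frac{1}{k}$ and $\binom{kp}{p} \equiv k \pmod{p}$ (again Lucas, or the standard fact $\binom{kp}{p} \equiv k$), the factors of $k$ cancel mod $p$ and one concludes by the inductive hypothesis.

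The main obstacle is purely bookkeeping: making sure the integrality of $\frac{(kp)!}{p!^k k!}$ and its value mod $p$ are handled cleanly. The cleanest route is to cite Kummer's theorem on the $p$-adic valuation of $\binom{kp}{p}$ (no carries occur when adding $p$ to $(k-1)p$ in base $p$ beyond what is already accounted for, so $v_p\binom{kp}{p} = 0$ is false — rather $v_p\binom{kp}{p}=0$ precisely when there is no carry, which fails; instead I use $v_p\left(\frac{(kp)!}{p!^k k!}\right) = 0$ directly via Legendre's formula: $v_p((kp)!) = \frac{kp-s_p(kp)}{p-1}$, $v_p(p!^k) = k$, $v_p(k!) = \frac{k - s_p(k)}{p-1}$, and since $s_p(kp) = s_p(k)$ one gets $v_p((kp)!) = k + v_p(k!)$ exactly). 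This Legendre-formula computation is the one place to be careful; everything else follows from the divided power axioms quoted in Section 2 and elementary congruences. I would present the Legendre computation explicitly and leave the two binomial congruences as one-line citations of Lucas' theorem.
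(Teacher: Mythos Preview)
The paper does not give a proof of this lemma; it simply cites \cite{Gros-LeStum} and \cite{Berthelot-Ogus}. Your proposal is therefore more detailed than what the paper provides, and the overall strategy --- quote the divided power axioms for the two identities, then verify the congruences by Lucas/Legendre --- is the standard one and is correct.

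There is one genuine gap in the congruence for $\dfrac{(kp)!}{(p!)^k k!}$. Your Legendre-formula computation only shows this number is a $p$-adic \emph{unit}, not that it is $\equiv 1 \pmod p$; and your inductive argument ``$\binom{kp}{p}\equiv k$, cancel the $k$'' breaks down when $p\mid k$, since then both sides of the cancellation are $0$ mod $p$. A clean fix: write
\[
\frac{(kp)!}{p^k\,k!}\;=\;\prod_{j=0}^{k-1}\prod_{i=1}^{p-1}(jp+i)\;\equiv\;\bigl((p-1)!\bigr)^k\;\equiv\;(-1)^k\pmod p
\]
by Wilson's theorem, and likewise $\dfrac{(p!)^k}{p^k}=((p-1)!)^k\equiv(-1)^k$; the ratio is then $\equiv 1\pmod p$. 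With this correction your proof is complete.
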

\begin{proof}
See \cite{Gros-LeStum} or \cite{Berthelot-Ogus}.
\end{proof}
\begin{lemma}
Let $S$ be a scheme over $\mathbb{F}_p$. Then in local coordinates, we can give explicit polynomial algebra descriptions of $\mathcal{P}$ and $\dfrac{\mathcal{P}}{I}$. Specifically, we can describe $\dfrac{\mathcal{P}}{I}$ itself as a divided power algebra on generators $\tau_i^{[p]}$.
\end{lemma}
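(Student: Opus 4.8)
The plan is to build on the positive-characteristic local description of $\mathcal{P}$ (Proposition 2.11), which already identifies $\mathcal{P} = \mathcal{O}_X\langle\tau_1,\dots,\tau_n\rangle$ locally, and then to read off both assertions by analyzing how a free divided power algebra behaves over a base of characteristic $p$. Since $\mathcal{P}$ decomposes as $\mathcal{O}_X\langle\tau_1\rangle\otimes_{\mathcal{O}_X}\cdots\otimes_{\mathcal{O}_X}\mathcal{O}_X\langle\tau_n\rangle$ and the extension ideal is $I^e = (\tau_1,\dots,\tau_n)$, giving $\mathcal{P}/I^e = \bigotimes_i\big(\mathcal{O}_X\langle\tau_i\rangle/(\tau_i)\big)$, I would first dispose of the single-variable case and then tensor.

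For one variable $\tau$, the first observation is that any $y$ in the PD ideal satisfies $y^p = p!\,\gamma_p(y) = 0$ over $\mathbb{F}_p$, so in particular $(\tau^{[p^j]})^p = 0$ for every $j$. Secondly, writing $k\ge 0$ in base $p$ as $k = \sum_j a_j p^j$ with $0\le a_j\le p-1$, iterated application of the two identities of the preceding lemma (whose coefficients are units because they are $\equiv 1\bmod p$) expresses $\tau^{[k]}$ as a unit multiple of $\prod_j (\tau^{[p^j]})^{a_j}$; since these products correspond bijectively, up to units, to the module basis $\{\tau^{[k]}\}$, they form a basis. Combining the two, $\mathcal{O}_X\langle\tau\rangle \cong \mathcal{O}_X[\,\tau^{[p^j]} : j\ge 0\,]\big/\big((\tau^{[p^j]})^p : j\ge 0\big)$, and tensoring over the $n$ coordinates gives the asserted polynomial description of $\mathcal{P}$. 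Next I would pin down $I^e$: from $\tau\cdot\tau^{[m]} = (m+1)\tau^{[m+1]}$ and characteristic $p$, the ideal $(\tau)$ is exactly the $\mathcal{O}_X$-span of the $\tau^{[k]}$ with $k\ge 1$ and $p\nmid k$; equivalently, $I^e$ is generated by the single polynomial generator $\tau = \tau^{[p^0]}$. Hence $\mathcal{O}_X\langle\tau\rangle/(\tau)$ is $\mathcal{O}_X$-free on the classes $\overline{\tau^{[pk]}}$, $k\ge 0$.

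To recognize this quotient as a free divided power algebra on $\overline{\tau^{[p]}}$, I would replace the basis $\{\overline{\tau^{[pk]}}\}$ by $\{\overline{(\tau^{[p]})^{[k]}}\}$ — legitimate since, by the first identity of the preceding lemma, $(\tau^{[p]})^{[k]}$ is a unit multiple of $\tau^{[pk]}$ — and then note that for these elements the product is forced by the universal divided-power identity $\gamma_a(x)\gamma_b(x) = \binom{a+b}{a}\gamma_{a+b}(x)$ with $x = \tau^{[p]}$; that is exactly the multiplication table of $\mathcal{O}_X\langle\eta\rangle$ under $\eta\mapsto\overline{\tau^{[p]}}$. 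Tensoring over the coordinates then identifies $\mathcal{P}/I^e$, as an $\mathcal{O}_X$-algebra, with the free divided power algebra $\mathcal{O}_X\langle\tau_1^{[p]},\dots,\tau_n^{[p]}\rangle$, the divided powers of $\tau_i^{[p]}$ being unit multiples of the classes of the $\tau_i^{[p^{j+1}]}$. I would be careful to flag that this divided power structure is \emph{not} inherited from $\mathcal{P}$ along the quotient map: $I^e$ is not a sub-PD-ideal of the PD ideal $J$ of $\mathcal{P}$, precisely because $\tau_i^{[p]} = \gamma_p(\tau_i)\notin I^e$ even though $\tau_i\in I^e$, so the relevant divided powers are the re-indexed ones visible in the polynomial presentation.

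The main obstacle is the first step — establishing the polynomial description of a free divided power algebra over an $\mathbb{F}_p$-algebra — which is essentially combinatorial bookkeeping with base-$p$ digit expansions and the valuations of the binomial and multinomial coefficients in the preceding lemma. Once that and the computation $I^e = (\tau^{[p^0]})$ are in hand, the identification of $\mathcal{P}/I^e$ with the free divided power algebra on the $\tau_i^{[p]}$ follows formally from the tensor decomposition together with a shift of index.
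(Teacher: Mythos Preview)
Your proposal is correct and follows essentially the same route as the paper: invoke Proposition~2.11(1) to get $\mathcal{P}=\mathcal{O}_X\langle\tau_1,\dots,\tau_n\rangle$ in characteristic $p$, use the base-$p$ digit expansion together with the identities of the preceding lemma to rewrite the free PD algebra as a truncated polynomial ring in the $\tau_i^{[p^j]}$, then kill the degree-zero generators $\tau_i$ and recognize what remains as $\mathcal{O}_X\langle\tau_1^{[p]},\dots,\tau_n^{[p]}\rangle$.

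Your write-up is in fact a bit more explicit than the paper's in two places: you spell out the tensor reduction to a single variable, and you compute $I^e$ as the $\mathcal{O}_X$-span of those $\tau^{[k]}$ with $p\nmid k$ via $\tau\cdot\tau^{[m]}=(m+1)\tau^{[m+1]}$, whereas the paper simply reads off the quotient from the polynomial presentation. Your cautionary remark that the PD structure on $\mathcal{P}/I^e$ is \emph{not} literally inherited along the quotient (since $I^e$ fails to be a sub-PD-ideal of $J$, as $\gamma_p(\tau_i)\notin I^e$) is a valid and useful clarification; the paper instead offers a coordinate-free description of the new PD ideal without pausing over this point.
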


\begin{remark}
If $S$ is a scheme over $\mathbb{Q}$, then $I=J$ and $\dfrac{\mathcal{P}}{I}=\mathcal{O}_X$.
\end{remark}

\begin{proof}
First, since we are in characteristic $p$, the first part of proposition 2.11 tells us that even the uncompleted object $\mathcal{P}$ has a simple form $\mathcal{O}_X <\tau_1,\ldots, \tau_n>$.

Using the divided power identities above, one can show by iteration that, if \\ $a_0,\ldots,a_k<p$  and $x \in J$, the divided power ideal, then 

\[x^{[a_0+\ldots + a_k p^k]}=x^{[a_0]}\ldots x^{[a_kp^k]} = x^{[a_0]}\ldots (x^{[p^k]})^{[a_k]}    \]

Moreover, for any $y \in J$, and $r<p$, $y^r$ and $y^{[r]}$ differ by a unit, while $y^p=p!y^{[p]}=0$. Thus $\mathcal{P}$ has an explicit presentation 

\[ \dfrac{\mathcal{O}_X [\tau_i, \tau_i^{[p]},\tau_i^ {[p^2]},\ldots]      }{ \tau_i^p=0, (\tau_i^{[p]})^p=0,(\tau_i^ {[p^2]})^p=0,\ldots }  \]

And $\dfrac{\mathcal{P}}{I}$ has an explicit presentation 

\[ \dfrac{\mathcal{O}_X [\tau_i^{[p]},\tau_i^ {[p^2]},\ldots]      }{ (\tau_i^{[p]})^p=0,(\tau_i^ {[p^2]})=0,\ldots }  \]

which is a divided power algebra with generators $\tau_i^{[p]}$. That is, in characteristic $p$,

\[ \dfrac{\mathcal{P}}{I} = \mathcal{O}_X <\tau_1^{[p]},\ldots,\tau_n^{[p]}>  \]

Without using coordinates, we can say that $\dfrac{\mathcal{P}}{I}$ is a divided power algebra with divided power ideal given by $\dfrac{J^{[p]} \cap I}{I}$.
\end{proof}

\thispagestyle{plain}

\begin{prop}
There is a canonical isomorphism $\dfrac{\mathcal{P}}{I} \xrightarrow{\sim} F_X^* \Gamma_{X'}\Omega_{X'/S}$. \\Composing with the canonical projection $\mathcal{P} \to \dfrac{\mathcal{P}}{I}$ and then taking the completion yields a canonical morphism $\hat{\mathcal{P}} \to F_X^* \hat{\Gamma}_{X'}\Omega_{X'/S}$. This is a coalgebra morphism.
\end{prop}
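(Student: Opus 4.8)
I would build the isomorphism from the perfect duality between $\hat{\mathcal{P}}$ and $\Lambda$ of Proposition 3.6, using that in characteristic $p$ the transpose of the projection $\mathcal{P}\to\mathcal{P}/I$ is the inclusion of the $p$-curvature subalgebra. Recall from section 7.1 that the $p$-curvature map identifies $\Lambda_c$ canonically with $F_X^* S(T_{X'/S})$, that $\Lambda_c$ is spanned over $\mathcal{O}_X$ in local coordinates by the monomials $\prod_i(\partial/\partial x_i)^{pk_i}$ (Lemma 7.2), and from Lemma 7.4 that $\mathcal{P}/I=\mathcal{O}_X\langle\tau_1^{[p]},\dots,\tau_n^{[p]}\rangle$ is a free divided-power $\mathcal{O}_X$-algebra on the $\tau_i^{[p]}$, the images of the remaining PD monomials $\underline\tau^{[\underline m]}$ (those with some $m_i$ not divisible by $p$) forming a basis of $I$.

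First I would observe that, under the pairing of Proposition 3.6 for which $\underline\tau^{[\underline m]}$ and $\underline\partial^{\underline l}$ are dual bases, $\hat I$ is precisely the annihilator of $\Lambda_c$; hence dualizing $\Lambda_c\hookrightarrow\Lambda$ identifies the projection $\hat{\mathcal{P}}\to\widehat{\mathcal{P}/I}$ with its transpose, and $\widehat{\mathcal{P}/I}\cong\Lambda_c^\vee$. Dualizing the canonical algebra isomorphism $F_X^* S(T_{X'/S})\xrightarrow{\sim}\Lambda_c$ — using that $F_X$ is finite locally free (smoothness) and that $S^{\le n}T_{X'/S}$ is finite locally free with $\mathcal{O}_{X'}$-dual $\Gamma^{\le n}\Omega_{X'/S}$ by the divided-power duality of Corollary 6.7, then passing to the limit over $n$ — gives the canonical isomorphism $\Lambda_c^\vee\cong F_X^*\hat{\Gamma}_{X'}\Omega_{X'/S}$, and composing produces the asserted morphism $\hat{\mathcal{P}}\to F_X^*\hat{\Gamma}_{X'}\Omega_{X'/S}$. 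The same duality on the finite, uncompleted levels (with $\mathcal{P}/I$ a graded PD algebra by Lemma 7.4, dual to the graded algebra $F_X^* S(T_{X'/S})$) gives the uncompleted isomorphism $\mathcal{P}/I\xrightarrow{\sim}F_X^*\Gamma_{X'}\Omega_{X'/S}$; in local coordinates it is $\bar\tau_i^{[p]}\mapsto F_X^* dx_i'$, and its independence of the chosen \'etale coordinates is the statement — transpose to the $p$-linearity relation $(\partial/\partial y_j)^p=\sum_i(\partial x_i/\partial y_j)^p(\partial/\partial x_i)^p$ of Lemma 7.2 — that $\bar\tau_{y_j}^{[p]}=\sum_i(\partial y_j/\partial x_i)^p\,\bar\tau_i^{[p]}$ in $\mathcal{P}/I$, which matches $F_X^* dy_j'=\sum_i(\partial y_j/\partial x_i)^p\,F_X^* dx_i'$ because $F_X^*\circ\pi^*$ is the $p$-th power map on functions.

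For the coalgebra claim, $\Lambda_c$ is an $\mathcal{O}_X$-subalgebra of $\Lambda$, so $\Lambda_c\hookrightarrow\Lambda$ is an algebra homomorphism; since by Proposition 3.6 the multiplication on $\Lambda$ is transpose to the comultiplication $\Delta$ on $\hat{\mathcal{P}}$, its transpose $\hat{\mathcal{P}}\to\widehat{\mathcal{P}/I}$ is a coalgebra homomorphism. The comultiplication on $F_X^*\hat{\Gamma}_{X'}\Omega_{X'/S}$ is the $F_X$-pullback of the standard comultiplication on $\hat{\Gamma}_{X'}\Omega_{X'/S}$ (Remark 2.15), which is transpose to the algebra structure on $F_X^* S(T_{X'/S})$; so the isomorphism $\widehat{\mathcal{P}/I}\cong F_X^*\hat{\Gamma}_{X'}\Omega_{X'/S}$, being transpose to an algebra isomorphism, is one of coalgebras. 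Composing, $\hat{\mathcal{P}}\to F_X^*\hat{\Gamma}_{X'}\Omega_{X'/S}$ is a coalgebra morphism.

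The main obstacle is the identification of $\hat I$ with the annihilator of $\Lambda_c$: it is here that the explicit divided-power presentation of $\mathcal{P}$ and $\mathcal{P}/I$ from Lemma 7.4 must be matched carefully against the explicit description of $\Lambda_c$ in section 7.1, and where one must be careful about passing between the finite-level pairings $\mathcal{P}^n\otimes\Lambda_n\to\mathcal{O}_X$ and the completed objects. Everything else is formal once this is established.
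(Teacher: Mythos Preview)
Your proof is correct and takes a genuinely different route from the paper's. The paper builds the map \emph{directly}: it considers $i \mapsto i^{[p]}$ as a map $I \to \mathcal{P}$, observes that modulo $I$ this kills $I^2$ (since $(xy)^{[p]}=p!\,x^{[p]}y^{[p]}=0$) and is Frobenius-linear (since $(ax)^{[p]}=a^p x^{[p]}$), hence descends to a linear map $F_X^*\Omega_{X'/S}\to\mathcal{P}/I$, and then invokes the universal property of $\Gamma$ to extend to a PD-algebra map $F_X^*\Gamma_{X'}\Omega_{X'/S}\to\mathcal{P}/I$, checked to be an isomorphism in local coordinates via $dx_i'\mapsto\tau_i^{[p]}$. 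The coalgebra conditions are then verified by hand through three explicit commutative diagrams (counit, source/target equalization, and comultiplication). Your approach instead transposes the algebra inclusion $F_X^*S(T_{X'/S})\cong\Lambda_c\hookrightarrow\Lambda$ from section 7.1---in effect you are anticipating Lemma 7.8, which the paper places \emph{after} this proposition precisely because it has already constructed the map by other means. What your route buys is a much cleaner coalgebra argument: since multiplication on $\Lambda$ is transpose to $\Delta$ on $\hat{\mathcal{P}}$ (Proposition 3.6), the transpose of an algebra inclusion is automatically a coalgebra surjection, and the three diagram chases disappear. What the paper's direct construction buys is that canonicity is manifest from the outset---the map $i\mapsto i^{[p]}$ uses no coordinates and no duality---so your parenthetical coordinate-change check is unnecessary there, and the local formula $dx_i'\mapsto\tau_i^{[p]}$ emerges as a consequence rather than as a definition whose well-definedness must be argued.
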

\begin{proof}
First, consider the map $i \mapsto i^{[p]}$, $I \to \mathcal{P}$.

Compose with the canonical projection to give a map $I \to \dfrac{\mathcal{P}}{I}$. \\

For $x,y \in I, a \in \mathcal{O}_X$ we have $(xy)^{[p]}=x^p y^{[p]}= p! x^{[p]}y^{[p]}=0$, and $(ax)^{[p]}=a^p x^{[p]}.$ Thus, the above map kills $I^2$ and induces a Frobenius-linear map \[\Omega_{X/S}=\dfrac{I}{I^2} \to \dfrac{\mathcal{P}}{I}\]

Recall the diagram

\begin{center}

\begin{tikzcd}

   X  \arrow[ddr] \arrow[drr,"F"]  \arrow[dr, "F_X"]   &    &                 \\
  &   X'      \arrow[d] \arrow[r,"\pi"]    &    X     \arrow[d]   \\ 
          &   S      \arrow[r,"F"]      &   S

\end{tikzcd}  

\end{center}

This induces a linear map $F^*\Omega_{X/S} \to \dfrac{\mathcal{P}}{I}$.

Now $F^*\Omega_{X/S}=F_X^* \pi^* \Omega_{X/S}=F_X^*\Omega_{X'/S}$. So this induces a map of $\mathcal{O}_X$-modules

\[ F_X^*\Omega_{X'/S} = F^*\Omega_{X/S}\to \dfrac{\mathcal{P}}{I} \]

 Passing to the PD envelope, this induces a map of $\mathcal{O}_X$-divided power algebras
 
 \thispagestyle{plain}

\[  \Gamma_X  F_X^*\Omega_{X'/S} \to \dfrac{\mathcal{P}}{I} \]

This map is canonical; we now use local coordinates to see it is an isomorphism. If $x_1,\ldots,x_n$ are coordinates on $X/S$ then these have pullbacks $x_1',\ldots,x_n'$ to $X'/S$. Therefore, $\Omega_{X'/S}$ is locally free on $dx_1',\ldots,dx_n'$ and $\Gamma_{X'}\Omega_{X'/S}$ is the divided \\ polynomial algebra $\mathcal{O}_{X'} <dx_1',\ldots,dx_n'>$. So the Frobenius pullback is simply $\mathcal{O}_{X} <dx_1',\ldots,dx_n'>$ and the $p$-curvature map is

\[\mathcal{O}_{X} <dx_1',\ldots,dx_n'>= F_X^*\Gamma_{X'}\Omega_{X'/S} \to \dfrac{\mathcal{P}}{I} = \mathcal{O}_X <\tau_1^{[p]},\ldots,\tau_n^{[p]}> \]

\[ dx_i' \mapsto \tau_i^{[p]}   \]

Viewing locally, we can now see that this is an isomorphism. In particular it gives a canonical grading on $\dfrac{\mathcal{P}}{I}$. 

Let $\theta_u: \mathcal{P} \to F_X^*\Gamma_{X'}\Omega_{X'/S}$ be the composition of the natural projection \\ $\mathcal{P}\to\dfrac{\mathcal{P}}{I}$ with this isomorphism above. $u$ stands for uncompleted. We have shown this is a surjective morphism of divided power algebras. Now we verify the coalgebra conditions, which shows this induces a map of groupoids. There are three coalgebra conditions to verify. We verify they hold before completion of the morphism objects.

First, we have a diagram for the identity morphisms $X\to N$  to commute for each groupoid. Let $N_1=\text{Spec}_X F^*_X \Gamma_{X'} \Omega_{X'/S}=\text{Spec}_X\dfrac{\mathcal{P}}{I}$, $N_2=\text{Spec}_X\mathcal{P}$. The necessary commutative diagram is

\begin{center}
\begin{tikzcd}

X \arrow[r,"id_X"] \arrow[d,"e_1"] &   X \arrow[d, "e_2"] \\

N_1 \arrow[r,"\theta_u"]           &   N_2

\end{tikzcd}
\end{center}

$e_1$ is induced by sending all of $\Omega_{X'/S}$ to zero. 

\thispagestyle{plain}

$e_2$ is induced from the diagonal $X\to X\times_S X$, that is, $(\mathcal{O}_X \otimes_{\mathcal{O}_S}  \mathcal{O}_X,I) \to (\mathcal{O}_X,0)$. By the universal property of the PD envelope, this induces a rather trivial map $(\mathcal{P},J)\to(\mathcal{O}_X,0)$. The diagram above then becomes

\begin{center}
\begin{tikzcd}

\mathcal{O}_X  &   \mathcal{O}_X \arrow[l,"id_X"]\\

\dfrac{\mathcal{P}}{I} \arrow[u]           &  \mathcal{P} \arrow[u] \arrow[l]

\end{tikzcd}
\end{center}

This diagram commutes as all generators of the diagonal ideal $\tau_i^{[p^k]}$ are killed by the right vertical arrow; their images in $\dfrac{\mathcal{P}}{I}$ are elements $dx_i'^{[p^k]}$ that are also killed. 

Next, there is a condition for the source and target maps of the groupoids.

\begin{center}
\begin{tikzcd}

N_1 \arrow[r,"\theta_u"] \arrow[d,"s=t"] &   N_2 \arrow[shift right=2]{d}{s} \arrow[shift left=2]{d}{t} \\

X   \arrow[r,"id"]     &   X 

\end{tikzcd}
\end{center}

In terms of algebras, this is 

\begin{center}
\begin{tikzcd}

\dfrac{\mathcal{P}}{I} &   \mathcal{P} \arrow[l,"\theta_u"] \\

\mathcal{O}_X  \arrow[u,"s=t"]       &   \mathcal{O}_X \arrow[shift right=2]{u}{s} \arrow[shift left=2]{u}{t} \arrow[l,"id"]  

\end{tikzcd}
\end{center}

The left vertical arrow is the canonical algebra map $\mathcal{O}_X \to F_X^*\Gamma_{X'} \Omega_{X'/S'}$. As for the right vertical arrows, we have been writing $\mathcal{P}=\mathcal{O}_X<\tau_1,\ldots,\tau_n>$, but really this $\mathcal{O}_X$-algebra structure implicitly uses the first projection. 

So the right vertical source map is the $\mathcal{O}_X$-algebra structure map, whereas the right vertical target map is $x_i \mapsto 1\otimes x_i = x_i \otimes 1 + \tau_i=x_i + \tau_i$. 

The difference in the source and target maps' action on $x_i$ is thus $\tau_i$. Critically, $\theta_u$ kills $\tau_i$ and therefore equalizes the source and target maps in the right vertical arrow to make the diagram commute. 

Finally, I address the comultiplication. For the Dolbeault groupoid, the \\ comultiplication is induced by $\Gamma \Omega \to \Gamma(\Omega \oplus \Omega) \simeq \Gamma \Omega \otimes \Gamma \Omega$. This can be written as $m\mapsto (m,m)$ or $dx_i \mapsto dx_i \otimes 1 + 1 \otimes dx_i$. 

\thispagestyle{plain}

For the de Rham groupoid, the comultiplication is given by 

\[\Delta(\tau^{[p^k]})= \sum_{i+j=p^k} \tau^{[i]}\otimes \tau^{[j]}\]

To see that $\theta_u$ respects the comultiplication, we have a diagram

\begin{center}
\begin{tikzcd}

\dfrac{\mathcal{P}}{I} \otimes \dfrac{\mathcal{P}}{I} &   \mathcal{P} \otimes_{2,\mathcal{O}_X,1} \mathcal{P}\arrow[l,"\theta_u\otimes \theta_u",swap] \\

\dfrac{\mathcal{P}}{I}  \arrow[u]        &   \mathcal{P}\arrow[l,"\theta_u"]\arrow[u, "\Delta"]

\end{tikzcd}
\end{center}

This commutes: $\tau_i$ goes around either way to be $0$. For $ k \geq 1, \tau^{[p^k]}$ maps as follows: while going up then left, 
\[\tau^{[p^k]}\mapsto \sum_{i+j=p^k} \tau^{[i]}\otimes \tau^{[j]} \mapsto \sum_{i+j=p^k, p|i,j} \tau^{[i]}\otimes \tau^{[j]} \]
which is the same as going left then up. 

After completing, all these coalgebra conditions still hold. Hence we have a \\ canonical coalgebra morphism $\theta: \hat{\mathcal{P}} \to F_X^* \hat{\Gamma}_{X'}\Omega_{X'/S}$.

\end{proof}

\begin{definition}
Define a new groupoid $(X/S)_{FDol}$ whose object object is $X$ and whose morphism object is the twisted PD formal completion of the tangent space, $F^*\hat{\Gamma}_{X}\Omega_{X/S}= F_X^* \hat{\Gamma}_{X'}\Omega_{X'/S}$. Then the above proposition proves there is a canonical embedding $(X/S)_{FDol} \hookrightarrow (X/S)_{dR}$, which is the identity on object objects and the above coalgebra morphism $\hat{\mathcal{P}} \to F_X^* \hat{\Gamma}_{X'}\Omega_{X'/S}$ on morphism objects. The coalgebra condition implies that this is a morphism of groupoids.
\end{definition}

\subsection{A new criterion for $p$-curvature vanishing}
At this point we have \\ considered $p$-curvature for both $\Lambda$ and $\mathcal{P}$. Now we use their duality.
\begin{lemma}
The morphism $\mathcal{P} \to \frac{\mathcal{P}}{I}$ is dual to the inclusion of $\Lambda_c$ in $\Lambda$.
\end{lemma}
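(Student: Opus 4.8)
The plan is to exploit the perfect duality between $\hat{\mathcal{P}}$ and $\Lambda$ set up in Proposition 3.8, together with the explicit local bases of all the objects involved. Recall from that proposition that, in local coordinates (take $n=1$ for clarity), the finite-level pairing $\mathcal{P}^n\otimes\Lambda_n\to\mathcal{O}_X$ is characterised by the requirement that $\tau^{[k]}$ and $\partial^k$ be dual $\mathcal{O}_X$-bases; passing to the limit it identifies $\Lambda=\varinjlim\Lambda_n$ with the continuous $\mathcal{O}_X$-linear dual of $\hat{\mathcal{P}}=\varprojlim\mathcal{P}^n$, and conversely. Since $\Lambda_c$ is a left $\mathcal{O}_X$-submodule of $\Lambda$ (Lemma 7.3), and $I$ is a two-sided ideal of $\mathcal{P}$, hence a left $\mathcal{O}_X$-submodule, with $\mathcal{P}/I$ inheriting the left structure, it makes sense to speak of the transpose of $\Lambda_c\hookrightarrow\Lambda$ and to compare it with $\mathcal{P}\to\mathcal{P}/I$; I will do the comparison in the completed setting $\hat{\mathcal{P}}\to\widehat{\mathcal{P}/I}$, which is the form in which the map is used in the sequel.

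First I would record the two objects combinatorially. By Lemma 7.3, $\Lambda_c$ is the $\mathcal{O}_X$-span of the elements $\partial^{pk}$, $k\ge 0$ (in the multi-variable case, of the monomials $\prod_i\partial_i^{pk_i}$ with $\mathcal{O}_X$-coefficients). On the other side, the proof of Lemma 7.6 together with the divided-power identities of Lemma 7.5 shows that $\mathcal{P}$ has $\mathcal{O}_X$-basis $\{\tau^{[m]}\}_{m\ge 0}$, that the ideal $I$ has $\mathcal{O}_X$-basis $\{\tau^{[m]}:m\ge 1,\ p\nmid m\}$, and hence that $\mathcal{P}/I$ has $\mathcal{O}_X$-basis the images of $\{\tau^{[pk]}\}_{k\ge 0}$ — in agreement with the description $\mathcal{P}/I=\mathcal{O}_X<\tau^{[p]}>$.

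The core step is then the computation of the annihilator $\Lambda_c^{\perp}\subset\hat{\mathcal{P}}$. Since $\langle\tau^{[m]},\partial^l\rangle=\delta_{ml}$, an element $\sum_m f_m\tau^{[m]}$ pairs to zero against every $\partial^{pk}$ precisely when $f_{pk}=0$ for all $k$; thus $\Lambda_c^{\perp}$ is exactly the closure of $I$ in $\hat{\mathcal{P}}$, i.e.\ the kernel of $\hat{\mathcal{P}}\to\widehat{\mathcal{P}/I}$ (the $n$-variable case is identical, the pairing being multiplicative in the coordinate directions). Equivalently, at each finite level the annihilator of $\Lambda_c\cap\Lambda_n$ in $\mathcal{P}^n$ is the image of $I$. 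Therefore the transpose of the inclusion $\Lambda_c\hookrightarrow\Lambda$ — an $\mathcal{O}_X$-linear surjection $\hat{\mathcal{P}}=\Lambda^{\vee}\twoheadrightarrow\Lambda_c^{\vee}$ — has kernel $\Lambda_c^{\perp}$, hence factors as the canonical projection $\hat{\mathcal{P}}\to\widehat{\mathcal{P}/I}$ followed by an isomorphism $\widehat{\mathcal{P}/I}\xrightarrow{\sim}\Lambda_c^{\vee}$. In other words the pairing descends to a perfect pairing between $\mathcal{P}/I$ and $\Lambda_c$ under which $\mathcal{P}\to\mathcal{P}/I$ is the transpose of $\Lambda_c\hookrightarrow\Lambda$, which is the assertion.

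The main obstacle I anticipate is bookkeeping rather than conceptual: one must be careful to pass between finite levels and completions (the pairing is genuinely perfect only at finite levels, and in characteristic $p$ the uncompleted $\mathcal{P}$ is not complete), and one must confirm that $I$ is the \emph{full} annihilator and not merely contained in it — which is exactly what the explicit $\mathcal{O}_X$-basis extracted from the proof of Lemma 7.6 provides. One should also check that the left $\mathcal{O}_X$-module structures used on the two sides are the ones in Proposition 3.8, so that the transpose is $\mathcal{O}_X$-linear and not just $\mathcal{O}_S$-linear; this is immediate from the stated conventions together with the fact that $I$ is a two-sided ideal.
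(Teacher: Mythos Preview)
Your proposal is correct and takes essentially the same approach as the paper: both use the explicit dual $\mathcal{O}_X$-bases $\{\tau^{[k]}\}$ and $\{\partial^k\}$ to see that the projection killing the $\tau^{[m]}$ with $p\nmid m$ is dual to the inclusion of the span of the $\partial^{pk}$. The paper is simply terser, and also records a coordinate-free alternative (that $\mathcal{P}/I$ is the largest quotient equalising the two projections, dually the largest commutative subalgebra of $\Lambda$) which you do not mention.
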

\begin{proof}
$\frac{\mathcal{P}}{I} = \mathcal{O}_X <\tau^{[p]}>$. As a morphism of left $\mathcal{O}_X$-modules, the morphism $\mathcal{P} \to \frac{\mathcal{P}}{I}$ acts as identity on all basis elements $\tau^{[pn]}$ and annihilates all others.

Passing to the dual basis, this corresponds to the inclusion of left $\mathcal{O}_X$-modules of the span of $\partial^{pn}$ into the span of all $\partial^{n}$. This is precisely the inclusion of $\Lambda_c$ in $\Lambda$.

As an alternative proof, note that the map $\mathcal{P} \to \frac{\mathcal{P}}{I}$ is the largest quotient which equalizes the two projections. This corresponds to taking the largest commutative subalgebra of $\Lambda$.
\end{proof}
\thispagestyle{plain}

At this point, we can rephrase the condition of vanishing $p$-curvature in a beautiful way. Let $(V,\nabla)$ be a vector bundle with integrable connection in characteristic $p$. By proposition 3.9, this is equivalent to an isomorphism between two pullbacks 

\begin{center}
\begin{tikzcd}

V \arrow[d] & pr_1^* V \xrightarrow[\phi]{\sim} pr_2^* V \\

X & (X\times_S X)^\wedge_{PD} \arrow[l,"pr_1"', yshift=0.7ex] \arrow[l, "pr_2" yshift=-0.7ex]

\end{tikzcd}
\end{center}

It's a natural question to ask, what is the equalizer of $pr_1,pr_2$? And under what circumstances does the isomorphism $\phi$, between the two pullbacks, pull back further to give the identity on the equalizer? After all, this is the closest possible statement to the n\"aive and impossible statement that $\phi$ ``is" the identity.

\begin{prop}
The equalizer of $pr_1,pr_2$ is precisely the completion of $\dfrac{P}{I}$. \\ Moreover, $V$ has trivial $p$-curvature if and only if $\phi$ pulls back to the identity.
\end{prop}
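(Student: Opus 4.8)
The plan is to prove the two assertions in turn: the identification of the equalizer is a direct local computation, and the $p$-curvature criterion then follows by dualizing and invoking the $\hat{\mathcal{P}}$--$\Lambda$ duality.

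For the equalizer I would work on structure sheaves. The equalizer of $pr_1,pr_2\colon\hat{\mathcal{P}}\to X$ is the closed formal subscheme of $\text{Spec}_X\hat{\mathcal{P}}$ cut out by the ideal generated by the elements $pr_2^\#(f)-pr_1^\#(f)$ for $f$ a local section of $\mathcal{O}_X$. In local coordinates $x_1,\dots,x_n$ one has $pr_2^\#(x_i)=x_i+\tau_i$ and $pr_1^\#(x_i)=x_i$, and more generally $pr_2^\#(f)-pr_1^\#(f)$ lies in, and together with its companions generates, the extended diagonal ideal $(\tau_1,\dots,\tau_n)=I^e$. Hence the equalizer has structure sheaf $\hat{\mathcal{P}}/\overline{I^e}$, which is the completion of $\mathcal{P}/I$; by Lemma~7.4 and Proposition~7.6 this is canonically $F_X^{*}\hat{\Gamma}_{X'}\Omega_{X'/S}$, i.e.\ the morphism object of the loop groupoid $(X/S)_{FDol}$, and the inclusion of the equalizer into $\hat{\mathcal{P}}$ is exactly the coalgebra embedding $\iota$ of Definition~7.7. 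That this subscheme does equalize $pr_1,pr_2$ is precisely the source/target square already checked in the proof of Proposition~7.6.

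For the second assertion, write $q=pr_1\circ\iota=pr_2\circ\iota$, so that $\iota^{*}\phi$ is an automorphism of $q^{*}V$ and ``$\phi$ pulls back to the identity'' means $\iota^{*}\phi=\mathrm{id}$. Under Proposition~3.9, $\phi$ is the stratification attached to the $\Lambda$-module structure on $V$ furnished by $\nabla$, built by dualizing the compatible actions $\Lambda_n\otimes V\to V$ against the pairings $\mathcal{P}^n\otimes\Lambda_n\to\mathcal{O}_X$ of Proposition~3.6. Pulling $\phi$ back along $\iota$ corresponds, on the dual side, to corestricting this construction along the surjection $\theta\colon\hat{\mathcal{P}}\twoheadrightarrow\widehat{\mathcal{P}/I}$, whose transpose is the inclusion $\Lambda_c\hookrightarrow\Lambda$ by Lemma~7.8. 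Thus $\iota^{*}\phi$ is exactly the stratification over $(X/S)_{FDol}$ attached to the $\Lambda_c$-module structure on $V$ obtained by restricting the $\Lambda$-action; and since $(X/S)_{FDol}$ is a loop groupoid, $\iota^{*}\phi=\mathrm{id}$ if and only if that $\Lambda_c$-module structure is trivial, i.e.\ $\Lambda_c$ acts through its augmentation $\Lambda_c\to\mathcal{O}_X$ (just as the trivial stratification on the Dolbeault groupoid corresponds to the zero Higgs field).

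Finally I would match this with $p$-curvature. By the discussion following Lemma~7.2 the natural isomorphism $F_X^{*}S(T_{X'/S})\xrightarrow{\sim}\Lambda_c$ carries the generator attached to $\partial/\partial x_i$ to $\psi(\partial/\partial x_i)=(\partial/\partial x_i)^p$, so the augmentation ideal of $\Lambda_c$ is generated by the $(\partial/\partial x_i)^p$; hence $\Lambda_c$ acts trivially on $V$ iff each $(\partial/\partial x_i)^p$ acts as $0$ iff $\psi(\partial/\partial x_i)=0$ for all $i$, and by the $p$-linearity of $\psi$ (Lemma~7.2) this is the same as $\psi\equiv0$, i.e.\ the vanishing of the $p$-curvature of $(V,\nabla)$. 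The main obstacle is the middle paragraph: one must keep careful track of the left/right $\mathcal{O}_X$-module conventions on $\hat{\mathcal{P}}$ and $\Lambda$ from Proposition~3.6 and of what the ``trivial stratification'' over the loop groupoid $(X/S)_{FDol}$ actually encodes, so that ``pull back $\phi$ along $\iota$'' really does transpose to ``restrict the $\Lambda$-action to $\Lambda_c$''. A concrete fallback avoiding this bookkeeping is to compute $\iota^{*}\phi$ outright in local coordinates from the connection matrix and observe directly that it is the identity precisely when the iterates $\nabla_{\partial/\partial x_i}^{\,p}$ all vanish.
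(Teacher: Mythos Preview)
Your proposal is correct and follows essentially the same approach as the paper: identify the equalizer as the closed formal subscheme cut out by the diagonal ideal $I$ (the paper simply declares this ``immediate,'' while you unpack it in local coordinates), and then use the duality of Lemma~7.8 between $\mathcal{P}\to\mathcal{P}/I$ and $\Lambda_c\hookrightarrow\Lambda$ to equate ``$\iota^*\phi=\mathrm{id}$'' with ``$\Lambda_c$ acts trivially'' with ``$p$-curvature vanishes.'' Your version is considerably more detailed than the paper's two-line proof, and your caveat about left/right $\mathcal{O}_X$-module bookkeeping is a fair warning, but there is no substantive divergence in strategy.
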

\begin{proof}
Let $Z$ be the closed formal scheme of $\hat{\mathcal{P}}$ cut out by the ideal $I$. Since $I$ is the ideal of the diagonal, it is immediate that $Z$ is the equalizer of $pr_1,pr_2$. Pull back the isomorphism $\phi$ as below,
\begin{center}
\begin{tikzcd}

V \arrow[d] & pr_1^* V \xrightarrow[\phi]{\sim} pr_2^* V \arrow[d, xshift=0.7ex]\arrow[d, xshift=-0.7ex] & p^*V \simeq p^*V \arrow[d] \\

X & (X\times_S X)^\wedge_{PD} \arrow[l,"pr_1"', yshift=0.7ex] \arrow[l, "pr_2" yshift=-0.7ex] & Z \arrow[l]

\end{tikzcd}
\end{center}

The $p$-curvature of $V$ is trivial if and only if $\Lambda_c$ acts trivially on $V$. Dualizing, this is equivalent to the induced stratification over $\dfrac{P}{I}$ being trivial.

\end{proof}

\subsection{Twisted Higgs fields}
\thispagestyle{plain}

\begin{definition}
Let $u:X \to Y$ be a morphism between schemes smooth over a base. A module $M$ over $X$ is said to be a $u$-Higgs bundle if there is a morphism $\psi: M \to M \otimes u^* \Omega_Y$ with an appropriate integrality condition $\psi^2=0$. When $X=Y, u=id$ this returns the regular definition of Higgs bundle.
\end{definition}
Analogous to corollary 6.7, this data is equivalent to a structure of a \\ $u^* S(T_Y)$-module. Now $F_X^*S(T_{X'})=F^* S(T_X)$, so there is no distinction between an $F_X$-Higgs bundle or an $F$-Higgs bundle on $X$.

Higgs bundles and their twists carry two different notions of pullback.

First, let $M$ be a Higgs bundle $M$ on $Y.$ $S(T_Y)$ acts on $M$, so $u^* S(T_Y)$ acts on $u^*M$, thus $u^*M$ is naturally a $u$-Higgs bundle on $X$. Call this the pullback bundle.

Next, given a $u$-Higgs bundle $N$ on $X$, the morphism $S(T_X) \to u^* S(T_Y)$ allows us to restrict $N$ to get a Higgs bundle on $X$. Call this the restricted bundle. So there is a sequence of categories

(HB on $Y$) $\to$ ($u$-HB on $X$) $\to$ (HB on $X$).

Specialize to the case of $F_X: X \to X'$ and invoke Cartier's theorem (\cite{Katz3}, 5.1) to get a sequence 

(HB on $X'$) $\to$ ($F$-HB on $X$ with specified descent data to $X'$) $\to$ ($F$-HB on $X$) $\to$ (HB on $X$). In fact, the composition of these maps is zero, although we will not use this.

\subsection{The dual conjugate filtrations}

Recall the decreasing filtration of ideals on $\Lambda$, which we label $\Lambda \supset \Lambda^{\geq p} \supset \Lambda^{\geq 2p} \supset \ldots$

\thispagestyle{plain}

This induces a surjective sequence of quotient algebras

\[\ldots \to \frac{\Lambda}{\Lambda^{\geq 2p}} \to \frac{\Lambda}{\Lambda^{\geq p}}\]

These are all finite rank locally free left $\mathcal{O}_X$-modules. Under the duality described above, this corresponds to an injective sequence of subcoalgebras of $\mathcal{P}$, all finite rank locally free left $\mathcal{O}_X$-modules, 

\[\mathcal{P}^{< p} \subset \mathcal{P}^{< 2p} \subset \ldots  \]

In coordinates, these can be described as the span of elements $\tau^{[i]}$ for $i<p$, $i<2p$, and so on.

In \cite{Berthelot-French}, $\Lambda$ is denoted as $D_X$, while the ideal $\Lambda^{\geq p}$ is denoted $K_X$, and the completion of $\Lambda=D_X$ along $K_X$ is denoted $\hat{D}_X$. I will denote this by $\hat{\Lambda}$. By taking the inverse and direct limit of the above two sequences respectively, we see there is a duality of left $\mathcal{O}_X$-modules between $\hat{\Lambda}$ and $\mathcal{P}$.

\subsection{The conjugate moduli stack $M_{conj}$}
These conjugate filtrations on $\Lambda$ and $\mathcal{P}$ produce a Rees module

\[\xi(\Lambda)=\Lambda_{conj}=\Lambda + t^{-p}\Lambda^{\geq p} + t^{-2p} \Lambda^{\geq 2p} + \ldots\]
\[\Xi=\mathcal{P}_{conj}=\mathcal{P}^{<p} + t^p\mathcal{P}^{<2p} +t^{2p} \mathcal{P}^{<3p}+ \ldots \]

$\Lambda_{conj}$ locally has generators over $\mathcal{O}_X[t]$ given by $t^{-pk}\partial^{pk + r}$ for $0\leq r < p$, while $\Xi$ has generators $t^{pk} \tau ^{[pk+r]}=t^{pk} \tau ^{[pk]} \tau^{[r]}$. As before, there exists a duality between $\Lambda_{conj}$ and $\Xi$. $\Lambda_{conj}$ has an order filtration indexed by $n=pk+r$. This corresponds to a decreasing filtration of $\Xi$, with respect to which we can complete.

\begin{prop}
The generators and relations of $\xi(\Lambda)$ as an algebra over $\mathcal{O}_X[t]$ can be determined explicitly. Modules over $\xi(\Lambda)$ consist of $\mathcal{O}_X$-modules with a $F_X$-Higgs field $\psi$ and an integrable connection, such that $t^p\psi$ coincides with the $p$-curvature of that integrable connection.
\end{prop}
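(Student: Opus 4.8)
The plan is to mimic the treatment of $\Lambda$ and $\Lambda_{Hod}$: first pin down a local presentation of $\xi(\Lambda)$ as an $\mathcal{O}_X[t]$-algebra, then read off the structure a module over it must carry. Work in local coordinates $x_1,\dots,x_n$, writing $\partial_i=\partial/\partial x_i$, and recall from Section 7.1 that $\partial_i^{\circ p}=0$, that each $\partial_i^p$ is central in $\Lambda(U)$ over $\mathcal{O}_X$ (from $\partial_i^{pn}f=f\partial_i^{pn}$ together with $[\partial_i,\partial_j]=0$, in the proof of the $p$-curvature lemma), and that the ideal $\Lambda^{\ge pk}$ is generated by the monomials in the $\partial_i^p$ of total degree $k$. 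Since $\xi(\Lambda)=\sum_{k\ge0}t^{-pk}\Lambda^{\ge pk}$ lives in $\Lambda\otimes_{\mathcal{O}_X}\mathcal{O}_X[t,t^{-1}]$, it is generated as an $\mathcal{O}_X[t]$-algebra by the copy of $\Lambda$ sitting in $t$-degree $0$ — in particular by the $\partial_i$ — together with the elements $\eta_i:=t^{-p}\partial_i^p$.

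First I would prove the presentation: locally $\xi(\Lambda)$ is the $\mathcal{O}_X[t]$-algebra on generators $\partial_i,\eta_i$ with relations $[\partial_i,\partial_j]=0$, $[\partial_i,f]=\partial_i(f)$ for $f\in\mathcal{O}_X$, $[\eta_i,\eta_j]=[\eta_i,\partial_j]=[\eta_i,f]=0$, and $t^p\eta_i=\partial_i^p$, with everything commuting with $t$. All of these hold in $\xi(\Lambda)$ by centrality of $\partial_i^p$, so there is a surjection from the abstractly presented algebra; for injectivity I would use $\partial_i^p=t^p\eta_i$ and the commutation relations to rewrite any word as an $\mathcal{O}_X[t]$-combination of the monomials $\prod_i\eta_i^{j_i}\prod_i\partial_i^{s_i}$ with $0\le s_i<p$, which are exactly the generators $t^{-p|\underline j|}\prod_i\partial_i^{pj_i+s_i}$ of $\xi(\Lambda)$ recorded just above and are $\mathcal{O}_X[t]$-independent inside $\Lambda\otimes\mathcal{O}_X[t,t^{-1}]$ (comparing orders in the $\partial_i$). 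Two coordinate-free consequences are worth isolating for the module description: the $\partial_i$ generate the canonical copy of $\Lambda$, and the $\eta_i$ generate the subalgebra $F_X^*S(T_{X'/S})\otimes_{\mathcal{O}_X}\mathcal{O}_X[t]$, the identification being $t^{-p}$ times the canonical $p$-curvature map $\psi_p\colon F_X^*T_{X'/S}=F^*T_{X/S}\to\Lambda$ of Section 7.1 — independent of coordinates precisely because $\psi_p$ is $p$-linear.

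Now let $M$ be a left $\xi(\Lambda)$-module, flat over $\mathcal{O}_X[t]$ (automatic for the locally free finite rank sheaves on $X\times\mathbb{A}^1_S$ relevant to $M_{conj}$). By the presentation $M$ is an $\mathcal{O}_X[t]$-module equipped with the actions of the $\partial_i$ and of the $\eta_i$ subject to the relations. The $\partial_i$-action, with $[\partial_i,f]=\partial_i(f)$ and $[\partial_i,\partial_j]=0$, is an integrable connection $\nabla$ on $M$ relative to $X/S$ (by the equivalence between integrable connections and $\Lambda$-modules, Proposition 3.8); the $\eta_i$-action, being $\mathcal{O}_X[t]$-linear and commutative, is via the globalisation above an $F_X^*S(T_{X'/S})$-module structure, i.e.\ an $F_X$-Higgs field $\psi\colon M\to M\otimes_{\mathcal{O}_X}F_X^*\Omega_{X'/S}$ with $\psi^2=0$ (Definition 7.10 and the $S(T_Y)$-module description, cf.\ Corollary 6.7). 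The remaining relations decode as: $[\eta_i,\partial_j]=0$ is horizontality of $\psi$ for $\nabla$, and $t^p\eta_i=\partial_i^p$ is the identity $t^p\psi(\partial_i')=\nabla_{\partial_i}^p$, where $\partial_i'=\partial/\partial x_i'$ is the corresponding basis field of $F_X^*T_{X'/S}$; since $\partial_i^{\circ p}=0$ the right side is $\psi_p(\nabla)(\partial_i)$, and as $\psi$ and $\psi_p(\nabla)$ are both $p$-linear in the derivation this is exactly the intrinsic identity $t^p\psi=\psi_p(\nabla)$ in $\Hom_{\mathcal{O}_X}(M,M\otimes F_X^*\Omega_{X'/S})$ (the right-hand side lying there precisely by the $p$-linearity of $p$-curvature). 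For the converse I would, given such $(M,\nabla,\psi)$, let $f$ and $t$ act by multiplication, $\partial_i$ by $\nabla_{\partial_i}$, and $\eta_i$ by $\psi(\partial_i')$, and check the relations of the presentation: the $\partial$- and $\eta$-relations are the connection and $F_X$-Higgs axioms, $t^p\eta_i=\partial_i^p$ is the hypothesis together with $\nabla_{\partial_i}^p=\psi_p(\nabla)(\partial_i)$, and $[\eta_i,\partial_j]=0$ follows from the $\nabla$-horizontality of $p$-curvature, since $t^p[\nabla_{\partial_j},\psi(\partial_i')]=[\nabla_{\partial_j},\psi_p(\nabla)(\partial_i)]=0$ and $M$ is $t$-torsion-free; the two assignments are then visibly mutually inverse.

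The step I expect to be the main obstacle is the algebra presentation, and within it the matching of the relation $t^p\eta_i=\partial_i^p$ with the coordinate-free identity $t^p\psi=\psi_p(\nabla)$ and of $[\eta_i,\partial_j]=0$ with $\nabla$-horizontality: this is exactly where one must invoke that $p$-curvature is $p$-linear — so that it genuinely defines an $F_X$-Higgs field and a global section of $\Hom(M,M\otimes F_X^*\Omega_{X'/S})$ rather than merely the unstructured operators $\partial_i^p$ — and $\nabla$-horizontal, so that no separate flatness hypothesis on $\psi$ is needed once $M$ is $t$-flat.
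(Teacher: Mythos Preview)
Your proposal is correct and follows essentially the same approach as the paper: present $\xi(\Lambda)$ locally as the $\mathcal{O}_X[t]$-algebra on generators $\partial_i$ and $\eta_i=t^{-p}\partial_i^p$ (the paper writes $\tilde{\partial_i}$) with the central relation $t^p\eta_i=\partial_i^p$, then read off that a module carries an integrable connection from the $\partial_i$, an $F_X$-Higgs field from the $\eta_i$, and the compatibility $t^p\psi=\psi_p(\nabla)$ from the defining relation. You are in fact more careful than the paper on one point: you isolate the horizontality relation $[\eta_i,\partial_j]=0$ and observe that in the converse direction it follows from the $\nabla$-horizontality of $p$-curvature together with $t$-torsion-freeness, whereas the paper simply records centrality of $\tilde{\partial_i}$ and does not discuss this in the module characterization.
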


\thispagestyle{plain}

\begin{proof}
Work in local coordinates $x_1,\ldots,x_m$. We explicitly find local generators and relations for the Rees module. First, specify to $t=1$. $\Lambda$ is the free left $\mathcal{O}_X$-algebra with generators $\partial_i$ and relations:

\[\partial_i\partial_j=\partial_j\partial_i\]
\[\partial^n f = \sum_{0\leq j \leq n} {n \choose j} \partial^j (f) \partial^{n-j} \]

We can modify these relations a little. First, we can introduce formal generators $\tilde{\partial_i}$ and the relation $\tilde{\partial_i}=\partial_i^p$. These are central elements of $\Lambda$. Next, we can refine the second relation above, restricting to $n<p$. For if $n=pk+r$, the second relation above reduces to just 

\[\partial^{pk+r} f= \partial^r f \partial^{pk} = \sum_{0\leq j \leq r} {r \choose j} \partial^j (f) \partial^{pk+r-j} \]

which follows from the relation for $r$ and the centrality of $\tilde{\partial_i}$. Therefore, we can reexpress $\Lambda$ as the free left $\mathcal{O}_X$-algebra with generators $\partial_i$ and $\tilde{\partial_i}$ with relations,

\[  \partial^r f = \sum_{0\leq j \leq r} {r \choose j} \partial^j (f) \partial^{r-j}, 0<r<p \]
\[\partial_i\partial_j=\partial_j\partial_i\]
\[ \tilde{\partial_i}=\partial_i^p  \]
and $\tilde{\partial_i}$ are central.

Now there exists a well-defined map $F_X^* S(T_{X'}) \to \Lambda + t^{-p}\Lambda^{\geq p} + t^{-2p} \Lambda^{\geq 2p} + \ldots$ sending the generators $\frac{\partial}{\partial x'_j} \mapsto t^{-p} \tilde{\partial_i}$. This is also valid at $t=0$, where we take the image of $t^{-p} \tilde{\partial_i}$ in the associated graded. It follows that $\xi(\Lambda)$ is the left $\mathcal{O}_X[t]$-algebra with generators $\partial_i$ and $\tilde{\partial_i}$ and relations

\[  \partial^r f = \sum_{0\leq j \leq r} {r \choose j} \partial^j (f) \partial^{r-j}, 0<r<p \]
\[\partial_i\partial_j=\partial_j\partial_i\]
\[ t^p\tilde{\partial_i}=\partial_i^p  \]
and $t^p$ and $\tilde{\partial_i}$ are central elements.

\thispagestyle{plain}

It follows that a module over $\xi(\Lambda)$ is a module simultaneously over $F_X^* S(T_{X'})$ and $D_X$, with the relations that under the action, $t^p\tilde{\partial_i}=\partial_i^p$. By sections 7.1 and 7.4, this is precisely the statement that such a module carries a $F_X$-Higgs field and an integrable connection, such that the $p$-curvature of the connection is $t^p$ times the Higgs field. Recall that there is no distinction between $F_X$-Higgs and $F$-Higgs fields.

\end{proof}

\thispagestyle{plain}

\begin{definition}[Conjugate stack]
Define a moduli stack $M_{conj}(X/S)$ over $\mathbb{A}^1_S$. To a scheme $R \to \mathbb{A}^1_S$, it assigns the category of all tuples $(V, \nabla, \psi)$ such that $V$ is a vector bundle over $X \times_S R$, $\nabla$ is an integrable connection on $V$, $\psi$ is a $F$-Higgs field on $V$, and the $p$-curvature of $\nabla$ is precisely $\lambda^p \psi$, where $\lambda$ is the coordinate on $\mathbb{A}^1_S$.
\end{definition}

\begin{corollary}
All non-zero fibres of $M_{conj}$ over $\mathbb{A}^1_S$ are equivalent to $M_{dR}$ and there is a $\mathbb{G}_m$-equivariance between them.
\end{corollary}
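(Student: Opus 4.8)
The plan is to show that, once the coordinate $\lambda$ on $\mathbb{A}^1_S$ is inverted, the Higgs field in a triple parametrised by $M_{conj}$ is forced upon us by the connection, so that $M_{conj}$ restricted to $\mathbb{G}_m\subset\mathbb{A}^1_S$ is canonically $M_{dR}\times_S\mathbb{G}_m$, and then read off the fibrewise statement together with the equivariance. In detail: by Definition 7.12 a point of $M_{conj}(X/S)$ over $R\to\mathbb{A}^1_S$ with coordinate $\lambda\in\Gamma(R,\mathcal{O}_R)$ is a triple $(V,\nabla,\psi)$, with $(V,\nabla)$ a vector bundle with integrable connection on $X\times_S R/R$, $\psi$ an $F$-Higgs field, and $\psi_p(\nabla)=\lambda^p\psi$, where $\psi_p(\nabla)$ denotes the $p$-curvature viewed (after dualising, as in section 7.1) as a map $V\to V\otimes F^*\Omega_{X/S}$. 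If $R\to\mathbb{A}^1_S$ factors through $\mathbb{G}_m$ — in particular over any fibre $\lambda=c\neq 0$ — then $\lambda$ is a unit and the relation forces $\psi=\lambda^{-p}\psi_p(\nabla)$, so $\psi$ is redundant data.

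Conversely, I would check that for \emph{any} $(V,\nabla)$ over such an $R$ the triple $(V,\nabla,\lambda^{-p}\psi_p(\nabla))$ is a legitimate object of $M_{conj}$; the only non-formal point is that $\lambda^{-p}\psi_p(\nabla)$ is an $F$-Higgs field, i.e. squares to zero. This is exactly Lemma 7.2: the $p$-curvatures of a fixed integrable connection generate a commutative $\mathcal{O}_X$-subalgebra of $\Lambda$, which under the duality of section 7.1 translates into $\psi_p(\nabla)\wedge\psi_p(\nabla)=0$. The assignments $(V,\nabla,\psi)\mapsto(V,\nabla)$ and $(V,\nabla)\mapsto(V,\nabla,\lambda^{-p}\psi_p(\nabla))$ are mutually inverse and natural in $R$, giving an equivalence of stacks
\[ M_{conj}(X/S)\big|_{\mathbb{G}_m}\;\xrightarrow{\ \sim\ }\;M_{dR}(X/S)\times_S\mathbb{G}_m . \]
Restricting to the fibre over $\lambda=c\neq 0$ yields $(M_{conj})_c\simeq M_{dR}$. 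Equivalently, at the level of rings, $\xi(\Lambda)[t^{-1}]\cong\Lambda\otimes_{\mathcal{O}_X}\mathcal{O}_X[t,t^{-1}]$ — immediate from $\Lambda^{\geq pk}\subseteq\Lambda$ and $t\in\xi(\Lambda)$ — so that Proposition 7.11 together with Propositions 5.11 and 5.12 gives the same conclusion.

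For the $\mathbb{G}_m$-equivariance I would exhibit the action explicitly: $u\in\mathbb{G}_m$ sends a triple $(V,\nabla,\psi)$ lying over $\lambda$ to $(V,\nabla,u^{-p}\psi)$ lying over $u\lambda$, which is well-defined since $\psi_p(\nabla)=\lambda^p\psi=(u\lambda)^p(u^{-p}\psi)$, and is routinely a left $\mathbb{G}_m$-action covering the scaling action on $\mathbb{A}^1_S$ — it is nothing but the canonical $\mathbb{G}_m$-structure carried by a Rees construction. Under the equivalence above this action becomes the translation action on the $\mathbb{G}_m$-factor of $M_{dR}\times_S\mathbb{G}_m$, which is simply transitive; hence the fibres $(M_{conj})_c$ for $c\neq 0$ are all identified with one another, and with $M_{dR}$ by taking $c=1$.

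I do not anticipate a real obstacle; the statement is soft. The one step requiring genuine input is the verification that $\psi_p(\nabla)$ is an honest $F$-Higgs field, i.e. the vanishing of its wedge-square, which is supplied by Lemma 7.2; everything else is bookkeeping with the moduli functors and keeping the $\mathbb{G}_m$-equivariant conventions of section 6.7 consistent, so that ``$\mathbb{G}_m$-equivariant'' genuinely descends to ``fibrewise $M_{dR}$''.
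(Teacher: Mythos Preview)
Your argument is correct. The paper's own proof is a two-line sketch: it observes that $\lambda=1$ returns $M_{dR}$, and for $\lambda\neq 0$ asserts that ``$\nabla/\lambda$ has $p$-curvature $\psi$'' to obtain the $\mathbb{G}_m$-equivariance. Your route is slightly different and in fact cleaner: rather than rescaling the connection (which, taken literally, does not yield a connection, since $\nabla/\lambda$ fails the Leibniz rule), you keep $\nabla$ fixed and observe that over $\mathbb{G}_m$ the Higgs field is forced to be $\lambda^{-p}\psi_p(\nabla)$, so the forgetful map $(V,\nabla,\psi)\mapsto(V,\nabla)$ is an equivalence $M_{conj}|_{\mathbb{G}_m}\simeq M_{dR}\times_S\mathbb{G}_m$. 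Your $\mathbb{G}_m$-action $(V,\nabla,\psi)\mapsto(V,\nabla,u^{-p}\psi)$ over $u\lambda$ is exactly the Rees-module action on $\xi(\Lambda)$ coming from $t\mapsto ut$, and it makes the equivariance transparent. You also make explicit the one nontrivial check the paper leaves implicit, namely that $\psi_p(\nabla)$ is an honest $F$-Higgs field, which you correctly source to the commutativity of $p$-curvatures in Lemma 7.2. Both approaches express the same Rees-module triviality over $\mathbb{G}_m$; yours just does so without the imprecise rescaling of $\nabla$.
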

\begin{proof}
Setting $\lambda=1$ returns $M_{dR}$. Away from $\lambda=0$, if $\nabla$ is an integrable connection with $p$-curvature $\lambda^p \psi$, then $\frac{\nabla}{\lambda}$ has $p$-curvature $\psi$.
\end{proof}

\begin{corollary}
The central fibre $M_{conj,0}$ consists of $F$-Higgs bundles on $X$ with a specified descent data to $X'$.
\end{corollary}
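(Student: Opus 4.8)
The plan is to identify the fibre of $M_{conj}(X/S)$ over $\lambda=0$ by reducing the Rees algebra $\xi(\Lambda)$ modulo $t$. By the definition of $M_{conj}$ together with the proposition describing $\xi(\Lambda)$-modules, giving an $R$-point of $M_{conj}$ (for $R\to\mathbb{A}^1_S$) is the same as giving a module over $\xi(\Lambda)$ on $X_R$. Since $\xi(\Lambda)$ is the Rees algebra of the filtration of $\Lambda$ by the locally free quotients $\Lambda/\Lambda^{\geq pk}$, it is locally free over $\mathcal{O}_X[t]$, hence flat over $\mathbb{A}^1_S$, so this description is compatible with the base change $\{0\}\hookrightarrow\mathbb{A}^1_S$: an $R$-point of $M_{conj,0}$, with $R$ an $S$-scheme, is a module over $\xi(\Lambda)_0:=\xi(\Lambda)\otimes_{\mathcal{O}_X[t]}\mathcal{O}_X[t]/(t)$ on $X_R$. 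First I would read off $\xi(\Lambda)_0$ from the local presentation of $\xi(\Lambda)$ obtained in that proposition: setting $t=0$ leaves the $\mathcal{O}_X$-algebra on generators $\partial_i,\tilde\partial_i$ with the relations defining $\Lambda$ among the $\partial_i$, the additional relations $\partial_i^{p}=0$, and the $\tilde\partial_i$ central.

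Unwinding this, a module over $\xi(\Lambda)_0$ on $X_R$ is a vector bundle $V$ equipped with (i) an integrable connection $\nabla$ with $\nabla(\partial_i)^{p}=0$ for all $i$ — equivalently, since in local coordinates the $p$-curvature of a connection is the $p$-linear map sending $\partial_i$ to $\nabla(\partial_i)^{p}$, an integrable connection of vanishing $p$-curvature — and (ii) an action of $F_X^{*}S(T_{X'/S})$ commuting with $\nabla$, i.e. an $F$-Higgs field $\psi$ commuting with $\nabla$ (Section 7.4). This agrees with the literal $\lambda=0$ specialisation of the tuple description of $M_{conj}$, once one remembers that the classes $\tilde\partial_i$ are central in $\xi(\Lambda)_0$, so that $\psi$ automatically commutes with $\nabla$.

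The key external input for the last step is Cartier's theorem (\cite{Katz3}, Theorem 5.1): on $X_R/R$ the pair $(V,\nabla)$ with $\nabla$ integrable and of vanishing $p$-curvature is the same data as a vector bundle $W$ on the relative Frobenius twist $X_R'$, via $(V,\nabla)\mapsto V^{\nabla}$ and $W\mapsto F_{X_R}^{*}W$; equivalently, it is a descent datum for $V$ along the finite flat morphism $F_{X_R}\colon X_R\to X_R'$. So the connection in (i) is precisely a specified descent datum to $X'$ for the underlying bundle, and $\psi$ in (ii) is the remaining $F$-Higgs structure. Putting the two pieces together, $M_{conj,0}(R)$ is the category of $F$-Higgs bundles on $X_R$ equipped with a specified descent datum to $X_R'$, functorially in $R$, which is the assertion. (Since $\psi$ commutes with $\nabla$ it is horizontal and hence descends, so this category is in fact equivalent to that of Higgs bundles on $X'$, i.e. to $M_{Dol}(X'/S)$ — the form in which the statement gets used in the proof of the main theorem as the Frobenius twist of the Kodaira--Spencer picture. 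I would still record it in the ``$F$-Higgs on $X$ with a descent datum'' form, since the comparison with $M_{dR}$ in the next section is carried out over $X$.)

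The only step requiring real care is the base-change compatibility in the first paragraph — that the fibre at $t=0$ of the moduli of $\xi(\Lambda)$-modules is the moduli of $\xi(\Lambda)_0$-modules — which genuinely uses the local freeness of $\xi(\Lambda)$ over $\mathcal{O}_X[t]$ coming from the Rees construction. Everything after that is the cited Cartier equivalence together with the routine bookkeeping of assembling the connection datum and the $F$-Higgs field into a single object.
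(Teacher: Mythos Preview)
Your proof is correct and follows essentially the same route as the paper: specialize to $\lambda=0$, observe that the connection must have vanishing $p$-curvature while the $F$-Higgs field is unconstrained (beyond horizontality), and then invoke Cartier's theorem to translate the $p$-curvature-zero connection into a descent datum to $X'$. The only difference is that you pass explicitly through the presentation of $\xi(\Lambda)_0$ whereas the paper reads the $\lambda=0$ condition directly off Definition~7.12; your version is in fact slightly more careful in recording that $\psi$ is automatically horizontal (from centrality of the $\tilde\partial_i$), a point the paper leaves implicit.
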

\begin{proof}
At $\lambda=0$, the condition on elements of $M_{conj}$ implies $\nabla$ has trivial $p$-curvature and provides no condition on the $F$-Higgs field. There is a one-to-one correspondence between vector bundles with integrable connection of trivial $p$-curvature, and vector bundles with descent to $X'$. An integrable connection with trivial $p$-curvature is the canonical connection relative to some descent to $X'$, as proven by Katz and Cartier in \cite{Katz3}, theorem 5.1.
\end{proof}

\begin{remark}
The associated graded of the conjugate filtration of $\Lambda$ is 
\[F_X^* S(T_{X'/S}) \otimes \frac{D_X}{K_X} = F_X^* S(T_{X'/S}) \otimes \text{End}_{\mathcal{O}_{X'}} \mathcal{O}_X \]
with this equality due to \cite{Berthelot-French}. $M_{conj,0}$ consists of modules over this.
\end{remark}

\thispagestyle{plain}

\begin{definition}[Conjugate groupoid]
There is a groupoid $(X/S)_{conj}$ with object object $X \times \mathbb{A}^1$ and morphism object $\hat{\Xi}$. Its $t=1$ fibre is $(X/S)_{dR}$. Modules over $(X/S)_{conj}$ are those over $\Lambda + t^{-p}\Lambda^{\geq p} + t^{-2p} \Lambda^{\geq 2p} + \ldots$
\end{definition}

\begin{lemma}
$(.)_{conj}$ is a groupoid arrow functor.
\end{lemma}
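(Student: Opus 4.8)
The plan is to verify the three defining properties of a groupoid arrow functor (section~5.6), following the template used for the Hodge groupoid and exploiting the flatness of $\hat{\Xi}$ over $\mathbb{A}^1_S$.

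\emph{Identity and the flatness reduction.} When $X=S$ the diagonal of $X\times_S X$ is all of $X$, so $\mathcal{P}=\mathcal{O}_X$, every $\mathcal{P}^{<pk}=\mathcal{O}_X$, and the Rees construction collapses to $\Xi=\mathcal{O}_X[t]$; hence $(X/X)_{conj}=\mathbb{A}^1_X$ with $e$ an isomorphism, which is the identity axiom. For the other two axioms, the key structural fact is that $\Xi$ is a Rees module built from the conjugate filtration $\mathcal{P}^{<p}\subset\mathcal{P}^{<2p}\subset\cdots$ on $\mathcal{P}$, hence is locally free over $\mathcal{O}_X[t]$ on an explicit basis of $t$-scaled PD monomials, so $\hat{\Xi}$ is flat over $\mathbb{A}^1_S$. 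Exactly as in the proof that $(.)_{Hod}$ is a groupoid arrow functor, the canonical morphisms
\[(X\times_S S'/S')_{conj}\longrightarrow (X/S)_{conj}\times_S S'\]
(for a base change $S'\to S$) and
\[(X/S)_{conj}\longrightarrow (X/T)_{conj}\times_{(S/T)_{conj}}\mathbb{A}^1_S\]
(for $X\to S\to T$) are, level by level, morphisms of coherent sheaves flat over $\mathbb{A}^1_S$; by Nakayama it therefore suffices to check that each becomes an isomorphism after specialising $t$ to each point of $\mathbb{A}^1$.

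\emph{Fibrewise verification.} Over any $t\neq 0$ the fibre of $(X/S)_{conj}$ is the de Rham groupoid $(X/S)_{dR}$ — the rescaling argument that identifies the nonzero fibres of $M_{conj}$ with $M_{dR}$ applies verbatim to the groupoids themselves — so both morphisms restrict to the corresponding canonical morphisms for $(.)_{dR}$, which are isomorphisms since $(.)_{dR}$ is a groupoid arrow functor. It remains to treat the fibre over $t=0$. There $\Xi/t\Xi$ is the associated graded of the conjugate filtration of $\hat{\mathcal{P}}$, which dually is the morphism object of the groupoid whose local systems are modules over $F_X^*S(T_{X'/S})\otimes\End_{\mathcal{O}_{X'}}\mathcal{O}_X$; in local coordinates its structure sheaf is a completed tensor product of $F_X^*\hat{\Gamma}_{X'}\Omega_{X'/S}$ — the morphism object of the twisted Dolbeault groupoid $(.)_{FDol}$ — with the finite locally free algebra $\mathcal{P}^{<p}$, the ``$<p$'' truncation of $\hat{\mathcal{P}}$. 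One then checks base change and descent for this $t=0$ groupoid directly in coordinates: base change is inherited from that of $\Omega_{X'/S}$ together with the compatibility of the relative Frobenius twist with base change, and from proposition~2.10 for the $\mathcal{P}^{<p}$ factor; descent amounts, as in the proofs of the Dolbeault and Hodge cases, to the observation that killing the $\Omega_{S/T}$-directions in the explicit local presentation recovers the analogous object for $X/S$ (using corollary~2.9 for the $\mathcal{P}^{<p}$ factor).

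\emph{Main obstacle.} The genuinely new input is the $t=0$ fibre: pinning down the correct structural description of the associated-graded conjugate groupoid — namely that it factors as the twisted Dolbeault morphism object tensored with the $\mathcal{P}^{<p}$-truncation of $\hat{\mathcal{P}}$, with each factor separately enjoying base change and descent — is what does not reduce to results already established. Once that description is fixed the remaining checks are routine divided-power bookkeeping, the only delicate point being careful tracking of the powers of $t$ in the Rees indexing (the shift $n\mapsto p\lfloor n/p\rfloor$).
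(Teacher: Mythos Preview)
Your proposal is correct and follows essentially the same route as the paper: reduce to a fibrewise check via flatness of the Rees construction over $\mathbb{A}^1$, handle $t\neq 0$ by $\mathbb{G}_m$-equivariance and the de Rham case, and at $t=0$ identify the morphism object as the tensor product of $\mathcal{P}^{<p}$ with $\mathcal{P}/I\simeq F_X^*\Gamma_{X'}\Omega_{X'/S}$, then verify base change and descent by the explicit coordinate presentation. The paper phrases the $t=0$ factor as $\dfrac{\mathcal{P}}{I}\otimes\dfrac{D_X}{K_X}$ (using self-duality of $\End_{\mathcal{O}_{X'}}\mathcal{O}_X$) rather than $F_X^*\hat{\Gamma}_{X'}\Omega_{X'/S}\otimes\mathcal{P}^{<p}$, but under proposition~7.6 and the duality between $\mathcal{P}^{<p}$ and $D_X/K_X$ these are the same object, and the subsequent coordinate check is identical.
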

\begin{proof}
There are three properties to verify. The first, that $(X/X)_{conj}$ is a trivial groupoid, is immediate. The second two properties require one to show that two natural maps are isomorphisms: if $X \to S \to T$ are smooth separated and $S'/S$ arbitrary then

\[(X\times_S S'/S')_{conj} \xrightarrow{\sim} (X/S)_{conj} \times_S S' \]

\[(X/S)_{conj} \xrightarrow{\sim}  (X/T)_{conj} \times_{(S/T)_{conj}} (S/S)_{conj} \]

For object objects these are immediate. For morphism objects, the Rees module construction is flat over $\mathbb{A}^1$ so it suffices to show this fibrewise. For $t\neq 0$ fibres, these isomorphisms follow from the $\mathbb{G}_m$-equivariance and the fact that the de Rham groupoid is a groupoid arrow functor. So all that remains is the $t=0$ fibre.

At $t=0$ the morphism object is simply 
\thispagestyle{plain}

\[ \dfrac{\mathcal{P}}{I} \otimes \frac{D_X}{K_X} \]

by the self-duality of $\frac{D_X}{K_X}=\text{End}_{\mathcal{O}_{X'}} \mathcal{O}_X $. Using local coordinates, this morphism object becomes

\[  \dfrac{\mathcal{O}_X [\tau_i]}{ \tau_i^p=0} \otimes \mathcal{O}_X <\tau_i^{[p]}>  \]

Then the base change property is immediate. For the descent property, assume $X/S$ has diagonal coordinates $\tau_i$ while $S/T$ has diagonal coordinates $\sigma_j$. Then one finishes by noticing

\[  \big( \dfrac{\mathcal{O}_X [\tau_i,\sigma_j]}{ \tau_i^p=0=\sigma_j^p} \otimes \mathcal{O}_X <\tau_i^{[p]},\sigma_j^{[p]}> \big) \otimes_{  \dfrac{\mathcal{O}_S [\sigma_j]}{\sigma_j^p=0} \otimes \mathcal{O}_S <\sigma_j^{[p]}>  } \mathcal{O}_S \simeq \dfrac{\mathcal{O}_X [\tau_i]}{ \tau_i^p=0} \otimes \mathcal{O}_X <\tau_i^{[p]}>  \]

Simply put, the tensor product on the left hand side kills all $\sigma_j$ and its higher powers, and returns the right hand side. That shows the descent isomorphism for the $t=0$ fibre and finishes the proof. Note that the morphism exists canonically, while the coordinates show that it is an isomorphism.
\thispagestyle{plain}

\end{proof}

\begin{corollary}
$M_{conj}(X/S)$ is an algebraic stack over $\mathbb{A}^1_S$, and has a descent to $[(S/T)]_{conj}$.
\end{corollary}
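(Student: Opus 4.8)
The plan is to follow the same three-step template used to establish the analogous statements for $M_{dR}$, $M_{Dol}$ and $M_{Hod}$: first identify $M_{conj}(X/S)$ with the module stack $M(X/S)_{conj}$ of the conjugate groupoid $(X/S)_{conj}$, then invoke Proposition 5.11 for algebraicity, then invoke Proposition 5.12 for the descent. Since $(X/X)_{conj}$ has object object $X \times \mathbb{A}^1$, the relevant base here is $B = (S/S)_{conj} = \mathbb{A}^1_S$, so all of this takes place over $Sch/\mathbb{A}^1_S$.

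First I would prove that $M_{conj}(X/S) \simeq M(X/S)_{conj}$ as fibred categories over $Sch/\mathbb{A}^1_S$. By definition, a module over the conjugate groupoid $(X/S)_{conj}$ is a locally free finite-rank $\mathcal{O}_X$-module equipped with a stratification over $\hat{\Xi}$. Using the duality between $\Lambda_{conj}$ and $\hat{\Xi}$ recorded in section 7.6, and repeating the argument of Propositions 3.9 and 6.23 (a compatible system of actions of $\Lambda_{conj}$ at finite order dualises to a compatible system of maps between the two pullbacks from the finite levels of $\hat{\Xi}$, with associativity translating into the cocycle condition), such a stratification is the same datum as a $\Lambda_{conj}$-module. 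The proposition above on the generators and relations of $\xi(\Lambda)$ then identifies a $\Lambda_{conj}$-module over a test scheme $R \to \mathbb{A}^1_S$ with parameter $\lambda$ as a tuple $(V,\nabla,\psi)$ in which $\nabla$ is an integrable connection, $\psi$ is an $F$-Higgs field, and the $p$-curvature of $\nabla$ equals $\lambda^p\psi$; that is, an object of $M_{conj}(X/S)(R)$. Functoriality in $R$ is immediate, yielding the desired equivalence.

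Next I would run the general machinery. By the preceding lemma, $(.)_{conj}$ is a groupoid arrow functor, so $(X/S)_{conj}$ is a groupoid scheme over $\mathbb{A}^1_S$ whose object object $X \times \mathbb{A}^1_S$ is smooth over the base and whose morphism object $\hat{\Xi}$ is formally smooth over the base, as one reads off from the explicit local description of $\hat{\Xi}$ as a completed polynomial--divided-power algebra in the same way as for $\hat{\mathcal{P}}$ and $N_{Hod}$. Proposition 5.11 then applies and shows $M(X/S)_{conj}$ is an algebraic stack over $\mathbb{A}^1_S$, isomorphic to the stack of locally free finite-rank modules on $[(X/S)_{conj}]$; combined with the first step, $M_{conj}(X/S)$ is an algebraic stack over $\mathbb{A}^1_S$. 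Finally, Proposition 5.12 applied to $(.)_{conj}$ and the tower $X \to S \to T$ furnishes a canonical isomorphism $M_{conj}(X/S) \simeq M(X_{st}/S_{st}) \times_{S_{st}} \mathbb{A}^1_S$, where $X_{st} = [(X/T)_{conj}]$ and $S_{st} = [(S/T)_{conj}]$, which is exactly a descent of $M_{conj}(X/S)$ to $[(S/T)]_{conj}$.

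The main obstacle is the first step, the conjugate analogue of the ``equivalence of three categories'': one must check that a stratification over $\hat{\Xi}$ really is the same datum as a $\Lambda_{conj}$-module. Unlike the de Rham and Dolbeault cases, the pairing now has to be carried across the whole Rees deformation and shown to specialise correctly at $\lambda = 0$ to the twisted-Dolbeault picture, where the morphism object degenerates to $\frac{\mathcal{P}}{I} \otimes \frac{D_X}{K_X}$ and the algebra to the associated graded $F_X^* S(T_{X'/S}) \otimes \End_{\mathcal{O}_{X'}} \mathcal{O}_X$. A secondary point requiring care is the formal smoothness of $\hat{\Xi}$ over $\mathbb{A}^1_S$, which is what licenses the application of Proposition 5.11.
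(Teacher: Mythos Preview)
Your proposal is correct and follows essentially the same approach as the paper: invoke Proposition 5.11 for algebraicity and Proposition 5.12 for the descent, relying on the preceding lemma that $(.)_{conj}$ is a groupoid arrow functor. The only organizational difference is that the paper defers the identification $M_{conj}(X/S)\simeq M(X/S)_{conj}$ (your first step) to a separate proposition stated immediately afterward, whereas you fold it into the proof of the corollary itself; the content of that identification---duality of $\Lambda_{conj}$ with $\hat{\Xi}$ plus the generators-and-relations description of $\xi(\Lambda)$---is exactly what the paper uses there.
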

\begin{proof}
Once again, proposition 5.11 gives the algebraicity, and then proposition 5.12 gives the descent.
\end{proof}

\begin{prop}
The stack of modules over this groupoid $M(X/S)_{conj}$ coincides with $M_{conj}(X/S)$ defined above.
\end{prop}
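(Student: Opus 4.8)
The plan is to chain together three equivalences, mirroring the structure of Proposition 3.9 (integrable connections $\leftrightarrow$ $\Lambda$-modules $\leftrightarrow$ stratifications over $\hat{\mathcal{P}}$) and its Hodge counterpart, Corollary 6.24. Fix a test scheme $R \to \mathbb{A}^1_S$ and write $\lambda$ for the image in $R$ of the coordinate $t$ on $\mathbb{A}^1_S$. By the very construction of $M(X/S)_{conj}$ — a module over the groupoid $(X/S)_{conj}$ being, per Definition 5.6, a vector bundle on its object object together with a stratification — and by Proposition 5.11, $M(X/S)_{conj}$ is an algebraic stack whose $R$-points are the vector bundles $V$ on $X \times_S R$ equipped with a stratification $\phi\colon pr_1^* V \xrightarrow{\sim} pr_2^* V$ relative to the base-changed groupoid $(X/S)_{conj}\times_{\mathbb{A}^1_S} R$, satisfying the cocycle conditions, all functorially in $R$.

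The second, and only substantive, step converts such a stratification into a module structure over $\Lambda_{conj} = \xi(\Lambda)$, via a $t$-varying, $R$-linear version of the duality of Proposition 3.9. The conjugate filtrations produce, on one side, the order-$\leq n$ pieces $\Lambda_{conj}^{\leq n}$, locally free over $\mathcal{O}_X[t]$ with basis $t^{-pk}\partial^{pk+r}$ ($0 \leq r < p$), and on the other the quotients $\hat{\Xi}\twoheadrightarrow \Xi^{\leq n}$, locally free over $\mathcal{O}_X[t]$ with basis $t^{pk}\tau^{[pk+r]}$; these pair perfectly at each finite level, the pairing being $\mathcal{O}_X[t]$-linear and specializing to the de Rham pairing of Proposition 3.9 at $t \neq 0$ by $\mathbb{G}_m$-equivariance, and at $t = 0$ through the self-duality $\frac{D_X}{K_X} = \End_{\mathcal{O}_{X'}}\mathcal{O}_X$ recorded in Remark 7.15. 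Just as in the proof (Corollary 6.24) that local systems on the Hodge groupoid are $\lambda$-connections, a compatible family of $\mathcal{O}_X$-linear actions $\Lambda_{conj}^{\leq n}\times V \to V$ dualises to a compatible family of maps $(pr^{(n)}_1)^* V \to (pr^{(n)}_2)^* V$, hence after completion to the stratification $\phi$; associativity of the $\Lambda_{conj}$-action corresponds to the cocycle conditions, and invertibility of $\phi$ is then automatic. The one piece of genuine bookkeeping — and what I expect to be the main obstacle — is checking that the comultiplication on $\hat{\Xi}$, restricted from that on $\hat{\mathcal{P}}$ (Proposition 2.14), is dual to the multiplication of $\xi(\Lambda)$, and that the pairing stays perfect at the degenerate fibre $t = 0$; both points are, however, entirely parallel to the de Rham and Hodge cases already handled.

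Finally, I would invoke Proposition 7.11: a $\Lambda_{conj}$-module on $X \times_S R$ is exactly a vector bundle $V$ with an integrable connection $\nabla$ (the action of the $\partial_i$) together with an $F$-Higgs field $\psi$ (the action of the central generators $\tilde{\partial}_i$, using $F_X^* S(T_{X'}) = F^* S(T_X)$ and Corollary 6.7) subject to the single relation that $\lambda^p\psi$ be the $p$-curvature of $\nabla$ — precisely the data of an $R$-point of $M_{conj}(X/S)$ as given in Definition 7.12. All three equivalences are natural in $R \to \mathbb{A}^1_S$ and carry isomorphisms to isomorphisms, so they assemble into an isomorphism of fibred categories over $Sch/\mathbb{A}^1_S$, hence — both sides being stacks — an isomorphism of stacks $M(X/S)_{conj} \simeq M_{conj}(X/S)$.
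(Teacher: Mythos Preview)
Your proposal is correct and follows essentially the same three-step chain as the paper's proof: identify $R$-points of $M(X/S)_{conj}$ with stratifications over $\hat{\Xi}$, use the finite-level duality between $\Lambda_{conj}$ and $\hat{\Xi}$ (parallel to Propositions 3.9 and 6.23/6.24) to pass to $\Lambda_{conj}$-modules, and then invoke Proposition 7.11 to unpack these as the data $(V,\nabla,\psi)$ of Definition 7.12. You supply considerably more detail on the duality step than the paper does, but the argument and its ingredients are the same.
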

\begin{proof}
See 7.12 for the definition of the elements of $M_{conj}$ as data $(V,\nabla,\psi)$. By proposition 7.11, this coincides with modules over an appropriate base change of $\Lambda_{conj}$. By the duality between $\Lambda_{conj}$ and $\hat{\Xi}=\hat{\mathcal{P}}_{conj}$, and the same proof as propositions 3.9 or 6.23, modules over $\Lambda_{conj}$ correspond to stratifications over $\hat{\Xi}$.
\end{proof}

\thispagestyle{plain}

\begin{remark}
Informally speaking, the modules over this groupoid are those \\ deforming between modules over $D_X$, and modules over its associated graded of the conjugate filtration.
\end{remark}

\subsection{Splittings of Cartier}
In this section I will introduce a technical construction used in the proof of the main theorem.

First, we follow \cite{Ogus-Vologodsky} and \cite{Katz3} in their presentation. Consider the Cartier \\ isomorphism, $C^{-1}: \Omega_{X'/S} \simeq \mathcal{H}^1({F_X}_*\Omega_{X/S})$, discussed in \cite{Katz3}. Reverse this and \\ compose with the natural map ${F_X}_* Z^1 \to \mathcal{H}^1({F_X}_*\Omega_{X/S})$ to get a map called the Cartier operator, $D: {F_X}_* Z^1 \to \Omega_{X'/S}$. Using the local freeness of the latter, we can find a section, $\Omega_{X'/S} \to {F_X}_* Z^1$. Now compose with the natural map \\ ${F_X}_* Z^1 \to {F_X}_*\Omega_{X/S}$. We get the following diagram:

\begin{center}
\begin{tikzcd}

& {F_X}_*\Omega_{X/S} & \\
\Omega_{X'/S} \arrow[ur,"\zeta"] \arrow[r,"\text{section}"] \arrow[rd,"C^{-1}"] & {F_X}_* Z^1 \arrow[d,"\text{natural}"] \arrow[u,"\text{natural}"] \arrow[r,"D"] & \Omega_{X'/S} \arrow[ld, "C^{-1}"] \\
& \mathcal{H}^1({F_X}_*\Omega_{X/S}) &

\end{tikzcd}
\end{center}

In particular, we can find such $\zeta$ for all affines. Call $\zeta$ a \textit{splitting of Cartier}.

\thispagestyle{plain}
The following is an alternative procedure. Recall relative Frobenius $F_X: X \to X'$. Suppose there exists a lift to characteristic $p^2$, denoted $\tilde{F}_X: \tilde{X} \to \tilde{X'}$. Let the ideals of the diagonal be $\tilde{I},\tilde{I'}$, and PD envelope $\tilde{\mathcal{P}}$. By an explicit calculation in \cite{Gros-LeStum}, there is a well-defined map

\[ \frac{1}{p!}\tilde{F}_X^*:  \tilde{I'} \to p \tilde{\mathcal{P}}  \simeq \frac{\tilde{\mathcal{P}}}{p\tilde{\mathcal{P}}} \simeq \mathcal{P}  \]

with values landing in $J$, the divided power ideal in $\mathcal{P}$. This induces a map

\[\Omega_{X'/S} \to \mathcal{P}\]

which lifts the map $\Omega_{X'/S} \to \dfrac{\mathcal{P}}{I}$ from the proof of proposition 7.6.

Since values land in $J$, one can also compose with the canonical projection to get a $p$-linear map \[\Omega_{X'/S} \to J \to \frac{J}{J^{[2]}}\simeq \Omega_{X/S} \]

This is a splitting of Cartier, $\zeta:\Omega_{X'/S} \to {F_X}_*\Omega_{X/S}$.

\subsection{The canonical connection}
Let $F_X: X \to X'$ be relative Frobenius, where $X'$ is the Frobenius twist relative to $S$. Let $E$ be a vector bundle over $X'$. Then the pullback $F_X^* E$ has a canonical connection

\[ \nabla_{can}: F_X^*E \to F_X^*E \otimes_{\mathcal{O}_X} \Omega_{X/S} \simeq E \otimes_{\mathcal{O}_{X'}} \Omega_{X/S}  \]

given by $\sigma \otimes s \mapsto \sigma \otimes ds$. Consider this with local coordinates. Let $e'_1,\ldots, e'_n$ be a local basis of $E$. Then $e_i=e'_i \otimes 1$ is a local basis of $F_X^*E$, and $\nabla(e_i)=0$. In particular, considering the corresponding action of $\Lambda$ on $F_X^*E$, we must have $\partial_j e_i = 0$ for all $i,j$.
\thispagestyle{plain}

\subsection{The horizontal subbundle in characteristic $p$}
Recall from section 3.4 that an integrable connection $\nabla$ on $V$ relative to $X/S$ corresponds to an integrable \\ subbundle $H$ of the tangent space $T_{W/S}$, where $W$ is the total space of $V$.
\begin{lemma}
Assume the integrability condition. Then $H$ is closed under $p$-th powers if and only if $\nabla$ has trivial $p$-curvature.
\end{lemma}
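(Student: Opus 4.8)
The statement to prove is \textbf{Lemma 7.21}: for a vector bundle $V$ on $X/S$ in characteristic $p$ with integrable connection $\nabla$ and associated horizontal subbundle $H \subset T_{W/S}$ (where $W$ is the total space of $V$), the subbundle $H$ is closed under $p$-th powers if and only if $\nabla$ has trivial $p$-curvature. The natural strategy is to unwind both conditions in a local frame and see that they are literally the same equation. By Proposition 3.10 and the explicit local formulas worked out there, a local frame $e_1,\ldots,e_r$ of $V$ together with local coordinates $x_1,\ldots,x_n$ on $X$ gives coordinates $e_1,\ldots,e_r,x_1,\ldots,x_n$ on $W$, and $H$ is spanned over $\mathcal{O}_W$ by the lifted vector fields
\[ \widetilde{\partial_k} = \frac{\partial}{\partial x_k} + \sum_{i,j} f_{ijk}\, e_j\, \frac{\partial}{\partial e_i}, \]
where the $f_{ijk}$ are the functions on $X$ appearing in $\nabla_{\partial/\partial x_k} e_i = \sum_j f_{ijk} e_j$ (this is exactly the computation at the end of the proof of Proposition 3.10, identifying $\widetilde{\partial_k}$ with the matrix $[\theta_{\partial/\partial x_k}]\in\End(V)$ acting on the vertical coordinates). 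The vertical directions $\partial/\partial e_i$ already lie in the complementary bundle $T_{W/X}$, so the only content of ``$H$ closed under $p$-th powers'' is that $\widetilde{\partial_k}^{\,\circ p}$ again lies in $H$.

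\textbf{Key steps.} First I would record that $H$ is automatically integrable here, by Lemma 3.11, since $\nabla$ is assumed integrable; this lets me work with $H$ as an honest foliation and reduces everything to the single lifted frame $\widetilde{\partial_k}$. Second, I would compute $\widetilde{\partial_k}^{\,\circ p}$ as a derivation on $\mathcal{O}_W$. Writing $\widetilde{\partial_k} = \partial/\partial x_k + M_k$ where $M_k$ is the ``linear vertical'' vector field $\sum_{i,j} f_{ijk} e_j \partial/\partial e_i$ corresponding to the endomorphism $\theta_k := \theta_{\partial/\partial x_k}$, the point is that iterating $\widetilde{\partial_k}$ on $W$ exactly mirrors iterating $\nabla_{\partial/\partial x_k}$ on $V$: the vertical component of $\widetilde{\partial_k}^{\,\circ p}$ is the linear vector field attached to the endomorphism $\nabla_{\partial/\partial x_k}^{\,p}$ (the $p$-fold composite as a differential operator, including the lower-order terms produced by the Leibniz interaction of $\partial/\partial x_k$ with the coefficients of $\theta_k$), while its horizontal component is $(\partial/\partial x_k)^{\circ p} = 0$. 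On the other hand, $\widetilde{\partial_k^{\,\circ p}}$, the horizontal lift of $(\partial/\partial x_k)^{\circ p}=0$, is the linear vector field attached to $\nabla_{(\partial/\partial x_k)^{\circ p}} = 0$. Third, I would conclude: $\widetilde{\partial_k}^{\,\circ p}$ lies in $H$ (equivalently equals $\widetilde{\partial_k^{\,\circ p}} = 0$) precisely when the endomorphism $\nabla_{\partial/\partial x_k}^{\,p} - \nabla_{(\partial/\partial x_k)^{\circ p}}$ vanishes, which is exactly $\psi(\partial/\partial x_k)=0$. Since by Lemma 7.3 the $p$-curvature is $p$-linear and $\psi(\partial/\partial x_k)=(\partial/\partial x_k)^p$ generate it, vanishing of all $\psi(\partial/\partial x_k)$ is equivalent to $\psi \equiv 0$, and closure of $H$ under $p$-th powers is equivalent to $\widetilde{\partial_k}^{\,\circ p}\in H$ for all $k$ (one must note that the span of the $\widetilde{\partial_k}$ over $\mathcal{O}_W$ is $p$-closed iff each $\widetilde{\partial_k}^{\,\circ p}$ is in $H$, using that $(gD)^{\circ p} = g^p D^{\circ p} + (\text{stuff involving }D^{\circ\ell}g\cdot D)$ stays in $H$ once the $\widetilde{\partial_k}^{\,\circ p}$ do, by Hochschild's formula / the standard identity in \cite{Katz3}).

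\textbf{Main obstacle.} The genuinely delicate point is Step 2: verifying that the vertical part of $\widetilde{\partial_k}^{\,\circ p}$ is the linear vector field corresponding to $\nabla_{\partial/\partial x_k}^{\,p}$ \emph{as a differential operator}, i.e. that the horizontal-lift construction $D \mapsto \widetilde{D}$ from Proposition 3.10 intertwines the $p$-fold iterate on $W$ with the $p$-fold product in $\Lambda$ acting on $V$. This is a bookkeeping computation with the Leibniz rule: the bracket $[\partial/\partial x_k, M_k]$ on $W$ produces the linear vector field attached to $(\partial/\partial x_k)(\theta_k)$, matching the term $\partial^{p-1}(f)\partial$-type contributions in the expansion $\partial^p f = \sum \binom{p}{j}\partial^j(f)\partial^{p-j}$ in $\Lambda$, and one must check all intermediate terms correspond. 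I would organize this by induction on the power, tracking that $\widetilde{\partial_k}^{\,\circ m}$ has horizontal part $(\partial/\partial x_k)^{\circ m}$ and vertical part the linear field of the operator $\nabla_{\partial/\partial x_k}^{\,m}\in\Lambda$; alternatively, one can sidestep the explicit iteration by invoking the curvature computation of Lemma 3.11 in the ``$p$-curvature'' form, observing that $\psi$ is precisely the obstruction to $\widetilde{\partial_k}^{\circ p}$ being horizontal, exactly as the ordinary curvature (appearing there as $[\nabla_{\partial/\partial x_k},\nabla_{\partial/\partial x_l}]$, the obstruction to $[\widetilde{\partial_k},\widetilde{\partial_l}]$ being horizontal) is. Either way, the bulk of the work is this local identification; once it is in hand the lemma is immediate.
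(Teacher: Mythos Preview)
Your approach is correct and essentially the same as the paper's. Both reduce to the local identification that the image of $\widetilde{\partial_k}^{\,\circ p}$ in $T_{W/S}/H \simeq \pi^*V$ is the linear vector field corresponding to $(\nabla_{\partial/\partial x_k})^p = \psi(\partial/\partial x_k)$, and both finish by $p$-linearity of $\psi$. Two minor differences worth noting: the paper dispatches one direction in a single line by invoking Cartier's theorem (if $\psi=0$ there is a local frame with $\nabla e_i=0$, so $H$ is literally the span of the $\partial/\partial x_j$ and visibly $p$-closed); and for the full equivalence the paper packages your ``main obstacle'' by defining the \emph{$p$-curvature of $H$} as the map $H \to T_{W/S}/H$, $D \mapsto D^{\circ p}$, observing it is $p$-linear and hence a section of $F^*\Omega_{X/S}\otimes\End(V)$, and then simply citing the correspondence of Proposition~3.10 to identify it with $\psi$---exactly the ``sidestep'' you mention at the end. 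Your explicit inductive verification that horizontal lift intertwines iteration on $W$ with multiplication in $\Lambda$ is the content hidden in the paper's appeal to that correspondence; neither proof spells this step out in full.
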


\thispagestyle{plain}

\begin{proof}
One direction is easy to see. If the $p$-curvature vanishes, then there exists a local frame of $V$ for which $\nabla e_i=0$. As discussed in proposition 3.10, this implies the splitting $L$ is trivial, and $H$ is given by the span of elements $\frac{\partial}{\partial x_j}$; these are closed under $p$-th powers.

Alternatively, consider the following morphism, called the $p$-curvature of $H$:

\[ H \to \frac{T_W}{H}\]
\[ D \mapsto D^p \]
This vanishes if and only if $H$ is preserved by $p$-th powers. This map is $p$-linear and defines a section of $F^* H^\vee \otimes \frac{T_W}{H}$. Since $H \simeq \pi^* T_{X/S}$ and $\frac{T_{W/S}}{H} \simeq \pi^* V$, this defines a section of $F^* \pi^* \Omega_{X/S} \otimes_{\mathcal{O}_V} \pi^*V = F^*  \Omega_{X/S} \otimes_{\mathcal{O}_X} \pi^*V$. By a similar calculation as section 3.4, and linearity, this defines a section of $F^*  \Omega_{X/S} \otimes_{\mathcal{O}_X} V^\vee \otimes V = F^* \Omega_{X/S} \otimes_{\mathcal{O}_X} \End(V)$. 

On the other hand, recall the $p$-curvature of $\nabla$ is a $p$-linear map \\ $\psi: T_{X/S} \to \End(V)$, that is, a section of $F^* \Omega_{X/S} \otimes_{\mathcal{O}_X} \End(V)$. Now invoke the \\ correspondence explicitly written down in section 3.4. Then the $p$-curvature \\ morphism of $H$ is given by the map 

\[ \pi^* T_{X/S} \to \pi^* V\]
\[ \frac{\partial}{\partial x_j} \mapsto (\nabla_{\frac{\partial}{\partial x_j}})^p \]

But $\psi(D) = (\nabla_D)^p - \nabla_{D^p} $. Specialize to $D= \frac{\partial}{\partial x_j}$. Then $D^p=0$, so \\ $\psi(\frac{\partial}{\partial x_j})= (\nabla_{\frac{\partial}{\partial x_j}})^p$. Now the $p$-curvature of $H$ vanishes if and only if $(\nabla_{\frac{\partial}{\partial x_j}})^p=0$ for all $j$. By $p$-linearity of $\psi$, these all vanish if and only if $\psi$ is identically zero.
\thispagestyle{plain}

\end{proof}
\thispagestyle{plain}

\pagebreak

\section{The main theorem}

Each of the subsequent sections contains a technical result necessary to state and prove the main theorem. 

\subsection{Bost's conjecture for the horizontal subbundle of $M_{dR}$}
\begin{prop}
$R_{dR}$ and $M_{dR}$ have canonical horizontal subbundles of their tangent bundles $T_{M_{dR}/T}$ and $T_{R_{dR}/T}$.
\end{prop}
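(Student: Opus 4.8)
The plan is to read off the horizontal subbundle from the Gauss--Manin connection, in exact analogy with the way an integrable connection on a vector bundle $V$ produces the horizontal subbundle of its total space in Proposition~3.10. The only new ingredients are, first, that $R_{dR}$ as well as $M_{dR}$ carries a Gauss--Manin connection relative to $S/T$, and second, that a stratification over the de Rham groupoid $(S/T)_{dR}$ restricts, on the first divided-power neighbourhood, to an honest splitting of a tangent sequence.

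For the first ingredient I would rerun the crystalline construction of Theorem~4.5 for the framed moduli problem: for $(S_0'\subset S')\in\Crys^r(S/T)$ with a chosen retraction $\rho\colon S'\to S$, put $X'=X\times_{S,\rho}S'$ and let $x'\colon S'\to X'$ be the section induced by $x$. The framing datum $\alpha$ along $x$ is preserved by all of the comparison isomorphisms of Corollary~4.3 and Proposition~4.4, so $R_{dR}(X,S,x)$ is a restricted crystal of schemes over $\Crys^r(S/T)$ and hence, by Lemma~4.2, carries a descent to $(S/T)_{dR}$, i.e.\ a stratification $\phi\colon pr_1^*R_{dR}\xrightarrow{\sim}pr_2^*R_{dR}$ over $(S\times_T S)_{PD}^\wedge$ satisfying the cocycle conditions. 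Since $M_{dR}=[R_{dR}/GL_d]$ with $GL_d$ acting $S$-linearly, this stratification is compatible with the Gauss--Manin connection on $M_{dR}$ of Theorem~4.5.

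For the second ingredient I would restrict $\phi$ to the first PD neighbourhood $P^1_S(S\times_T S)=\Spec_S(\mathcal O_S\oplus\Omega_{S/T})$, using $J/J^{[2]}\simeq\Omega_{S/T}$ from Proposition~2.11. This yields an isomorphism $pr_1^*R_{dR}\simeq pr_2^*R_{dR}$ over the trivial square-zero thickening of $S$ by $\Omega_{S/T}$, equal to the identity over $S$. Writing $f\colon R_{dR}\to S$, both the collection of such level-one descent data and the collection of splittings of
\[0\longrightarrow T_{R_{dR}/S}\longrightarrow T_{R_{dR}/T}\longrightarrow f^*T_{S/T}\longrightarrow 0\]
are torsors under $\Hom(f^*T_{S/T},\,T_{R_{dR}/S})$, and they are matched by the usual infinitesimal computation --- this is precisely the torsor argument of Proposition~3.10, with $S\to T$ playing the role of $X\to S$ and $R_{dR}$ the role of the total space $W$. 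I then define $H_{R_{dR}}\subset T_{R_{dR}/T}$ to be the image of $f^*T_{S/T}$ under the resulting splitting. Running the identical argument for the Gauss--Manin stratification on $M_{dR}$ --- working with the stack via the stratification/descent equivalence (Proposition~5.10) and the relative tangent complex --- produces $H_{M_{dR}}\subset T_{M_{dR}/T}$; and because the two stratifications are compatible while $GL_d$ acts along the fibres of $R_{dR}\to S$ (so contributes only to $T_{R_{dR}/M_{dR}}\subset T_{R_{dR}/S}$), $H_{R_{dR}}$ descends along the smooth surjection $R_{dR}\to M_{dR}$ to $H_{M_{dR}}$.

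The step I expect to be the main obstacle is passing from the first-order descent datum to a genuine subbundle in the \emph{stack} case: one must make sense of $T_{M_{dR}/T}$ and the exactness of its tangent sequence, and verify that the level-one datum cuts out an actual subbundle rather than merely a map of tangent complexes. I would sidestep this by performing the whole construction on the scheme $R_{dR}$, where Proposition~3.10 applies essentially verbatim, and only then descending; the one remaining verification is the $GL_d$-equivariance of $H_{R_{dR}}$, which follows from functoriality of the construction in the test scheme together with $S$-linearity of the $GL_d$-action.
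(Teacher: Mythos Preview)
Your proposal is correct and follows the same architecture as the paper: equip $R_{dR}$ with its Gauss--Manin stratification relative to $(S/T)_{dR}$, extract the horizontal subbundle via the first-order argument of section~3.4, and then descend along $R_{dR}\to M_{dR}=[R_{dR}/GL_d]$. The paper's proof is terser --- it simply asserts that the stratification on $R_{dR}$ gives an integrable connection on each $\mathcal{O}_U$ for affine opens $U\subset R_{dR}$ and then invokes section~3.4 directly --- whereas you spell out the restriction to $P^1_S(S\times_T S)$ and the torsor matching explicitly; but the content is the same.
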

\thispagestyle{plain}

\begin{proof}
$M_{dR}$ and $R_{dR}$ both have an integrable connection over $[(S/T)_{dR}]$. By taking affine open subsets of $S$ and $R_{dR}$, the data of the integrable connection on $R_{dR}$ yields integrable connections on all structure sheaves $\mathcal{O}_U$, for affine opens $U \subset R_{dR}$. By \\ section 3.4, this yields horizontal subbundles of $T_{U/T}$ for all $U$. These are \\ compatible, so yield a horizontal subbundle of $T_{R_{dR}/T}$. Finally, $M_{dR}=[R_{dR}/GL_d]$, so this horizontal subbundle descends to $M_{dR}$.
\end{proof}

Thus, we can state Bost's criterion for $M_{dR}$, namely, say the $p$-curvature of $M_{dR}$ is trivial if this horizontal subbundle is closed under $p$-th powers.

\subsection{An alternative definition of non-abelian $p$-curvature}
Let $S/T$ be a smooth separated scheme in characteristic $p$, $S'$ the Frobenius twist with respect to $T$, and $F_{S/T}: S \to S'$ the relative Frobenius with respect to $T$.

For $S/T$, consider the $p$-curvature morphism $\theta:(S/T)_{FDol}\to (S/T)_{dR}$ introduced in definition 7.7. Abuse notation and also use $\theta$ for the induced morphism on \\ quotient stacks. $(S/T)_{FDol}$ is the twisted Dolbeault groupoid, with object object $S$ and morphism object $F_S^* \hat{\Gamma}_{S'}\Omega_{S'/T}$.  Recall the Gauss-Manin connection on $M_{dR}(X/S)$ is the descent $M_{dR}(X/S) \to [(S/T)]_{dR}$. We can form the diagram

\pagebreak
\begin{center}
\begin{tikzcd}

\theta  ^* M_{dR}(X/S) \arrow[d] \arrow[r] &   M_{dR}(X/S) \arrow[d]  \\

[(S/T)]_{FDol} \arrow[r,"\theta"]           &  {[(S/T)]_{dR}}

\end{tikzcd}
\end{center}

As a second definition, declare the non-abelian $p$-curvature of the Gauss-Manin connection on $M_{dR}(X/S)$ to be trivial if the induced stratification of $\theta  ^* M_{dR}(X/S)$ over the twisted Dolbeault groupoid $(S/T)_{FDol}$ is trivial. Note this groupoid is one with source and target maps being equal so this condition makes sense. 

\begin{prop}
The two definitions of $p$-curvature being trivial are equivalent. That is, the horizontal subbundle is closed under $p$-th powers if and only if the pullback stratification induced by $\theta$ is trivial.
\end{prop}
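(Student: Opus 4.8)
The plan is to reduce the statement to a purely local question about modules over $\Lambda$, where the equivalence of the two notions becomes the already-established duality between $p$-curvature of a connection and the induced stratification over $\dfrac{\mathcal{P}}{I}$, proven in Proposition~7.16. First I would unwind what each side means concretely. For the horizontal-subbundle side: by Proposition~8.1 the horizontal subbundle of $T_{M_{dR}/T}$ is obtained by descent from the horizontal subbundle of $T_{R_{dR}/T}$, which in turn is glued from the horizontal subbundles of $T_{U/T}$ for affine opens $U \subset R_{dR}$ coming from the Gauss-Manin connection on $\mathcal{O}_U$. By Lemma~7.22, for each such $U$ the horizontal subbundle is closed under $p$-th powers if and only if the corresponding integrable connection on $\mathcal{O}_U$ has trivial $p$-curvature. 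So the first definition is equivalent to: the Gauss-Manin connection on every $\mathcal{O}_U$ has trivial $p$-curvature, for $U$ in an affine cover of $R_{dR}$.

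Next I would unwind the second side. The Gauss-Manin connection on $M_{dR}(X/S)$ is the descent $M_{dR}(X/S) \to [(S/T)]_{dR}$, and since $M_{dR} = [R_{dR}/GL_d]$ this descent is compatible with a descent of $R_{dR}$, hence with descents of the structure sheaves $\mathcal{O}_U$ to $[(S/T)]_{dR}$; equivalently, integrable connections on the $\mathcal{O}_U$ relative to $S/T$. The $p$-curvature morphism $\theta:(S/T)_{FDol}\to(S/T)_{dR}$ on morphism objects is precisely the coalgebra map $\hat{\mathcal{P}} \to F_S^*\hat{\Gamma}_{S'}\Omega_{S'/T}$ of Proposition~7.6, which (Lemma~7.15, Proposition~7.16) is dual to the inclusion $\Lambda_c \hookrightarrow \Lambda$ and whose associated quotient is the completion of $\dfrac{\mathcal{P}}{I}$. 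So pulling back the stratification on $\mathcal{O}_U$ along $\theta$ and asking whether it is trivial is, by Proposition~7.16 applied to the connection on $\mathcal{O}_U$, exactly asking whether that connection has trivial $p$-curvature. Assembling: the pullback stratification of $\theta^*M_{dR}(X/S)$ over $(S/T)_{FDol}$ is trivial iff the induced stratification on each $\theta^*\mathcal{O}_U$ is trivial (a statement about a quotient stack being checked on a smooth cover, using that $\theta^*M_{dR} = [\theta^*R_{dR}/GL_d]$ and that the $GL_d$-part of the stratification is tautologically trivial) iff each connection on $\mathcal{O}_U$ has trivial $p$-curvature.

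Thus both definitions reduce to the same condition — vanishing $p$-curvature of the Gauss-Manin connection on every $\mathcal{O}_U$ in an affine cover of $R_{dR}$ — and the proof is complete. Concretely I would structure it as: (i) reduce the first definition via Lemma~7.22 to vanishing $p$-curvature of the $\mathcal{O}_U$-connections; (ii) identify $\theta$ on morphism objects with the map of Proposition~7.6 and invoke Proposition~7.16 to reduce the second definition to vanishing $p$-curvature of the $\theta^*\mathcal{O}_U$, noting $\theta^*\mathcal{O}_U = \mathcal{O}_U$ as a sheaf with the pulled-back stratification; (iii) check that triviality of a stratification over a quotient groupoid can be tested after pulling back along the atlas $R_{dR}\to M_{dR}$, which works because a stratification over $[R_{dR}/GL_d]$ relative to $(S/T)_{FDol}$ is trivial iff it is trivial after pullback to $R_{dR}$ and the $GL_d$-direction contributes nothing.

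The main obstacle is step~(iii): making precise and rigorous the passage between triviality of the stratification on the stack $M_{dR}$ (or its pullback $\theta^*M_{dR}$) and triviality of the stratifications on the structure sheaves of an affine cover of the atlas $R_{dR}$. One must be careful that "trivial stratification" for the Gauss-Manin connection on $M_{dR}$, defined via the descent to $[(S/T)]_{dR}$ or $[(S/T)]_{FDol}$, is faithfully detected on $R_{dR}$; this uses that $R_{dR}\to M_{dR}$ is a smooth surjection and that the Gauss-Manin connection and the pullback stratification are defined compatibly with this presentation (as in the proof of Theorem~6.37). The other steps — Lemma~7.22 and Proposition~7.16 — are quoted essentially verbatim, so once the descent bookkeeping in step~(iii) is handled, the equivalence follows formally.
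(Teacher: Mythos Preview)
Your proposal is correct and follows essentially the same route as the paper: reduce to affine opens $U$ of the atlas $R_{dR}$, invoke the lemma that the horizontal subbundle is closed under $p$-th powers iff the connection on $\mathcal{O}_U$ has trivial $p$-curvature, then invoke the proposition that trivial $p$-curvature is equivalent to the pullback stratification along $\theta$ being the identity, and finally pass back to $M_{dR}=[R_{dR}/GL_d]$ using that $GL_d$ descends automatically. Your numbering is off by one throughout (the relevant results are Lemma~7.21 and Proposition~7.9, not 7.22 and 7.16), but the argument itself matches the paper's.
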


\begin{proof}
Assume the horizontal tangent bundle of $R_{dR}$, equivalently $M_{dR}$, is preserved by $p$-th powers. Work locally on $R_{dR}$ as above. By taking affine open subsets of $S$ and $R_{dR}$, the data of the integrable connection on $R_{dR}$ yields integrable connections on all structure sheaves $\mathcal{O}_U$ for affine opens $U \subset R_{dR}$. By lemma 7.21, the induced integrable connection on $\mathcal{O}_U$ has trivial $p$-curvature; by proposition 7.9, the \\ corresponding stratification is equalized by $\theta$. So $\theta  ^* R_{dR}$ has trivial stratification over $[(S/T)]_{FDol}$. But $GL_d$ automatically descends, so this implies $\theta  ^* M_{dR}$ has trivial stratification over $[(S/T)]_{FDol}$. 

The converse is identical, applying proposition 7.9 then 7.21.
\end{proof}
\thispagestyle{plain}

\subsection{Extending the $p$-curvature morphism}
Recall the morphisms constructed in proposition 7.6 and definition 7.7. We have an embedding of groupoids and their quotient stacks $\theta:(S/T)_{FDol}\hookrightarrow (S/T)_{dR}$ given by the identity on object objects and $\hat{\mathcal{P}} \to F_S^* \hat{\Gamma}_{S'}\Omega_{S'/T}$ on morphism objects.

\begin{prop}
$\theta$ can be extended over all of $\mathbb{A}^1$ to a morphism \\ $\Theta:(S/T)_{FDol}\times \mathbb{A}^1_T \to (S/T)_{conj}$ such that $\Theta_1=\theta$.
\end{prop}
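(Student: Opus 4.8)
The plan is to construct $\Theta$ at the level of groupoid arrow functors by giving a morphism of morphism objects that deforms the coalgebra map $\theta$ over the Rees parameter, and then pass to quotient stacks. Recall that the morphism object of $(S/T)_{conj}$ is $\hat{\Xi}$, the completion of the Rees module $\mathcal{P}^{<p} + t^p\mathcal{P}^{<2p} + t^{2p}\mathcal{P}^{<3p} + \ldots$, while the morphism object of $(S/T)_{FDol}\times\mathbb{A}^1_T$ is $F_S^*\hat{\Gamma}_{S'}\Omega_{S'/T}[t]$. Since both groupoid arrow functors are compatible with the de Rham groupoid at $t=1$ (by Corollary~7.18 and the $t=1$ fibre of the conjugate groupoid), and both are flat over $\mathbb{A}^1$, it suffices to produce the morphism locally and check it assembles. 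First I would work in local coordinates $s_1,\ldots,s_m$ on $S/T$, with diagonal coordinates $\tau_j$, and recall from Proposition~7.6 that the $p$-curvature map sends $ds_j' \mapsto \tau_j^{[p]}$ inside $\mathcal{P}/I = \mathcal{O}_S\langle\tau_1^{[p]},\ldots,\tau_m^{[p]}\rangle$. The key observation is that $\tau_j^{[p]}$ lives in $\mathcal{P}^{<2p}$ but not $\mathcal{P}^{<p}$, so inside the Rees module $\hat{\Xi}$ the element $t^p\tau_j^{[p]}$ is a natural degree-one generator; this is exactly the generator dual to $t^{-p}\tilde{\partial_j}$ appearing in the presentation of $\xi(\Lambda)$ in Proposition~7.11.

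The map $\Theta$ on morphism objects is then defined, in coordinates, by
\[
F_S^*\hat{\Gamma}_{S'}\Omega_{S'/T}[t] = \mathcal{O}_S\langle\langle ds_1',\ldots,ds_m'\rangle\rangle[t] \longrightarrow \hat{\Xi}, \qquad ds_j' \mapsto t^p\tau_j^{[p]},\quad t\mapsto t.
\]
Because $(\tau_j^{[p]})^{[k]}$ equals $\tau_j^{[pk]}$ up to a unit that is $1$ modulo $p$ (Lemma~7.4), the divided powers of $ds_j'$ map to $t^{pk}\tau_j^{[pk]}$ up to units, which are precisely among the generators of $\hat{\Xi}$; this shows the map is a well-defined morphism of completed divided power algebras over $\mathcal{O}_S[t]$. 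To see it is a morphism of coalgebras — hence of groupoids — I would verify the three conditions (identity, source/target equalization, comultiplication) exactly as in the proof of Proposition~7.6, observing that the extra factor of $t$ is central and carried along trivially: the comultiplication on $\hat{\Xi}$ restricted to the $t^{pk}\tau^{[pk]}$ generators agrees with $\Delta(\tau^{[pk]}) = \sum_{i+j=pk}\tau^{[i]}\otimes\tau^{[j]}$, and after the projection forced by $\Theta$ only the $p\mid i,j$ terms survive, matching the comultiplication $ds_j'\mapsto ds_j'\otimes 1 + 1\otimes ds_j'$ on the twisted Dolbeault side. One then checks $\Theta_1 = \theta$ by setting $t=1$, which recovers $ds_j'\mapsto \tau_j^{[p]}$, i.e.\ exactly the map of Definition~7.7.

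Finally I would argue that this coordinate construction is canonical and globalizes. The cleanest route is to note that dually $\Theta$ corresponds to the inclusion of algebras $F_S^* S(T_{S'/T})[t] \hookrightarrow \xi(\Lambda)$ sending $\partial/\partial s_j' \mapsto t^{-p}\tilde{\partial_j}$ — which is precisely the map already written down inside the proof of Proposition~7.11 and shown there to be well-defined at all $t$, including $t=0$ where one takes the image in the associated graded. Since that map is manifestly independent of coordinates (it is the Rees-module version of the $p$-curvature map $F_S^*S(T_{S'/T})\to\Lambda_c\subset\Lambda$, which is canonical by Lemma~7.2), its dual $\Theta$ is canonical, and the local descriptions patch. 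Passing to quotient stacks via Proposition~5.12 and abusing notation gives $\Theta:[(S/T)]_{FDol}\times\mathbb{A}^1_T \to [(S/T)]_{conj}$ with $\Theta_1 = \theta$, as required. The main obstacle I anticipate is bookkeeping the interaction between the divided-power structure and the Rees grading — specifically making sure that the unit discrepancies from Lemma~7.4 in the formulas $(\tau^{[p]})^{[k]} = (\text{unit})\,\tau^{[pk]}$ do not obstruct the comultiplication compatibility — but since all these units are $\equiv 1 \pmod p$ and we are in characteristic $p$, they are genuinely units in $\mathcal{O}_S$ and cause no difficulty beyond careful notation.
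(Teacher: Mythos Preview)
Your argument has a genuine direction error in the primary coordinate construction. A morphism of groupoids $\Theta:(S/T)_{FDol}\times\mathbb{A}^1 \to (S/T)_{conj}$ is a map of (formal) schemes on morphism objects, so on structure sheaves it must go from the \emph{target} to the \emph{source}: one needs a map $\hat{\Xi} \to F_S^*\hat{\Gamma}_{S'}\Omega_{S'/T}[t]$, not the other way. Your formula $ds_j'\mapsto t^p\tau_j^{[p]}$ defines a map $F_S^*\hat{\Gamma}_{S'}\Omega_{S'/T}[t] \to \hat{\Xi}$, which on Spec's would yield $(S/T)_{conj}\to (S/T)_{FDol}\times\mathbb{A}^1$, the wrong direction. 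Consequently your check that $\Theta_1=\theta$ fails: at $t=1$ you recover a \emph{section} of the projection $\hat{\mathcal{P}}\to\mathcal{P}/I$, not the projection itself, whereas $\theta$ on structure sheaves is precisely that projection (composed with the isomorphism $\mathcal{P}/I\simeq F_S^*\Gamma_{S'}\Omega_{S'/T}$); see Definition~7.7. Likewise your claim that the map $ds_j'\mapsto t^p\tau_j^{[p]}$ is dual to the inclusion $F_S^*S(T_{S'/T})[t]\hookrightarrow\xi(\Lambda)$ is false: by Lemma~7.8 the inclusion $\Lambda_c\hookrightarrow\Lambda$ is dual to the \emph{quotient} $\mathcal{P}\to\mathcal{P}/I$, so the Rees version dualizes to a surjection $\hat{\Xi}\to F_S^*\hat{\Gamma}_{S'}\Omega_{S'/T}[t]$, not to your section.

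Ironically, your ``secondary'' dual description via $F_S^*S(T_{S'/T})[t]\hookrightarrow\xi(\Lambda)$ is correct and, once dualized properly, \emph{is} the paper's argument. The paper proceeds directly on the $\mathcal{P}$ side: the projection $\mathcal{P}\to\mathcal{P}/I$ respects the conjugate filtration on the source and the grading-induced filtration on the target, so it passes to a map of Rees modules $\Xi\to\zeta(\mathcal{P}/I)$; since $\mathcal{P}/I$ is graded its Rees module is canonically $(\mathcal{P}/I)\times\mathbb{A}^1$, and completing gives the desired $\hat{\Xi}\to F_S^*\hat{\Gamma}_{S'}\Omega_{S'/T}[t]$. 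This avoids all the coordinate bookkeeping with divided-power units you anticipated, and makes canonicity immediate. The fix to your proof is simply to replace your coordinate map by its correct dual (the Rees of the projection), after which the coalgebra and $t=1$ checks go through as in Proposition~7.6.
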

\begin{proof}

Consider the $p$-curvature morphism $\mathcal{P} \to \dfrac{\mathcal{P}}{I}$. 

On $\mathcal{P}$ we have the conjugate filtration $\mathcal{P}^{< p} \subset \mathcal{P}^{< 2p} \subset \ldots $

On $\dfrac{\mathcal{P}}{I}$ we have a filtration induced by the grading in the natural way. That is, write $\dfrac{\mathcal{P}}{I} = \bigoplus A_n$ where $A_n$ is locally generated by elements $\tau^{[pn]}$. Set $F_n = \bigoplus_{k< n} A_n$ and $F_0=0$. Then $F_0 \subset F_1 \subset \ldots$ is a filtration on $\dfrac{\mathcal{P}}{I}$.

In local coordinates, $\mathcal{P}^{< pr}$ is spanned by elements $\tau^{[i]}$ for $i<pr$, so $\mathcal{P}^{< pr} $ maps to $F_r$ under $\mathcal{P} \to \dfrac{\mathcal{P}}{I}$. That is, the $p$-curvature morphism respects the filtrations, and hence extends to a morphism between the two Rees modules $\Xi \to \zeta(\dfrac{\mathcal{P}}{I}, F^\bullet)$. Since $\dfrac{\mathcal{P}}{I}$ is graded, its Rees module is trivial over $\mathbb{A}^1$ and is simply a copy of $\dfrac{\mathcal{P}}{I} \times \mathbb{A}^1$. Now pass to the completion and take the identity morphism on objects to yield a groupoid morphism

\[ \Theta:(S/T)_{FDol}\times \mathbb{A}^1_T \to (S/T)_{conj}. \]

\end{proof}

\subsection{Key deformation lemma}
The following lemma will be used in the proof of the main theorem.

\begin{lemma}
Let $(E,\psi)$ be a Higgs bundle on $X/S$. Form the pullback by Frobenius $F^* E$ with its canonical connection $\nabla_{can}$. Assume there exists a splitting of Cartier $\zeta$, as introduced in section 7.7.

let $V=F^* E \times_S \mathbb{A}^1_S$ be the trivial extension, and equip this with an integrable connection $\nabla= \nabla_{can} + t \zeta (F^* \psi)$.

Then $(V,\nabla,F^*\psi)$ is an element of $M_{conj}$.
\end{lemma}
\begin{proof}

Recall from section 1.3 that absolute Frobenius factors through relative \\ Frobenius. By section 7.8, $F^*E$ has a canonical connection. Our construction is a variant of \cite{Lan-Sheng}, spread over $\mathbb{A}^1$. There, Lan-Sheng-Zuo prove that for $t=1$, this connection $\nabla_1= \nabla_{can} + \zeta (F^* \psi)$ is integrable with $p$-curvature equal to $F^*\psi$.
\thispagestyle{plain}

Work in local coordinates. If $e'_i$ is a local basis of $E$ and its pullback is $e_i$, the canonical connection annihilates these basis elements. Now consider the action of $\Lambda$ corresponding to this canonical connection: if $\partial_j$ are the generators of $\Lambda$, then they act by $\partial_j e_i = 0$.

Thus, corresponding to the connection $\nabla_1$ on the $t=1$ fibre, $\partial_j e$ acts as \\ $\zeta (F^* \psi) \wedge \partial_j (e_i)$. So $t\partial_j e$ acts as simply the multiple by $t$. To determine the \\ $p$-curvature, it suffices to simply take the $p$-th power $\partial_j^p$. By \cite{Lan-Sheng}, page 5, if $\partial_j e$ acts as $\zeta (F^* \psi) \wedge \partial_j (e_i)$, corresponding to $\nabla_1$, then $\partial^p_j e$ acts by $F^* \psi$. So corresponding to the connection $\nabla$ over $\mathbb{A}^1$, if $\partial_j e$ acts as $t\zeta (F^* \psi) \wedge \partial_j (e_i)$, then $\partial^p_j e$ acts by $t^pF^* \psi$. 

That is, $p$-linearity and the triviality of $\nabla_{can}$ shows that the $p$-curvature of $\nabla$ is $t^pF^* \psi$. So $(V,\nabla,F^*\psi)$ is an element of $M_{conj}.$

\thispagestyle{plain}

In particular, taking $t=0$, $F^* \psi$ lies in the central fibre of the conjugate groupoid and can be deformed away from $t=0$. 

\end{proof}

\subsection{Proof of main theorem}
\begin{theorem}[Main theorem]
Let $X/S$ be schemes in characteristic $p$. Assume both have lifts to characteristic $p^2$ and that relative Frobenius $F_X: X \to X'$ has a global lift to characteristic $p^2$. Then there is an identification between the $p$-curvature of $M_{dR}(X/S)$ after passing to the associated graded of its conjugate filtation, with the Frobenius twist of the Kodaira-Spencer map on $M_{Dol}$. Moreover, if the $p$-curvature vanishes in the sense of Bost's condition, then the Kodaira-Spencer map vanishes, and so the Gauss-Manin connection extends over the Hodge filtration.
\end{theorem}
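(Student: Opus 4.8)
### Plan for the proof of the Main Theorem

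\textbf{Setting up the comparison.} The plan is to work entirely in characteristic $p$, exploiting the groupoid formalism developed in sections 5--7. The key structural input is the extended $p$-curvature morphism $\Theta: (S/T)_{FDol} \times \mathbb{A}^1_T \to (S/T)_{conj}$ from Proposition 8.5 (the one proved just above the statement), together with the conjugate groupoid $(S/T)_{conj}$ and its associated stack $M_{conj}$. First I would form the pullback $\Theta^* M_{conj}(X/S)$ along $\Theta$, which lives over $(S/T)_{FDol} \times \mathbb{A}^1_T$. By Corollary 7.17 the non-zero fibres of $M_{conj}$ are $M_{dR}$ with a $\mathbb{G}_m$-equivariance, so the restriction of $\Theta^* M_{conj}(X/S)$ over $\{t = 1\}$ recovers $\theta^* M_{dR}(X/S)$ together with its stratification over the twisted Dolbeault groupoid, i.e. exactly the object whose triviality is (by Proposition 8.3) equivalent to Bost's condition. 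On the other hand, the restriction over $\{t = 0\}$ should be identified with (the Frobenius twist of) the Kodaira-Spencer stratification of $M_{Dol}$ over $(S/T)_{Dol}$; this uses Corollary 7.18, which says $M_{conj,0}$ consists of $F$-Higgs bundles with descent data to $X'$, and the identification of the $t=0$ morphism object of $(S/T)_{conj}$ with $\frac{\mathcal{P}_S}{I_S} \otimes \frac{D_S}{K_S}$, whose $\frac{\mathcal{P}_S}{I_S}$-part is canonically $F_S^*\hat{\Gamma}_{S'}\Omega_{S'/T}$ by Proposition 7.6. This produces the first assertion: the $p$-curvature on the conjugate associated graded $=$ the Frobenius twist of Kodaira-Spencer, both being the $t=0$ and $t=1$ fibres of one stratified object over $(S/T)_{FDol} \times \mathbb{A}^1_T$.

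\textbf{From vanishing $p$-curvature to vanishing Kodaira-Spencer.} Now suppose Bost's condition holds, so by Proposition 8.3 the stratification of $\theta^* M_{dR}(X/S)$ over $(S/T)_{FDol}$ is trivial; equivalently the $t=1$ fibre of $\Theta^* M_{conj}(X/S)$ has trivial stratification. I want to conclude the $t=0$ fibre — the Frobenius twist of Kodaira-Spencer — is also trivial. Unlike the abelian case, this is not formal: a stratification can be trivial on one fibre of an $\mathbb{A}^1$-family without being trivial on another, as emphasized in the Remark following the introduction. The idea is to run a flatness/deformation argument. Working locally on $R_{conj}$ (using $M_{conj} = [R_{conj}/GL_d]$ in the manner of Theorem 6.32), the relevant object is a module over $\Lambda_{conj}$ base-changed appropriately, i.e. a bundle with a connection $\nabla$ and an $F$-Higgs field $\psi$ with $p\text{-curv}(\nabla) = \lambda^p \psi$; triviality of the $t=1$ stratification over $(S/T)_{FDol}$ forces, via Proposition 7.9, that $\nabla$ has trivial $p$-curvature at $\lambda=1$, hence (by the $\mathbb{G}_m$-equivariance propagating this) for all $\lambda \neq 0$, hence $\lambda^p \psi = 0$ for $\lambda \neq 0$. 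Since the Rees/$\Xi$-construction is flat over $\mathbb{A}^1$ and $\lambda$ is a non-zero-divisor, $\psi$ must vanish identically, including at $\lambda = 0$; this is the flatness step. The key deformation lemma (Lemma 8.6) is what guarantees the family $(V,\nabla,F^*\psi)$ genuinely lives in $M_{conj}$ and that the $t=0$ fibre is not artificially forced to vanish — it certifies the constructions are consistent over all of $\mathbb{A}^1$, so the flatness argument has content.

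\textbf{Concluding via Theorem 6.34.} Once the Frobenius twist of Kodaira-Spencer vanishes, the Kodaira-Spencer stratification of $M_{Dol}$ over $(S/T)_{Dol}$ itself is trivial: the Frobenius twist $\theta_{Dol}: (S/T)_{FDol} \to (S/T)_{Dol}$ is, on morphism objects, the map $\hat{\Gamma}_{S'}\Omega_{S'/T} \to F_S^*\hat{\Gamma}_{S'}\Omega_{S'/T}$ arising from pullback, and one checks (in local coordinates, $dx_i' \mapsto$ the corresponding generator) that triviality of the pulled-back stratification is equivalent to triviality of the original — essentially because $F_S$ is faithfully flat, so no information is lost on pullback. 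Then Theorem 6.34 applies verbatim: vanishing of the non-abelian Kodaira-Spencer map implies the Gauss-Manin connection on $M_{dR}$ extends over the Hodge filtration $M_{Hod}$.

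\textbf{Main obstacle.} I expect the main difficulty to be the flatness step: proving that the vanishing of $\psi$ away from $\lambda = 0$ propagates to $\lambda = 0$, which requires knowing that the relevant module over $\Lambda_{conj}$ (equivalently the stratified bundle over $\hat{\Xi}$) is flat over $\mathbb{A}^1_S$ in a way compatible with the $GL_d$-quotient presentation, and that the identification of the $t=0$ fibre with the Dolbeault data is the \emph{correct} one (not off by the descent-data factor $\frac{D_X}{K_X} = \mathrm{End}_{\mathcal{O}_{X'}}\mathcal{O}_X$). Disentangling the $F$-Higgs field from the descent-to-$X'$ data in $M_{conj,0}$, and checking that the surviving piece is precisely the Frobenius pullback of the Higgs field governing Kodaira-Spencer, is where the lift-to-$p^2$ hypotheses and the explicit splitting of Cartier from section 7.7 do their real work.
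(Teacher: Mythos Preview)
Your overall architecture matches the paper's: set up $\Theta^* M_{conj}(X/S)$ over $(S/T)_{FDol}\times\mathbb{A}^1$, identify the $t=1$ fibre with $\theta^* M_{dR}$, relate the $t=0$ fibre to the Frobenius twist of Kodaira--Spencer, then propagate triviality from $t\neq 0$ to $t=0$ and invoke the last theorem of section~6. Two points need correction.

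\textbf{The identification at $t=0$.} The central fibre $\Theta_0^* M_{conj,0}$ is \emph{not} the Frobenius twist of $M_{Dol}$; it is strictly larger, parametrizing $F$-Higgs bundles together with an arbitrary descent datum to $X'$. What the paper produces is a \emph{map} $F^*: M_{Dol}\to \Theta_0^* M_{conj,0}$ (pullback by absolute Frobenius, landing in the sublocus where the descent datum is the canonical one), and shows this map is compatible with the two stratifications. The identification is through this map, not an equivalence of the whole $t=0$ fibre.

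\textbf{The flatness step.} This is the real gap. You assert that because the Rees/$\Xi$-construction is flat over $\mathbb{A}^1$, the vanishing of $\psi$ for $\lambda\neq 0$ propagates to $\lambda=0$. But flatness of the \emph{groupoid} $(S/T)_{conj}$ over $\mathbb{A}^1$ does not imply flatness of the moduli space $R_{conj}$ (or $M_{conj}$) over $\mathbb{A}^1$, and it is the latter you would need to conclude that $\lambda$ is a non-zero-divisor on the structure sheaf and hence that $\psi=0$ identically. The paper never establishes flatness of $M_{conj}$; instead, it uses the deformation lemma for a different purpose than you describe. The lemma is not a consistency check: it is the \emph{mechanism} by which vanishing is propagated. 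Concretely, it shows that every point of the image $N_0:=F^*(M_{Dol})\subset M_{conj,0}$ admits an explicit deformation $(F^*E\times\mathbb{A}^1,\ \nabla_{can}+t\,\zeta(F^*\psi),\ F^*\psi)$ to the $t\neq 0$ locus, using the splitting of Cartier supplied by the lift-to-$p^2$ hypothesis. Hence $N_0$ lies in the closure of the $t\neq 0$ part, where the $(S/T)_{FDol}$-stratification is already known to be trivial; so it is trivial on $N_0$. This bypasses any need for flatness of the moduli stack itself. Once the stratification on $N_0$ is trivial, injectivity of relative Frobenius over $T$ (by smoothness) gives triviality of Kodaira--Spencer on $M_{Dol}$, and the final theorem of section~6 finishes.
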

\begin{remark}
The assumption that both have lifts to $p^2$ is of no consequence as \\ ultimately this result should be applied to $X/S$ which come from characteristic $0$, exactly like the context of \cite{Katz2}. The assumption that $F_X$ should have a lift is more restrictive, but is satisfied if $X/S$ is relatively affine. That detail aside, this is the exact non-abelian analogue of \cite{Katz2}, Theorem 3.2.

\end{remark}

\begin{proof}

\thispagestyle{plain}

Recall from section 8.2,

\begin{center}
\begin{tikzcd}

\theta  ^* M_{dR}(X/S) \arrow[d] \arrow[r] &   M_{dR}(X/S) \arrow[d]  \\

[(S/T)]_{FDol} \arrow[r,"\theta"]           &   {[(S/T)]_{dR}}

\end{tikzcd}
\end{center}

By proposition 8.3 and corollary 7.18, this extends to a diagram

\begin{center}
\begin{tikzcd}

\Theta  ^* M_{conj}(X/S) \arrow[d] \arrow[r] &   M_{conj}(X/S) \arrow[d]  \\

[(S/T)]_{FDol} \times \mathbb{A}^1_T \arrow[r,"\Theta"]           &   {[(S/T)]_{conj}}

\end{tikzcd}
\end{center}


We now wish to identify the following two stratifications

\begin{center}
\begin{tikzcd}

\Theta_0  ^* M_{conj,0}(X/S) \arrow[d]  &   M_{Dol}(X/S) \arrow[d]  \\

[(S/T)]_{FDol}        &   {[(S/T)]_{Dol}}

\end{tikzcd}
\end{center}

The stratification of $\Theta_0  ^* M_{conj}(X/S)_0$ over $[(S/T)]_{FDol}$ is the precise non-abelian \\ analogue of the $p$-curvature of the Gauss-Manin connection after passing to the \\ associated graded of the conjugate filtration. The stratification of $ M_{Dol}(X/S) $ over $[(S/T)]_{Dol}$ is the precise analogue of the Kodaira-Spencer map. To compare these two stratifications, up to Frobenius, is the analogue of Katz' theorem (\cite{Katz2}, 3.2).

Given a Higgs bundle on $X$, or more generally on $X_R /R$ for an $S$-scheme $R$, pulling it back by absolute Frobenius lands in the central fibre $\Theta_0  ^* M_{conj}(X/S)_0$ by corollary 7.14, section 7.4, and the final line of lemma 8.4. Indeed, pulling back by absolute Frobenius factors through relative Frobenius, so the resulting pullback has a descent to $X'$, and also carries a $F_X$-Higgs field. That is, there is a diagram

\begin{center}
\begin{tikzcd}

 \Theta_0  ^* M_{conj}(X/S)_0  \arrow[d]  &   M_{Dol}(X/S) \arrow[d] \arrow[l,"F^*"]   \\

S          &   S\arrow[l,"F^*"] 

\end{tikzcd}
\end{center}
\thispagestyle{plain}

Note absolute Frobenius is not a map of $S$-schemes. Instead, it is natural, so the bottom map must be taken to be absolute Frobenius for this diagram to commute. 

Now we invoke the fact that both groupoids here are groupoid arrow functors, and use proposition 5.12. This proposition explicitly identifies the descent data of $M(X/S)$ as $M(X_{st}/S_{st})$. Again, since Frobenius is natural we have a commutative diagram, replacing $X/S$ by $X_{st}/S_{st}$.

\begin{center}
\begin{tikzcd}

 \Theta_0  ^* M_{conj}(X_{st}/S_{st})_0  \arrow[d]  &   M_{Dol}(X_{st}/S_{st}) \arrow[d] \arrow[l,"F^*"]   \\

S_{st}          &   S_{st}\arrow[l,"F^*"] 

\end{tikzcd}
\end{center}

It follows the former diagram is commutative when we include the data of the stratifications as well

\begin{center}
\begin{tikzcd}

 \Theta_0  ^* M_{conj}(X/S)_0  \arrow[d]  &   M_{Dol}(X/S) \arrow[d] \arrow[l,"F^*"]   \\

[(S/T)]_{FDol}          &   {[(S/T)]_{Dol}} \arrow[l,"F^*"] 

\end{tikzcd}
\end{center}

This diagram provides the desired identification between the $p$-curvature of $M_{dR}$ after passing to the conjugate filtration on the left, and the Frobenius twist of non-abelian Kodaira-Spencer on the right.












Now the result follows by combining the above steps. By proposition 8.1, $M_{dR}$ has a canonical integrable horizontal subbundle $H$ of its tangent bundle. Assume the $p$-curvature of the Gauss-Manin connection of $M_{dR}$ is trivial in the sense of Bost's condition, namely $H$ is preserved by $p$-th powers. By proposition 8.2,
$\theta^* M_{dR}$ has a trivial stratification over $[(S/T)]_{FDol}$. Extend $\theta$ to $\Theta$ using proposition 8.3 and use the $\mathbb{G}_m$-equivariance of $M_{conj}$ to deduce the stratification of $\Theta^* M_{conj}(X/S)$ is trivial above all of $\mathbb{G}_m$.

Now map $M_{Dol}$ to $\Theta_0^* M_{conj,0}$ as above, and let the image be $N_0$. We invoke the deformation in lemma 8.4. The assumption that $F_X$ has a lift of Frobenius provides a splitting of Cartier necessary for the lemma.

By the deformation in lemma 8.4, $N_0$ may deformed off the central fibre, and so lies in the closure of the non-zero fibres. Since the stratification is trivial away from $0$, deduce the stratification of $N_0$ over $[(S/T)]_{FDol}$ is trivial. Since everything takes place over the base scheme $T$, we may replace $F$ with relative Frobenius relative to $T$. Since $X/S/T$ are smooth, $F_T^*$ is injective. Deduce the stratification of $M_{Dol}$ above $[(S/T)]_{Dol}$ is trivial.

Finally, by theorem 6.33 it follows that the Gauss-Manin connection on $M_{dR}$ \\ extends over $M_{Hod}$.

\end{proof}
\thispagestyle{plain}

\pagebreak

\singlespacing



\thispagestyle{plain}


\let\oldthebibliography=\thebibliography
\let\endoldthebibliography=\endthebibliography
\renewenvironment{thebibliography}[1]{
  \begin{oldthebibliography}{#1}
    \setlength{\itemsep}{3ex}
  }{
    \end{oldthebibliography}
  }

\bibliographystyle{abbrv}
\bibliography{biblio}




\end{document}